\renewcommand{\le}{\leqslant}
\renewcommand{\ge}{\geqslant}
\definecolor{mno}{rgb}{0.5,0.1,0.5}
\newcommand{\R}{\mathds R}
\newcommand{\B}{\beta}
\newcommand{\T}{\theta}
\newcommand{\Pp}{\mathds P}
\newcommand{\Ee}{\mathds E}
\newcommand{\I}{\mathds 1}
\newcommand{\p}{\preceq}
\newcommand{\q}{\succeq}
\newcommand{\K}{\kappa}
\newcommand{\A}{\alpha}
\newcommand{\G}{\gamma}
\newcommand{\Bb}{\mathscr{B}}
\newcommand{\e}{ \varepsilon}
\def\LL{{\mathcal L}}
\def\wt{\widetilde}
\def\wh{\widehat}
\newtheorem{theorem}{Theorem}[section]
\newtheorem{lemma}[theorem]{Lemma}
\newtheorem{proposition}[theorem]{Proposition}
\newtheorem{corollary}[theorem]{Corollary}
\theoremstyle{definition}
\newtheorem{remark}[theorem]{Remark}
\begin{document}
\allowdisplaybreaks
\title[Heat kernel for non-local operators with variable order]{
Heat kernel for non-local operators with variable order}

\author{{Xin Chen}, \quad {Zhen-Qing Chen} \quad \hbox{and} \quad {Jian Wang}}
\thanks{\emph{X.\ Chen:}
   Department of Mathematics, Shanghai Jiao Tong University, 200240, Shanghai, P.R. China. \url{chenxin217@sjtu.edu.cn}}
  \thanks{\emph{Z.-Q.\ Chen:}
   Department of Mathematics, University of Washington, Seattle,
WA 98195, USA. \url{zqchen@uw.edu}}

  \thanks{\emph{J.\ Wang:}
 College of Mathematics and Informatics \& Fujian Key Laboratory of Mathematical Analysis and Applications (FJKLMAA), Fujian Normal University, 350007 Fuzhou, P.R. China. \url{jianwang@fjnu.edu.cn}}

\maketitle

\begin{abstract} Let $\alpha(x)$ be a measurable function taking values in  $ [\alpha_1,\alpha_2]$ for  $0<\A_1\le \A_2<2$, and
$\kappa(x,z)$ be a positive measurable function that is symmetric in $z$ and bounded
between two positive constants.  Under  a uniform H\"{o}lder continuous assumptions on
$\alpha(x)$ and $x\mapsto \kappa(x,z)$, we obtain  existence, upper and
lower bounds, and regularity properties of the heat kernel associated
with the following non-local operator of variable order
$$
\LL f(x)=\int_{\R^d}\big(f(x+z)-f(x)-\langle\nabla f(x), z\rangle \I_{\{|z|\le
1\}}\big) \frac{\kappa(x,z)}{|z|^{d+\alpha(x)}}\,dz.
$$
In particular, we  show
that the operator $\LL$   generates a conservative Feller process on $\R^d$ having the strong Feller property,
which is usually assumed a priori  in the literature
to study analytic properties of $\LL$ via probabilistic approaches.  Our near-diagonal
estimates and lower bound estimates of the heat kernel depend on the local behavior of index function $\alpha(x)$; when
$\alpha(x)\equiv \A\in(0,2)$, our results recover  some results
 by Chen and Kumagai (2003) and Chen and Zhang (2016).

\medskip

\noindent\textbf{Keywords:} non-local operator with variable order; stable-like process; heat kernel; Levi's method

\medskip

\noindent \textbf{MSC 2010:} 60G51; 60G52; 60J25; 60J75.
\end{abstract}

\section{Introduction and Main Results}
\subsection{Setting}
Let $\LL$  be a non-local operator with variable order given by
\begin{equation}\label{e1-1}
\LL f(x)=\int_{\R^d}\big(f(x+z)-f(x)-\langle\nabla f(x), z\rangle \I_{\{|z|\le 1\}}\big)
\frac{\kappa(x,z)}{|z|^{d+\alpha(x)}}\,dz,\quad   f \in C_c^2(\R^d),
\end{equation}
where $\alpha:\R^d \rightarrow (0,2)$
is a measurable function such that
\begin{equation}\label{e1-3}
\begin{split}
& 0<\alpha_1\le \alpha(x) \le \alpha_2<2  \quad \hbox{for }   x \in \R^d,\\
& |\A(x)-\A(y)|\le c_1(|x-y|^{\beta_1}\wedge 1)   \quad \hbox{for }   x,y\in \R^d,
\end{split}
\end{equation}
for  some constants $c_1\in(0,\infty)$ and $\beta_1\in(0,1]$, and $\kappa:\R^d\times \R^d \rightarrow (0,\infty)$ is a measurable function satisfying
\begin{equation}\label{e1-2}
\begin{split}
& \K(x,z)=\K(x,-z) \quad \hbox{for }  x,z \in \R^d,\\
&  0<\K_1\le \K(x,z)\le \K_2<\infty \quad \hbox{for }   x,z\in \R^d,\\
&  |\K(x,z)-\K(y,z)|\le c_2(|x-y|^{\beta_2}\wedge 1)  \quad \hbox{for }   x,y,z\in \R^d,
\end{split}
\end{equation}
for  some constants $c_2\in(0,\infty)$ and $\beta_2 \in (0,1]$.
Set $\beta_0= \beta_1\wedge \beta_2$. Clearly \eqref{e1-3} and \eqref{e1-2} hold with $\beta_0$ in place of $\beta_1$ and $\beta_2$, respectively.

When
$\A(x)\equiv \A \in (0,2)$ and
$\K(x,z)\equiv \alpha2^{\alpha-1}\, \frac{\Gamma((\alpha+d)/2))}{\pi^{d/2}\,\Gamma(1-(\alpha/2))}$
for all $x,z\in\R^d$,
$\LL $ of \eqref{e1-1} is the fractional Laplacian operator $-(-\Delta)^{\A/2}$, which is the infinitesimal generator of the rotationally symmetric $\A$-stable L\'evy process
in $\R^d$.
Therefore $\LL$ could be viewed as a generalization of fractional Laplacian operator with
jump coefficient $\K(x,z)$ and  of variable order  $\A(x)$.

\subsection{Background}
Non-local operators  arise naturally in the study of stochastic processes with jumps. Various properties of the non-local operator $\LL$ have been intensively investigated both from the analytic and the probabilistic point of view.

We first recall some known results for the constant order case,
i.e.\ $\A(x) \equiv \A \in (0,2)$ for all $x\in\R^d$. The regularity
properties of the operator $\LL$ including the H\"{o}lder continuity
of harmonic functions, the Schauder estimates and the $L^p$
estimates were studied in \cite{Bass2,CS1,CS,CV,CRZ,S1,S2,Z}. When
the operator $\LL$ is symmetric with respect to the Lebesgue measure,
existence and two-sided estimates of the heat kernel associated with $\LL$
were obtained in \cite{CK,CK1}, and
the corresponding parabolic Harnack inequality was also established
there. For general $\K(x,z)$ satisfying \eqref{e1-2}, the associated
heat kernel was constructed by Levi's method in \cite{CZ}, where the
corresponding two-sided estimates and gradient estimates were given,
and the parabolic equation (see \eqref{t1-1-1} below) was also
verified. Recently the extension of \cite{CZ}
from the L\'evy kernel $1/|z|^{d+\alpha}$ to that of a  class of subordinate Brownian motion
was presented in  \cite{KSV}. See also
\cite{BSK, GS, Kim, KK1,KK2, Ku, KR17,Kul, SZ} for the construction of certain L\'evy
type processes via Levi's  (parametrix)
 method. Besides, existence and uniqueness
(i.e.\ well-posedness) of the martingale problem for $\LL$ were proved
in \cite{BT, CZ4, J} under some mild continuous conditions on $\K(x,z)$.

When the index function $\A(x)$ depends on $x $ and the jump
coefficient $\kappa(x,z)=\frac{\alpha(x)2^{\alpha(x)-1}\Gamma
((\alpha(x)+d)/2)}{\pi^{d/2}\,\Gamma(1-(\alpha(x)/2))}$ for all
$x,z\in \R^d$, $\LL$ can be written as a pseudo-differential operator
$-p(x,D)$ with the symbol $p(x, \xi)= -|\xi|^{\alpha(x)}$; that is,
$$
\LL f(x) = \frac{1}{(2\pi)^d} \int_{\R^d} e^{\langle ix, \xi\rangle} p(x, \xi) \wh  f(\xi)\, d\xi
= \frac{1}{(2\pi)^d} \int_{\R^d\times \R^d} e^{i \langle x-y, \xi\rangle} p(x, \xi) f(y) \,dy \,d\xi .
$$
Formally, we can write $\LL$ as $-(-\Delta)^{\alpha (x)/2}$.
For this reason, we call $\alpha(x)$ the order of the non-local operator at the point $x\in \R^d$.
Existence and well-posedness of
the martingale solution for $\LL= -(-\Delta)^{\alpha (x)/2}$ were established in Bass \cite{Bass1}, which
immediately yields that there is  a strong Markov process corresponding to $-(-\Delta)^{\alpha (x)/2}$.
(This process was called the stable-like process in \cite{Bass1}). A few
properties for such stable-like process (including existence of heat
kernel, sample path properties and the long-time behaviors) have
been investigated in \cite{BSW,Hoh,KN,Neg,NS, Sch} by using the theory of
pseudo-differential operators and Fourier analysis. When $\kappa (x,
z)=\kappa (x)$ is independent of $z$ and both $\alpha (x)$ and
$\kappa (x)$ have uniformly bounded continuous derivatives, the heat
kernel for $\LL$ was constructed formally in \cite[Section 5]{Kol} via
 Duhamel's formula, and some upper bound estimates for heat kernel were
also given there. Duhamel's formula has also been adopted in
\cite{CW,W} to study the foundational solution to fractional
Laplacian or Laplacian perturbed by non-local operators. We also
mention that there are some results for the case that $\K(x,z)$
depends on $z$. For instance, the regularity for harmonic functions
or the semigroups associated with $\LL$ were studied in
\cite{BK1,LW,S1}, and the elliptic Harnack inequality was obtained
in \cite{BK2}.

From all the results mentioned above, we can see that there are
already a lot of developments related to   non-local operators of
variable order. However, the following questions, which should be
fundamental and interesting, were still unknown.
\begin{itemize}
\item [(1)] When $\A(x)$ is not a constant, how can we construct a fundamental solution of $\LL$ that really satisfies
 the parabolic equation \eqref{t1-1-1} below?  What are upper
and lower bound estimates for this solution (if it exists)? We point out that it had not been established that,
even under additional smoothness assumptions on $\alpha (x)$ and $\kappa (x)$,
the  heat kernel constructed  in \cite{Kol}  by Duhamel formula is actually a solution to the equation \eqref{t1-1-1}.

\item[(2)] When $\A(x)$ is not a constant and $\K(x,z)$ depends on $z$, is there a strong Markov process
associated with $\LL$?  In literature, the existence of strong Markov process was
always assumed a priori  in the study of regularity of harmonic
functions and elliptic Harnack inequalities for such non-local operator $\LL$,
see \cite{BK1,BK2, LW} for examples.

\item[(3)] In existing literature,
  regularity for the solution to parabolic equation \eqref{t1-1-1} associated
with $\LL$ usually depends on
 the uniform bounds $\A_1$ and $\A_2$ in
\eqref{e1-3}.
Can one establish regularity of the fundamental solution in some neighborhood of  $x_0$ (such  as  gradient estimate) in terms of $\A(x_0)$ under some special settings?
\end{itemize}

\subsection{Main results} The aim of this paper is to address these questions.
We show that there is a Feller process associated with $\LL$ and it has the strong Feller property.
We give existence and uniqueness of fundamental solutions to $\LL$ as well as their regularity properties.

For $x_0\in \R^d$ and $r>0$, define  $B(x_0, r):=\{x\in \R^d: |x-x_0|<  r_0\}$.
Let $C_{b,u}(\R^d)$ denote the set of all bounded and uniformly continuous functions on $\R^d$.
The following are two  of the main results of our paper.

\begin{theorem}\label{t1-1}
Suppose that conditions \eqref{e1-3} and \eqref{e1-2}   hold. If $\K(x,z)= \kappa (x)$ is
independent of $z$,
then there exists a
  jointly continuous non-negative
function
$p:(0,1]\times \R^d
\times \R^d \rightarrow \R_+:=[0,\infty)$ such that for every $t \in (0,1]$ and $x,y \in \R^d$,
\begin{equation}\label{t1-1-1}
\frac{\partial p(t,x,y)}{\partial t}=\LL p(t,\cdot,y)(x),
\end{equation}
and has the following properties.
\begin{enumerate}
\item[(i)]{\bf (Upper bounds)}\,\,
For every $\gamma, c_0>0$, there exist positive constants $c_1 =c_1(\A,\K,c_0)$ and $c_2 =c_2(\A,\K,\G,c_0)$ such that
for all $t\in(0,1]$ and $x,y\in\R^d$,
\begin{equation}\label{t1-1-4}
p(t,x,y)\le
\begin{cases}
{c_1}{t^{-{d/\A(x)}}},&\quad |x-y|< c_0t^{{1/\A(x)}},\\
\frac{c_1t}{|x-y|^{d+\A_2}}\wedge \frac{c_2t^{1-\gamma}}{|x-y|^{d+\A(x)}},&\quad
c_0t^{{1/\A(x)}}\le |x-y|< 1,\\
\frac{c_1t}{|x-y|^{d+\A_1}},&\quad|x-y|\ge 1.
\end{cases}
\end{equation}
If, in addition,
there are some $x_0\in\R^d$ and $r_0\in(0,\infty]$ such that $\alpha(z)=\alpha(x_0)$ for all $z \in B(x_0,r_0)$,
then there is a positive constant
$c_3 =c_3(\A,\K,r_0)$ such that for every $t\in (0,1]$ and $y \in \R^d$,
\begin{equation}\label{t1-1-4a}
p(t,x_0,y)\le
\begin{cases}
\frac{c_3 t }{(t^{{1/\A(x_0)}}+|y-x_0|)^{d+\A(x_0)}},&
|y-x_0|< {r_0}/{2},\\
\frac{c_3 t }{|y-x_0|^{d+\A_1}},&|y-x_0|\ge {r_0}/{2};
\end{cases}
\end{equation}

\noindent {\bf (Lower bounds)}\,\, There exists a constant $c_4 =c_4(\A,\K)>0$ such that for all $t\in(0,1]$ and $x,y\in\R^d$,
\begin{equation}\label{t1-1-4b}
p(t,x,y)\ge \frac{c_4t}{(t^{1/\A(x)}+|x-y|)^{d+\A(x)}}.
\end{equation}

\item[(ii)]{\bf (H\"{o}lder regularity and gradient estimates)}\,\,
For any
$\G \in (0, \alpha_2]$,
 there exist positive constants
$c_5 =c_5(\A,\K,\G)$ and $R_1=R_1(\A,\K,\G)$ such that for all $t \in (0,1]$ and $x, x', y \in \R^d$ with $|x-x'|\le R_1$,
\begin{equation}\label{t1-1-7}
\begin{split}
 \big|p(t,x,y)-p(t,x', y)\big|
&\le c_5
|x-x'|^{(\A(x)-\G)_+\wedge 1}
\big(\rho^{y,0}_{\gamma_0}(t,x-y)+\rho^{y,0}_{\gamma_0}(t,x'-y)\big),
\end{split}
\end{equation}
where $\gamma_0= {\G/(2\A_2)}$ and
 \begin{equation}\label{e3-0}
\rho_{\G}^{y,\beta}(t,x)=t^{\G}(|x|^{\beta}\wedge 1)
\begin{cases}
\frac{1}{(t^{{1/\A(y)}}+|x|)^{d+\A(y)}},&  |x|\le 1,\\
\frac{1}{|x|^{d+\A_1}},& |x|> 1.
\end{cases}
\end{equation}
If moreover $\tilde \B_0(x_0):=\big({\A_1\B_0}/{\A(x_0)})\wedge \A_1>1-\A(x_0)$ for some $x_0 \in \R^d$, then for every fixed $t \in (0,1]$ and $y \in \R^d$,
$p(t,\cdot,y)$ is differentiable at $x=x_0$.
In this case,
for every $\G>0$, $t\in(0,1]$ and $ y\in \R^d$,
\begin{equation}\label{t1-1-7a}
|\nabla p(t,\cdot,y)(x_0)|\le c_6 \, \rho^{y,0}_{1-({1/\A(x_0)})+
(\B_0^*/\A_2)-(\B_0^*/\A_1)-\G}(t,x_0-y)
\end{equation}
  for some $c_6 =c_6(\A,\K,\G,x_0)>0$, where $\B_0^*:=\B_0 \wedge\A_2$.

\item[(iii)]{\bf (Chapman-Kolmogorov equation)} For every $s,t \in (0,1]$ with $s+t\le 1$,
\begin{equation}\label{t1-1-5}
\int_{\R^d}p(s,x,z)p(t,z,y)\,dz=p(s+t,x,y),\quad x,y\in \R^d.
\end{equation}

\item[(iv)]{\bf (Conservativeness)} For every $(t,x)\in (0,1]\times \R^d$,
\begin{equation}\label{t1-1-6}
\int_{\R^d}p(t,x,y)\,dy=1.
\end{equation}

\item[(v)] {\bf (Strong continuity and generator)}
For every $f \in C_{b,u}(\R^d)$, let
$$u_f(t,x):=\int_{\R^d}p(t,x,y)f(y)\,dy.$$
Then
\begin{equation}\label{t1-1-2a}
\LL u_f (t, \cdot) (x) \hbox{ exists pointwise}, \quad t \mapsto \LL u_f (t, ) (x)
\hbox{  is continuous},
\end{equation}
\begin{equation}\label{t1-1-2}
\frac{\partial u_f(t,x)}{\partial t}
= \LL u_f(t,\cdot)(x),
\quad  t\in (0,1],\ x \in \R^d,
\end{equation}
and
\begin{equation}\label{t1-1-3}
\lim_{t \downarrow 0}\sup_{x \in \R^d}|u_f(t,x)-f(x)|=0.
\end{equation}

For every $f \in C_{b,u}^2(\R^d):= C_{b,u}(\R^d)\cap C^2(\R^d)$,  we also have
\begin{equation}\label{t1-1-5a}
\frac{\partial u_f(t,x)}{\partial t}=\int_{\R^d}
p(t,x,y)\LL f(y)\,dy,\quad  t\in
(0,1],\ x \in \R^d
\end{equation} and
\begin{equation}\label{t1-1-5b}
\lim_{t \downarrow 0}\sup_{x \in \R^d}\Big|\frac{1}{t}\cdot\big(
u_f(t,x)-f(x)\big)-\LL f(x)\Big|=0.
\end{equation}

\item[(vi)] {\bf (Uniqueness)} Jointly continuous function $p(t, x, y)$ on $(0, 1]\times \R^d\times \R^d$
that is bounded for each $t>0$ and  satisfies
\eqref{t1-1-7} and \eqref{t1-1-2a}-\eqref{t1-1-3} is unique.
\end{enumerate}
\end{theorem}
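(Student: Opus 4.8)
The plan is to construct $p(t,x,y)$ by Levi's method (the parametrix construction) and then extract every listed property from the resulting series representation together with sharp estimates on its building blocks. First I would fix $y\in\Rd$ and introduce the spatially homogeneous (L\'evy‑type) operator obtained by freezing the coefficients of $\LL$ at $y$,
\[
\LL^{(y)}g(x)=\int_{\Rd}\big(g(x+z)-g(x)-\langle\nabla g(x),z\rangle\I_{\{|z|\le1\}}\big)\frac{\K(y,z)}{|z|^{d+\A(y)}}\,dz ,
\]
with transition density $p_0^{(y)}(t,\cdot)$, and set $p_0(t,x,y):=p_0^{(y)}(t,x-y)$. Since $\LL^{(y)}$ has constant order $\A(y)$, the estimates for symmetric stable‑like processes from \cite{CK,CK1,CZ} supply two‑sided bounds for $p_0$, bounds for $\nabla_x^k p_0$ ($k=1,2$) and for $\LL^{(y)}_x p_0$, and H\"older‑type estimates for the dependence of $p_0^{(y)}$ on the freezing point $y$ through $(\A(y),\K(y,\cdot))$. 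All of these are packaged into the weighted kernel classes $\rho^{y,\B}_{\G}$ of \eqref{e3-0}, for which I would prove a space–time convolution inequality: schematically, $(f\ast g)(t,x,y)=\int_0^t\!\int_{\Rd}f(t-s,x,z)g(s,z,y)\,dz\,ds$ sends $\rho_{\G_1}\ast\rho_{\G_2}$ into $C\,\rho_{\G_1+\G_2+1}$ provided each $t$‑exponent exceeds $-1$, with $C$ of Beta/Gamma‑function type so that the resulting series converges.

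Next I would set $q_1(t,x,y):=(\LL_x-\LL^{(y)}_x)p_0(t,\cdot,y)(x)$, $q_{n+1}:=q_1\ast q_n$, $q:=\sum_{n\ge1}q_n$, and finally $p:=p_0+p_0\ast q$. The operator $\LL_x-\LL^{(y)}_x$ decomposes into a $\K$‑part carrying the factor $|\K(x,z)-\K(y,z)|\le c_2(|x-y|^{\B_2}\wedge1)$, and an $\A$‑part governed by $|z|^{-d-\A(x)}-|z|^{-d-\A(y)}=|z|^{-d-\A(y)}\big(|z|^{\A(y)-\A(x)}-1\big)$; for $|z|\le1$ one uses $\big|\,|z|^{\A(y)-\A(x)}-1\,\big|\lesssim(|x-y|^{\B_0}\wedge1)\,|z|^{-\G}$ for every $\G>0$ (treating $|x-y|$ bounded below separately), while for $|z|>1$ both L\'evy densities are dominated by $\K_2|z|^{-d-\A_1}$. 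This gives $|q_1(t,x,y)|\lesssim C_\G\,\rho^{y,\B_0}_{\G}(t,x-y)$‑type bounds with the $t$‑exponent lying just below the borderline required by the convolution inequality, the deficit being exactly the logarithmic loss coming from $|z|^{\A(y)-\A(x)}-1\approx(\A(y)-\A(x))\log|z|$; because $\B_0>0$ this is still admissible for arbitrarily small $\G$, so $q=\sum_n q_n$ converges and $p_0\ast q$ is controlled by a kernel of class $\rho^{y,0}_{1-\G}$, which is the source of the factor $t^{1-\G}$ in \eqref{t1-1-4}.

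With $p$ so defined, it remains to verify its properties. Joint continuity and the upper bound \eqref{t1-1-4} come from the $p_0$‑estimates and the convolution inequality; near the diagonal, $|\A(x)-\A(y)|\le c_1|x-y|^{\B_0}$ makes $t^{-d/\A(y)}\asymp t^{-d/\A(x)}$ and $|x-y|^{-d-\A(y)}\asymp|x-y|^{-d-\A(x)}$ (this is how $\A(x)$ enters), while the heaviest tail $|z|^{-d-\A_1}$ of all the frozen L\'evy kernels produces the $\A_1$‑regime for $|x-y|\ge1$. Identity \eqref{t1-1-1} is the usual Levi cancellation: differentiating $p=p_0+p_0\ast q$ in $t$ and applying $\LL_x$, the relation $q=q_1+q_1\ast q$ makes everything cancel, all differentiations under the integral being justified by the estimates. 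Non‑negativity of $p$ I would get by approximating $\A(\cdot)$ and $\K(\cdot,z)$ by smooth functions, for which classical constructions yield non‑negative fundamental solutions, then passing to the limit using the uniformity of the estimates (alternatively via a parabolic maximum principle). The lower bound \eqref{t1-1-4b} is first proved near the diagonal, where $p$ is a small perturbation of $p_0\gtrsim t^{-d/\A}$, and then propagated to all $(x,y)$ by a chaining argument once \eqref{t1-1-5} is in hand. The Chapman–Kolmogorov equation \eqref{t1-1-5} follows by uniqueness: for fixed $(t,y)$, both $s\mapsto\int p(s,x,z)p(t,z,y)\,dz$ and $s\mapsto p(s+t,x,y)$ solve \eqref{t1-1-1} in $x$ with the same value as $s\downarrow0$. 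Conservativeness \eqref{t1-1-6} follows from $\LL1=0$: $m(t,x):=\int p(t,x,y)\,dy$ is bounded and solves $\partial_t m=\LL m$ with $m(t,\cdot)\to1$ as $t\downarrow0$, so $m\equiv1$, the delicate points being boundedness of $m$ and the exchange of $\int dy$ with $\LL_x$ (handled via the pointwise bounds and integrability of the L\'evy kernels). Item (v) is then routine: $u_f(t,\cdot)\to f$ uniformly is an approximate‑identity argument from \eqref{t1-1-4}, \eqref{t1-1-4b}, \eqref{t1-1-6}; \eqref{t1-1-2}–\eqref{t1-1-2a} come from \eqref{t1-1-1} by differentiation under the integral; and \eqref{t1-1-5a}–\eqref{t1-1-5b} follow using that $C^2_{b,u}(\Rd)$ lies in the domain of $\LL$ together with the semigroup property. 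The H\"older estimate \eqref{t1-1-7} is established for $p_0$ (H\"older continuity in $x$ of the stable‑like kernel, the exponent $(\A(x)-\G)_+\wedge1$ again reflecting the logarithmic term) and transferred to $p_0\ast q$ by differencing inside the convolution; when $\tilde\B_0(x_0)>1-\A(x_0)$ the kernel $(\nabla_x p_0)\ast q$ still has $t$‑exponent above $-1$ — the natural threshold for admitting one further $x$‑derivative — whence differentiability of $p(t,\cdot,y)$ at $x_0$ and the bound \eqref{t1-1-7a}. The refined local bound \eqref{t1-1-4a} follows by splitting the $q_1$‑contributions according to the distance to $x_0$: on the ball where $\A\equiv\A(x_0)$ the $\A$‑part of $q_1$ vanishes (no logarithmic loss), while remote contributions are absorbed into the $|y-x_0|^{-d-\A_1}$ tail. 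Finally, uniqueness (vi): if $\tilde p$ is another jointly continuous, locally bounded function satisfying \eqref{t1-1-7} and \eqref{t1-1-2a}–\eqref{t1-1-3}, those are exactly the regularity hypotheses permitting Duhamel's formula $\tilde p=p_0+p_0\ast\tilde q$ with $\tilde q(t,z,y)=(\LL_z-\LL^{(y)}_z)\tilde p(t,\cdot,y)(z)$; then $\tilde q=q_1+q_1\ast\tilde q$, whose solution in the relevant kernel class is unique, so $\tilde q=q$ and $\tilde p=p$.

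I expect the main obstacle to be the error estimate on $q_1$, which is where the variable order $\A(\cdot)$ genuinely goes beyond the constant‑order theory of \cite{CK,CZ}: the parametrix error is now a difference of two non‑local operators of \emph{different} orders $\A(x)$ and $\A(y)$, and the factor $|z|^{\A(y)-\A(x)}-1\approx(\A(y)-\A(x))\log|z|$ forces a logarithmic loss near $z=0$. Designing the weighted classes $\rho^{y,\B}_{\G}$ so as to absorb this loss into an arbitrarily small power $t^{-\G}$ while keeping every $t$‑exponent above $-1$ — so that the series $\sum q_n$ converges, and so that, under the sharp threshold $\tilde\B_0(x_0)>1-\A(x_0)$, one extra $x$‑derivative remains admissible — is the chief technical burden. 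The localized estimates \eqref{t1-1-4a} and \eqref{t1-1-7a}, and the exchange of $\LL_x$ with $\int dy$ in the conservativeness argument, are the remaining points demanding more than routine care.
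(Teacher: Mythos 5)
Your construction is the same Levi parametrix scheme as the paper's (your $q_1$ is the paper's $q_0$, your kernel classes $\rho^{y,\B}_{\G}$ and convolution inequality are the paper's Section 2--3 machinery), and items (i) upper bounds, (iii)--(v), the H\"older/gradient estimates and \eqref{t1-1-4a} are sketched along essentially the paper's lines. The genuine gap is the off-diagonal lower bound in \eqref{t1-1-4b}. You propose to prove it near the diagonal by treating $p$ as a small perturbation of the frozen kernel and then to "propagate to all $(x,y)$ by a chaining argument once \eqref{t1-1-5} is in hand". Neither half of that reaches the stated bound. Analytically, for $|x-y|\gtrsim 1$ the correction term $\int_0^t\int p^z(t-s,x-z)q(s,z,y)\,dz\,ds$ is only controlled by $c\,t^{1-\G}|x-y|^{-d-\A_1}$, which can dominate the frozen-kernel term $p^y(t,x-y)\asymp t\,|x-y|^{-d-\A(y)}$ (and that term carries the exponent $\A(y)$, not the required $\A(x)$; for $|x-y|\ge 1$ these are not comparable), so the perturbation argument is confined to $|x-y|\lesssim t^{1/\A(x)}\vee t^{1/\A(y)}$. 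And chaining the near-diagonal bound through Chapman--Kolmogorov multiplies many small on-diagonal factors and yields decay that is super-polynomial in $|x-y|/t^{1/\A}$; it can extend the on-diagonal bound in \emph{time} (this is exactly how the paper goes from $t\le t_0$ to $t\le 1$), but it cannot produce the single-jump tail $c\,t/|x-y|^{d+\A(x)}$. The paper closes this by a probabilistic argument you omit: it first builds the strong Markov (Feller) process associated with $p$ via the martingale problem, derives its L\'evy system, proves an exit-time estimate ($\Pp_x(\tau_{B(x,A_0r)}\le r^{\A(x)})\le 1/2$), and then gets the off-diagonal bound by a one-big-jump computation with the L\'evy system before concluding with Chapman--Kolmogorov. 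Some substitute for this step (or another mechanism producing the polynomial tail with exponent $\A(x)$) is indispensable.

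A secondary point: your uniqueness argument for (vi) presumes that a competitor kernel $\wh p$ satisfies the pointwise equation $\partial_t \wh p(t,\cdot,y)=\LL\wh p(t,\cdot,y)$ so that $\wh q:=(\LL-\LL^y)\wh p$ can be formed and Duhamel's formula applied; but the hypotheses of (vi) only assert \eqref{t1-1-7} and \eqref{t1-1-2a}--\eqref{t1-1-3}, i.e.\ properties of $u_f(t,x)=\int \wh p(t,x,y)f(y)\,dy$ for $f\in C_{b,u}(\R^d)$, not of the kernel itself, so the integral-equation route is not available from the stated assumptions. The paper instead proves a non-local parabolic maximum principle (under precisely the H\"older condition \eqref{t1-1-7} and time-continuity of $\LL u$) and applies it to $u_f-\wh u_f$, which also delivers Chapman--Kolmogorov, conservativeness and non-negativity in one stroke; your maximum-principle alternative for non-negativity is consistent with this, but the uniqueness step should be rerouted through $u_f$ in the same way.
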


\begin{remark}\label{remark1}
\begin{itemize}
\item [(1)] From \eqref{t1-1-4} and \eqref{t1-1-4b}, we know that both upper and lower bounds of
near-diagonal
estimates for $p(t,x,y)$ enjoy the same order $t^{-{d/\A(x)}}$, which typically depends on $x$.
In particular, along with \eqref{t1-1-4b}, when $r_0=\infty$ in \eqref{t1-1-4a} (i.e.\ $\A(x)\equiv \A \in (0,2)$ for all $x\in \R^d$), we arrive at the two-sided heat kernel estimates obtained in
\cite{CZ}.
Furthermore, by \eqref{t1-1-7} and \eqref{t1-1-7a}, the regularity of $p(t,\cdot,y)$ at $x=x_0$ only depends on
$x_0$.

\item [(2)] Compared with these results yielded by  Duhamel's formula in \cite{Kol},  Theorem \ref{t1-1}(v) shows that the function $u_f(t,x):=\int_{\R^d}p(t,x,y)f(y)\,dy$ solves  the Cauchy problem for  $\LL$. In particular, by \eqref{t1-1-1} and \eqref{t1-1-3}, we have
\begin{equation*}
\begin{cases}
\frac{\partial }{\partial t} p(t,x,y) = \LL p(t,\cdot,y)(x)\\
\lim_{t \downarrow 0} p(t,x,y)=\delta_y(x),
\end{cases}
\end{equation*}
which means that $p(t,x,y)$ is the fundamental solution associated
with $\LL$. It is easy to deduce from \eqref{t1-1-7},  \eqref{t1-1-5}, \eqref{t1-1-6},
\eqref{t1-1-2} and \eqref{t1-1-5a} that there is  a conservative Feller process  having the strong Feller property
associated with the operator $\LL$;  cf.  Proposition \ref{p4-4} below.
Recall that a Markov process on $\R^d$ is said to be a Feller process if its transition semigroup is a strongly continuous semigroup in the Banach space of continuous functions that vanish at infinity equipped
with the uniform norm.
A Markov process is said to have the strong Feller property if its transition semigroup maps bounded
measurable functions to bounded continuous functions.
\end{itemize}
\end{remark}

\begin{theorem}\label{t1-2} Let $\B_0^{**}\in(0,\B_0]\cap(0,{\alpha_2}/{2}).$
For general $\K (x, z)$  satisfying  \eqref{e1-3} and \eqref{e1-2}, if
$
 ({\A_2}/{\A_1})-1<\B_0^{**}/{\A_2},
$
 there exists a
jointly continuous non-negative function
$p:(0,1]\times \R^d \times \R^d \rightarrow \R_+$ such that
all the conclusions in Theorem $\ref{t1-1}$ hold
except that the  upper bounds \eqref{t1-1-4}-\eqref{t1-1-4a} in {\rm (i)} and
H\"{o}lder regularity and gradient estimates \eqref{t1-1-7} and \eqref{t1-1-7a}
in {\rm (ii)} are to be replaced
by the following estimates.
\begin{enumerate}
\item[(i)]{\bf (Upper bounds)}\,\, For any $\gamma, c_0>0$, there exist constants $c_1 =c_1(\A,\K,c_0)>0$ and $c_2 =c_1(\A,\K,\G,c_0)>0$
such that for all $t\in (0,1]$ and $x,y\in\R^d$,
\begin{equation}\label{t1-1-9}
p(t,x,y)\le
\begin{cases}
{c_1}{t^{-{d/\A(x)}}},&\quad |x-y|< c_0t^{{1/\A(x)}}, \smallskip \\
\frac{c_1t^{2-({\A_2}/{\A_1})}}{|x-y|^{d+\A_2}} \wedge \frac{c_2t^{2-({\A_2}/{\A_1})-\gamma}}{|x-y|^{d+\A(x)}},&\quad
c_0t^{{1/\A(x)}}\le |x-y|< 1,  \smallskip \\
\frac{c_1t^{2-({\A_2}/{\A_1})}}{|x-y|^{d+\A_1}},&\quad |x-y|\ge 1.
\end{cases}
\end{equation}
If, in addition,  there are some $x_0\in\R^d$ and $r_0\in(0,\infty]$ such that $\alpha(z)=\alpha(x_0)$ for all $z \in B(x_0,r_0)$, then for any $c_0>0$ there is a positive
constant $c_3:=c_3(\A,\K,r_0,c_0)$ so that for every $t\in (0,1]$ and $y \in \R^d$,
\begin{equation}\label{t1-1-9a}
p(t,x_0,y)\le
\begin{cases}
{c_3}{t^{-{d/\A (x_0)}}},&\quad |y-x_0|< c_0t^{{1/\A(x_0)}}, \smallskip \\
\frac{c_3 t^{2-({\A_2}/{\A_1})}
}{|y-x_0|^{d+\A(x_0)}},& \quad
c_0t^{{1/\A(x_0)}}\le |y-x_0|< {r_0}/{2}, \smallskip \\
\frac{c_3 t^{2-({\A_2}/{\A_1})} }{|x-y_0|^{d+\A_1}},&\quad |y-x_0|\ge {r_0}/{2}.
\end{cases}
\end{equation}

\item[(ii)] {\bf (H\"{o}lder regularity and gradient estimates)}\,\,
For every
$\G \in (0, \A_1]$,
there exist constants
$c_5 =c_5(\A,\K,\G)$ and $R_1 =R_1(\A,\K,\G)>0$ such that for all $t \in (0,1]$ and $x,x',y \in \R^d$ with $|x-x'|\le R_1$,
\begin{equation}\label{t1-1-10}
\begin{split}
 \big|p(t,x,y)-p(t,x',y)\big|
&\le c_5|x-x'|^{(\A_1-\G)_+\wedge 1}
\big(\rho^{y,0}_{\gamma_1}(t,x-y)+ \rho^{y,0}_{\gamma_1}(t,x'-y)\big),
\end{split}
\end{equation}
where $\gamma_1:=1-({\A_2}/{\A_1})+({\G}/({2\A_2}))$.
Let $\B_0^*$ and $\tilde \B_0(x_0)$ be the same constants in
Theorem $\ref{t1-1}$ $(ii)$. If moreover $\tilde \B_0(x_0)-({\A_1\A_2}/{\A(x_0)})
\big(({\A_2}/{\A_1})-1\big)>1-\A_1$ for some
$x_0\in \R^d$, then for every fixed $t \in (0,1]$ and $y \in \R^d$, $p(t,\cdot,y)$ is differentiable
at $x=x_0$. In this case, for every $\G>0$, there exists a constant
$c_6:=c_6(\A,\K,\G, x_0)$ such that for all $ t\in (0,1]$ and $y\in \R^d$,
\begin{equation}\label{t5-1-2}
|\nabla p(t,\cdot,y)(x_0)|\le c_6
\rho^{x,0}_{1-(1/\A_1)+(\B_0^*/\A_2)-(\B_0^*/\A_1)-\G_2}(t,x_0-y),
\end{equation} where $\G_2=(\A_2/\A_1)-1+\G.$
\end{enumerate}
\end{theorem}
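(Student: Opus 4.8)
The plan is to run Levi's (parametrix) method, combining the scheme used in the proof of Theorem~\ref{t1-1} with the one from \cite{CZ}: Theorem~\ref{t1-1} treats variable order $\A(x)$ with $\K$ independent of $z$, while \cite{CZ} treats $z$-dependent $\K$ with constant order, and here both difficulties are present at once. For each $y\in\Rd$ one first fixes the frozen constant-order operator
\[
\LL^{(y)}f(x)=\int_{\Rd}\big(f(x+z)-f(x)-\langle\nabla f(x),z\rangle\I_{\{|z|\le1\}}\big)\frac{\K(y,z)}{|z|^{d+\A(y)}}\,dz ,
\]
whose heat kernel $p_0(t,x,y)$ exists and, by the constant-order theory recalled in the introduction (\cite{CK,CK1,CZ}), satisfies two-sided bounds of the form $p_0(t,x,y)\asymp t\big(t^{1/\A(y)}+|x-y|\big)^{-d-\A(y)}$ for $|x-y|\le1$, together with the corresponding $x$-gradient and $x$-Hölder estimates. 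One then looks for
\[
p(t,x,y)=p_0(t,x,y)+(p_0\ast q)(t,x,y),\qquad (p_0\ast q)(t,x,y)=\int_0^t\!\!\int_{\Rd}p_0(t-s,x,z)\,q(s,z,y)\,dz\,ds ,
\]
where $q$ solves the Volterra equation $q=q_0+q_0\ast q$ with $q_0(t,x,y):=\big(\LL-\LL^{(y)}\big)p_0(t,\cdot,y)(x)=\LL p_0(t,\cdot,y)(x)-\partial_t p_0(t,x,y)$. Writing $q=\sum_{n\ge0}q_n$ with $q_{n+1}=q_0\ast q_n$, the whole construction reduces to estimating $q_0$ and proving convergence of this series and of the defining convolutions.

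The genuinely new point relative to Theorem~\ref{t1-1} lies in the estimates for $q_0$ and for the iterated convolutions. Here $q_0$ carries two sources of error—the order variation $|\A(x)-\A(y)|\le c_1|x-y|^{\B_0}$ and the coefficient variation $|\K(x,z)-\K(y,z)|\le c_2|x-y|^{\B_0}$—and the $z$-dependence of $\K$ rules out the structural simplifications used in Theorem~\ref{t1-1}. Using \eqref{e1-3}, \eqref{e1-2} and the pointwise and gradient bounds on $p_0$, I would first establish an estimate for $q_0$ of the form $|q_0(t,x,y)|\lesssim t^{\B_0^{**}/\A_2-1}$ times a $\rho$-type spatial profile as in \eqref{e3-0} (the analogue of the freezing-error bound in \cite{CZ}), and then estimate the space-time convolutions $q_{n+1}=q_0\ast q_n$. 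In the spatial integrals defining these convolutions the scales $(t-s)^{1/\A(z)}$ and $s^{1/\A(y)}$ may be badly mismatched—$\A(z)$ can be as large as $\A_2$ while $\A(y)$ can be as small as $\A_1$—and tracking this mismatch through the Beta-function bookkeeping produces, at each step, an extra loss of a factor $t^{(\A_2/\A_1)-1}$ relative to the $z$-independent case. Thus each convolution with $q_0$ gains a net power of $t$ controlled below by $\B_0^{**}/\A_2-\big((\A_2/\A_1)-1\big)$, and the hypothesis $(\A_2/\A_1)-1<\B_0^{**}/\A_2$ (together with the restriction $\B_0^{**}<\A_2/2$) is precisely what makes this positive, so that $\sum_n q_n$ and the defining convolutions converge absolutely. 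The same scale mismatch accounts for the degraded global prefactor $t^{2-\A_2/\A_1}$ in the off-diagonal parts of \eqref{t1-1-9}--\eqref{t1-1-9a} and for the exponent $\gamma_1=1-(\A_2/\A_1)+\G/(2\A_2)$ appearing in \eqref{t1-1-10}, both inherited by the correction $p_0\ast q$, while the near-diagonal bound $t^{-d/\A(x)}$ still comes directly from $p_0$.

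Once $q$ and its bounds are in place, the remaining conclusions are obtained essentially as in Theorem~\ref{t1-1}. The upper bounds \eqref{t1-1-9} follow by estimating $p_0$ and $p_0\ast q$ separately with the degraded time powers; the refined bound \eqref{t1-1-9a} uses the additional hypothesis that $\A\equiv\A(x_0)$ on $B(x_0,r_0)$, which kills the order-variation part of $q_0$ on that ball. The Hölder estimate \eqref{t1-1-10} and the differentiability and gradient estimate \eqref{t5-1-2} are proved by differencing, resp.\ differentiating, $p_0$ and $p_0\ast q$ in the first space variable and re-summing, invoking the corresponding regularity of $p_0$; the condition $\tilde\B_0(x_0)-(\A_1\A_2/\A(x_0))\big((\A_2/\A_1)-1\big)>1-\A_1$ is the analogue of $\tilde\B_0(x_0)>1-\A(x_0)$ from Theorem~\ref{t1-1}(ii) corrected by the same $(\A_2/\A_1)-1$ loss. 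Finally the lower bound \eqref{t1-1-4b}, the Chapman--Kolmogorov equation \eqref{t1-1-5}, conservativeness \eqref{t1-1-6}, the strong-continuity and generator statements in (v), and the uniqueness in (vi) go through as for Theorem~\ref{t1-1}: the lower bound comes from $p_0$ dominating $p_0\ast q$ for small $t$ near the diagonal and is then propagated to all $x,y$ by chaining through \eqref{t1-1-5}, and (iii)--(vi) use only the two-sided bounds and regularity of $p$ already established. I expect the main obstacle to be exactly the convergence of the Levi series under the weakened integrability—controlling the iterated space-time convolutions and the propagation of the spatial profiles $\rho^{y,\beta}_\gamma$ when $q_0$'s time singularity is only $t^{\gamma-1}$ with $\gamma$ possibly close to $0$—together with the cancellations needed for the gradient estimate \eqref{t5-1-2}.
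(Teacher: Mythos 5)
Your overall scheme is the paper's: the same parametrix $p=p^y+p^y\ast q$, the same Volterra iteration $q=\sum_n q_n$, and the same bookkeeping in which the per-step gain $\B_0^{**}/\A_2-\big((\A_2/\A_1)-1\big)>0$ is exactly what the hypothesis buys. However, you mislocate the source of the degradation $t^{(\A_2/\A_1)-1}$, and this matters for carrying out the estimates. In the paper the loss sits entirely in the freezing error $q_0$ (Proposition \ref{l2-1}(2), estimate \eqref{l2-1-1}): when $\K$ depends on $z$, the frozen kernel $p^y$ does \emph{not} satisfy the off-diagonal derivative bounds \eqref{p1-1-1} (those are precisely what the hypothesis ``$\K$ independent of $z$'' supplies in Theorem \ref{t1-1}, cf.\ Remark \ref{rrr-000}(1)), so for $|x-y|$ bounded away from $0$ one must use the second-difference bound \eqref{p1-1-2}, and the small-$|z|$ integral produces the factor $t^{1-\overline{\A}(y;x)/\A(y)}\le t^{1-\A_2/\A_1}$. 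The convolution inequalities themselves (Lemma \ref{l3-2}, Corollary \ref{l3-3}) incur \emph{no} such loss: the mismatch between $\A(z)$ and $\A(y)$ is absorbed by the H\"older continuity of $\A$ at the price of bounded $\exp(C|\log t|t^{\B_0/\A_2})$-type corrections. So your appeal to ``the corresponding $x$-gradient estimates'' of $p_0$ from the constant-order theory is exactly the step that is unavailable here, and your claim that the Beta-function bookkeeping loses $t^{(\A_2/\A_1)-1}$ at each convolution is not what happens; your numbers happen to close because the two misstatements compensate, but a proof written this way would either silently use unavailable gradient bounds for $p^y$ or would need to prove a lossy convolution lemma that is never actually the issue.

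A second, genuine gap is the lower bound \eqref{t1-1-4b}, which is part of ``all the conclusions in Theorem \ref{t1-1} hold''. Your plan — near-diagonal domination of $p^y$ over the correction, then ``propagation to all $x,y$ by chaining through \eqref{t1-1-5}'' — only yields the on-diagonal bound (this is Proposition \ref{p4-3}); chaining Chapman--Kolmogorov from near-diagonal estimates gives exponentially small off-diagonal bounds for jump-type kernels and cannot produce the polynomial tail $t/|x-y|^{d+\A(x)}$. The paper instead first uses the already-established properties of $p$ to build the associated Feller process (Proposition \ref{p4-4}), proves an exit-time estimate (Lemma \ref{l4-3}), and then runs a one-big-jump argument through the L\'evy system \eqref{p4-4-1} to get Proposition \ref{t4-4}. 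Some argument of this kind (or an analytic substitute for the single large jump) is needed; without it the off-diagonal lower bound does not follow from your outline.
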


\begin{remark}
\begin{enumerate}
\item[(1)] According to Theorem \ref{t1-2} above, when $\A(x)$ is not a constant function and $\K(x,z)$
depends on $z$,
we can construct heat kernel associated with the operator $\LL$ when
$({\A_2}/{\A_1})-1<\B_0^{**}/{\A_2}$, which indicates that
the oscillation of the index function $\A(x)$ could not be too large.
We note that in \cite{BK2}, the elliptic Harnack inequality for $\LL$ was established under similar assumptions on the index function $\A(x)$.
Note that by \eqref{t1-1-9a} and \eqref{t1-1-4b}, when $\A(x)\equiv \A \in (0,2)$ for all $x\in \R^d$, two-sided estimates of heat kernel in Theorem \ref{t1-2} are reduced
to those in \cite{CZ}.

\item[(2)] As mentioned in Remark \ref{remark1}(2), Theorem \ref{t1-2} indeed provides us some sufficient conditions
for the existence of a strong Markov process (in fact a Feller process having the strong Feller property)
associated with $\LL$
when $\K(x,z)$ depends on $z$. Note that, the existence of such
strong Markov process is required and assumed a prior in the proofs of
the H\"{o}lder continuity of harmonic functions in \cite{BK1} and
the elliptic Harnack inequality for $\LL$ in \cite{BK2}.

 \end{enumerate}
\end{remark}

We conclude this part with the following remarks on some possible extensions of Theorems \ref{t1-1} and \ref{t1-2}.

\begin{remark}\label{rrr-000}
\begin{itemize}
 \item[(1)] By carefully checking the arguments of Theorem \ref{t1-1}, one can see that Theorem \ref{t1-1} still holds true if the condition that $\K(x,z)$ is independent of $z$ is replaced  by the following two assumptions on $\kappa(x,z)$:
\begin{itemize}
\item [(i)] For every fixed $x \in \R^d$, $\K(x,\cdot):\R^d \rightarrow \R_+$
is a radial function, i.e., $\K(x,z)$ only depends on $|z|$.

\item [(ii)] Let $j(x,r)=\wt \K(x,r)r^{-d-\A(x)}$ for $x \in \R^d$ and $r>0$, where
$\wt \K(x,r):=\K(x,z)$ with $|z|=r$. For every $x \in \R^d$, $j(x,\cdot)$ is non-increasing and differentiable on $(0,\infty)$ such that the function $r\mapsto -\frac{1}{r}\frac{\partial j(x,r)}{\partial r}$ is non-increasing.
\end{itemize}

\item[(2)] Let $\wt \LL f(x)=\LL f(x)+\langle b(x), \nabla f(x)\rangle$, where $b:\R^d\rightarrow \R^d$ is bounded and uniformly
H\"older continuous.
It is possible to extend Theorems \ref{t1-1} and \ref{t1-2}  to $\wt \LL$ under the assumption that $\A_1>1$,
When $\A_1>1$, the gradient perturbation $\langle b(x), \nabla \rangle$ is of lower order than $\LL$.
We  refer the readers to
\cite{BJ, CD, CH, JS, MM, MM1,XZ}
on heat kernel estimates associated
with non-local operators under gradient perturbation.
For heat kernel estimates associated
with non-local operators under non-local perturbations, see \cite{CW, W, CHXZ}.

\item[(3)] By combining with the approach from \cite{CZ4},
it is possible to remove the symmetry assumption of $\kappa (x, z)$ in $z$.
But we will not pursue this extension  in this paper.

\end{itemize}
\end{remark}

\subsection{Idea of the proofs: Levi's method}
In this paper, we will apply the Levi's method to construct
the fundamental solution $p(t,x,y)$ for $\LL$. Some ideas of our approach are inspired by
those in \cite{CZ, Kol}.

We first introduce some notation which will be
frequently used in this article, and then briefly mention the ideas of our approach.
Due to the symmetry assumption  $\K(x,z)=\K(x,-z)$, it is easy to see that
\begin{equation}\label{e2-1}
\begin{split}
\LL f(x)&=\lim_{\e \downarrow 0}\frac{1}{2}\int_{\{|z|>\e\}}
\big(f(x+z)+f(x-z)-2f(x)\big)\frac{\K(x,z)}{|z|^{d+\alpha(x)}}\,dz.
\end{split}
\end{equation}
For simplicity, throughout this paper we write $\LL f(x)$ as
\begin{equation*}
\LL f(x)=\frac{1}{2}\int_{\R^d}
\big(f(x+z)+f(x-z)-2f(x)\big)\frac{\K(x,z)}{|z|^{d+\alpha(x)}}\,dz.
\end{equation*}
Note that the above integral is absolutely convergent for bounded $C^2$ functions.

For fixed $y \in \R^d$, define
\begin{equation}\label{e2-2}
\LL^y f(x):=\int_{\R^d}\big(f(x+z)-f(x)-\langle\nabla f(x), z \rangle\I_{\{|z|\le 1\}}\big)\frac{\K(y,z)}{|z|^{d+\alpha(y)}}\,dz.
\end{equation}
Then $ \LL^y$ is the generator of a pure jump symmetric  L\'{e}vy process
$X^y:=(X_t^y)_{t\ge 0}$ with  jump measure
$\nu^y(dz)=\frac{\K(y,z)}{|z|^{d+\alpha(y)}}\,dz$.
We denote the
fundamental solution for $ \LL^y$ by $p^y:(0,\infty)\times \R^d
\rightarrow \R_+$, which is just the transition density of the
process $X^y$. The fundamental solution $p^y$ satisfies
that
\begin{equation*}
\frac{\partial p^y(t,x)}{\partial t}= \LL^yp^y(t,\cdot)(x)
\quad \hbox{for every } (t,x)\in (0,1]\times \R^d.
\end{equation*}
We remark here that although the operator $ \LL^y$
is clearly well defined on $C_c^2(\R^d)$.
It is also pointwisely well defined for the function
$x\mapsto p^y(t, x)$;  see  the
estimates in \cite[Theorem 2.4]{CZ}.

Throughout the paper,  we define for $ f:\R^d\rightarrow \R$,
\begin{equation*}
\delta_f(x;z):=f(x+z)+f(x-z)-2f(x), \quad \ x,z\in \R^d.
\end{equation*}
\begin{equation*}
\delta_{p^y}(t,x;z):=p^y(t,x+z)+p^y(t,x-z)-2p^y(t,x),\quad t\in
(0,\infty),\ x,y,z\in \R^d.
\end{equation*}
Then, for every $x,y,w\in \R^d$,
\begin{equation}\label{e2-3}
 \LL^w p^y(t,\cdot)(x)=\frac{1}{2}\int_{\R^d}\delta_{p^y}(t,x;z)\frac{\K(w,z)}{|z|^{d+\alpha(w)}}\,dz.
\end{equation}

According to the Levi's method (cf. \cite[pp.\ 310--311]{Fi}), we
look for the fundamental solution to \eqref{t1-1-1} of the following
form:
\begin{equation}\label{eq2-1}
\begin{split}
& p(t,x,y)=p^y(t,x-y)+\int_0^t\int_{\R^d}p^z(t-s,x-z)q(s,z,y)\,dz\,ds,
\end{split}
\end{equation}
where $q(t,x,y)$ solves
\begin{equation}\label{eq2-2}
q(t,x,y)=q_0(t,x,y)+\int_0^t\int_{\R^d}q_0(t-s,x,z)q(s,z,y)\,dz\,ds
\end{equation}
with
$
q_0(t,x,y)=( \LL^x- \LL^y)p^y(t,\cdot)(x-y).
$
So the main task of the remainder of this paper is to solve equation \eqref{eq2-1},
show it is  indeed  the unique fundamental solution of $\LL$, and derive its various properties.
The next three sections are devoted to the estimates for $q_0(t,x,y)$, $q(t,x,y)$ and $p(t,x,y)$ respectively. The proofs of main results are presented in Section \ref{section5}.

We will mainly follow the approach of \cite{CZ}, where the jumping
kernel is of type $\kappa(x,z)/|z|^{d+\alpha}$ for some $\alpha\in (0,2)$.
However, due to the variable order nature of the operator $\LL$ given
by \eqref{e1-1}, there are many
new challenges and difficulties.
In order to obtain
good estimates
and regularity of heat
kernel for $\LL$
in terms of the local behavior of the index function $\alpha (x)$,
we need to  introduce the key function $\rho_{\G}^{y,\B}$
(see \eqref{e3-0}), which involves the variable order $\A (x)$.
This brings us a lot of
difficulties  from the beginning of applying the Levi's method.
In comparison to \cite[Section 2.1]{CZ}, we need take into account
the variable index function $\alpha(x)$ in
some key
convolution
inequalities, see Section \ref{section3.1} for more details.
In the derivation of explicit upper bounds for $q_0(t,x,y)$ in  Proposition
\ref{l2-1}, we need consider the variation of both $\kappa(x,z)$
and $\alpha(x)$ in dealing with the difference between
$ \LL^xp^y(t,\cdot)$ and $ \LL^yp^y(t,\cdot)$. These causes a lot of complications
in proofs for  the crucial Proposition \ref{p3-1}, Proposition \ref{t4-2}
and Proposition \ref{t5-1}.

\medskip

\noindent {\bf Notation}\quad For any $a,b\in \R_+$, $a\wedge b:=\min\{a,b\}$ and $a\vee b:=\max \{a,b\}.$ For every measurable function $f,g:(0,1]\times\R^d\times \R^d \rightarrow \R_+$, the notation $f \asymp g$ means that there exists a
constant $1\le c_0<\infty$ such that $c_0^{-1} g(t,x,y)\le f(t,x,y)\le c_0 g(t,x,y)$ for every $(t,x,y) \in (0,1]\times\R^d\times \R^d $,  and the notation $f \p g$ (resp.\ $f \q g$)  means that there exists a
constant $0<c_1<\infty$ such that $f(t,x,y)\le c_1 g(t,x,y)$ (resp.\ $f(t,x,y)\ge c_1g(t,x,y)$) for every $(t,x,y) \in (0,1]\times\R^d\times \R^d $.

\medskip

\section{Estimates for $q_0(t,x,y)$}\label{section2}

\subsection{Preliminary estimates}
For each fixed $y \in \R^d$, $\B \in [0,\infty)$, $\G\in \R$ and $R \in (0,\infty)$, we
define a function
$\rho_{\G,R}^{y,\beta}:(0,1]\times \R^d \rightarrow (0,\infty)$ as follows
\begin{equation*}
\rho_{\G,R}^{y,\beta}(t,x):=t^{\G}\big(|x|^{\beta}\wedge 1\big)
\begin{cases}
 \big(t^{{1/\A(y)}}+|x|\big)^{-(d+\A(y))},&  |x|\le R,\\
 |x|^{-(d+\A_1)},& |x|> R,
\end{cases}
\end{equation*} where $\alpha(x)$ is the index function in \eqref{e1-1} and $\alpha_1$
is the lower bound of $\alpha (x)$  in \eqref{e1-3}.
It is easy to see that for any $0<R_1\le R_2<\infty$, there is a constant $c>1$ such that
$c^{-1}\rho_{\G,R_1}^{y,\beta}\le \rho^{y,\beta}_{\G,R_2}\le
c\rho_{\G,R_1}^{y,\beta}$ for all $y\in \R^d$ and $\beta\in[0,\infty)$.
Therefore, without loss of generality,
we may and do assume that $R=1$ in the definition of
$\rho_{\G,R}^{y,\beta},$ and we write
$\rho_{\G,1}^{y,\beta}$ as $\rho_{\G}^{y,\beta}$, which is exactly the one defined by
\eqref{e3-0}.

For every $y \in \R^d$ and $s>0$, define
$
\overline{\A}(y;s):=\sup\{\A(z): |z-y|\le s\}$ and $  \underline{\A}(y;s):=\inf\{\A(z): |z-y|\le s\}.
$ For simplicity, we write $\overline
\A(y;|x-y|)$ and $\underline{\A}(y;|x-y|)$ as $\overline{\A}(y;x)$ and $\underline{\A}(y;x)$, respectively.

We begin with the following simple lemma.
\begin{lemma}\label{l3-1}
For every $\beta \in [0,\A_2)$, $t \in (0,1]$ and $x \in \R^d$,
\begin{equation}\label{l3-1-1}
\int_{\R^d}\rho_{0}^{z,\beta}(t,x-z)\,dz\p \big(t^{(\beta/\A(x))-1}\vee 1\big)\big(1+|\log t|\I_{\{\B=\A(x)\}}\big)
\p t^{(\B/\A_2)-1}\vee 1.
\end{equation}\end{lemma}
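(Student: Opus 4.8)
The plan is to split the integral $\int_{\R^d}\rho_0^{z,\beta}(t,x-z)\,dz$ according to the three regimes that define $\rho_\G^{y,\beta}$: the near-diagonal region $|x-z|\le t^{1/\A(z)}$, the intermediate region $t^{1/\A(z)}\le |x-z|\le 1$, and the far region $|x-z|>1$. Because $\A(z)$ takes values in $[\A_1,\A_2]$, one has $t^{1/\A_1}\le t^{1/\A(z)}\le t^{1/\A_2}$ for $t\le 1$, so each of these regions can be controlled by a region depending only on the fixed constants $\A_1,\A_2$ and, where the local order $\A(x)$ is relevant, on a small ball where $\A(z)$ is close to $\A(x)$. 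Concretely, I would first observe that in the intermediate and far regions the integrand is dominated, up to a constant, by $\rho_0^{x,\beta}(t,x-z)$ itself (using $|\A(z)-\A(x)|\le c_1$ and $t^{1/\A(z)}\asymp t^{1/\A(x)}$ only matters near the diagonal), so it suffices to bound $\int \rho_0^{x,\beta}(t,x-y)\,dy$ — a genuinely radial integral in $y$ with the fixed base point $x$.

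Then I would compute that radial integral by polar coordinates. Writing $r=|y|$, the contribution of $r\le t^{1/\A(x)}$ is of order $\int_0^{t^{1/\A(x)}} r^{\beta}\, t^{-d/\A(x)}\, r^{d-1}\,dr \asymp t^{(\beta-d)/\A(x)+d/\A(x)}\cdot t^{0}= t^{\beta/\A(x)}\cdot t^{-d/\A(x)}\cdot t^{d/\A(x)}$, i.e. of order $t^{\beta/\A(x)}$; the contribution of $t^{1/\A(x)}\le r\le 1$ is $\int_{t^{1/\A(x)}}^1 r^{\beta} r^{-d-\A(x)} r^{d-1}\,dr = \int_{t^{1/\A(x)}}^1 r^{\beta-\A(x)-1}\,dr$, which is $\asymp t^{(\beta-\A(x))/\A(x)}=t^{\beta/\A(x)-1}$ when $\beta<\A(x)$, is $\asymp |\log t|$ when $\beta=\A(x)$, and is $\asymp 1$ when $\beta>\A(x)$; and the contribution of $r>1$ is $\int_1^\infty r^{\beta-\A_1-1}\,dr$, which is a finite constant provided $\beta<\A_1\le\A_2$, matching the hypothesis $\beta\in[0,\A_2)$ in the borderline-safe direction (the far part integrability is automatic since $\beta<\A_2$ forces... actually one needs $\beta<\A_1$ here, so I would note that for $\beta\in[\A_1,\A_2)$ the far integral is absorbed by restricting attention to $t$ small and using the factor $|x|^\beta\wedge 1$, which caps the power; more carefully, the $|x|^\beta\wedge 1$ means the far integrand is $r^{-d-\A_1}$, giving a universal constant regardless of $\beta$). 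Collecting the three contributions gives the middle bound $(t^{\beta/\A(x)-1}\vee 1)(1+|\log t|\,\I_{\{\beta=\A(x)\}})$.

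For the final inequality $(t^{\beta/\A(x)-1}\vee 1)(1+|\log t|\,\I_{\{\beta=\A(x)\}}) \p t^{\beta/\A_2-1}\vee 1$, I would use that for $t\in(0,1]$ the map $a\mapsto t^{\beta/a-1}$ is monotone, so $t^{\beta/\A(x)-1}\le t^{\beta/\A_2-1}$ when $t\le 1$ (since $\beta/\A(x)\ge \beta/\A_2$ and $t\le 1$), and then absorb the logarithmic factor: when $\beta=\A(x)$ the claimed bound is $t^{\A(x)/\A_2-1}\vee 1$ with exponent $\A(x)/\A_2-1<0$ strictly (as $\A(x)\le\A_2$, and if $\A(x)=\A_2$ then $\beta=\A_2$, excluded by $\beta<\A_2$), so $t^{\A(x)/\A_2-1}\to\infty$ like a strict negative power, which dominates $|\log t|$ as $t\downarrow 0$, while for $t$ bounded away from $0$ everything is a constant. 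I expect the main obstacle to be the bookkeeping at the borderline case $\beta=\A(x)$ and in the far region when $\beta\in[\A_1,\A_2)$: one must be careful that the truncation $|x|^\beta\wedge 1$ is what rescues integrability at infinity, and that the logarithmic blow-up at $\beta=\A(x)$ is genuinely swallowed by the strictly negative power $t^{\beta/\A_2-1}$ precisely because $\beta<\A_2$ is strict. Everything else is a routine polar-coordinate computation together with the comparison $t^{1/\A(z)}\asymp t^{1/\A(x)}$ on a neighbourhood, justified by \eqref{e1-3}.
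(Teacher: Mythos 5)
Your overall route is the paper's route: replace the $z$-dependent index by $\A(x)$, reduce to the radial integral $\int\rho_0^{x,\beta}(t,x-y)\,dy$, split into near/intermediate/far regions, get the log at the borderline $\beta=\A(x)$ and a universal constant in the far region from the truncation $|x|^\beta\wedge 1$, and absorb the log into $t^{\beta/\A_2-1}$ using that the indicator can only fire when $\A(x)=\beta<\A_2$. That part, including the second inequality, is fine.

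Two points need repair. First, the key domination $\rho_0^{z,\beta}(t,x-z)\p\rho_0^{x,\beta}(t,x-z)$ cannot be justified by ``$|\A(z)-\A(x)|\le c_1$'' alone: writing $|x-z|^{-\A(z)}=|x-z|^{-\A(x)}|x-z|^{\A(x)-\A(z)}$, mere boundedness of the oscillation lets the factor $|x-z|^{\A(x)-\A(z)}$ grow like $t^{-(\A_2-\A_1)/\A_1}$ on the region $|x-z|\ge t^{1/\A(z)}$, which would ruin the stated bound. What makes it work is the H\"older continuity in \eqref{e1-3}: $|\A(x)-\A(z)|\le c|x-z|^{\beta_0}$ gives $|x-z|^{-|\A(x)-\A(z)|}\le\exp\big(c|\log|x-z||\,|x-z|^{\beta_0}\big)\le C$ for $|x-z|\le 1$ (this is exactly the paper's estimate \eqref{l3-1-1a}); and in the near region you additionally need that $|x-z|\le t^{1/\A(z)}\le t^{1/\A_2}$, so that $|\A(z)-\A(x)|\p t^{\beta_0/\A_2}$ and hence $t^{-d/\A(z)}\le C\,t^{-d/\A(x)}$ uniformly (the paper's $\underline{\A}(x;t^{1/\A_2})$ step). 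Your closing appeal to \eqref{e1-3} is the right ingredient, but it must be invoked in both of these places, not only for $t^{1/\A(z)}\asymp t^{1/\A(x)}$ near the diagonal. Second, a small slip in the polar computation: on $\{|x-z|\le t^{1/\A(x)}\}$ the kernel $(t^{1/\A(x)}+|x-z|)^{-(d+\A(x))}$ has size $t^{-1-d/\A(x)}$, not $t^{-d/\A(x)}$, so the near contribution is of order $t^{\beta/\A(x)-1}$ rather than $t^{\beta/\A(x)}$; this does not affect the conclusion (it is exactly of the claimed order), but the arithmetic as written undercounts it. The far-region confusion you resolve yourself correctly: for $|x-z|>1$ the integrand is $|x-z|^{-d-\A_1}$ and gives a constant independent of $\beta$.
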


\begin{proof} Since $\beta\in (0,\A_2)$, it suffices to prove the first inequality in \eqref{l3-1-1}.

For every $x,z\in \R^d$ with $|x-z|\le 1$,
\begin{equation}\label{l3-1-1a}
\begin{split}
|x-z|^{-\A(z)}&=|x-z|^{-\A(x)}|x-z|^{\A(x)-\A(z)}\p |x-z|^{-\A(x)}\cdot \exp\left(\left|\log|x-z|\right|\left|\A(x)-\A(z)\right|\right)\\
&\p |x-z|^{-\A(x)}\cdot \exp\left(C\left|\log|x-z|\right|\cdot |x-z|^{\B_0}\right)
\p |x-z|^{-\A(x)},
\end{split}
\end{equation}
where in the last inequality we have used
$$\sup_{z\in \R^d: |z|\le 1}\exp\big(C|\log
|z||\cdot|z|^{\beta_0}\big)<\infty.$$
Then, for all $t\in (0,1]$ and $x,z\in \R^d$,
\begin{align*}
\frac{1}{(t^{{1}/{\A(z)}}+|x-z|)^{d+\A(z)}}& \p
\begin{cases}
|x-z|^{-d-\A(z)},& |x-z|\le 1,\\
t^{-1-({d/\underline{\A}(x; t^{1/\A_2})})},&  |x-z|\le t^{{1/\A(x)}},
\end{cases}
\\
&\p
\begin{cases}
|x-z|^{-d-\A(x)},& |x-z|\le 1,\\
t^{-1-({d/\underline{\A}(x; t^{1/\A_2})})},&  |x-z|\le t^{{1/\A(x)}}.
\end{cases}
\end{align*}
Therefore, by the definition of $\rho_{0}^{z,\beta}$ (see \eqref{e3-0}), for all $t\in (0,1]$ and $x\in \R^d$, we have
\begin{align*}
\int_{\R^d}\rho_{0}^{z,\beta}(t,x-z)\,dz \p&
\int_{\{|z-x|> 1\}}\frac{1}{|x-z|^{d+\A_1}}\,dz +
\int_{\{t^{{1/\A(x)}}<|z-x|\le 1\}}\frac{|x-z|^{\beta}}{|x-z|^{d+\A(x)}}\,dz\\
&+t^{-1-({d/\underline{\A}(x; t^{1/\A_2})})}\int_{\{|z-x|\le t^{1/\A(x)}\}}
|x-z|^{\beta}\,dz\\
=&:J_1+J_2+J_3.
\end{align*}

It is easy to verify that $J_1\p 1$  and $J_2\p \big(t^{(\beta-\A(x))/{\A(x)}}\vee 1\big)
\big(1+|\log t|\I_{\{\B=\A(x)\}}\big)$, due to the assumption that $\beta\in [0,\A_2)$.
At the same time,
\begin{align*}
J_3&\p t^{-1-({d/\underline{\A}(x; t^{1/\A_2}}))}\cdot t^{(\beta+d)/\A(x)} \p t^{(\beta-\A(x))/\A(x)}\exp\left(d|\log t| \frac{\A(x)-\underline{\A}(x;t^{{1/\A_2}}) }{\A(x)\underline{\A}(x;t^{{1/\A_2}})}\right) \\
&\p t^{(\beta-\A(x))/\A(x)}\exp\big(c|\log t| t^{{\B_0}/{\A_2}}\big)\p t^{{(\beta-\A(x))}/{\A(x)}},
\end{align*}
where in the forth inequality we have used
$$
\frac{\A(x)-\underline{\A}(x;t^{{1/\A_2}}) }{\A(x)\underline{\A}(x;t^{{1/\A_2}})}\p t^{{\B_0}/{\A_2}},
$$
 thanks to \eqref{e1-3}.
Combining all the estimates above, we get the first inequality in \eqref{l3-1-1}. The proof is complete.
\end{proof}

\begin{remark} By the proof of \cite[Lemma 2.1(i)]{CZ}, we can obtain that for every $\beta \in [0,\A_2)$, $t \in (0,1]$ and $x \in \R^d$,
\begin{equation}\label{l3-1-1-00}
\int_{\R^d}\rho_{0}^{x,\beta}(t,x-z)\,dz\p \big(t^{({\beta}/{\A(x)})-1}\vee 1\big)\big(1+|\log t|\I_{\{\B=\A(x)\}}\big)\p t^{({\beta}/{\A_2})-1}\vee 1.
\end{equation}
Note that although the definition of $\rho^{x,\B}_{0}(t,x)$ here is a little
different from that in \cite{CZ} when $|x|>1$, the proof of
\cite[Lemma 2.1]{CZ}  still works  for the first inequality in \eqref{l3-1-1-00}. We emphasize that in the present setting we
need  estimate \eqref{l3-1-1}, where in the integrand
$\rho^{z,\B}_0$ the index $\A(z)$ depends on $z$.
\end{remark}

\begin{lemma}\label{l1-1}
For every $t \in (0,1]$ and $x,y,w\in \R^d$, define
\begin{equation}\label{l1-1-0}
\begin{split}
I_1(t,x,y,w):=&\int_{\{|z|\le 1\}}\big[\big(t^{-{2}/{\A(y)}}|z|^2\big)\wedge 1\big]
\rho^{y,0}_1(t,w
+
z)\frac{1+|\log|z||}{|z|^{d+\overline{\A}(y;x)}}\,dz,\\
I_2(t,x,y,w):=&\int_{\{|z|>1\}}\rho^{y,0}_1(t,w
+
z)\frac{1+|\log|z||}{|z|^{d+\underline{\A}(y;x)}}\,dz,\\
I_3(t,x,y):=&\int_{\R^d}\big[\big(t^{-{2}/{\A(y)}}|z|^2\big)\wedge 1\big]  \left(
\frac{1+|\log|z||}{|z|^{d+\overline{\A}(y;x)}}\I_{\{|z|\le 1\}}
+\frac{1+|\log|z||}{|z|^{d+\underline{\A}(y;x)}}\I_{\{|z|> 1\}}\right)\,dz.
\end{split}
\end{equation}
Then there exists a constant $c_1:=c_1(\A,\K)$ such that for every
$x,y,w \in \R^d$ and $t \in (0,1]$,
\begin{equation}\label{l1-1-1}
\begin{split}
 I_1(t,x,y,w) \le
\begin{cases}
c_1(1+|\log t|)t^{1-({\overline{\A}(y;x)}/{\A(y)})}\rho^{y,0}_0(t,w),\ & |w|\le {1}/{2},\\
c_1\big[1+|\log|w||+(1+|\log t|)t^{1-({\overline{\A}(y;x)}/{\A(y)})}\big]\rho^{y,0}_0(t,w),\ &
|w|> {1}/{2};
\end{cases}\end{split}
\end{equation}
\begin{equation}\label{l1-1-2}
I_2(t,x,y,w)\le
\begin{cases}
c_1\rho^{y,0}_1(t,w),\ & |w|\le {1}/{2},\\
c_1\big(1+|\log|w||\big)\rho^{y,0}_0(t,w),\ & |w|>{1}/{2};
\end{cases}
\end{equation} and
\begin{equation}\label{l1-1-3}
I_3(t,x,y)\le
c_1(1+|\log t|)t^{-({\overline{\A}(y;x)}/{\A(y)})}.
\end{equation}
\end{lemma}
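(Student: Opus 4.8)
The plan is to read all three integrals as convolution-type bounds: in each case one integrates the ``target'' peak $\rho^{y,0}_1(t,\cdot)$ (or a constant, for $I_3$) against a truncated L\'evy-type kernel carrying an extra logarithmic weight, and the estimates for $I_1,I_2$ will be reduced to the kernel-only computation needed for $I_3$. The only analytic inputs are: \emph{(a)} the elementary one-dimensional estimates, valid for $0<\A<2$ and $r_0\in(0,1]$,
\[
\int_0^{r_0}(1+|\log r|)r^{1-\A}\,dr\p r_0^{2-\A}(1+|\log r_0|),\qquad
\int_{r_0}^{1}(1+|\log r|)r^{-1-\A}\,dr\p r_0^{-\A}(1+|\log r_0|),
\]
together with $\int_1^\infty(1+\log r)r^{-1-\A_1}\,dr<\infty$; \emph{(b)} the bound $\int_{\R^d}\rho^{y,0}_1(t,u)\,du\p 1$, a direct polar-coordinate computation; \emph{(c)} the ``almost-monotonicity'' of $u\mapsto\rho^{y,0}_\G(t,u)$, namely $|u|\ge|v|/2$ implies $\rho^{y,0}_\G(t,u)\p\rho^{y,0}_\G(t,v)$ with a constant depending only on $d,\A_1,\A_2$, together with $\rho^{y,0}_\G(t,u)\le\rho^{y,0}_\G(t,0)\asymp t^{\G-(d+\A(y))/\A(y)}$; and \emph{(d)} the elementary monotonicity of $r\mapsto r^a$ and $r\mapsto|\log r|$ which, since $\A(y)\le\overline{\A}(y;x)$ and $\underline{\A}(y;x)\le\A(y)$ always hold, lets one convert powers of $t^{1/\A(y)}$ carrying the exponent $\overline{\A}(y;x)$ into the form written in \eqref{l1-1-1}--\eqref{l1-1-3}. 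In contrast to Lemma \ref{l3-1}, no modulus-of-continuity hypothesis on $\A$ is needed here, since the relevant exponents already appear separated in the statement.

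For $I_3$ I would pass to polar coordinates and split $\{|z|\le1\}$ at the radius $t^{1/\A(y)}$. On $\{|z|\le t^{1/\A(y)}\}$ the cutoff is $t^{-2/\A(y)}|z|^2$, and the first estimate in \emph{(a)} (applicable since $\overline{\A}(y;x)\le\A_2<2$) gives $t^{-2/\A(y)}(t^{1/\A(y)})^{2-\overline{\A}(y;x)}(1+|\log t|)\asymp(1+|\log t|)t^{-\overline{\A}(y;x)/\A(y)}$; on $\{t^{1/\A(y)}<|z|\le1\}$ the cutoff is $1$ and the second estimate in \emph{(a)} gives the same order; on $\{|z|>1\}$ the cutoff is $1$, and $\underline{\A}(y;x)\ge\A_1>0$ makes the contribution a finite constant, dominated by $(1+|\log t|)t^{-\overline{\A}(y;x)/\A(y)}\ge1$. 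Summing gives \eqref{l1-1-3}.

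For $I_1$ (and $I_2$ by the same scheme) I would use the covering $\{|z|\le1\}\subseteq\{|w+z|\ge|w|/2\}\cup\{|z|\ge|w|/2\}$. On $\{|w+z|\ge|w|/2\}$, property \emph{(c)} gives $\rho^{y,0}_1(t,w+z)\p\rho^{y,0}_1(t,w)$; pulling this factor out leaves exactly the kernel integral already bounded for $I_3$, so that piece is $\p(1+|\log t|)t^{-\overline{\A}(y;x)/\A(y)}\rho^{y,0}_1(t,w)=(1+|\log t|)t^{1-\overline{\A}(y;x)/\A(y)}\rho^{y,0}_0(t,w)$, uniformly in $|w|$. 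On $\{|z|\ge|w|/2,\ |z|\le1\}$ (nonempty only when $|w|\le2$) the factor $1+|\log|z||$ is maximised at $|z|=|w|/2$ and $|z|^{-d-\overline{\A}(y;x)}\le(|w|/2)^{-d-\overline{\A}(y;x)}$, so factoring out $(1+|\log|w||)|w|^{-d-\overline{\A}(y;x)}$ and using \emph{(b)} bounds this piece by $\p(1+|\log|w||)|w|^{-d-\overline{\A}(y;x)}$; it then remains to dominate this quantity by the right-hand side of \eqref{l1-1-1}. When $1/2<|w|\le2$ one has $\rho^{y,0}_0(t,w)\asymp|w|^{-d-\overline{\A}(y;x)}\asymp1$ and the bound is immediate (this produces the $(1+|\log|w||)\rho^{y,0}_0(t,w)$ term), while for $|w|>2$ only the first region occurs. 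When $|w|\le1/2$ I would split according to whether $|w|\ge t^{1/\A(y)}$ or not: if $|w|\ge t^{1/\A(y)}$ then by \emph{(d)}, $|w|^{\A(y)-\overline{\A}(y;x)}\le t^{1-\overline{\A}(y;x)/\A(y)}$ and $1+|\log|w||\le1+\A_1^{-1}|\log t|$, which yields the claim; if $|w|<t^{1/\A(y)}$ the crude bound above is too weak, and here one must return to the integral on $\{|z|\ge|w|/2\}$, split it at $|z|=t^{1/\A(y)}$, use the quadratic cutoff on $\{|z|\le t^{1/\A(y)}\}$ together with $\rho^{y,0}_1(t,w+z)\le\rho^{y,0}_1(t,0)$, and the estimates in \emph{(a)} then give $\p(1+|\log t|)(t^{1/\A(y)})^{-d-\overline{\A}(y;x)}\asymp(1+|\log t|)t^{1-\overline{\A}(y;x)/\A(y)}\rho^{y,0}_0(t,w)$. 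Combining the two regions gives \eqref{l1-1-1}. Estimate \eqref{l1-1-2} for $I_2$ follows the same two-region decomposition but is simpler: there is no cutoff to handle, convergence at infinity comes directly from $\underline{\A}(y;x)\ge\A_1$, and for $|w|\le1/2$ the outcome is $\rho^{y,0}_1(t,w)$ rather than $\rho^{y,0}_0(t,w)$ because $\rho^{y,0}_1(t,w)\asymp t$ there while the whole integral is $\p t$.

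The step needing the most care, and which I expect to be the main obstacle, is the small-$|w|$ regime $|w|<t^{1/\A(y)}$ in $I_1$: there the bare kernel $|z|^{-d-\overline{\A}(y;x)}$ is non-integrable at the origin, so one is forced to exploit the quadratic cutoff $(t^{-2/\A(y)}|z|^2)\wedge1$ precisely on the range $|z|\le t^{1/\A(y)}$ where it equals $t^{-2/\A(y)}|z|^2$, and at the same time to track the mismatch between the exponent $\overline{\A}(y;x)$ of the kernel and $\A(y)$ inside $\rho$, so that the final power of $t$ comes out as $1-\overline{\A}(y;x)/\A(y)$; keeping every constant uniform in $x,y,w$ while $\overline{\A}(y;x)$ and $\underline{\A}(y;x)$ themselves vary with $x,y$ is the bookkeeping burden throughout.
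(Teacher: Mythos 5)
Your proposal is correct, and it reaches \eqref{l1-1-1}--\eqref{l1-1-3} by a slightly different organization than the paper. The paper argues by a case analysis in $|w|$ (namely $|w|\le 2t^{1/\A(y)}$, $2t^{1/\A(y)}<|w|\le 1/2$, $|w|>1/2$), splitting the $z$-integral radially at $t^{1/\A(y)}$, $|w|/2$ and $1/4$ inside each case; you instead use the single covering $\{|z|\le1\}\subseteq\{|w+z|\ge|w|/2\}\cup\{|z|\ge|w|/2\}$, dispose of the first region uniformly in $w$ by the almost-monotonicity of $u\mapsto\rho^{y,0}_1(t,u)$ (pulling out $\rho^{y,0}_1(t,w)$ and reducing to the kernel-only bound you prove for $I_3$), and confine the case analysis ($|w|$ versus $t^{1/\A(y)}$ and versus $1/2$) to the remaining region $\{|z|\ge|w|/2\}$. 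The ingredients are the same as the paper's (bounding $\rho^{y,0}_1(t,w+z)$ by $\rho^{y,0}_1(t,w)$ when $|w+z|\gtrsim|w|$, the log-weighted kernel integrals with exponents $\overline\A(y;x)\le\A_2<2$ and $\underline\A(y;x)\ge\A_1>0$, the normalization $\int\rho^{y,0}_1(t,\cdot)\p1$ as in \eqref{l3-1-1-00}, and the conversion $(1+|\log|w||)\,|w|^{\A(y)-\overline\A(y;x)}\p(1+|\log t|)\,t^{1-\overline\A(y;x)/\A(y)}$ for $t^{1/\A(y)}\le|w|\le1/2$, which is exactly the paper's treatment of $I_{122}$), so what your rearrangement buys is mainly that the three $|w|$-cases of the paper collapse into one uniform estimate for the ``good'' region plus a short analysis of the singular region.

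Two small points of precision, neither a genuine gap. First, in the $I_2$ bound for $|w|\le1/2$ you write $\rho^{y,0}_1(t,w)\asymp t$; this is false as a two-sided comparison (it can be as large as $t^{-d/\A(y)}$), but only the lower bound $\rho^{y,0}_1(t,w)\q t$ is needed and that is correct. Second, your assertion that the whole of $I_2$ is $\p t$ when $|w|\le1/2$ does not follow from the generic two-region scheme (which on $\{|z|\ge|w|/2\}$ only yields $\p1$); it requires the one-line observation that $|z|>1$ and $|w|\le1/2$ force $|w+z|\ge1/2$, whence $\rho^{y,0}_1(t,w+z)\p t$ on the whole domain of $I_2$ and the log-weighted kernel is integrable there. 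With that line inserted (it plays the role of the paper's remark that $|w+z|\ge|w|$ in its Case (b)), your argument is complete and uniform in $x,y,w,t$.
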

\begin{proof}
We only prove \eqref{l1-1-1} and \eqref{l1-1-2}, since
the proof of \eqref{l1-1-3} is similar and more direct. We denote $I_1(t,x,y,w)$ and $I_2(t,x,y,w)$ by $I_1$ and $I_2$, respectively.
Write
\begin{align*}
I_1&=\int_{\{|z|\le t^{{1/\A(y)}}\}}
\big(t^{-{2}/{\A(y)}}|z|^2\big)\rho^{y,0}_1(t,w{+} z)\frac{1+|\log|z||}{|z|^{d+\overline{\A}(y;x)}}\,dz  +\int_{\{t^{{1/\A(y)}}<|z|\le 1\}}\rho^{y,0}_1(t,w{+} z)\frac{1+|\log|z||}{|z|^{d+\overline{\A}(y;x)}}\,dz\\
&=:I_{11}+I_{12}.
\end{align*}
We will divide the proof into the following three subcases.

{\bf Case (a): $0\le |w| \le 2t^{{1/\A(y)}}$.}

Since, by \eqref{e3-0}, \begin{equation}\label{e:ond} \rho^{y,0}_1(t,w{+} z)\p t^{-{d/\A(y)}}\quad \textrm{ for all } t\in(0,1]\textrm{ and }  y,z,w\in\R^d,\end{equation} we have
\begin{align*}
I_{11}&\p t^{-{(d+2)}/{\A(y)}}\int_{\{|z|\le t^{{1/\A(y)}}\}}
\frac{(1+|\log|z|)|z|^2}{|z|^{d+\overline{\A}(y;x)}}\,dz\\
&\p(1+|\log t|)t^{-{(d+\overline\A(y;x))}/{\A(y)}} \p(1+|\log t|)t^{1-({\overline{\A}(y;x)}/{\A(y)})}\rho^{y,0}_0(t,w),
\end{align*}
where in the last inequality we have used the fact that \begin{equation}\label{e:one1}t^{-{d/\A(y)}}\p \rho^{y,0}_1(t,w)
\quad \textrm{ when } |w|\le 2t^{{1/\A(y)}}.\end{equation}

Applying \eqref{e:ond} and \eqref{e:one1} to
$I_{12}$ and $I_2$ again, we get
\begin{align*}
I_{12}&\p t^{-{d/\A(y)}}\int_{\{t^{{1/\A(y)}}<|z|\le 1\}}
\frac{1+|\log |z||}{|z|^{d+\overline{\A}(y;x)}}\,dz \p(1+|\log t|)t^{-({d+\overline\A(y;x)})/{\A(y)}}\\
& \p(1+|\log t|)t^{1-({\overline
\A(y;x)}/{\A(y)})}\rho^{y,0}_0(t,w)
\end{align*} and
\begin{align*}
I_2&\p  t^{-{d/\A(y)}}\int_{\{|z|> 1\}} \frac{1+\log
|z|}{|z|^{d+\A_1}}\,dz\p \rho^{y,0}_1(t,w).
\end{align*}

{\bf Case (b): $2t^{{1/\A(y)}}<|w|\le {1}/{2}$.}

When $|w|>2t^{{1/\A(y)}}$ and $|z|\le t^{{1/\A(y)}}$,
$
|w{+} z|\ge |w|-{|w|}/{2}\ge {|w|}/{2},
$
which along with \eqref{e3-0} implies that
\begin{equation}\label{l2-1-7}
\begin{split}
\rho^{y,0}_0(t,w{+} z)&\p
\frac{1}{(t^{{1/\A(y)}}+|w{+} z|)^{d+\A(y)}}\p \frac{1}{(t^{{1/\A(y)}}+|w|)^{d+\A(y)}}.
\end{split}
\end{equation}
Therefore,
\begin{align*}
I_{11}&\p \frac{t^{1-{2}/{\A(y)}}}{(t^{{1/\A(y)}}+|w|)^{d+\A(y)}}
\int_{\{|z|\le t^{{1/\A(y)}}\}}\frac{(1+|\log |z||)|z|^2}{|z|^{d+\overline{\A}(y;x)}}\,dz\\
& \p (1+|\log t|)t^{1-({\overline\A(y;x)}/{\A(y)})}\frac{1}{(t^{{1/\A(y)}}+|w|)^{d+\A(y)}} \p (1+|\log t|)t^{1-({\overline
\A(y;x)}/{\A(y)})}\rho^{y,0}_0(t,w),
\end{align*}
where the last step is due to the fact that \begin{equation}\label{e:one-2}\frac{1}{(t^{{1/\A(y)}}+|w|)^{d+\A(y)}} \p \rho^{y,0}_0(t,w)\quad
\textrm{ when }|w|>2t^{{1/\A(y)}}.\end{equation}

On the other hand, we have the following decomposition for $I_{12}$:
\begin{align*}
I_{12}&=\int_{\{
t^{{1/\A(y)}}<|z|\le{|w|}/{2}\}}\rho^{y,0}_1(t,w{+} z)
\frac{1+|\log |z||}{|z|^{d+\overline{\A}(y;x)}}\,dz  + \int_{\{{|w|}/{2}\le |z|\le 1\}}\rho^{y,0}_1(t,w{+} z)
\frac{1+|\log |z||}{|z|^{d+\overline{\A}(y;x)}}\,dz\\
&=:I_{121}+I_{122}.
\end{align*}
If $|z|\le {|w|}/{2}$, then $|w{+} z|\ge {|w|}/{2}$,
and so \eqref{l2-1-7} still holds. Hence, we have
\begin{align*}
I_{121}&\p \frac{t}{(t^{{1/\A(y)}}+|w|)^{d+\A(y)}}
\int_{\{|z|>t^{{1/\A(y)}}\}}\frac{1+|\log |z||}{|z|^{d+\overline{\A}(y;x)}}\,dz\\
&\p (1+|\log t|)t^{-{\overline
\A(y;x)}/{\A(y)}}\frac{t}{(t^{{1/\A(y)}}+|w|)^{d+\A(y)}} \p (1+|\log t|)t^{1-({\overline
\A(y;x)}/{\A(y)})}\rho^{y,0}_0(t,w),
\end{align*} where the last step follows from \eqref{e:one-2}.
Meanwhile,
\begin{align*}
I_{122}&\p \frac{1+|\log|w||}{|w|^{d+\overline{\A}(y;x)}}\int_{\R^d}\rho^{y,0}_1(t,w{+} z)\,dz\p \frac{1+|\log|w||}{|w|^{d+\overline{\A}(y;x)}}\\
&\p \frac{(1+|\log|w||)|w|^{\A(y)-\overline{\A}(y;x)}}{(t^{{1/\A(y)}}+|w|)^{d+\A(y)}}\p (1+|\log t|)t^{1-({\overline{\A}(y;x)}/{\A(y)})}\rho^{y,0}_0(t,w),
\end{align*}
where the second inequality is due to \eqref{l3-1-1-00},
in the third inequality we have used the fact that $$|w|^{-d-\A(y)}\p
(t^{{1/\A(y)}}+|w|)^{-d-\A(y)}\quad \textrm{ when } 2t^{{1/\A(y)}}<|w|\le {1}/{2},$$ and the fourth
inequality follows from
$$ (1+|\log|w||)|w|^{\A(y)-\overline{\A}(y;x)}\p (1+|\log t|)t^{1-({\overline{\A}(y;x)}/{\A(y)})}
\quad\textrm{ when }2t^{{1/\A(y)}}<|w|\le {1}/{2}.$$
Combining with all the estimates above, we obtain
$$I_1\p (1+|\log t|)t^{1-({\overline{\A}(y;x)}/{\A(y)})}\rho^{y,0}_0(t,w).$$

Furthermore, note that if $|w|\le {1}/{2}$ and $|z|>1$, then
$
|w{+} z|\ge |z|-|w|\ge 2|w|-|w|=|w|.
$
Therefore, for every $y,z,w\in\R^d$ and $t\in(0,1]$ with $|z|>1$ and $2t^{{1/\A(y)}}\le |w|\le{1}/{2}$,
\begin{align*}
\rho^{y,0}_0(t,w{+} z)& \p
\begin{cases}
|w|^{-d-\A(y)}, \ &\ |w{+} z|\le 1,\\
|w|^{-d-\A_1},\ &\  |w{+} z|> 1
\end{cases}\\
&\p |w|^{-d-\A(y)},
\end{align*}
which implies that
\begin{align*}
I_2&\p \frac{t}{|w|^{d+\A(y)}}
\int_{\{|z|\ge 1\}} \frac{1+\log |z|}{|z|^{d+\A_1}}\,dz\p
\frac{t}{|w|^{d+\A(y)}}\p \rho^{y,0}_1(t,w).
\end{align*}

{\bf{Case (c): $|w|>{1}/{2}$}.}

By adjusting the constants properly, it is easy to verify that
\begin{align*}
I_1+I_2&\p \int_{\{|z|\le {1}/{4}\}}\big[\big(t^{-{2}/{\A(y)}}|z|^2\big)\wedge 1\big]
\rho^{y,0}_1(t,w{+} z)\frac{1+|\log|z||}{|z|^{d+\overline{\A}(y;x)}}\,dz +\int_{\{|z|>{1}/{4}\}}\rho^{y,0}_1(t,w{+} z)\frac{1+|\log|z||}{|z|^{d+\underline{\A}(y;x)}}\,dz\\
&=:J_1+J_2.
\end{align*}
Note that when $|w|>{1}/{2}$ and $|z|\le {1}/{4}$,
$|w{+} z|\ge |w|-|z|\ge {|w|}/{2}>{1}/{4}$, which implies
that
$$\rho^{y,0}_1(t,w{+} z)\p \frac{t}{|w|^{d+\A_1}}.$$ On the other hand,
\begin{align*}
&\int_{{\{|z|\le 1\}}}\big[\big(t^{-{2}/{\A(y)}}|z|^2\big)\wedge 1\big]
\frac{1+|\log|z||}{|z|^{d+\overline{\A}(y;x)}}\,dz\\
&\p \int_{{\{|z|\le t^{{1/\A(y)}}\}}}\big(t^{-{2}/{\A(y)}}|z|^2\big)
\frac{1+|\log|z||}{|z|^{d+\overline{\A}(y;x)}}\,dz+\int_{\{t^{{1/\A(y)}}<|z|\le 1\}}\frac{1+|\log|z||}{|z|^{d+\overline{\A}(y;x)}}\,dz\\
&\p (1+|\log t|)t^{-{\overline{\A}(y;x)}/{\A(y)}}.
\end{align*}
Therefore, we obtain
\begin{align*}
J_1&\p \frac{t}{|w|^{d+\A_1}} \int_{{\{|z|\le {1}/{4}\}}}\big[\big(t^{-{2}/{\A(y)}}|z|^2\big)\wedge 1\big]
\frac{1+|\log|z||}{|z|^{d+\overline{\A}(y;x)}}\,dz\\
&\p \frac{\big(1+|\log t|\big)t^{1-({\overline{\A}(y;x)}/{\A(y)})}}{|w|^{d+\A_1}}\p \big(1+|\log t|\big)t^{1-({\overline{\A}(y;x)}/{\A(y)})}\rho^{y,0}_0(t,w).
\end{align*}
Meanwhile,
\begin{align*}
J_2&\p \int_{\{|z|>{|w|}/{2}\}}\!\!\rho^{y,0}_1(t,w{+} z)\frac{1+|\log|z||}{|z|^{d+\underline{\A}(y;x)}}\,dz +\!
\int_{\{{1}/{4}<|z|<{|w|}/{2}\}}\!\!\rho^{y,0}_1(t,w{+} z)\frac{1+|\log|z||}{|z|^{d+\underline{\A}(y;x)}}\,dz\\
&\p
\frac{1+|\log|w||}{|w|^{d+\underline{\A}(y;x)}} \int_{\{|z|>{|w|}/{2}\}}\!\!
\rho^{y,0}_1(t,w{+} z)\,dz +\!
\frac{t}{|w|^{d+\A_1}}\int_{\{|z|>{1}/{4}\}}\!\!\frac{1+|\log|z||}{|z|^{d+\A_1}}\,dz\\
&\p \frac{1+|\log |w||}{|w|^{d+\A_1}}\p (1+|\log |w||)\rho^{y,0}_0(t,w),
\end{align*}
where the second step above follows from  the following property
\begin{equation*}
\rho^{y,0}_0(t,w{+} z)\p |w|^{-d-\A_1}\quad \textrm{ when }
|z|<{|w|}/{2}\textrm{ and }\ |w|>{1}/{2},
\end{equation*}
and the third step is due to \eqref{l3-1-1-00}.
Hence, for all $w\in \R^d$ with $|w|>{1}/{2}$,
$$ I_1\p J_1+J_2\p \big[1+|\log |w||+(1+|\log t|)t^{-{\overline{\A}(y;x)}/{\A(y)}}\big]\rho^{y,0}_0(t,w)$$ and
$$ I_2\p J_2\p  (1+|\log |w||)\rho^{y,0}_0(t,w).
$$
Combining all the three cases together,  we finish the proof.
\end{proof}

\begin{remark}\label{r-1}
Following the arguments of \eqref{l1-1-1} and \eqref{l1-1-2} for case (c) above, we can get the following
 for every $c_0>0$,
$t \in (0,1]$ and $x,y,w \in \R^d$ with $|w|>2c_0$:
\begin{itemize}
\item[(i)] \begin{equation}\label{r1-1-1a}
\int_{\{|z|\le c_0\}}\big[\big(t^{-{2}/{\A(y)}} |z|^2\big)\wedge 1\big]
\big(
\rho_1^{y,0}(t,w{+} z)
+\rho_1^{y,0}(t,w)\big)\frac{1}{|z|^{d+\A_2}}\,dz\p \frac{t^{1-\A_2/\A_1}}{|w|^{d+\A_1}}
\end{equation} and
\begin{equation}\label{r1-1-2a}
\begin{split}
&\int_{\{|z|\le c_0\}}\big[\big(t^{-{2}/{\A(y)}} |z|^2\big)\wedge 1\big]
\big(
\rho_1^{y,0}(t,w + z)
+\rho_1^{y,0}(t,w)\big)\frac{1+|\log|z||}{|z|^{d+\A_2}}\,dz\\
&\p
\big(1+|\log t|+|\log|w||\big)\frac{t^{1-\A_2/\A_1}}{|w|^{d+\A_1}}.
\end{split}
\end{equation}

\item[(ii)]
\begin{equation}\label{r1-1-1}
\int_{\{|z|>c_0\}}\big(\rho^{y,0}_1(t,w\pm z)+\rho^{y,0}_1(t,w)\big)\frac{1}{|z|^{d+\A_1}}\,dz\p \frac 1{|w|^{d+\A_1}}
\end{equation}
 and
\begin{equation}\label{r1-1-2}
\int_{\{|z|>c_0\}}\big(\rho^{y,0}_1(t,w{+} z)+\rho^{y,0}_1(t,w)\big)\frac{1+|\log|z||}{|z|^{d+\A_1}}\,dz\p
\log(1+|w|)\frac 1{|w|^{d+\A_1}}.
\end{equation}
\end{itemize}
\end{remark}

\medskip

\subsection{Upper bounds for $q_0(t,x,y)$}

\begin{proposition}\label{l2-1}
\begin{itemize}
\item[(1)] Suppose $\K(x,z) =\K (x)$ is independent of $z$. Then for any $\gamma>0$, there exist constants
$R_0:=R_0(\A,\K, \G)\in(0,1)$ and $c_0:=c_0(\A,\K, \G)>0$ such
that for all $t\in(0,1]$ and $x,y\in \R^d$,
\begin{equation}\label{l2-1-2}\begin{split}
|q_0(t,x,y)|\le &c_0\big(|x-y|^{\beta_0}\wedge 1\big)\cdot
\begin{cases}
\frac{t^{-\gamma}}{(t^{{1/\A(y)}}+|x-y|)^{d+\A(y)}},& |x-y|< R_0,\\
\frac{1}{|x-y|^{d+\A_1}},&|x-y|\ge R_0
\end{cases}\\
\p& \rho_{-\gamma}^{y,\beta_0}(t,x-y).\end{split}
\end{equation}
Suppose, in addition, that there are some $y_0\in \R^d$ and $r_0\in(0,\infty]$ such that $\alpha(z)=\alpha(y_0)$ for all $z \in B(y_0,r_0)$. Then
there exists a constant
$c_1:=c_1(\A,\K, \G, r_0)>0$ such that for every $t\in(0,1]$ and $x \in \R^d$,
\begin{equation}\label{l2-1-33a}
|q_0(t,x,y_0)|\le c_1\big(|x-y_0|^{\beta_0}\wedge 1\big)\cdot
\begin{cases}
\frac{1}{(t^{{1/\A(y_0)}}+|x-y_0|)^{d+\A(y_0)}},& |x-y_0|< r_0,\\
\frac{1}{|x-y_0|^{d+\A_1}},&|x-y_0|\ge r_0.
\end{cases}
\end{equation}

\item[(2)]
For general $\K (x, z)$ and for any $\gamma>0$, there are constants
$R_0:=R_0(\A,\K, \G)\in(0,1)$ and $c_0:=c_0(\A,\K, \G)>0$ such
that for all $t\in(0,1]$ and $x,y\in \R^d$,
\begin{equation}\label{l2-1-1} \begin{split}
|q_0(t,x,y)|\le& c_0\big(|x-y|^{\beta_0}\wedge 1\big)\cdot
\begin{cases}
\frac{t^{-\gamma}}{(t^{{1/\A(y)}}+|x-y|)^{d+\A(y)}},& |x-y|< R_0,\\
\frac{t^{1-({\A_2}/{\A_1})}}{|x-y|^{d+\A_1}}, &|x-y|\ge R_0
\end{cases}\\
\p&\rho_{1-({\A_2}/{\A_1})-\gamma}^{y,\beta_0}(t,x-y).\end{split}
\end{equation}
Suppose, in addition, that there are some $y_0\in \R^d$ and $r_0\in(0,\infty]$ such that $\alpha(z)=\alpha(y_0)$ for all $z \in B(y_0,r_0)$. Then for every $t\in (0,1]$ and $x\in \R^d$,
\begin{equation}\label{l2-1-33}
|q_0(t,x,y_0)|\le c_1\big(|x-y_0|^{\beta_0}\wedge 1\big)\cdot
\begin{cases}
\frac{1}{(t^{{1/\A(y_0)}}+|x-y_0|)^{d+\A(y_0)}},& |x-y_0|< r_0,\\
\frac{t^{1-({\A_2}/{\A_1})}}{|x-y_0|^{d+\A_1}},&|x-y_0|\ge r_0.
\end{cases}
\end{equation}
\end{itemize}
\end{proposition}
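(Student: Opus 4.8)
The starting point is the symmetrized representation of $q_0$. Combining \eqref{e2-3} with $q_0(t,x,y)=(\LL^x-\LL^y)p^y(t,\cdot)(x-y)$ gives
\[
q_0(t,x,y)=\frac12\int_{\Rd}\delta_{p^y}(t,x-y;z)\Bigl(\frac{\K(x,z)}{|z|^{d+\A(x)}}-\frac{\K(y,z)}{|z|^{d+\A(y)}}\Bigr)\,dz,
\]
each of the two ``halves'' being absolutely convergent because $\delta_{p^y}(t,v;z)=O(|z|^2)$ near $z=0$ and is bounded for $|z|\ge1$, while $\A(x),\A(y)\in(0,2)$. I would build the bound from three ingredients: a pointwise bound for the kernel difference, the known estimates for $p^y$ and its second difference $\delta_{p^y}$, and the convolution inequalities of Lemma~\ref{l1-1} and Remark~\ref{r-1}.

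For the kernel difference, write the integrand factor as $\dfrac{\K(x,z)-\K(y,z)}{|z|^{d+\A(x)}}+\K(y,z)\bigl(|z|^{-d-\A(x)}-|z|^{-d-\A(y)}\bigr)$. The first term is $\p(|x-y|^{\beta_0}\wedge1)\,|z|^{-d-\A(x)}$ by \eqref{e1-2}. For the second, I would use $\bigl||z|^{a}-1\bigr|\le|a\log|z||\,e^{|a\log|z||}$ together with $|\A(x)-\A(y)|\le c_1(|x-y|^{\beta_0}\wedge1)$ from \eqref{e1-3}, measuring the exponents against $\overline\A(y;x)$ when $|z|\le1$ and against $\underline\A(y;x)$ when $|z|>1$; since $\underline\A(y;x)\le\A(x)\wedge\A(y)$ and $\A(x)\vee\A(y)\le\overline\A(y;x)$, this yields
\[
\Bigl|\frac{\K(x,z)}{|z|^{d+\A(x)}}-\frac{\K(y,z)}{|z|^{d+\A(y)}}\Bigr|\p(|x-y|^{\beta_0}\wedge1)\Bigl(\frac{1+|\log|z||}{|z|^{d+\overline\A(y;x)}}\I_{\{|z|\le1\}}+\frac{1+|\log|z||}{|z|^{d+\underline\A(y;x)}}\I_{\{|z|>1\}}\Bigr).
\]
For $p^y$, recall from \cite{CK,CZ} applied to the L\'evy process $X^y$ (whose jump kernel lies between constant multiples of $|z|^{-d-\A(y)}$) that $p^y(t,x)\p\rho^{y,0}_1(t,x)$, $|\partial_t p^y(t,x)|\p\rho^{y,0}_0(t,x)$, and the second-difference bound $|\delta_{p^y}(t,x;z)|\p\bigl((t^{-2/\A(y)}|z|^2)\wedge1\bigr)\bigl(\rho^{y,0}_1(t,x+z)+\rho^{y,0}_1(t,x-z)+\rho^{y,0}_1(t,x)\bigr)$, together with its spatially localised refinement $|\delta_{p^y}(t,x;z)|\p|z|^{2}\bigl(t^{-1/\A(y)}\wedge|x|^{-1}\bigr)^{2}\rho^{y,0}_1(t,x)$ valid for $|z|\le|x|/2$.

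Now I would substitute these, fold the $x-z$ terms onto the $x+z$ terms by $z\mapsto-z$, and split $\{|z|\le1\}$ from $\{|z|>1\}$. Over $\{|z|\le1\}$ the $\rho^{y,0}_1(t,w\pm z)$ contributions (with $w=x-y$) are exactly $I_1(t,x,y,w)$ of Lemma~\ref{l1-1}, while the $\rho^{y,0}_1(t,w)$ contribution gives $\rho^{y,0}_1(t,w)$ times an $I_3$-type integral, i.e.\ the factor $(1+|\log t|)t^{-\overline\A(y;x)/\A(y)}$; over $\{|z|>1\}$ one gets $I_2(t,x,y,w)$ plus $\rho^{y,0}_1(t,w)$ times a convergent integral. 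Since $\rho^{y,0}_1=t\,\rho^{y,0}_0$, Lemma~\ref{l1-1} then gives, near the diagonal, $|q_0(t,x,y)|\p(|x-y|^{\beta_0}\wedge1)(1+|\log t|)\,t^{\,1-\overline\A(y;x)/\A(y)}\rho^{y,0}_0(t,x-y)$; because $\overline\A(y;x)-\A(y)\le c_1(|x-y|^{\beta_0}\wedge1)$, choosing $R_0=R_0(\A,\K,\gamma)$ so small that $c_1R_0^{\beta_0}/\A_1\le\gamma/2$ and using $1+|\log t|\le C_\gamma t^{-\gamma/2}$ turns this into $C_\gamma t^{-\gamma}\rho^{y,0}_0(t,x-y)$ on $\{|x-y|<R_0\}$, the first branch of \eqref{l2-1-2} and \eqref{l2-1-1}. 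For $\{|x-y|\ge R_0\}$ the scale $t^{-1/\A(y)}$ has to be traded for $|x-y|^{-1}$: I would split off the large-jump part $\int_{\{|z|>1\}}$ of $\delta_{p^y}$, whose $-2p^y(t,w)$ piece pairs with a \emph{convergent} integral of the kernel difference and is $\p(|x-y|^{\beta_0}\wedge1)\,t\,\rho^{y,0}_0(t,x-y)$, and on $\{|z|\le1\}$ use the localised second-difference bound; a direct computation then yields $\p_{R_0}(|x-y|^{\beta_0}\wedge1)\,t^{\,1-\A_2/\A_1}|x-y|^{-d-\A_1}$, which is \eqref{l2-1-1}. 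In part~(1), where $\K=\K(x)$ does not depend on $z$, one can do better by first peeling off $\bigl(\tfrac{\K(x)}{\K(y)}-1\bigr)\LL^y p^y(t,\cdot)(x-y)=\bigl(\tfrac{\K(x)}{\K(y)}-1\bigr)\partial_t p^y(t,x-y)$, which is $\p(|x-y|^{\beta_0}\wedge1)\rho^{y,0}_0(t,x-y)$, leaving only $\tfrac{\K(x)}2\int_{\Rd}\delta_{p^y}(t,x-y;z)\bigl(|z|^{-d-\A(x)}-|z|^{-d-\A(y)}\bigr)\,dz$, which the localised bound and Remark~\ref{r-1} handle; this produces the $t$-free tail in \eqref{l2-1-2}. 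Finally, when $\A\equiv\A(y_0)$ on $B(y_0,r_0)$ one has $\overline\A(y_0;x)=\underline\A(y_0;x)=\A(y_0)$ for $|x-y_0|<r_0$, so $t^{\,1-\overline\A(y_0;x)/\A(y_0)}=1$ and all the $t^{-\gamma}$ and logarithmic losses disappear, giving \eqref{l2-1-33a} and \eqref{l2-1-33}.

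The main obstacle is the simultaneous bookkeeping of the variable exponents through $\overline\A(y;x)$ and $\underline\A(y;x)$ and the attendant time powers $t^{-\overline\A(y;x)/\A(y)}$: near the diagonal they must be absorbed into $t^{-\gamma}$ (which forces $R_0$ and $c_0$ to depend on $\gamma$), whereas in the far field the scale $t^{-1/\A(y)}$ in the derivative estimates of $p^y$ must be traded for $|x-y|^{-1}$ so as not to spoil the decay in $t$; and for general $\K(x,z)$ one has to quantify exactly the loss --- the factor $t^{\,1-\A_2/\A_1}$ --- caused by the unavailability of the ``$z$-independence'' peeling used in part~(1). Keeping track of the logarithmic factors coming from $\bigl||z|^{-\A(x)}-|z|^{-\A(y)}\bigr|$ and from the small-jump integral, and absorbing them into the $t^{-\gamma}$ (respectively $t^{\,1-\A_2/\A_1-\gamma}$) slack, is the other delicate point.
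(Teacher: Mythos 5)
Your treatment of the near-diagonal regime $|x-y|<R_0$ — the kernel-difference estimate, feeding it into Lemma \ref{l1-1}, and absorbing $(1+|\log t|)t^{1-\overline{\A}(y;x)/\A(y)}$ into $t^{-\G}$ by shrinking $R_0$ — is exactly the paper's Lemma \ref{l1-2} and is correct, as is your observation that in the locally constant-order case the logarithms and the $t^{-\G}$ loss disappear. The genuine problems are in the far-field regime $|x-y|\ge R_0$. Your ``localised second-difference bound'' $|\delta_{p^y}(t,x;z)|\p|z|^2\big(t^{-1/\A(y)}\wedge|x|^{-1}\big)^2\rho^{y,0}_1(t,x)$ rests on the second-derivative estimate \eqref{p1-1-1}, which Lemma \ref{p1-1} supplies \emph{only} when $\K(x,z)=\K(x)$ is independent of $z$; for general $\K(x,z)$ only the capped bound \eqref{p1-1-2} is available, and this is precisely the structural reason part (2) is weaker than part (1). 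Invoking the localised bound in your part-(2) computation is therefore unjustified, and it is also inconsistent with your own conclusion: were it available, the small-$z$ integral would be $\p\int_{\{|z|\le1\}}(1+|\log|z||)\,|z|^{2-d-\overline{\A}(y;x)}\,dz<\infty$ times $C(R_0)\,\rho^{y,0}_1(t,x-y)$, i.e.\ no loss in $t$ at all, whereas the factor $t^{1-\A_2/\A_1}$ you (correctly) claim is exactly what the capped bound produces via $\int\big[(t^{-2/\A(y)}|z|^2)\wedge1\big]\,|z|^{-d-\A(x)}\,dz\asymp t^{-\A(x)/\A(y)}$, as in \eqref{l2-1-5}. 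Relatedly, the loss in part (2) is not caused by the unavailability of your $(\K(x)/\K(y)-1)\,\partial_t p^y$ peeling (the paper never uses such a peeling), but by the unavailability of \eqref{p1-1-1}.

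Second, by keeping the differenced kernel in the far field you retain the weight $(1+|\log|z||)\,|z|^{-d-\underline{\A}(y;x)}$ on $\{|z|>1\}$; the $p^y(t,x-y\pm z)$ contribution there — which your far-field paragraph does not address at all (you only treat the $-2p^y(t,x-y)$ piece) — is controlled by \eqref{r1-1-2}, which carries an extra factor $\log(1+|x-y|)$ and hence overshoots the stated bound $c_0/|x-y|^{d+\A_1}$ for large $|x-y|$. The paper avoids this by estimating $\LL^x p^y(t,\cdot)(x-y)$ and $\LL^y p^y(t,\cdot)(x-y)$ separately when $|x-y|>R_1$, so the kernels carry no logarithmic weight and \eqref{r1-1-1} suffices; alternatively, for $|z|>1$ one may simply bound the kernel difference by $2\K_2|z|^{-d-\A_1}$. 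Both repairs are routine, but as written your far-field step does not deliver the claimed estimates \eqref{l2-1-2} and \eqref{l2-1-1}.
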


\medskip

To prove Proposition \ref{l2-1}, we need some regularity estimates for $p^y(t,\cdot)$ taken from \cite{CZ}.
For simplicity, we use the following abbreviation for a function $f$:
$$
f(x\pm y):= f(x+y) + f(x-y).
$$

\begin{lemma}$($\cite[Lemmas 2.2 and 2.3]{CZ}$)$\label{p1-1} The following statements hold.
\begin{itemize}
\item[(1)] If $\K(x,z) =\K (x)$ is independent of $z$, then for any $j \in \mathbb{Z}_+:=\{0,1,\ldots\}$, there exists a constant $c_1:=c_1(\A,\K, j)>0$ such that for every $t \in (0,1]$ and
$x,y\in \R^d$,
\begin{equation}\label{p1-1-1}
|\nabla^j p^y(t,\cdot)(x)|\le c_1t\big(t^{{1/\A(y)}}+|x|\big)^{-d-\A(y)-j}.
\end{equation}

\item[(2)] For general  $\K(x,z)$,
there exists a constant
$c_2:=c_2(\A,\K)>0$ such that for every $t \in (0,1]$ and $x,x',y,z \in \R^d$,
\begin{equation}\label{p1-1-1-1-0}
p^y(t,x)\le c_2 \rho_1^{y,0}(t,x),
\end{equation}
\begin{equation}\label{p1-1-2}
|\delta_{p^y}(t,x;z)|\le
c_2\big[\big(t^{-{2}/{\A(y)}}|z|^2\big)\wedge 1\big] \cdot
\big(\rho_1^{y,0}(t,x\pm z)+\rho_1^{y,0}(t,x)\big),
\end{equation}
\begin{equation}\label{p1-1-3}
\begin{split}
|p^y(t,x)-p^y(t,x')|&\le c_2\big[\big(t^{-{1/\A(y)}}|x-x'|\big)\wedge 1\big]
\cdot\big(\rho_1^{y,0}(t,x')+\rho_1^{y,0}(t,x)\big),
\end{split}
\end{equation} and
\begin{equation}\label{p1-1-4}
\begin{split}
|\delta_{p^y}(t,x;z)-\delta_{p^y}(t,x';z)|&
\le  c_2 \big[\big(t^{-{1/\A(y)}}|x-x'|\big)\wedge 1\big]
\cdot \big[\big(t^{-{2}/{\A(y)}}|z|^2\big)\wedge 1\big]\\
&\quad\times\big(\rho_1^{y,0}(t,x\pm z)+\rho_1^{y,0}(t,x)+\rho_1^{y,0}(t,x'\pm z)+\rho_1^{y,0}(t,x')\big).
\end{split}
\end{equation}
\end{itemize}
\end{lemma}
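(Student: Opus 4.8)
The plan is to reduce everything to the case of a \emph{constant-order} kernel and to invoke the corresponding estimates of \cite{CZ}, while tracking the dependence of all constants on the order. Fix $y\in\Rd$ and set
\[
\psi^y(\xi):=\int_{\Rd}\big(1-\cos\langle\xi,z\rangle\big)\,\frac{\K(y,z)}{|z|^{d+\A(y)}}\,dz ,
\]
the L\'evy exponent of $X^y$. Since $\K_1\le\K(y,z)\le\K_2$, we have $c^{-1}|\xi|^{\A(y)}\le\psi^y(\xi)\le c\,|\xi|^{\A(y)}$ for all $\xi$, with $c=c(d,\K_1,\K_2,\A_1,\A_2)$ independent of $y$; hence $e^{-t\psi^y}$ together with all its $\xi$-derivatives lies in $L^1(\Rd)$ for $t>0$, and $p^y(t,x)=(2\pi)^{-d}\int_{\Rd}e^{-i\langle x,\xi\rangle-t\psi^y(\xi)}\,d\xi$ is a well-defined, $C^\infty$-in-$x$ transition density of $X^y$. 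The whole statement is precisely \cite[Lemmas 2.2 and 2.3]{CZ} applied with the fixed order $\A(y)$ and coefficient $\K(y,\cdot)$; the only new point to monitor is that the constants come out uniform over $y$, which will follow because they depend on $y$ only through $\A(y)\in[\A_1,\A_2]\subset(0,2)$ and $\K_1,\K_2$.

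The heart of the matter is the family of gradient bounds
\[
|\nabla^j p^y(t,\cdot)(x)|\le c_j\, t\,\big(t^{1/\A(y)}+|x|\big)^{-d-\A(y)-j},\qquad j\in\Z_+,\ |x|\le1,
\]
together with $|\nabla^j p^y(t,\cdot)(x)|\le c_j\,t\,|x|^{-d-\A(y)-j}$ for $|x|>1$, with $c_j$ uniform in $y$. On the near-diagonal range $|x|\le t^{1/\A(y)}$ this is immediate from $|\nabla^j p^y(t,\cdot)(x)|\le(2\pi)^{-d}\int_{\Rd}|\xi|^j e^{-c^{-1}t|\xi|^{\A(y)}}\,d\xi=c_j' t^{-(d+j)/\A(y)}$, where $c_j'$ is bounded for $\A(y)\in[\A_1,\A_2]$. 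For the off-diagonal range in part (1), where $\K(x,z)=\K(x)$, the measure $\K(y)|z|^{-d-\A(y)}\,dz$ is a constant multiple, of size comparable to $1$, of the isotropic $\A(y)$-stable L\'evy measure, so $p^y(t,\cdot)$ is a space-time rescaling of the isotropic $\A(y)$-stable density $g_{\A(y)}$, and \eqref{p1-1-1} reduces to the classical bounds $|\nabla^j g_\beta(t,x)|\le c\,t(t^{1/\beta}+|x|)^{-d-\beta-j}$ (Blumenthal--Getoor, P\'olya, Kolokoltsov), whose constants are uniform for $\beta$ in the compact set $[\A_1,\A_2]$ (e.g.\ via the subordination representation of $g_\beta$). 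For part (2), where $z\mapsto\K(y,z)$ is only measurable, no smoothness of the kernel in $z$ is available, and this is where the real work lies: I would split the L\'evy measure, $\nu^y=\nu^y_0+\nu^y_1$ with $\nu^y_0=\nu^y\I_{\{|z|<1\}}$ and $\nu^y_1=\nu^y\I_{\{|z|\ge1\}}$, the latter a finite measure of mass $\le\K_2\int_{\{|z|\ge1\}}|z|^{-d-\A_1}\,dz$ uniformly in $y$, and write $p^y(t,\cdot)=P^y_0(t,\cdot)*P^y_1(t,\cdot)$. The small-jump symbol $\psi^y_0(\xi)=\int_{\{|z|<1\}}(1-\cos\langle\xi,z\rangle)\K(y,z)|z|^{-d-\A(y)}\,dz$ is smooth with $|\partial^\gamma_\xi\psi^y_0(\xi)|\le c_\gamma$ for $|\gamma|\ge2$ and $|\partial_\xi\psi^y_0(\xi)|\le c(1+|\xi|^{(\A(y)-1)_+})$, obtained by bounding $|\partial^\gamma_\xi(1-\cos\langle\xi,z\rangle)|\le|z|^{|\gamma|}\wedge(|\xi|\,|z|^{|\gamma|+1})$ and integrating, splitting at $|z|\sim1/|\xi|$ (no oscillation cancellation is needed since $|z|<1$); consequently repeated integration by parts in $\xi$ against $e^{-t\psi^y_0}$ yields $|\nabla^j_x P^y_0(t,x)|\le c\,t(t^{1/\A(y)}+|x|)^{-d-\A(y)-j}$ on $|x|\le1$ up to a rapidly decaying term, while $P^y_1(t,\cdot)=e^{-t\nu^y_1(\Rd)}\sum_{k\ge0}\frac{t^k}{k!}(\nu^y_1)^{*k}$ is a compound-Poisson density that contributes the polynomial tail $|x|^{-d-\A(y)}$ because $\nu^y_1$ has density $\le\K_2|z|^{-d-\A(y)}$ on $\{|z|\ge1\}$; a routine convolution estimate recombines $P^y_0$ and $P^y_1$ into the displayed bounds, uniformly in $y$. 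This is exactly the constant-order argument of \cite{CZ}.

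The remaining assertions are then routine consequences of these gradient bounds. Estimate \eqref{p1-1-1-1-0} is the $j=0$ case rewritten through $\rho_1^{y,0}$, using $\A(y)\ge\A_1$ to weaken the sharp tail $t|x|^{-d-\A(y)}$ to $t|x|^{-d-\A_1}$ for $|x|>1$. For \eqref{p1-1-3}: if $|x-x'|\ge t^{1/\A(y)}$ the prefactor $(t^{-1/\A(y)}|x-x'|)\wedge1$ equals $1$ and the bound is $|p^y(t,x)-p^y(t,x')|\le p^y(t,x)+p^y(t,x')$ together with \eqref{p1-1-1-1-0}; if $|x-x'|<t^{1/\A(y)}$, integrate $\nabla p^y$ along the segment $[x',x]$ and use $|\nabla p^y(t,\cdot)(w)|\le c\,t^{-1/\A(y)}\rho_1^{y,0}(t,w)$ (a consequence of the gradient bound) together with the doubling property $\rho_1^{y,0}(t,w)\asymp\rho_1^{y,0}(t,x)\asymp\rho_1^{y,0}(t,x')$ valid because the segment lies in a ball of radius $t^{1/\A(y)}$. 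For \eqref{p1-1-2}: on $\{|z|\ge t^{1/\A(y)}\}$ the prefactor $(t^{-2/\A(y)}|z|^2)\wedge1$ equals $1$ and one uses $|\delta_{p^y}(t,x;z)|\le p^y(t,x\pm z)+2p^y(t,x)$ with \eqref{p1-1-1-1-0}; on $\{|z|<t^{1/\A(y)}\}$, second-order Taylor gives $|\delta_{p^y}(t,x;z)|\le|z|^2\sup_{|w-x|\le|z|}|\nabla^2 p^y(t,\cdot)(w)|$, and the Hessian bound together with $\rho_1^{y,0}(t,w)\asymp\rho_1^{y,0}(t,x)$ on $|w-x|\le|z|$ yields the claim. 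Finally \eqref{p1-1-4} combines the two mechanisms: if $|x-x'|\ge t^{1/\A(y)}$, reduce via $|\delta_{p^y}(t,x;z)-\delta_{p^y}(t,x';z)|\le|\delta_{p^y}(t,x;z)|+|\delta_{p^y}(t,x';z)|$ and \eqref{p1-1-2}; if $|x-x'|<t^{1/\A(y)}$, integrate $\nabla_x\delta_{p^y}(t,\cdot;z)=\delta_{\nabla p^y}(t,\cdot;z)$ along $[x',x]$ and apply the analogue of \eqref{p1-1-2} with $\nabla p^y$ in place of $p^y$, which reproduces both prefactors. In every step the constants depend on $y$ only through $\A(y)\in[\A_1,\A_2]$ and $\K_1,\K_2$, hence are uniform in $y$. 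I expect the main obstacle to be precisely the off-diagonal analysis of $P^y_0$ in part (2): since $\K(y,\cdot)$ is merely measurable, $\psi^y$ does not a priori lie in a convenient symbol class, and it is the small/large-jump decomposition—confining the singular part of $\nu^y$ to $\{|z|<1\}$, where all relevant integrals are harmless—that makes the integration-by-parts argument go through.
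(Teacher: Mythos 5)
Your proposal is correct, but note that the paper itself offers no proof of this lemma: it is quoted verbatim from \cite[Lemmas 2.2 and 2.3]{CZ}, applied with the coefficients frozen at $y$, i.e.\ with constant order $\A(y)$ and kernel $\K(y,\cdot)$. What you have written is essentially a reconstruction of that cited argument, and the one point that genuinely needs verification beyond the citation --- that the constants can be taken uniform in $y$ because they depend on $y$ only through $\A(y)\in[\A_1,\A_2]\subset(0,2)$ and the bounds $\K_1,\K_2$ --- you identify and handle correctly. The only place where your route visibly diverges from the one the paper relies on is the general-$\K(y,z)$ case: you split the L\'evy measure into small and large jumps, treat the small-jump part by integration by parts on the Fourier side and the large-jump part as a compound Poisson perturbation, and then recombine by convolution. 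The device used in \cite{CZ} and reused explicitly in this paper (see the proof of Lemma \ref{l4-4}) is instead the factorization $p^y=p^y_{\K_1/2}*p^y_{\wh\K}$ with $\wh\K(y,z)=\K(y,z)-\K_1/2\ge \K_1/2>0$, which peels off a constant multiple of the isotropic $\A(y)$-stable kernel; all derivatives then fall on the smooth factor $p^y_{\K_1/2}$, for which the classical stable bounds (uniform for the index in a compact subset of $(0,2)$) apply, and the rough factor only needs the upper bound \eqref{p1-1-1-1-0}. Both decompositions are standard and both work; the factorization is arguably cleaner for producing the derivative and increment estimates \eqref{p1-1-2}--\eqref{p1-1-4}, since it avoids having to extract the sharp linear-in-$t$ off-diagonal decay of the truncated semigroup by repeated integration by parts (the step you yourself flag as the main obstacle). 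Two harmless imprecisions in your sketch: the bound $|\partial^\gamma_\xi(1-\cos\langle\xi,z\rangle)|\le |\xi|\,|z|^{|\gamma|+1}$ fails for even $|\gamma|\ge 2$ (irrelevant, since $\int_{\{|z|<1\}}|z|^{|\gamma|}|z|^{-d-\A(y)}\,dz$ converges anyway for $|\gamma|\ge2$), and the compound Poisson object $P^y_1(t,\cdot)$ is a probability measure with an atom at $0$ rather than a density (the convolution argument is unaffected).
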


\begin{lemma}\label{l1-2}
For every $\G>0$, there exist constants $R_1:=R_1(\A,\K,\G)\in (0,{1}/{2})$ and $c_1:=c_1(\A,\K,\G)>0$ such that
for every $t \in (0,1]$ and $x,y,w\in \R^d$ with $|x-y|\le R_1$
\begin{equation}\label{l1-2-1}\begin{split}
&\big|\big( \LL^x - \LL^y\big)p^y(t,w)\big| \le c_1\big(|x-y|^{\B_0}\wedge 1\big)
\cdot
\begin{cases}
t^{-\G}\rho_0^{y,0}(t,w), &\ |w|\le {1}/{2},\\
(t^{-\G}+|\log |w||)\rho_0^{y,0}(t,w),&\ |w|>{1}/{2}.
\end{cases}\end{split}
\end{equation} In particular, for every $t\in(0,1]$ and $x,y\in\R^d$ with $|x-y|\le R_1$,
\begin{equation}\label{l1-2-1a}
\big|q_0(t,x,y)\big|\p \rho_{-\G}^{y,\B_0}(t,x-y).
\end{equation}

If, in addition, there are some $y_0\in \R^d$ and $r_0\in(0,\infty]$ such that $\alpha(z)=\alpha(y_0)$ for all $z \in B(y_0,r_0)$,
then
there exists a constant
$c_2:=c_2(\A,\K, \G, r_0)>0$ such that for every $t\in(0,1]$ and $x \in \R^d$ with $|x-y_0|<r_0$,
\begin{equation}\label{l1-2-1b}
|q_0(t,x,y_0)|\le c_2\big(|x-y_0|^{\beta_0}\wedge 1\big)\cdot
\frac{1}{(t^{{1/\A(y_0)}}+|x-y_0|)^{d+\A(y_0)}}.
\end{equation}
\end{lemma}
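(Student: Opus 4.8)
The plan is to estimate the difference $(\LL^x-\LL^y)p^y(t,w)$ by writing it, via \eqref{e2-3}, as
\begin{equation*}
(\LL^x-\LL^y)p^y(t,w)=\frac12\int_{\R^d}\delta_{p^y}(t,w;z)\Bigl(\frac{\K(x,z)}{|z|^{d+\A(x)}}-\frac{\K(y,z)}{|z|^{d+\A(y)}}\Bigr)\,dz,
\end{equation*}
and then controlling the difference of the two L\'evy densities pointwise in $z$. The natural splitting is
\begin{equation*}
\frac{\K(x,z)}{|z|^{d+\A(x)}}-\frac{\K(y,z)}{|z|^{d+\A(y)}}
=\frac{\K(x,z)-\K(y,z)}{|z|^{d+\A(x)}}+\K(y,z)\,\frac{|z|^{-\A(x)}-|z|^{-\A(y)}}{|z|^{d}}.
\end{equation*}
For the first term, \eqref{e1-2} gives the factor $|x-y|^{\B_2}\wedge 1\le |x-y|^{\B_0}\wedge 1$, and the remaining $|z|^{-d-\A(x)}$ is comparable (after restricting to $|x-y|\le R_1$ small, so $\A(x)$ is close to $\A(y)$, and using an estimate like \eqref{l3-1-1a} to trade $\A(x)$ for $\A(y)$ in the relevant ranges of $|z|$) to $|z|^{-d-\overline\A(y;x)}$ on $\{|z|\le 1\}$ and to $|z|^{-d-\underline\A(y;x)}$ on $\{|z|>1\}$, up to logarithmic factors coming from $\bigl||z|^{\A(y)-\A(x)}\bigr|$. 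For the second term, the mean value theorem applied to $\A\mapsto |z|^{-\A}$ gives $\bigl||z|^{-\A(x)}-|z|^{-\A(y)}\bigr|\le |\A(x)-\A(y)|\,\bigl|\log|z|\bigr|\,|z|^{-\A(\xi)}$, so $|\A(x)-\A(y)|\le c_1(|x-y|^{\B_1}\wedge 1)$ from \eqref{e1-3} produces the prefactor $|x-y|^{\B_0}\wedge1$, while the $\bigl|\log|z|\bigr|$ is exactly the logarithmic weight appearing in the integrals $I_1,I_2,I_3$ of Lemma \ref{l1-1}. Combining, $\bigl|(\LL^x-\LL^y)p^y(t,w)\bigr|$ is bounded, up to the factor $|x-y|^{\B_0}\wedge1$, by a constant times $I_1(t,x,y,w)+I_2(t,x,y,w)$, where in forming $I_1$ one uses the bound \eqref{p1-1-2} on $\delta_{p^y}(t,w;z)$ to supply the cutoff $(t^{-2/\A(y)}|z|^2)\wedge1$ and the $\rho_1^{y,0}(t,w\pm z)+\rho_1^{y,0}(t,w)$ factor.

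Next I would invoke Lemma \ref{l1-1} directly. Estimate \eqref{l1-1-1} for $I_1$ and \eqref{l1-1-2} for $I_2$ give, for $|w|\le 1/2$, a bound of the form $c(1+|\log t|)t^{1-(\overline\A(y;x)/\A(y))}\rho_0^{y,0}(t,w)$ plus $c\,\rho_1^{y,0}(t,w)$; since $\rho_1^{y,0}(t,w)\le\rho_0^{y,0}(t,w)$ and $|x-y|\le R_1$ forces $|\overline\A(y;x)-\A(y)|$ to be as small as we like (so $t^{1-(\overline\A(y;x)/\A(y))}\le t^{-\G'}$ for any preassigned $\G'$ once $R_1$ is small), and since $(1+|\log t|)t^{-\G'}\le t^{-\G}$ for $t\in(0,1]$ after shrinking $\G'<\G$, we obtain the claimed bound $c_1(|x-y|^{\B_0}\wedge1)\,t^{-\G}\rho_0^{y,0}(t,w)$ for $|w|\le1/2$. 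For $|w|>1/2$ the same two estimates contribute the extra $(1+|\log|w||)\rho_0^{y,0}(t,w)$, which is precisely the $|\log|w||$ term allowed in \eqref{l1-2-1}. The in particular statement \eqref{l1-2-1a} then follows by setting $w=x-y$ in \eqref{l1-2-1}, recalling $q_0(t,x,y)=(\LL^x-\LL^y)p^y(t,\cdot)(x-y)$ and noting $\rho_0^{y,0}(t,x-y)$ (together with the harmless $\log$ term when $|x-y|>1/2$, which is absorbed into the constant since $|x-y|$ is then bounded away from $0$ and the factor $|x-y|^{\B_0}\wedge1$ is a constant there) is dominated by $\rho_{-\G}^{y,\B_0}(t,x-y)$ up to the constant.

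For the last assertion \eqref{l1-2-1b}, the point is that when $\A(z)\equiv\A(y_0)$ on $B(y_0,r_0)$ and $|x-y_0|<r_0$, there is no oscillation of $\A$ near $y_0$: one has $\overline\A(y_0;x)=\underline\A(y_0;x)=\A(y_0)$ for the relevant range, the second (index-difference) term above vanishes outright in the inner region, and so the logarithmic losses and the $t^{1-(\overline\A(y_0;x)/\A(y_0))}=t^0=1$ factor disappear, upgrading the $t^{-\G}$ to a clean $1$. Concretely I would rerun the argument: only the $\K$-difference term survives with a factor $|x-y_0|^{\B_0}\wedge1$, and the resulting integral against $|\delta_{p^{y_0}}(t,w;z)|$ from \eqref{p1-1-2} is bounded by $I_1+I_2$ without the log weights and with $\overline\A=\underline\A=\A(y_0)$ inside $B(y_0,r_0)$; Lemma \ref{l1-1}, whose constants now depend on $r_0$ through the splitting radius, yields $c_2(|x-y_0|^{\B_0}\wedge1)(t^{1/\A(y_0)}+|x-y_0|)^{-d-\A(y_0)}$, which is \eqref{l1-2-1b}. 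The main obstacle I anticipate is bookkeeping: cleanly tracking how small $R_1$ must be so that the exponent discrepancy $\overline\A(y;x)/\A(y)-1$ (which governs the power of $t$) is swallowed by the prescribed $\G$ uniformly in $x,y$, and making sure the log-of-$|z|$ weights genuinely match the integrands of Lemma \ref{l1-1} in every range of $|z|$ (near $0$, near $1$, and at infinity) so that no stray factor of $|z|^{\pm(\A(x)-\A(y))}$ escapes the estimate \eqref{l3-1-1a}.
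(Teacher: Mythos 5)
Your proposal is correct and follows essentially the same route as the paper's proof: the same decomposition of the kernel difference (the paper's \eqref{e:pp}, via the mean value theorem for $r\mapsto a^r$ plus the H\"older bound on $\K$), the same reduction via \eqref{p1-1-2} to the integrals of Lemma \ref{l1-1}, the same absorption of $(1+|\log t|)\,t^{1-\overline{\A}(y;x)/\A(y)}$ into $t^{-\G}$ by shrinking $R_1$, and the same rerun of the argument with the simplified kernel bound when $\A$ is constant on $B(y_0,r_0)$. The only cosmetic point is that the contribution of the unshifted factor $\rho_1^{y,0}(t,w)$ in \eqref{p1-1-2} is recorded in the paper as $\rho_1^{y,0}(t,w)\,I_3(t,x,y)$ and controlled by \eqref{l1-1-3}, which produces the same bound as your $I_1$ term.
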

\begin{proof} (i)
Applying the mean value theorem to the function $r\mapsto a^r$, we find that for every $x,y,z \in \R^d$ with
$|z|>0$,
\begin{equation}\label{e:pp}
\begin{split}
&\bigg|\frac{\K(x,z)}{|z|^{d+\A(x)}}-\frac{\K(y,z)}{|z|^{d+\A(y)}}\bigg|\\
&\p |\A(x)-\A(y)|\bigg(\frac{|\log|z||}{|z|^{d+\overline{\A}(y;x)}}\I_{\{|z|\le 1\}}+
\frac{\log|z|}{|z|^{d+\underline{\A}(y;x)}}\I_{\{|z|> 1\}}\bigg)  +\frac{|\K(x,z)-\K(y,z)|}{|z|^{d+\A(y)}}\\
&\p \big(|x-y|^{\B_0}\wedge
1\big)\bigg(\frac{1+|\log|z||}{|z|^{d+\overline
\A(y;x)}}\I_{\{|z|\le 1\}}+ \frac{1+\log|z|}{|z|^{d+\underline
\A(y;x)}}\I_{\{|z|> 1\}}\bigg),
\end{split}
\end{equation}
where in the last inequality we have used \eqref{e1-2}, \eqref{e1-3} and the
fact that
\begin{equation*}
\frac{1}{|z|^{d+\A(y)}}\p \frac{1}{|z|^{d+\overline
\A(y;x)}}\I_{\{|z|\le 1\}}+ \frac{1}{|z|^{d+\underline
\A(y;x)}}\I_{\{|z|> 1\}}.
\end{equation*}
Combining \eqref{e:pp} with \eqref{e2-3}, we can obtain that
\begin{align*}
 \big|\big( \LL^x- \LL^y\big)p^y(t,w)\big|
& \p
\big(|x-y|^{\B_0}\wedge 1\big) \int_{\R^d}|\delta_{p^y}(t,w;z)|\bigg(
\frac{1+|\log|z||}{|z|^{d+\overline{\A}(y;x)}}\I_{\{|z|\le 1\}}+\frac{1+|\log|z||}{|z|^{d+\underline{\A}(y;x)}}\I_{\{|z|> 1\}}\bigg)\,dz\\
&\p\big(|x-y|^{\B_0}\wedge 1\big)\cdot \big(I_1(t,x,y,w)+I_2(t,x,y,w)+\rho^{y,0}_1(t,w)I_3(t,x,y)\big),
\end{align*}
where in the second inequality we have used \eqref{p1-1-2}, and $I_1$, $I_2$ and $I_3$ are defined in \eqref{l1-1-0}.
Hence, Lemma \ref{l1-1} yields that
\begin{equation}\label{l1-2-2}
\begin{split}
&\big|\big( \LL^x- \LL^y\big)p^y(t,w)\big|\\
&\p  \!\big(|x-y|^{\B_0}\wedge 1\big) \!\cdot\!
\begin{cases}
(1+|\log t|)t^{1-({\overline{\A}(y;x)}/{\A(y)})}\rho^{y,0}_0(t,w),\ \ &\ |w|\le {1}/{2},\\
\big(1+|\log |w||+(1+|\log t|)t^{1-({\overline{\A}(y;x)}/{\A(y)})}\big)\rho^{y,0}_0(t,w),\ \
&\ |w|>{1}/{2}.
\end{cases}
\end{split}
\end{equation}
Note that due to \eqref{e1-3},
\begin{equation*}
\overline{\A}(y;x)\le \A(y)+c_0|x-y|^{\B_0}\le
\A(y)+c_0R_1^{\B_0}\quad\textrm{ for all } x,y\in\R^d\textrm{ with }|x-y|\le R_1.
 \end{equation*}
Therefore, choosing $R_1\in(0,{1}/{2})$ small enough such that
$c_0R_1^{\B_0} /\alpha_1 < \gamma$, we have
\begin{equation*}
(1+|\log t|)t^{1-({\overline{\A}(y;x)}/{\A(y)})}\p t^{-\G}\quad\textrm{ when }|x-y|\le R_1.
\end{equation*}
This, along with \eqref{l1-2-2}, immediately implies \eqref{l1-2-1}.

Estimate  \eqref{l1-2-1a} follows from \eqref{l1-2-1} by taking $w=x-y$.

\smallskip

(ii) Suppose, in addition, that  there are some $y_0\in\R^d$ and $r_0\in(0,\infty]$ such that
 $\alpha(x)=\alpha(y_0)$ for all $x \in B(y_0,r_0)$. Then \eqref{e:pp} is reduced to
\begin{equation*}
\begin{split}
&\bigg|\frac{\K(x,z)}{|z|^{d+\A(x)}}-\frac{\K(y_0,z)}{|z|^{d+\A(y_0)}}\bigg|\p \big(|x-y|^{\beta_0}\wedge
1\big)\cdot\bigg(\frac{1}{|z|^{d+\A(y_0)}}\I_{\{|z|\le 1\}}+ \frac{1}{|z|^{d+
\A(y_0)}}\I_{\{|z|> 1\}}\bigg)
\end{split}
\end{equation*} for all $x\in B(y_0,r_0)$.
Having this at hand and repeating the same argument above, we can
easily see that the second assertion holds in this case.
\end{proof}

Now, we are in the position to present the

\begin{proof}[Proof of Proposition $\ref{l2-1}$]
Let $R_1=R_1(\A,\K,\G)$, where $R_1(\A,\K,\G)$ is the constant in
Lemma \ref{l1-2}. According to Lemma \ref{l1-2},  we only need to
treat the case that $|x-y|> R_1$.

(1)  We first assume that $\K(x,z)$ is independent of $z$.

By \eqref{e2-3}, for every $t\in (0,1]$ and $x,y \in \R^d$ with $|x-y|>R_1$, we have
\begin{equation}\label{l2-1-3a}
\begin{split}
 | \LL^x p^y(t,\cdot)(x-y)|
& \p \int_{\{|z|\le {R_1}/{2}\}}|\delta_{p^y}(t,x-y;z)|\frac{1}{|z|^{d+\A(x)}}\,dz\\
&\quad +\int_{\{|z|>{R_1}/{2}\}} \big(p^y(t,x-y\pm z)+p^y(t,x-y)\big)\frac{1}{|z|^{d+\A(x)}}\,dz\\
&=:J_1+J_2.
\end{split}
\end{equation}
Applying \eqref{p1-1-1} and using the mean value theorem, we can get
\begin{align*}
J_1&\p \int_{\{|z|\le {R_1}/{2}\}}|\nabla^2 p^y(t,\cdot)(x-y+\theta_{x,y,z}z)||z|^{2-d-\A(x)}\, dz\\
&\p \int_{\{|z|\le {R_1}/{2}\}}\frac{t}{(t^{{1/\A(y)}}+|x-y+\theta_{x,y, z}z|)^{d+\A(y)+2}}
|z|^{2-d-\A(x)}\, dz\\
&\p \frac{t}{|x-y|^{d+\A(y)+2}}\int_{\{|z|\le
{R_1}/{2}\}}|z|^{2-d-\A(x)}\, dz\p
\frac{t}{|x-y|^{d+\A(y)+2}}\p \frac{1}{|x-y|^{d+\A_1}}=\rho_0^{y,0}(t,x-y),
\end{align*}
where in the first inequality $\theta_{x,y,z}$ is a constant depending on
$x,y,z$ such that $|\theta_{x,y,z}|\le 1$, and the third inequality follows
from the fact that $$|x-y+\theta_{x,y,z}z|\ge {|x-y|}/{2}\ge
{R_1}/{2} \ge C_0 t^{{1/\A(y)}}\quad\textrm{ for any } z\in \R^d\textrm{ with } |z|\le
{R_1}/{2}.$$
On the other hand, according to \eqref{p1-1-1-1-0} and \eqref{r1-1-1}, we have
\begin{equation}\label{e:ref1}
\begin{split}
J_2&\p \int_{\{|z|>{R_1}/{2}\}}\rho^{y,0}_1(t,x-y\pm z)\frac{1}{|z|^{d+\A(x)}}\,dz +
\rho^{y,0}_1(t,x-y) \int_{\{|z|>{R_1}/{2}\}}\frac{1}{|z|^{d+\A(x)}}\,dz \p  \rho^{y,0}_0(t,x-y).
\end{split}
\end{equation}
Hence, for every $t\in (0,1]$ and $x,y\in\R^d$ with $|x-y|>R_1$,
$$| \LL^x p^y(t,\cdot)(x-y)|\p\rho^{y,0}_0(t,x-y).$$

Following the same argument above, we have for every $t\in (0,1]$ and $x,y\in\R^d$ with $|x-y|>R_1$,
\begin{equation}\label{l2-1-4}
| \LL^y p^y(t,\cdot)(x-y)|\p \frac{1}{|x-y|^{d+\A_1}}\p \rho^{y,0}_0(t,x-y).\end{equation}
Therefore, for every $t\in (0,1]$ and $x,y\in\R^d$ with $|x-y|>R_1$,
\begin{align*}
|q_0(t,x,y)|&\le | \LL^x p^y(t,\cdot)(x-y)|+| \LL^y p^y(t,\cdot)(x-y)|\p
\rho^{y,0}_0(t,x-y).
\end{align*}
By now we have shown the second case in the estimate \eqref{l2-1-2}.
From \eqref{l1-2-1b} and \eqref{l2-1-2}, we arrive at
\eqref{l2-1-33a} immediately.

(2) Now we suppose the general case that $\K(x,z)$ may depend on $z$. We still define $J_1$ and $J_2$ by those in \eqref{l2-1-3a}. It is not
difficult to verify that \eqref{e:ref1} also holds for this case.
On the other hand, according to \eqref{p1-1-2}, it holds that
\begin{equation}\label{l2-1-5}
\begin{split}
J_1&\p \int_{\{|z|\le {R_1}/{2}\}}\big[(t^{-{2}/{\A(y)}}|z|^2)\wedge 1\big]\cdot\rho^{y,0}_1(t,x-y\pm z)\frac{1}{|z|^{d+\A(x)}}\,dz\\
&\quad +\rho_1^{y,0}(t,x-y)\int_{\{|z|\le {R_1}/{2}\}}\big[(t^{-{2}/{\A(y)}}|z|^2)\wedge 1\big]\cdot\frac{1}{|z|^{d+\A(x)}}\,dz\\
&\p\frac{t^{1-{2}/{\A(y)}}}{|x-y|^{d+\A_1}}\int_{\{|z|\le
t^{{1/\A(y)}}\}}|z|^{2-d-\A(x)}\,dz  +
\frac{t}{|x-y|^{d+\A_1}}\int_{\{t^{{1/\A(y)}}\le |z|\le {R_1}/{2}\}}|z|^{-d-\A(x)}\,dz\\
&\quad + \frac{t}{|x-y|^{d+\A_1}}
\int_{\{|z|\le {R_1}/{2}\}}\big[(t^{-{2}/{\A(y)}}|z|^2)\wedge 1\big]|z|^{-d-\A(x)}\,dz\\
&\p \frac{t^{1-({\overline{\alpha}(y;x)}/{\alpha(y)})}}{|x-y|^{d+\alpha_1}}\le  \frac{t^{1-(\A_2/\A_1)}}{|x-y|^{d+\alpha_1}}.
\end{split}
\end{equation}
Here the second step follows from
\begin{equation*}
\rho^{y,0}_0(t,x-y+ z)\p \frac{1}{|x-y|^{d+\A_1}}\quad\textrm{
when } |x-y|> R_1\textrm{ and }|z|\le {R_1}/{2},
\end{equation*}
which is due to the fact that for $ |x-y|> R_1$ and $ |z|\le {R_1}/{2}$
$$|x-y+ z| \ge |x-y|-|z|\ge  {|x-y|}/{2}\ge
{R_1}/{2}\ge C_0t^{{1/\A(y)}}.$$

Note also that \eqref{l2-1-4} is still true for this case. Combining all the estimates above, we obtain
\begin{align*}
|q_0(t,x,y)|&\p
| \LL^x p^y(t,x-y)|+| \LL^y p^y(t,x-y)| \p J_1+J_2+| \LL^yp^y(t,x-y)|\p t^{1-({\A_2}/{\A_1})}\rho_0^{y,0}(t,x-y),
\end{align*}
where in the last step we used the fact that $t^{-{\overline{\A}(y;x)}/{\A(y)}}\le t^{-{\A_2}/{\A_1}}$ for all $t \in (0,1].$
Thus, the second case of \eqref{l2-1-1} is proved.
\end{proof}

According to the proof of Proposition \ref{l2-1}, we also have the following estimates for $q_0(t,x,y)$.
\begin{proposition}\label{p2-1} \begin{itemize}
\item [(1)] Suppose $\K(x,z)$ is independent of $z$. Then there exists a constant $c_1:=c_1(\A,\K)>0$ such that
for every $t \in (0,1]$ and $x,y\in \R^d$,
\begin{equation}\label{p2-1-1}\begin{split}
|q_0(t,x,y)|&\le
\begin{cases}
c_1t^{-1-({d}/{\A(y)})},\quad & |x-y|\le t^{{1}/{\A(y)}},\\
\frac{c_1}{|x-y|^{d+\A(x)}}+ \frac{c_1}{|x-y|^{d+\A(y)}},\quad & |x-y|> t^{{1}/{\A(y)}}
\end{cases}\\
&\le
\begin{cases}
c_1t^{-1-({d}/{\A(y)})},\quad & |x-y|\le t^{{1}/{\A(y)}},\\
\frac{c_1}{|x-y|^{d+\A_2}\wedge |x-y|^{d+\A_1}},\quad & |x-y|> t^{{1}/{\A(y)}}.
\end{cases}\end{split}
\end{equation}

\item[(2)] Suppose $\K(x,z)$ depends on $z$ and ${\B_0}/{\A_2}>({\A_2}/{\A_1})-1$. Then there exists a constant $c_2:=c_2(\A,\K)>0$ such that
for every $t \in (0,1]$ and $x,y\in \R^d$,
\begin{equation}\label{p2-1-1a}
|q_0(t,x,y)|\le
\begin{cases}
c_2t^{-1-({d}/{\A(y)})},\quad &|x-y|\le t^{{1}/{\A(y)}},\\
\frac{c_1 t^{1-(\A_2/\A_1)}}{|x-y|^{d+\A_2}\wedge |x-y|^{d+\A_1}},\quad & |x-y| > t^{{1}/{\A(y)}}.
\end{cases}
\end{equation}
\end{itemize}
\end{proposition}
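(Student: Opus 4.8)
The plan is to re-run the estimates behind Proposition \ref{l2-1}, but, rather than extracting the gain $|x-y|^{\B_0}\wedge 1$ from the cancellation in $q_0(t,x,y)=\big(\LL^x-\LL^y\big)p^y(t,\cdot)(x-y)$, to bound $|\LL^xp^y(t,\cdot)(x-y)|$ and $|\LL^yp^y(t,\cdot)(x-y)|$ separately for all $x,y\in\R^d$ and keep exact track of the power of $t$ near the diagonal; this is the source of both the sharper $t$-dependence and the loss of the factor $|x-y|^{\B_0}\wedge 1$ in the present statement. Write $w:=x-y$. The term $\LL^yp^y(t,\cdot)(w)=\partial_tp^y(t,w)$ is the easy one: combining \eqref{p1-1-1} with the exact scaling of the $\A(y)$-stable kernel $p^y$ when $\K$ is independent of $z$, or the estimates of \cite[Theorem 2.4]{CZ} for general $\K$, one gets $|\LL^yp^y(t,\cdot)(w)|\p\rho_0^{y,0}(t,w)$, which is $\p t^{-1-d/\A(y)}$ for $|w|\le t^{1/\A(y)}$, $\p|w|^{-d-\A(y)}$ for $t^{1/\A(y)}<|w|\le 1$, and $\p|w|^{-d-\A_1}$ for $|w|>1$; this provides the $|x-y|^{-d-\A(y)}$ summand in \eqref{p2-1-1} and is harmlessly absorbed into \eqref{p2-1-1a} because $t^{1-\A_2/\A_1}\ge 1$.

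The substantive step is to bound $\LL^xp^y(t,\cdot)(w)=\tfrac12\int_{\R^d}\delta_{p^y}(t,w;z)\tfrac{\K(x,z)}{|z|^{d+\A(x)}}\,dz$, using $\K(x,z)\le\K_2$, and treating the near-diagonal regime $|w|\le t^{1/\A(y)}$ and the off-diagonal regime $|w|>t^{1/\A(y)}$ separately. On the diagonal I would split the $z$-integral at $|z|=t^{1/\A(y)}$, estimate $\delta_{p^y}$ by a second-order Taylor expansion combined with \eqref{p1-1-1} when $\K$ is independent of $z$ (and by \eqref{p1-1-2} for general $\K$), and use $\rho_1^{y,0}(t,w\pm z)\p t^{-d/\A(y)}$ for $|z|\lesssim t^{1/\A(y)}$. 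The elementary integrals produce a bound of the form $t^{-(d+\A(x))/\A(y)}$, and here H\"older continuity of $\A$ enters exactly as in \eqref{l3-1-1a}: since $|w|=|x-y|\le t^{1/\A(y)}\le 1$ forces $|\A(x)-\A(y)|\p|x-y|^{\B_0}\p t^{\B_0/\A_2}$, we have $t^{(\A(y)-\A(x))/\A(y)}=\exp\!\big(\tfrac{\A(y)-\A(x)}{\A(y)}\log t\big)\p 1$ as $\sup_{0<t\le 1}t^{\B_0/\A_2}|\log t|<\infty$; multiplying by this bounded factor converts $t^{-(d+\A(x))/\A(y)}$ into $t^{-(d+\A(y))/\A(y)}=t^{-1-d/\A(y)}$, yielding the first line of \eqref{p2-1-1} and \eqref{p2-1-1a} (the general-$\K$ split still lands on $t^{-1-d/\A(y)}$ here).

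Off the diagonal I would split $\int_{\R^d}=\int_{\{|z|\le|w|/2\}}+\int_{\{|w|/2<|z|\le 2|w|\}}+\int_{\{|z|>2|w|\}}$. On $\{|z|\le|w|/2\}$ one has $|w\pm z|\ge|w|/2$, so $\rho_1^{y,0}(t,w\pm z)\p\rho_1^{y,0}(t,w)$, and for $\K$ independent of $z$, $|\nabla^2p^y(t,w+u)|\p t(t^{1/\A(y)}+|w|)^{-d-\A(y)-2}$ for $|u|\le|z|$; with $\int_{\{|z|\le|w|/2\}}|z|^{2-d-\A(x)}\,dz\p|w|^{2-\A(x)}$ this gives $\p t(t^{1/\A(y)}+|w|)^{-d-\A(y)}|w|^{-\A(x)}\p|w|^{-d-\A(x)}$, since $t<|w|^{\A(y)}$ in this regime; this stays valid for $|w|>1$ provided one uses the genuine order-$\A(y)$ tail of $p^y$ from \eqref{p1-1-1} instead of the order-$\A_1$ tail built into $\rho_1^{y,0}$. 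For general $\K$ one uses \eqref{p1-1-2} and the integral $\int_{\{|z|\le|w|/2\}}[(t^{-2/\A(y)}|z|^2)\wedge 1]|z|^{-d-\A(x)}\,dz\p t^{-\A(x)/\A(y)}$ instead, producing the extra factor $t^{1-\A(x)/\A(y)}\le t^{1-\A_2/\A_1}$. On $\{|w|/2<|z|\le 2|w|\}$ one pulls out $|z|^{-d-\A(x)}\asymp|w|^{-d-\A(x)}$ and uses $\int_{\R^d}\rho_1^{y,0}(t,u)\,du\p 1$, and on $\{|z|>2|w|\}$ one has $|w+z|\asymp|z|$ and integrates the tail directly. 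Collecting the pieces and adding the $\LL^yp^y$ bound yields \eqref{p2-1-1} for $\K$ independent of $z$ and \eqref{p2-1-1a} in general (the hypothesis $\B_0/\A_2>\A_2/\A_1-1$ does not enter the pointwise bound; it is needed later to make $\rho^{y,0}_{1-\A_2/\A_1}$ an admissible kernel in the Levi iteration). The hard part will be the near-diagonal reconciliation just described; the rest is careful but routine bookkeeping, the one recurring subtlety being that at distances $|x-y|\ge 1$ one must invoke the sharp order-$\A(y)$ decay of $p^y$ (which $\rho_1^{y,0}$ does not record), something that costs nothing in the general case since \eqref{p2-1-1a} already carries $\A_1$ there.
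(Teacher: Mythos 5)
Your proposal is correct, and off the diagonal it is essentially the paper's own argument: split the $z$-integral around $|z|\asymp|x-y|$, use the mean value theorem with \eqref{p1-1-1} (resp.\ \eqref{p1-1-2} for general $\K$) on the small-$|z|$ piece, use $t\le|x-y|^{\A(y)}$ to trade the factor $t$ for $|x-y|^{\A(y)}$, and note that the extra factor $t^{1-\A(x)/\A(y)}\le t^{1-\A_2/\A_1}$ appears only in the general-$\K$ case; your three-way split versus the paper's two-way split ($|z|\le|x-y|/2$ and $|z|>|x-y|/2$, the latter handled via \eqref{l3-1-1-00}) is cosmetic. Where you genuinely deviate is the near-diagonal regime $|x-y|\le t^{1/\A(y)}$: the paper does not redo any integral there, but simply quotes the cancellation bounds \eqref{l1-2-1a}/\eqref{l2-1-2} (resp.\ \eqref{l2-1-1}) and converts the H\"older factor via $|x-y|^{\B_0}\le t^{\B_0/\A_2}$, as in \eqref{p2-1-2a}; in part (2) this is exactly where the hypothesis $\B_0/\A_2>(\A_2/\A_1)-1$ is invoked, to absorb the $t^{1-(\A_2/\A_1)-\G}$ coming from \eqref{l2-1-1}. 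You instead bound $\LL^xp^y$ and $\LL^yp^y$ separately on the diagonal, getting $t^{-(d+\A(x))/\A(y)}$ and then using $|\A(x)-\A(y)|\p t^{\B_0/\A_2}$ together with $\sup_{0<t\le1}t^{\B_0/\A_2}|\log t|<\infty$ to pass to $t^{-1-d/\A(y)}$. Both routes are sound; yours buys a slightly stronger conclusion (the near-diagonal bound, and hence \eqref{p2-1-1a} as stated, holds without the hypothesis $\B_0/\A_2>(\A_2/\A_1)-1$, which is only needed later in the Levi iteration), at the cost of not reusing Lemma \ref{l1-2}, while the paper's route is shorter given the machinery already established. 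Your caveat about invoking the genuine order-$\A(y)$ tail of $p^y$ from \eqref{p1-1-1} (rather than the $\A_1$-tail built into $\rho^{y,0}_1$) when $|x-y|\ge1$, so as to get the $|x-y|^{-d-\A(y)}$ summand in the first line of \eqref{p2-1-1}, is exactly the point the paper's computation relies on as well.
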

\begin{proof}
(1) We first suppose that $\K(x,z)$ is independent of $z$.

{\bf Case (a): $|x-y|\le t^{{1}/{\A(y)}}.$}

According to \eqref{l1-2-1a}, we can choose $0<\G<{\B_0}/{\A_2}$ and adjust the constants properly to get that
\begin{equation}\label{p2-1-2a}
\begin{split}
|q_0(t,x-y)|&\p \rho^{y,\B_0}_{-\G}(t,x-y)\p t^{-\G}|x-y|^{\B_0}\cdot\rho^{y,0}_{0}(t,x-y)\\
&\p t^{({\B_0}/{\A_2})-\G}\rho^{y,0}_0(t,x-y)\p \rho^{y,0}_0(t,x-y)\p t^{-({d}/{\A(y)})-1},
\end{split}
\end{equation}
where in the third inequality we have used the fact that $|x-y|\le t^{{1}/{\A(y)}}\le t^{{1}/{\A_2}}$, and the
fourth inequality follows from $\G<{\B_0}/{\A_2}$.

{\bf Case (b): $|x-y|>t^{{1}/{\A(y)}}.$}

By \eqref{e2-3}, it holds that
\begin{equation}\label{p2-1-2}
\begin{split}
| \LL^x p^y(t,\cdot)(x-y)|&
\p \int_{\{|z|\le {|x-y|}/{2}\}}\left|
\delta_{p^y}\left(t,x-y;z\right)\right|\frac{1}{|z|^{d+\A(x)}}\,dz\\
&\quad +\int_{\{|z|> {|x-y|}/{2}\}}\left|
\delta_{p^y}\left(t,x-y;z\right)\right|\frac{1}{|z|^{d+\A(x)}}\,dz\\
&=:J_1+J_2.
\end{split}
\end{equation}
Then, using the mean value theorem, we have
\begin{align*}
J_1&\p \int_{\{|z|\le {|x-y|}/{2}\}}
\left|\nabla^2 p^y(t,x-y+\theta_{x,y,z} z)\right|\cdot |z|^2\frac{1}{|z|^{d+\A(x)}}\,dz\\
&\p \int_{\{|z|\le {|x-y|}/{2}\}}
\frac{t}{\left(t^{{1}/{\A(y)}}+|x-y+\theta_{x,y,z} z|\right)^{d+\A(y)+2}}
\cdot |z|^{2-d-\A(x)}\,dz\\
&\p \frac{t}{\left(t^{{1}/{\A(y)}}+|x-y|\right)^{d+\A(y)+2}}
\int_{\{|z|\le {|x-y|}/{2}\}} |z|^{2-d-\A(x)}\,dz\\
&\p \frac{|x-y|^{\A(y)}}{|x-y|^{d+\A(y)+2}}\cdot |x-y|^{2-\A(x)}\p \frac{1}{|x-y|^{d+\A(x)}},
\end{align*}
where in the first inequality  $\theta_{x,y,z}$ is a constant depending on $x,y,z$ such that
$|\theta_{x,y,z}|\le 1$, in the second inequality we have used \eqref{p1-1-1}, the third inequality follows
from the fact that $|x-y+\theta_{x,y,z} z|\ge |x-y|-|z|\ge {|x-y|}/{2}$, and the fourth inequality is due to
the fact that $t\le |x-y|^{\A(y)}$.

On the other hand,
by \eqref{p1-1-2} and the fact that $|z|>{|x-y|}/{2}\ge t^{{1}/{\A(y)}}/2$, we have
$ \left|\delta_{p^y}(t,x-y;z)\right|\p \rho^{y,0}_1(t,x-y\pm z)+\rho^{y,0}_1(t,x-y),$ and so \begin{align*}
J_2&\p  \int_{\{|z|>{|x-y|}/{2}\}}\!\!
\rho^{y,0}_1(t,x-y\pm z)\frac{1}{|z|^{d+\A(x)}}\,dz +
\rho^{y,0}_1(t,x-y)\int_{\{|z|>{|x-y|}/{2}\}}\!\!
\frac{1}{|z|^{d+\A(x)}}\,dz=:J_{21}+J_{22}.
\end{align*}
It holds that
\begin{align*}
J_{21}&\p \frac{1}{|x-y|^{d+\A(x)}} \int_{\{|z|>{|x-y|}/{2}\}}\rho^{y,0}_1(t,x-y\pm z)\,dz\p \frac{1}{|x-y|^{d+\A(x)}}\int \rho^{y,0}_1(t,x-y\pm z)\,dz\p \frac{1}{|x-y|^{d+\A(x)}},
\end{align*}
where in the first inequality we have used following fact
$$ \frac{1}{|z|^{d+\A(x)}}\p \frac{1}{|x-y|^{d+\A(x)}}\quad\textrm{ for } |z|>{|x-y|}/{2},$$
and the last inequality follows from \eqref{l3-1-1-00}.
Furthermore, it is easy to verify that
\begin{align*}
J_{22}&\p \rho^{y,0}_1(t,x-y)
\int_{\{|z|>{|x-y|}/{2}\}}\frac{1}{|z|^{d+\A(x)}}\, dz\p \frac{t}{|x-y|^{d+\A(x)+\A(y)}}\p \frac{1}{|x-y|^{d+\A(x)}},
\end{align*} where in the last inequality we used $|x-y|\le t^{{1}/{\A(y)}}$.

Combining all these estimates above, we arrive at
$$ \left| \LL^xp^y(t,\cdot)(x-y)\right|\p \frac{1}{|x-y|^{d+\A(x)}}.$$
Following the same arguments as above, we also have
$$ \left| \LL^yp^y(t,\cdot)(x-y)\right|\p \frac{1}{|x-y|^{d+\A(y)}}.$$
Hence,
\begin{align*}
|q_0(t,x,y)|&\p
\left| \LL^x p^y(t,\cdot)(x-y)\right|+\left| \LL^yp^y(t,\cdot)(x-y)\right|\p \frac{1}{|x-y|^{d+\A(x)}}+\frac{1}{|x-y|^{d+\A(y)}},
\end{align*}
from which we can get \eqref{p2-1-1} immediately.

(2) Next we consider the case that $\K(x,z)$ depends on $z$.
With the estimate \eqref{l2-1-1} and the condition
${\B_0}/{\A_2}>({\A_2}/{\A_1})-1$ at hand, we can follow the same argument as in \eqref{p2-1-2a} to verify the
upper bound in \eqref{p2-1-1a} for the case that $|x-y|\le t^{{1}/{\A(y)}}$.

When $|x-y|>t^{{1}/{\A(y)}}$, we still define
$J_1$ and $J_2$ via \eqref{p2-1-2}. By carefully tracking the proof above, we find that the argument for the
estimates of $J_2$ in part (1) still works. So it remains to consider upper bound for
$J_1$.
According to \eqref{p1-1-2}, we have
\begin{align*}
J_1&\p \int_{\{|z|\le {|x-y|}/{2}\}}
\big[\big(t^{-{2}/{\A(y)}}|z|^2\big)\wedge 1\big] \cdot
\big(\rho_1^{y,0}(t,x-y\pm z)+\rho_1^{y,0}(t,x-y)\big)\frac{1}{|z|^{d+\A(x)}}\,dz\\
&\p \frac{t}{|x-y|^{d+\A(y)}}\int
\big[\big(t^{-{2}/{\A(y)}}|z|^2\big)\wedge 1\big] \cdot
\frac{1}{|z|^{d+\A(x)}}\,dz\\
&\p \frac{t}{|x-y|^{d+\A(y)}}\cdot \bigg(\int_{\{|z|\le t^{{1}/{\A(y)}}\}}
t^{-{2}/{\A(y)}}|z|^{2-d-\A(x)}\,dz+\int_{\{|z|> t^{{1}/{\A(y)}}\}}|z|^{-d-\A(x)}\,dz\bigg)\\
&\p t^{1-({\A(x)}/{\A(y)})}\cdot \frac{1}{|x-y|^{d+\A(y)}}\p t^{1-({\A_2}/{\A_1})}\cdot \frac{1}{|x-y|^{d+\A(y)}},
\end{align*}
where the second inequality above follows from
$$ \rho^{y,0}_1(t,x-y\pm z)\p \frac{t}{|x-y|^{d+\A(y)}}\quad\textrm{ for any }|z|\le {|x-y|}/{2}.$$
Combining this with the estimate of $J_2$ in part (1) yields
\begin{align*}
|q_0(t,x,y)|&\p
\left| \LL^x p^y(t,\cdot)(x-y)\right|+\left| \LL^xp^y(t,\cdot)(x-y)\right|\p t^{1-({\A_2}/{\A_1})}\cdot \bigg(\frac{1}{|x-y|^{d+\A(x)}}+\frac{1}{|x-y|^{d+\A(y)}}\bigg),
\end{align*}
which implies that \eqref{p2-1-1a} holds for every $x,y \in \R^d$ with $|x-y|>t^{{1}/{\A(y)}}$. The proof is complete.
\end{proof}

\subsection{Continuity of $q_0(t,x,y)$}
\begin{proposition}\label{p3-1-1a-p}
 \begin{itemize}
 \item[(1)] If $\kappa(x,z)=\K (x) $ is independent of $z$, then for any $\gamma,\theta>0$ with $\gamma<\theta<{\beta_0}/{\A_2}$ and any
$\varepsilon>0$,
there exists a positive constant $c_1:=c_1(\A,\K, \G, \theta,\varepsilon)$ such that
for every $(t,x,x',y)\in (0,1]\times \R^d\times \R^d \times \R^d$,
\begin{equation}\label{p3-1-1a-1}
\begin{split}
&|q_0(t,x,y)-q_0(t,x',y)|\\
&\le c_1\left(|x-x'|^{\A_1(\B_0-\A_2\theta)/{\A(x)}}\wedge 1\right)\\
&\quad \times\Big[\left(\rho^{y,0}_{\theta-\gamma}+\rho^{y,\B_0}_{\theta-\gamma-({\B_0}/{\A(x)})}\right)(t,x-y)
+\left(\rho^{y,0}_{\theta-\gamma}+\rho^{y,\B_0}_{\theta-\gamma-({\B_0}/{\A(x')})}\right)(t,x'-y)\\
&\qquad\quad  +|x-y|^{\varepsilon}\I_{\{|x-y|>R_0\}}\rho^{y,0}_{\theta-\gamma}(t,x-y) +|x'-y|^{\varepsilon}\I_{\{|x'-y|>R_0\}}\rho^{y,0}_{\theta-\gamma}(t,x'-y)\Big].
\end{split}
\end{equation}

\item[(2)]
For general $\K (x, z)$, if $({\A_2}/{\A_1})-1<{\B_0}/{\A_2}$,
then for any $\gamma,\theta>0$ such that $ ({\A_2}/{\A_1})-1+\gamma =:\G_2 <\theta<{\B_0}/{\A_2}$ and
$\varepsilon>0$, there exists a positive constant $c_2:=c_2(\A,\K,\G,\theta,\varepsilon)$ such that for every $(t,x,x',y)\in (0,1]\times \R^d\times \R^d \times \R^d$,
\begin{equation}\label{p3-1-2a-1}
\begin{split}
& |q_0(t,x,y)-q_0(t,x',y)|\\
&\le c_2\left(|x-x'|^{\A_1(\B_0-\A_2\theta)/{\A(x)}}\wedge 1\right)
\Big[\left(\rho^{y,0}_{\theta-\G_2 }+\rho^{y,\B_0}_{\theta-\G_2-({\B_0}/{\A(x)})}\right)(t,x-y)\\
&\qquad\quad +\left(\rho^{y,0}_{\theta-\G_2}+\rho^{y,\B_0}_{\theta-\G_2 -({\B_0}/{\A(x')})}\right)(t,x'-y)
 +|x-y|^{\varepsilon}\I_{\{|x-y|>R_0\}}\rho^{y,0}_{\theta-\G_2 }(t,x-y)\\
 & \qquad \quad
+|x'-y|^{\varepsilon}\I_{\{|x'-y|>R_0\}}\rho^{y,0}_{\theta-\G_2 }(t,x'-y)\Big].
\end{split}
\end{equation}
\end{itemize}

\end{proposition}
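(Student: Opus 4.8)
The plan is to adapt the strategy behind Proposition~\ref{l2-1} and Lemma~\ref{l1-2}, now tracking the dependence on the base point, and to produce the fractional H\"older exponent in $|x-x'|$ by interpolating between a boundedness estimate and a first-order (Lipschitz-type) estimate. First I would dispose of the case $|x-x'|\ge R_1$, where $R_1=R_1(\A,\K,\gamma)$ is the radius from Lemma~\ref{l1-2}: then $|x-x'|^{\A_1(\B_0-\A_2\theta)/\A(x)}\wedge1$ is bounded below by a positive constant, so it suffices to combine $|q_0(t,x,y)-q_0(t,x',y)|\le|q_0(t,x,y)|+|q_0(t,x',y)|$ with Proposition~\ref{l2-1}; since $\theta<\B_0/\A_2\le\B_0/\A(x)$ one has $\rho^{y,\B_0}_{-\gamma}(t,\cdot)\le\rho^{y,\B_0}_{\theta-\gamma-\B_0/\A(\cdot)}(t,\cdot)$, which is one of the terms on the right-hand side of \eqref{p3-1-1a-1}. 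So from now on $|x-x'|\le R_1$.

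For $|x-x'|\le R_1$ I would split
\begin{align*}
q_0(t,x,y)-q_0(t,x',y)&=\tfrac12\int_{\R^d}\bigl(\delta_{p^y}(t,x-y;z)-\delta_{p^y}(t,x'-y;z)\bigr)\Bigl(\tfrac{\K(x,z)}{|z|^{d+\A(x)}}-\tfrac{\K(y,z)}{|z|^{d+\A(y)}}\Bigr)\,dz\\
&\quad+\tfrac12\int_{\R^d}\delta_{p^y}(t,x'-y;z)\Bigl(\tfrac{\K(x,z)}{|z|^{d+\A(x)}}-\tfrac{\K(x',z)}{|z|^{d+\A(x')}}\Bigr)\,dz\ =:\ \mathrm{I}+\mathrm{II}.
\end{align*}
In $\mathrm{I}$ one bounds the second-order difference $\delta_{p^y}(t,x-y;z)-\delta_{p^y}(t,x'-y;z)$ by \eqref{p1-1-4} (which carries the gain $(t^{-1/\A(y)}|x-x'|)\wedge1$) and the kernel difference by the pointwise estimate \eqref{e:pp} (which carries $|x-y|^{\B_0}\wedge1$), and the remaining $z$-integral has exactly the shape handled in Lemma~\ref{l1-1} and Remark~\ref{r-1}; thus $\mathrm{I}$ is doubly small. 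In $\mathrm{II}$ one uses \eqref{p1-1-2} for $\delta_{p^y}$ and \eqref{e:pp} (with $x'$ in place of $y$) for the kernel difference, again reducing to the Lemma~\ref{l1-1}/Remark~\ref{r-1} integrals, now with the single gain $|x-x'|^{\B_0}\wedge1$; since $\A_1(\B_0-\A_2\theta)/\A(x)\le\B_0$ this factor already dominates the target H\"older factor. Throughout one converts the emerging indices $\overline\A(\cdot;\cdot)$, $\underline\A(\cdot;\cdot)$, $\A(x)$, $\A(x')$ to $\A(y)$ using \eqref{e1-3} and the smallness of $R_1$; the resulting discrepancies are absorbed into the free parameter $\gamma$ via $1+|\log t|\p t^{-\gamma}$, while the logarithmic tails $\log(1+|w|)$ on $\{|w|>R_0\}$ are absorbed by $\log(1+|w|)\p|w|^{\varepsilon}$ — which is the source of the $|x-y|^{\varepsilon}$ and $|x'-y|^{\varepsilon}$ terms.

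The fractional exponent is then extracted from the elementary bound $(t^{-1/\A(y)}|x-x'|)\wedge1\le(t^{-1/\A(y)}|x-x'|)^{\mu}$, valid for every $\mu\in[0,1]$. Taking $\mu=\A_1(\B_0-\A_2\theta)/\A(x)\in[0,1]$ and using $\A_1\le\A(y)$ together with $\A(x)\le\A_2$, one checks
\[
\frac{\B_0}{\A(x)}-\frac{\mu}{\A(y)}\ \ge\ \frac{\B_0}{\A(x)}-\frac{\B_0-\A_2\theta}{\A(x)}\ =\ \frac{\A_2\theta}{\A(x)}\ \ge\ \theta ,
\]
which, after allowing for the $\gamma$-slacks, is precisely what lets the powers of $t$ produced by $t^{-\mu/\A(y)}$, by $|x-y|^{\B_0}\wedge1\le t^{\B_0/\A(y)}$ on $\{|x-y|\le t^{1/\A(y)}\}$, and by the structure of $\rho^{y,\B_0}$ off the diagonal, combine into the exponents $\theta-\gamma$ on the $\rho^{y,0}$-pieces and $\theta-\gamma-\B_0/\A(x)$ (resp.\ $\theta-\gamma-\B_0/\A(x')$) on the $\rho^{y,\B_0}$-pieces. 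Part~(2) follows by the same scheme: in the general case $p^y$ no longer enjoys the extra gradient decay \eqref{p1-1-1}, so every estimate loses a factor $t^{1-\A_2/\A_1}$ (equivalently, the baseline bound for $q_0$ becomes $\rho^{y,\B_0}_{1-\A_2/\A_1-\gamma}$ rather than $\rho^{y,\B_0}_{-\gamma}$); this shifts each $t$-exponent by $1-\A_2/\A_1$, turning $\theta-\gamma$ into $\theta-\G_2$ with $\G_2=\A_2/\A_1-1+\gamma$, and the hypothesis $(\A_2/\A_1)-1<\B_0/\A_2$ is exactly what makes the admissible range $\G_2<\theta<\B_0/\A_2$ nonempty.

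I expect the main obstacle to be the uniform bookkeeping rather than any single estimate. Each of $\mathrm{I}$ and $\mathrm{II}$ must be treated separately in the near regime (where $|x-y|$ and $|x'-y|$ are $\lesssim R_1$) and in the far regime, and within the near regime across several sub-scales comparing $|x-x'|$, $|x-y|$, $|x'-y|$ with $t^{1/\A(y)}$; in the far regime $q_0(t,\cdot,y)$ is handled by the $J_1+J_2$ decomposition used in the proof of Proposition~\ref{l2-1}, the $J_1$-difference being controlled by a third-order Taylor estimate for $p^y$ and the $J_2$-difference by \eqref{p1-1-4} together with $|\A(x)-\A(x')|\le c|x-x'|^{\B_0}$ and $|\K(x,z)-\K(x',z)|\le c|x-x'|^{\B_0}$. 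It is precisely the demand that a single inequality hold uniformly over all $(t,x,x',y)$ and all these regimes that forces the somewhat elaborate right-hand sides of \eqref{p3-1-1a-1} and \eqref{p3-1-2a-1}, with their several $\rho$-terms and their $\varepsilon$-tails.
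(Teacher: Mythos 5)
Your outline reproduces the paper's strategy almost verbatim — the splitting into $\mathrm{I}+\mathrm{II}$ is exactly the decomposition \eqref{p3-1-2aa}, the near/far analysis corresponds to Cases (c)--(e) of the paper's proof, and the final exponent extraction with $\mu=\A_1(\B_0-\A_2\theta)/\A(x)$ is the same computation — but it has one genuine gap: you only remove the regime $|x-x'|\ge R_1$ by the triangle inequality and then claim the $\mathrm{I}+\mathrm{II}$ argument covers all of $\{|x-x'|\le R_1\}$. The interpolation bound $(t^{-1/\A(y)}|x-x'|)\wedge 1\le (t^{-1/\A(y)}|x-x'|)^{\mu}$ rescues only the term $\mathrm{I}$, which carries that factor through \eqref{p1-1-4}. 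The term $\mathrm{II}$ carries no $t$-dependent gain at all: its only smallness is $|x-x'|^{\B_0}\wedge 1$ from \eqref{e:pp} applied at $x$ versus $x'$, and after the Lemma \ref{l1-1}-type integration it is of size $|x-x'|^{\B_0}\rho^{y,0}_{-\gamma}(t,x'-y)$, with no factor $|x'-y|^{\B_0}\wedge 1$. To dominate this by $|x-x'|^{\mu}\rho^{y,0}_{\theta-\gamma}(t,x'-y)$ you must trade the leftover $|x-x'|^{\B_0-\mu}$ for $t^{\theta}$, which is possible only when $|x-x'|\p t^{1/\A(y)}$ (this is precisely the paper's step $|x-x'|^{\A(y)\theta}\p t^{\theta}$). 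In the intermediate regime $C_0t^{1/\A(y)}\le|x-x'|\le R_1$ the ratio $|x-x'|^{\B_0-\mu}t^{-\theta}$ blows up as $t\downarrow 0$; worse, in a configuration with $|x'-y|\le t^{1/\A(y)}$ and $|x-x'|\asymp R_1$ the term $\mathrm{II}$ is essentially $|(\LL^{x}-\LL^{x'})p^y(t,\cdot)(x'-y)|\asymp t^{1-\A(x)/\A(y)}\rho^{y,0}_0(t,x'-y)$ with no smallness in $|x-x'|$ or in $t$, so $\mathrm{I}$ and $\mathrm{II}$ individually are not bounded by the right-hand side of \eqref{p3-1-1a-1} there. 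Your chain of estimates therefore cannot close in that regime.

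The missing ingredient is the paper's Case (b): for $C_0t^{1/\A(y)}\le|x-x'|\le R_1$ one abandons the difference structure, writes $|q_0(t,x,y)-q_0(t,x',y)|\le|q_0(t,x,y)|+|q_0(t,x',y)|$, and exploits that the bound \eqref{l2-1-2} for $q_0$ itself carries the factor $|x-y|^{\B_0}\wedge 1$, so that
\begin{equation*}
\rho^{y,\B_0}_{-\gamma}(t,x-y)=t^{(\B_0-\A(x)\theta)/\A(x)}\,\rho^{y,\B_0}_{\theta-\gamma-(\B_0/\A(x))}(t,x-y)\p |x-x'|^{\A(y)(\B_0-\A(x)\theta)/\A(x)}\,\rho^{y,\B_0}_{\theta-\gamma-(\B_0/\A(x))}(t,x-y),
\end{equation*}
using $t\p|x-x'|^{\A(y)}$ and then \eqref{p3-1-3}; this is exactly the reason the terms $\rho^{y,\B_0}_{\theta-\gamma-(\B_0/\A(x))}$ and $\rho^{y,\B_0}_{\theta-\gamma-(\B_0/\A(x'))}$ with the negative shift appear on the right-hand side of \eqref{p3-1-1a-1}, which your argument never genuinely uses. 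Once this case is inserted (and the same adjustment made in part (2) with $\gamma$ replaced by $\G_2$, the only changes there being \eqref{p1-1-4}, \eqref{p1-1-2} together with \eqref{r1-1-1a}--\eqref{r1-1-2a} in the far regime), the remainder of your plan — third-order Taylor or \eqref{p1-1-4} for $\mathrm{I}$ in the far regime, \eqref{p1-1-2} plus \eqref{e:pp} for $\mathrm{II}$, absorption of the logarithms into $\gamma$ and into the $\varepsilon$-tails — is the paper's proof.
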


\begin{proof} In the remainder of this paper, we denote $\nabla p^y(t,\cdot)(x)$ by $\nabla p^y(t,x)$ for simplicity.

(1) We first consider the case that $\K(x,z)$ is independent of $z$. Let $R_1:=R_1(\A,\K,\G)$ be a positive constant to be determined later. The proof is split into the following five different cases.

{\bf Case (a): $|x-x'|>R_1$.}

By \eqref{l2-1-2} and the condition $\B_0>\A_2\theta$, it holds that
\begin{align*} |q_0(t,x,y)-q_0(t,x',y)|
&\le |q_0(t,x,y)|+|q_0(t,x',y)|\p \rho_{-\gamma}^{y,\B_0}(t,x-y)+\rho_{-\gamma}^{y,\B_0}(t,x'-y)\\
&\p \big(|x-x'|^{{\A_1(\B_0-\A_2\theta)/{\A(x)}}}\wedge1\big)\big(\rho_{-\gamma}^{y,\B_0}(t,x-y)+\rho_{-\gamma}^{y,\B_0}(t,x'-y)\big).\end{align*}

{\bf Case (b): $C_0t^{1/\A(y)}\le |x-x'|\le R_1$ for some small
constant $C_0>0$.}

According to \eqref{l2-1-2} again, we have
\begin{align*} |q_0(t,x,y)|\p& \rho_{-\gamma}^{y,\B_0}(t,x-y)\p t^{{(\B_0-\A(x)\theta)}/{\A(x)}} \rho_{-\gamma+\theta-({\B_0}/{\A(x)})}^{y,\B_0}(t,x-y) \\
\p&|x-x'|^{{{\A(y)(\B_0-\A(x)\theta)}/{\A(x)}}} \rho_{-\gamma+\theta-({\B_0}/{\A(x)})}^{y,\B_0}(t,x-y) \\
\p&|x-x'|^{{\A_1(\B_0-\A_2\theta)/{\A(x)}}} \rho_{-\gamma+\theta-({\B_0}/{\A(x)})}^{y,\B_0}(t,x-y),  \end{align*}
where the third inequality is due to $t\p |x-x'|^{\A(y)}$.
Then
\begin{align*}&|q_0(t,x,y)-q_0(t,x',y)|\\
&\le |q_0(t,x,y)|+|q_0(t,x',y)|\\
&\p |x-\!x'|^{{\A_1(\B_0-\A_2\theta)/{\A(x)}}} \! \rho_{-\gamma+\theta-({\B_0}/{\A(x)})}^{y,\B_0}(t,x-y)  +
|x-\!x'|^{{\A_1(\B_0-\A_2\theta)/{\A(x')}}}\!
\rho_{-\gamma+\theta-({\B_0}/{\A(x')})}^{y,\B_0}(t,x'\!-y)\\
&\p |x-\!x'|^{{\A_1(\B_0-\A_2\theta)/{\A(x)}}}\Big(
\rho_{-\gamma+\theta-({\B_0}/{\A(x)})}^{y,\B_0}(t,x\!-y)+
\rho_{-\gamma+\theta-({\B_0}/{\A(x')})}^{y,\B_0}(t,x'\!-y)\Big).\end{align*}
Here in the last inequality we have used the fact that for $ x,x'\in
\R^d$ with $|x-x'|\le R_1,$
\begin{equation}\label{p3-1-3}
|x-x'|^{{\A_1(\B_0-\A_2\theta)/{\A(x')}}}\p
|x-x'|^{{\A_1(\B_0-\A_2\theta)/{\A(x)}}},
\end{equation}
which can be verified by following the argument of \eqref{l3-1-1a}.

Next, we mainly treat the case that $|x-x'|\le C_0t^{1/\A(y)}$,
which we divide into three cases.

{\bf Case (c): $|x-x'|\le C_0t^{1/\A(y)}$ and $|x-y|\ge R_1$.}

By the definition of $q_0(t,x,y)$,
\begin{equation}\label{p3-1-2aa}
\begin{split}
 |q_0(t,x,y)-q_0(t,x',y)|
&=\frac{1}{2}\bigg|\int_{\R^d} \delta_{p^y}(t,x-y;z)\bigg(\frac{\kappa(x,z)}{|z|^{d+\alpha(x)}}-\frac{\kappa(y,z)}{|z|^{d+\alpha(y)}} \bigg)\,dz\\
&\qquad\quad-\int_{\R^d} \delta_{p^y}(t,x'-y;z)\bigg(\frac{\kappa(x',z)}{|z|^{d+\alpha(x')}}-\frac{\kappa(y,z)}{|z|^{d+\alpha(y)}} \bigg)\,dz\bigg|\\
&\le \frac{1}{2}\int_{\R^d} |\delta_{p^y}(t,x-y;z)-\delta_{p^y}(t,x'-y;z)| \bigg|\frac{\kappa(x,z)}{|z|^{d+\alpha(x)}}-\frac{\kappa(y,z)}{|z|^{d+\alpha(y)}} \bigg|\,dz\\
&\quad+ \frac{1}{2}\int_{\R^d} |\delta_{p^y}(t,x'-y;z)| \bigg|\frac{\kappa(x',z)}{|z|^{d+\alpha(x')}}-\frac{\kappa(x,z)}{|z|^{d+\alpha(x)}} \bigg|\,dz\\
&=:J_1+J_2.
\end{split}
\end{equation}

Note that \begin{equation}\label{e:conpr0}
\begin{split} \delta_{p^y}(t,x;z)=&p^y(t,x+z)+p^y(t,x-z)-2p^y(t,x)\\
=&\int_0^1 \frac{d}{d\theta}\Big(p^y(t,x+\theta z)+p^y(t,x-\theta z)\Big)\,d\theta\\
=&\int_0^1 \langle \nabla p^y(t,x+\theta z)-\nabla p^y(t,x-\theta z),z\rangle \,d\theta\\
=&\int_0^1 \Big\langle \int_{-1}^1 \frac{d}{d \theta'}\left(\nabla p^y(t,x+\theta'\theta z)\right)d\theta',z \Big\rangle \,d\theta\\
=&\int_0^1\int_{-1}^1 \nabla^2 p^y(t,x+\theta'\theta z)(\theta z, z)d\theta'\,d\theta,\end{split}\end{equation}
and so
\begin{align*}  \delta_{p^y}(t,x;z)-\delta_{p^y}(t,x';z)
&=\int_0^1\int_{-1}^1 \big(\nabla^2 p^y(t,x+\theta'\theta z)- \nabla^2 p^y(t,x'+\theta'\theta z)\big)(\theta z, z)\,d\theta'\,d\theta\\
&=\int_0^1\int_{-1}^1\int_0^1  \frac{d}{d\theta''}\Big(\nabla^2 p^y(t,x'+\theta''(x-x')+\theta'\theta z)\Big)
(\theta z, z)\,d\theta''d\theta'\,d\theta\\
&=\int_0^1\int_{-1}^1\int_0^1  \nabla^3 p^y(t,x'+\theta''(x-x')+\theta'\theta z)
(x-x',\theta z, z)\,d\theta''d\theta'\,d\theta,\end{align*} which implies that
\begin{align*}
 |\delta_{p^y}(t,x;z)-\delta_{p^y}(t,x';z) |
&\le \Big(\int_0^1\int_{-1}^1\int_0^1 |\nabla^3 p^y(t,x'+\theta''(x-x')+\theta'\theta z)|\,\,d\theta''d\theta'\,d\theta\Big)
\cdot |z|^2|x-x'|.
\end{align*}
For every $x,x',z\in\R^d$ with $|x-x'|\le C_0 t^{{1/\A(y)}}$, $|x|\ge cR_1$, $|x'|\ge cR_1$ and $|z|\le {cR_1}/{4}$
for $C_0$ small enough, there exists a constant $c'>0$ such that for every $\theta\in(0,1)$, $\theta'\in(-1,1)$ and $\theta''\in(0,1)$, we have
$|x'+\theta''(x-x')+\theta'\theta z|\ge c'|x'|\ge c''R_1,$ which along with \eqref{p1-1-1} yields that
$$|\nabla^3 p^y(t,x'+\theta''(x-x')+\theta'\theta z)| \p \frac{t}{|x'|^{d+\A(y)+3}}.$$ Combining with all the estimates above, we arrive at that
for $|x-x'|\le C_0t^{1/\A(y)}$, $|x|\ge cR_1$, $|x'|\ge cR_1$ and $|z|\le {cR_1}/{4}$,
\begin{equation}\label{e:conpr1} |\delta_{p^y}(t,x;z)-\delta_{p^y}(t,x';z) |\p \frac{t|z|^2|x-x'|}{|x'|^{d+\A(y)+3}}.
   \end{equation}

Since $|x-x'|\le C_0t^{{1/\A(y)}}\le C_0$, $|x'-y|\ge cR_1$ holds for some constant $c>0$.
By taking $x=x-y$ and $x'=x'-y$ in \eqref{e:conpr1}, we obtain that for all $z\in\R^d$ with $|z|\le {cR_1}/{4}$,
\begin{equation}\label{e:conpr2} |\delta_{p^y}(t,x-y;z)-\delta_{p^y}(t,x'-y;z) |\p \frac{t|z|^2|x-x'|}{|x'-y|^{d+\A(y)+3}}.  \end{equation}

Let \begin{equation}\label{p3-1-3a}
\begin{split}
J_1=&\frac{1}{2}\int_{\{|z|\le {cR_1}/{4}\}} |\delta_{p^y}(t,x-y;z)-\delta_{p^y}(t,x'-y;z)| \bigg|\frac{\kappa(x,z)}{|z|^{d+\alpha(x)}}-\frac{\kappa(y,z)}{|z|^{d+\alpha(y)}} \bigg|\,dz\\
&+ \frac{1}{2}\int_{\{|z|> {cR_1}/{4}\}} |\delta_{p^y}(t,x-y;z)-\delta_{p^y}(t,x'-y;z)| \bigg|\frac{\kappa(x,z)}{|z|^{d+\alpha(x)}}-\frac{\kappa(y,z)}{|z|^{d+\alpha(y)}} \bigg|\,dz\\
=&:J_{11}+J_{12}.
 \end{split}
\end{equation}
 By \eqref{e:conpr2},
 \begin{align*}J_{11}\p &|x-x'|\frac{t}{|x'-y|^{d+\A(y)+3}} \int_{\{|z|\le {cR_1}/{4}\}}\bigg( \frac{|z|^2}{|z|^{d+\A(x)}}+\frac{|z|^2}{|z|^{d+\A(y)}} \bigg)\,dz\\
 \p& |x-x'|\frac{t}{|x'-y|^{d+\A(y)+3}} \p |x-x'|\frac{1}{|x'-y|^{d+\A_1}},
 \end{align*} where the last inequality is due to $|x'-y|>cR_1$ and $t\le 1$.
On the other hand, according to \eqref{r1-1-1}, we have for any $x,y\in\R^d$ with $|x-y|>R_1$,
\begin{equation}\label{l3-1-3b}
\begin{split}
&\int_{\{|z|>{cR_1}/{4}\}} \rho_1^{y,0}(t,x-y\pm z) \frac{1}{|z|^{d+\A (x)}}\,dz \p \frac{1}{|x-y|^{d+\A_1}},\\
&\int_{\{|z|>{cR_1}/{4}\}} \rho_1^{y,0}(t,x-y) \frac{1}{|z|^{d+\A (x)}}\,dz \p \frac{1}{|x-y|^{d+\A_1}}.
\end{split}
\end{equation}
Combining \eqref{p1-1-4} with \eqref{l3-1-3b} and the fact that $|z|>{cR_1}/{4}$ implies $|z|>C_1t^{{1/\A(y)}}$, we arrive at
\begin{align*}
J_{12}&\p t^{-{1/\A(y)}}|x-x'| \int_{\{|z|>{cR_1}/{4}\}}\Big(\rho_1^{y,0}(t,x-y\pm z)+\rho_1^{y,0}(t,x-y)\\
&\qquad \qquad\qquad\qquad +\rho_1^{y,0}(t,x'-y\pm z)+\rho_1^{y,0}(t,x'-y)\Big)
\bigg(\frac{1}{|z|^{d+\alpha(x)}}+\frac{1}{|z|^{d+\alpha(y)}} \bigg)\,dz\\
&\p t^{-{1/\A(y)}}|x-x'| \left( \frac{1}{|x-y|^{d+\A_1}}+
\frac{1}{|x'-y|^{d+\A_1}} \right). \end{align*} By both of the
estimates above, we obtain that
 \begin{align*}J_1&\p t^{-{1/\A(y)}}|x-x'| \big( \rho_0^{y,0}(t,x-y)+\rho_0^{y,0}(t,x'-y)\big)\p  |x-x'| \big( \rho_{-{1/\A(y)}}^{y,\B_0}(t,x-y)+\rho_{-{1/\A(y)}}^{y,\B_0}(t,x'-y)\big), \end{align*}
where the last inequality follows from the facts that $|x-y|\ge cR_1$ and $|x'-y|\ge c R_1$.

Let \begin{equation}\label{p3-1-4a}
\begin{split}
J_2=&\int_{\{|z|\le {cR_1}/{4}\}} |\delta_{p^y}(t,x'-y;z)| \bigg|\frac{\kappa(x',z)}{|z|^{d+\alpha(x')}}-\frac{\kappa(x,z)}{|z|^{d+\alpha(x)}} \bigg|\,dz\\
&+ \int_{\{|z|> {cR_1}/{4}\}} |\delta_{p^y}(t,x'-y;z)| \bigg|\frac{\kappa(x',z)}{|z|^{d+\alpha(x')}}-\frac{\kappa(x,z)}{|z|^{d+\alpha(x)}} \bigg|\,dz\\
=&:J_{21}+J_{22}.
\end{split}
\end{equation}
Since $|x'-y|>cR_1$, by \eqref{e:conpr0} and \eqref{p1-1-1}, we find that for any $|z|<{cR_1}/{4} $,
\begin{align*}|\delta_{p^y}(t,x'-y;z)|&\le |z|^2\int_0^1\int_{-1}^1 |\nabla^2 p^y(t,x'-y+\theta'\theta z)|\,d\theta'\,d\theta \p |z|^2 \frac{ t}{ |x'-y|^{d+\A(y)+2}}.\end{align*}
 Thus,
 due to \eqref{e:pp},
 \begin{align*}J_{21} \p& |x-x'|^{\B_0} \frac{t}{|x'-y|^{d+\A(y)+2}}\int_{\{|z|\le {cR_1}/{4}\}} \!\!\frac{1+|\log |z||}{|z|^{d+\overline\A(x;x')}}|z|^2\,dz
 \p  |x-x'|^{\B_0} \frac{1}{|x'-y|^{d+\A_1}},\end{align*} where we have used the facts that $|x'-y|>cR_1$ and $t<1$ in the second inequality.
On the other hand, when $|z|> {cR_1}/{4}$,
$$|\delta_{p^y}(t,x'-y;z)|\p p^y(t,x'-y\pm z)+p^y(t,x'-y).$$
Having this at hand and using \eqref{r1-1-2} and \eqref{p1-1-1-1-0}, we know that
$$\int_{\{|z|>{cR_1}/{4}\}} |\delta_{p^y}(t,x'-y;z)|\frac{1+|\log|z||}{|z|^{d+\underline{\A}(x;x')}}\,dz
\p \frac{\log(1+|x'-y|)}{|x'-y|^{d+\A_1}}.$$
Then, by \eqref{e:pp}, we find that
\begin{align*}
J_{22}&\p |x-x'|^{\B_0}\cdot \int_{\{|z|>{cR_1}/{4}\}}|\delta_{p^y}(t,x'-y;z)|\frac{1+|\log|z||}{|z|^{d+\underline{\A}(x;x')}}\,dz  \p   |x-x'|^{\B_0}\frac{\log(1+|x'-y|)}{|x'-y|^{d+\A_1}}.
\end{align*}
By the estimates for $J_{21}$ and $J_{22}$, we have
\begin{equation*}
J_2\p |x-x'|^{\B_0}\log(1+|x'-y|)\rho^{y,0}_0(t,x'-y).
\end{equation*}

Putting the estimates of $J_1$ and $J_2$ together, we finally arrive at
\begin{align*}
|q_0(t,x,y)-q_0(t,x',y)|\p & |x-x'|\big( \rho_{-{1/\A(y)}}^{y,\B_0}(t,x-y)+\rho_{-{1/\A(y)}}^{y,\B_0}(t,x'-y)\big)\\
&+|x-x'|^{\B_0}\log(1+|x'-y|)\rho^{y,0}_0(t,x'-y).
\end{align*}

{\bf Case (d): $|x-x'|\le C_0t^{{1/\A(y)}}$ and $t^{{1/\A(y)}}\le |x-y|< R_1$.}

We still define $J_1$ and $J_2$ by those in \eqref{p3-1-2aa}. Combining \eqref{p1-1-4} with \eqref{e:pp}, we arrive at
\begin{align*}
J_1 \p & t^{-{1/\A(y)}}|x-x'||x-y|^{\B_0}\Bigg[\int_{\R^d} \left[\left(t^{-{2}/{\A(y)}}|z|^2\right)\wedge 1\right]\\
&\qquad\qquad\qquad\times \big(
\rho_1^{y,0}(t,x-y)+\rho_1^{y,0}(t,x'-y)+\rho_1^{y,0}(t,x-y\pm z)+\rho_1^{y,0}(t,x'-y\pm z)\big)\\
&\qquad\qquad\qquad \times\bigg(\frac{1+|\log|z||}{|z|^{d+\overline{\A}(y;x)}}\I_{\{|z|\le 1\}}+\frac{1+|\log|z||}{|z|^{d+\underline{\A}(y;x)}}\I_{\{|z|> 1\}}\bigg)\,dz\Bigg].
\end{align*}
Noting that $t^{{1/\A(y)}}\le |x-y|< R_1$ and using \eqref{l1-1-1}--\eqref{l1-1-3}, we obtain that
\begin{equation}\label{p3-1-4}
\begin{split}
& \int_{\R^d} \left[\left(t^{-{2}/{\A(y)}}|z|^2\right)\wedge 1\right]\big
(\rho_1^{y,0}(t,x-y\pm z)+\rho_1^{y,0}(t,x-y)\big)\\
&\qquad\qquad \times \bigg(\frac{1+|\log|z||}{|z|^{d+\overline{\A}(y;x)}}\I_{\{|z|\le 1\}}
+\frac{1+|\log|z||}{|z|^{d+\underline{\A}(y;x)}}\I_{\{|z|> 1\}}\bigg)\,dz\\
&\p \left(1+|\log t|\right)t^{-{(\overline{\A}(y;x)-\A(y))}/{\A(y)}}
\rho_0^{y,0}(t,x-y)\p \rho_{-\gamma}^{y,0}(t,x-y),
\end{split}
\end{equation}
where the last step is due to the fact that we can choose the constant $R_1$ small enough such that $$\frac{\overline{\A}(y;x)-\A(y)}{\A(y)}\le
\frac{C|x-y|^{\B_0}}{\A_1}<\gamma\quad \textrm{ for every }|x-y|\le R_1.$$
Since $|x-x'|\le C_0 t^{{1/\A(y)}}$ for some $C_0$ small enough,
$C_2t^{{1/\A(y)}}\le |x'-y|\le C_3R_1$ holds for some positive constants $C_2$ and $C_3$. Hence, \eqref{p3-1-4} still holds with $x$ replaced by $x'$.
Therefore, combining both the estimates above together, we arrive at
\begin{align*}
J_1 &\p |x-x'|\cdot |x-y|^{\B_0}\cdot
\left(\rho^{y,0}_{-\gamma-({1/\A(y)})}(t,x-y)+\rho^{y,0}_{-\gamma-({1/\A(y)})}(t,x'-y)\right)\\
&\p |x-x'|\left(\rho^{y,\B_0}_{-\gamma-({1/\A(y)})}(t,x-y)+\rho^{y,\B_0}_{-\gamma-({1/\A(y)})}(t,x'-y)\right),
\end{align*}
where in the last step we have used the fact that
\begin{equation*}
|x'-y|\ge |x-y|-|x-x'| \ge |x-y|-C_0t^{{1/\A(y)}}\ge (1-C_0)|x-y|.
\end{equation*}

Now we are going to estimate $J_2$. \eqref{p1-1-2}
along with \eqref{e:pp} yields that
\begin{equation}\label{p3-1-5a}
\begin{split}
J_2 & \p |x-x'|^{\B_0}\int_{\R^d}\left[\left(t^{-{2}/{\A(y)}}|z|^2\right)\wedge 1\right]\cdot\Big(
\rho_1^{y,0}(t,x'\!-y)\!+\rho_1^{y,0}(t,x'\!-y\pm z)\Big)\\
&  \qquad\times \bigg(\frac{|\log|z||+1}{|z|^{d+\overline{\A}(x;x')}}\I_{\{|z|\le 1\}}
\!+\frac{|\log|z||+1}{|z|^{d+\underline{\A}(x;x')}}\I_{\{|z|> 1\}}\!\!\bigg)\,dz.
\end{split}
\end{equation}
As noted above, it holds that $C_2t^{{1/\A(y)}}\le |x'-y|\le C_3R_1$. Although the indexes $\overline{\A}(y;x)$ and
$\underline{\A}(y;x)$ are replaced by $\overline{\A}(x;x')$ and $\underline{\A}(x;x')$ respectively, we can
still follow the proof of Lemma \ref{l1-1} to obtain
that
\begin{equation}\label{p3-1-5}
\begin{split}
& \int_{\R^d}\left[\left(t^{-{2}/{\A(y)}}|z|^2\right)\wedge 1\right]\cdot\big(
\rho_1^{y,0}(t,x'-y)+\rho_1^{y,0}(t,x'-y\pm z)\big)\\
&\qquad \qquad\qquad \qquad\qquad  \times  \bigg(\frac{1+|\log|z||}{|z|^{d+\overline{\A}(x;x')}}\I_{\{|z|\le 1\}}
+\frac{1+|\log|z||}{|z|^{d+\underline{\A}(x;x')}}\I_{\{|z|> 1\}}\bigg)\,dz\\
& \p \left(1+|\log t|\right)t^{1-({\overline{\A}(x;x')}/{\A(y)})}\rho_0^{y,0}(t,x'-y).
\end{split}
\end{equation}
Note that when  $|x-y|\le R_1$ and $|x-x'|\le C_0t^{{1/\A(y)}}$,
\begin{align*}
|\overline{\A}(x;x')-\A(y)|&
\le |\overline{\A}(x;x')-\A(x)|+|\A(x)-\A(y)|\le C_4(t^{{\B_0}/{\A(y)}}+R_1^{\B_0}).
\end{align*}
So, by changing $R_1$ properly such that
$$t^{-{(\overline{\A}(x;x')-\A(y))}/{\A(y)}}\p \exp(C_5t|\log t|)\cdot t^{-C_4R_1^{\B_0}}
\p t^{-\G/2}$$ and putting all these estimates into \eqref{p3-1-5a},  we arrive at
$$J_2 \p |x-x'|^{\B_0}\rho_{-\gamma}^{y,0}(t,x'-y).$$
Therefore, according to the estimates for $J_1$ and $J_2$, we have
\begin{align*}
|q_0(t,x,y)-q_0(t,x',y)| \p & |x-x'|\left(\rho^{y,\B_0}_{-\gamma-({1/\A(y)})}(t,x-y)+\rho^{y,\B_0}_{-\gamma-({1/\A(y)})}(t,x'-y)\right) +|x-x'|^{\B_0}\rho_{-\gamma}^{y,0}(t,x'-y).
\end{align*}

{\bf Case (e): $|x-x'|\le C_0t^{{1/\A(y)}}$ and $|x-y|\le t^{{1/\A(y)}}$.}

We still define $J_1$ and $J_2$ by those in \eqref{p3-1-2aa}. It is
easy to see that \eqref{p3-1-4} and \eqref{p3-1-5} still hold for
such case. Note that $|x'-y|\le |x-y|+|x-x'|\le
(1+C_0)t^{{1/\A(y)}}$. Using \eqref{p3-1-4} and
\eqref{p3-1-5}, and repeating the argument in {\bf Case (d)}, we
have
\begin{align*}
& J_1 \p |x-x'|\cdot |x-y|^{\B_0}\cdot
\left(\rho^{y,0}_{-\gamma-({1/\A(y)})}(t,x-y)+\rho^{y,0}_{-\gamma-({1/\A(y)})}(t,x'-y)\right),\\
& J_2  \p |x-x'|^{\B_0}\rho_{-\gamma}^{y,0}(t,x'-y).
\end{align*}
Then, we arrive at
\begin{align*}
&|q_0(t,x,y)-q_0(t,x',y)|\\
&\p|x-x'|\rho^{y,\B_0}_{-\gamma-({1/\A(y)})}(t,x-y)+|x-x'||x-y|^{\B_0}\rho^{y,0}_{-\gamma-({1/\A(y)})}(t,x'-y) +|x-x'|^{\B_0}\rho_{-\gamma}^{y,0}(t,x'-y)\\
&\p|x-x'|\rho^{y,\B_0}_{-\gamma-({1/\A(y)})}(t,x-y)+|x-x'|^{\B_0}\rho_{-\gamma}^{y,0}(t,x'-y),
\end{align*}
where the last step follows from the property that
\begin{equation*}
|x-x'||x-y|^{\B_0}=|x-x'|^{\B_0}|x-x'|^{1-\B_0}|x-y|^{\B}\p t^{{1/\A(y)}}|x-x'|^{\B_0}.
\end{equation*}

Combining all {\bf Cases (c)--(e)} together, we can find a constant $C_0>0$ small enough such that for every $|x-x'|\le C_0t^{{1/\A(y)}}$
\begin{align*}
 |q_0(t,x,y)-q_0(t,x',y)|
 &\p |x-x'|\big(
\rho^{y,\B_0}_{-\gamma-({1/\A(y)})}(t,x-y)
+\rho^{y,\B_0}_{-\gamma-({1/\A(y)})}(t,x'-y)\big)\\
&\,\,+|x-x'|^{\B_0}\big(1+\log(1+|x'-y|)\I_{\{|x'-y|>R_0\}}\big)\rho^{y,0}_{-\gamma}(t,x'-y).
\end{align*}

Furthermore, for $0<\gamma<\theta<{\B_0}/{\A_2}$, we have
\begin{align*}
|x-x'|^{\B_0}\rho^{y,0}_{-\gamma}(t,x'-y)&=
|x-x'|^{\B_0-\A(y)\theta}|x-x'|^{\A(y)\theta}
\rho^{y,0}_{-\gamma}(t,x'-y)\\
& \p |x-x'|^{\B_0-\A(y)\theta}\rho^{y,0}_{\theta-\gamma}(t,x'\!-y) \! \p |x-x'|^{\B_0-\A_2\theta}\rho^{y,0}_{\theta-\gamma}(t,x'\!-y),
\end{align*}
where the second step above follows from $|x-x'|\le C_0t^{{1/\A(y)}}$.
On the other hand, it holds that
\begin{align*}
 |x-x'|\rho^{y,\B_0}_{-\gamma-({1/\A(y)})}(t,x'-y)
&=
|x-x'|^{\A_1(\B_0-\A(x')\theta)/{\A(x')}}|x-x'|^{1-(\A_1(\B_0-\A(x')\theta)/{\A(x')})}
\rho^{y,\B_0}_{-\gamma-({1/\A(y)})}(t,x'-y)\\
& \p |x-x'|^{\A_1(\B_0-\A_2\theta)/{\A(x')}}
t^{{(1/\A(y))}-({\A_1(\B_0-\A(x')\theta)}/({\A(x')\A(y)}))}
\rho^{y,\B_0}_{-\gamma-({1/\A(y)})}(t,x'-y)\\
&\p|x-x'|^{\A_1(\B_0-\A_2\theta)/{\A(x)}}
\rho^{y,\B_0}_{-\gamma-(({\A_1}/{\A(y)})(({\B_0}/{\A(x')})-\theta))}(t,x'-y)\\
&\p |x-x'|^{\A_1(\B_0-\A_2\theta)/{\A(x)}}
\rho^{y,\B_0}_{-\gamma+\theta-({\B_0}/{\A(x')})}(t,x'-y),
\end{align*}
where in the first inequality we have used again that $|x-x'|\le
C_0t^{{1/\A(y)}}$, and the second inequality follows from
\eqref{p3-1-3}. Similarly, it holds that
$$|x-x'|\rho^{y,\B_0}_{-\gamma-({1/\A(y)})}(t,x-y) \p |x-x'|^{\A_1(\B_0-\A_2\theta)/{\A(x)}}
\rho^{y,\B_0}_{-\gamma+\theta-({\B_0}/{\A(x)})}(t,x-y).$$
Hence, we arrive at that for every $|x-x'|\le C_0t^{{1/\A(y)}}$ and $\varepsilon>0$
\begin{equation}\label{p3-1-6}
\begin{split}
& |q_0(t,x,y)-q_0(t,x',y)| \\
& \p |x-x'|^{\A_1(\B_0-\A_2\theta)/{\A(x)}}
\Big(
\rho^{y,\B_0}_{-\gamma+\theta-({\B_0}/{\A(x)})}(t,x-y)
+\rho^{y,\B_0}_{-\gamma+\theta-({\B_0}/{\A(x')})}(t,x'-y)\\
&\qquad   +\rho^{y,0}_{\theta-\gamma}(t,x'-y)+|x'-y|^{\varepsilon}\I_{\{|x'-y|>R_0\}}\rho^{y,0}_{\theta-\gamma}(t,x'-y)\Big).
\end{split}
\end{equation}

Therefore, according to all the five different cases above, \eqref{p3-1-1a-1} holds for $q_0(t,x,y)$.

\medskip

(2) Now we study the estimate for $|q_0(t,x,y)-q_0(t,x',y)|$ under the assumption that
$\K(x,z)$ depends on $z$.
Recall that, by  \eqref{l2-1-1}, we have
\begin{equation}\label{p3-1-8}
|q_0(t,x,y)|\p \rho^{y,\B_0}_{-\G_2 }(t,x-y),\quad (t,x,y)\in (0,1]\times \R^d \times \R^d,
\end{equation}
where $\G_2 =:\gamma+({\A_2}/{\A_1})-1$.

Using \eqref{p3-1-8} and following the arguments in {\bf Case (a)} and {\bf Case (b)} of  part (1) above, we derive that for every $(t,x,y)\in (0,1]\times \R^d \times \R^d$ such that
$|x-x'|\ge C_0t^{{1/\A(y)}}$ (which includes both $|x-x'|>R_1$
and $C_0t^{{1/\A(y)}}\le |x-x'|\le R_1$, and where as in (1) the constant $C_0$ is chosen to be small enough) and $\G_2 <\theta<{\B_0}/{\A_2}$,
\begin{align*}
|q_0(t,x,y)-q_0(t,x',y)|& \p
\big(|x-x'|^{\A_1(\B_0-\A_2\theta)/{\A(x)}}\wedge 1\big)\\
&\quad\times\big(\rho^{y,\B_0}_{-\G_2 +\theta-({\B_0}/{\A(x)})}(t,x-y)
+\rho^{y,\B_0}_{-\G_2 +\theta-({\B_0}/{\A(x')})}(t,x'-y)\big).
\end{align*}

Note that
in the proofs of {\bf Case (d)} and {\bf Case (e)} above, we do not need the assumption that
$\K(x,z)$ is independent of $z$, so the conclusions there are still true, which in particular means that
\eqref{p3-1-2a-1} holds for {\bf Case (d)} and {\bf Case (e)}.
Now we turn to the case that $|x-x'|\le C_0t^{{1/\A(y)}}$ and $|x-y|>R_1$, i.e., the {\bf Case (c)} above.
We still define
$J_{11}$, $J_{12}$, $J_{21}$ and $J_{22}$ as those in \eqref{p3-1-2aa}, \eqref{p3-1-3a} and \eqref{p3-1-4a}, respectively.

For $J_{11}$, we apply \eqref{p1-1-4} instead of \eqref{e:conpr2} in part (1), and derive that
\begin{align*}
J_{11}\p \,&t^{-{1/\A(y)}}|x-x'|\int_{\{|z|\le {cR_1}/{4}\}}
[(t^{-{2}/{\A(y)}} |z|^2)\wedge 1]\\
&\times\Big(
\rho_1^{y,0}(t,x-y)+\rho_1^{y,0}(t,x'-y)+\rho_1^{y,0}(t,x-y\pm z)+\rho_1^{y,0}(t,x'-y\pm z)\Big)\cdot\bigg(\frac{1}{|z|^{d+\A(x)}}
+\frac{1}{|z|^{d+\A(y)}}\bigg)\,dz.
\end{align*}
Note that $|x-y|>R_1$. By \eqref{r1-1-1a}, we arrive at
\begin{align*}
&\int_{\{|z|\le {cR_1}/{4}\}}[(t^{-{2}/{\A(y)}} |z|^2)\wedge 1]
\big(
\rho_1^{y,0}(t,x-y)
+\rho_1^{y,0}(t,x-y\pm z)\big) \bigg(\frac{1}{|z|^{d+\A(x)}}
+\frac{1}{|z|^{d+\A(y)}}\bigg)\,dz\p \frac{t^{1-({\A_2}/{\A_1})}}{|x-y|^{d+\A_1}},
\end{align*}
which yields that
\begin{equation*}
J_{11}\p\, |x-x'|\big(\rho_{-\G_2-({1/\A(y)})}^{y,\B_0}(t,x-y)
+\rho_{-\G_2-({1/\A(y)})}^{y,\B_0}(t,x'-y)\big).
\end{equation*}
Similarly, using \eqref{p1-1-2} instead of \eqref{p1-1-1} in the estimate of $J_{21}$ and applying
\eqref{r1-1-2a}, we obtain
\begin{align*}
J_{21} &\p |x-x'|^{\B_0}\cdot \bigg(
\int_{\{|z|\le {cR_1}/{4}\}}[(t^{-{2}/{\A(y)}} |z|^2)\wedge 1]
\big[\rho^{y,0}_1(t,x'-y\pm z)+\rho^{y,0}_1(t,x'-y)\big]
\frac{1+|\log |z||}{|z|^{d+\overline{\A}(x;x')}}\,dz\bigg)\\
&\p |x-x'|^{\B_0}(1+|\log t|+|\log|x'-y||)\rho_{1-({\A_2}/{\A_1})}^{y,0}(t,x'-y).
\end{align*}
As explained before, the proofs for estimates of $J_{12}$ and
$J_{22}$ in (1) do not require the condition that $\K(x,z)$ is
independent of $z$, so those estimates still hold here.

By the estimates for $J_{11}$, $J_{12}$, $J_{21}$ and $J_{22}$, we know that for
every $|x-x'|\le C_0t^{{1/\A(y)}}$ and $|x-y|>R_1$,
\begin{align*}
|q_0(t,x,y)-q_0(t,x',y)|
 \p& |x-x'|\cdot\big[
\rho^{y,\B_0}_{-\gamma_2-({1/\A(y)})}(t,x-y)
+\rho^{y,\B_0}_{-\gamma_2-({1/\A(y)})}(t,x'-y)\big]\\
&+|x-x'|^{\B_0}\big(1+\log(1+|x'-y|)\big)\rho^{y,0}_{-\gamma_2}(t,x'-y).
\end{align*}
Combining the estimates above for all the cases, we can find a constant $C_0>0$ small enough such that for
every $|x-x'|\le C_0 t^{{1/\A(y)}}$ and
$\gamma,\varepsilon>0$,
\begin{align*}
|q_0(t,x,y)-q_0(t,x',y)|\p& |x-x'|\cdot\big[
\rho^{y,\B_0}_{-\G_2 -({1/\A(y)})}(t,x-y)
+\rho^{y,\B_0}_{-\G_2 -({1/\A(y)})}(t,x'-y)\big]\\
&+|x-x'|^{\B_0}|x-y|^{\varepsilon}\I_{\{|x-y|>R_0\}}\rho^{y,0}_{-\gamma_2}(t,x-y)\\
&+|x-x'|^{\B_0}|x'-y|^{\varepsilon}\I_{\{|x'-y|>R_0\}}\rho^{y,0}_{-\gamma_2}(t,x'-y).
\end{align*}
Then, by the same argument for \eqref{p3-1-6} and all the conclusions above, we can immediately show that \eqref{p3-1-2a-1} holds.
\end{proof}

\medskip

\section{Estimates for $q(t,x,y)$ }\label{section3}

\subsection{Convolution inequalities}\label{section3.1}
In this subsection, we will establish a convolution inequality
involving $\rho_\G^{y,\B}$, which will be frequently used in the
remainder of our paper. Though part idea of the arguments below is
inspired by these of \cite{CZ,Kol}, some essential and non-trivial
modifications are required in our variable order setting.

\begin{lemma}\label{l3-2} \begin{itemize}
\item[(1)]
For every $\theta_1,\theta_2\in \R$ and  $\beta_1,\beta_2 \in (0,1)$, it holds for all $0<s<t\le 1$ and
$x,y \in \R^d$ that
\begin{equation}\label{l3-2-0}
\begin{split}
&\int_{\R^d} \rho_{\T_1}^{z,\beta_1}(t-s,x-z)\rho_{\T_2}^{y,\beta_2}(s,z-y)\, dz\\
 &\p \Big[(t-s)^{\T_1+[((\beta_1+\beta_2)/{\A(x)})\wedge 1]-1}s^{\T_2}
 \big(1+|\log (t-s)|\I_{\{\B_1+\B_2=\A(x)\}}\big)\\
 &\qquad+(t-s)^{\T_1}
s^{\T_2 +[((\beta_1+\beta_2)/{\A(y)})\wedge 1]-1}\big(1+|\log s|\I_{\{\B_1+\B_2=\A(y)\}}\big)\Big]\rho_{0}^{y,0}(t,x-y)\\
&\qquad +(t-s)^{\T_1+[({\beta_1}/{\A(x)})\wedge 1]-1}s^{\T_2}
\big(1+|\log (t-s)|\I_{\{\B_1=\A(x)\}}\big)\rho_{0}^{y,\beta_2}(t,x-y)\\
&\qquad+(t-s)^{\T_1}s^{\T_2+({\beta_2}/{\A(y)})\wedge 1 -1}
\big(1+|\log s|\I_{\{\B_2=\A(y)\}}\big)\rho_0^{y,\beta_1}(t,x-y).
\end{split}
\end{equation}

\item[(2)] For every $\theta_1,\theta_2\in \R$ and  $\beta_1,\beta_2 \in (0,1)$, it holds
for every $x,y \in \R^d$ and $0<s<t\le 1$ that
\begin{equation}\label{l3-2-1}
\begin{split}
&\int_{\R^d} \rho_{\T_1}^{z,\beta_1}(t-s,x-z)\rho_{\T_2}^{y,\beta_2}(s,z-y)\, dz\\
 &\p \Big[(t-s)^{\T_1+[((\beta_1+\beta_2)/{\A_2})\wedge 1]-1}s^{\T_2}\big(1+|\log (t-s)|\I_{\{\B_1+\B_2=\A_2\}}\big)\\
 &\quad+(t-s)^{\T_1}
s^{\T_2 +[((\beta_1+\beta_2)/{\A_2})\wedge 1]-1}\big(1+|\log s|\I_{\{\B_1+\B_2=\A_2\}}\big)\Big]\rho_{0}^{y,0}(t,x-y)\\
&\quad +(t-s)^{\T_1+[({\beta_1}/{\A_2})\wedge 1]-1}s^{\T_2}\big(1+|\log (t-s)|\I_{\{\B_1=\A_2\}}\big)\rho_{0}^{y,\beta_2}(t,x-y)\\
&\quad+(t-s)^{\T_1}s^{\T_2+[({\beta_2}/{\A_2})\wedge 1] -1}\big(1+|\log s|\I_{\{\B_2=\A_2\}}\big)\rho_0^{y,\beta_1}(t,x-y).
\end{split}
\end{equation}\end{itemize}
\end{lemma}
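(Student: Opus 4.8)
The plan is to establish \eqref{l3-2-0} first and then deduce \eqref{l3-2-1} from it. Since $\A(x),\A(y)\le\A_2$ and $0<t-s\le1$, $0<s\le1$, for every $\beta\in(0,1)$ one has $[(\beta/\A(x))\wedge1]\ge[(\beta/\A_2)\wedge1]$, so replacing $\A(x)$ by $\A_2$ in the exponents on the right of \eqref{l3-2-0} only decreases a power of $t-s$ (or of $s$); the logarithmic indicators cause no loss, because on $\{\B_1=\A(x)\}$ either $\A(x)=\A_2$, in which case the two expressions agree, or $\A(x)<\A_2$, in which case the exponent in \eqref{l3-2-1} is strictly smaller and the factor $1+|\log(t-s)|$ is absorbed by a power of $t-s$; the same holds for $\B_1+\B_2$ and for the $s$-variable. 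Hence the right side of \eqref{l3-2-0} is $\p$ the right side of \eqref{l3-2-1}, and it suffices to prove \eqref{l3-2-0}. For this, exploiting the freedom in the threshold radius in the definition of $\rho_{\G,R}^{y,\B}$, I split $\int_{\Rd}=\int_{D_1}+\int_{D_2}+\int_{D_3}$ with $D_1=B(x,|x-y|/2)$, $D_2=B(y,|x-y|/2)$, $D_3=(D_1\cup D_2)^c$; on $D_1$ one has $|z-y|\asymp|x-y|$, on $D_2$ one has $|x-z|\asymp|x-y|$, on $D_3$ both $|x-z|,|z-y|\ge|x-y|/2$. The range $|x-y|>1$ is easier, since then all four $\rho$-factors on the right of \eqref{l3-2-0} coincide and only the powers of $t-s$ and $s$ are at issue, so the substantive case is $|x-y|\le1$.

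On $D_1$, since $|z-y|\asymp|x-y|$ one has $\rho_{\T_2}^{y,\beta_2}(s,z-y)\p\rho_{\T_2}^{y,\beta_2}(s,x-y)$ when $|x-y|\gtrsim s^{1/\A(y)}$ (comparability of $\rho_0^{y,\cdot}(s,\cdot)$ with $\rho_0^{y,\cdot}(t,\cdot)$ in the polynomial regime), while $\rho_{\T_2}^{y,\beta_2}(s,z-y)\asymp s^{\T_2-d/\A(y)-1}(|x-y|^{\B_2}\wedge1)$ when $|x-y|\lesssim s^{1/\A(y)}$, by flatness of the kernel near the origin. Pulling this factor out and bounding $\int_{D_1}\rho_{\T_1}^{z,\beta_1}(t-s,x-z)\,dz\le(t-s)^{\T_1}\int_{\Rd}\rho_0^{z,\beta_1}(t-s,x-z)\,dz$ by \eqref{l3-1-1} produces $(t-s)^{\T_1+[(\B_1/\A(x))\wedge1]-1}\big(1+|\log(t-s)|\I_{\{\B_1=\A(x)\}}\big)$, and the surviving $|x-y|^{\B_2}\wedge1$ turns $\rho_0^{y,0}(t,x-y)$ into $\rho_0^{y,\beta_2}(t,x-y)$, giving the third term of \eqref{l3-2-0}; in the sub-regime where both $(t-s)^{1/\A(x)}$ and $s^{1/\A(y)}$ dominate $|x-y|$ one instead uses $|x-y|^{\B_1+\B_2+d}\le s^{(\B_1+\B_2+d)/\A(y)}$ together with $\rho_0^{y,0}(t,x-y)\asymp t^{-d/\A(y)-1}$ and lands in the first or second term. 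The ball $D_2$ is symmetric, with the one extra point that $\rho_{\T_1}^{z,\beta_1}(t-s,x-z)$ now carries the index $\A(z)\approx\A(y)$, so one first trades $\A(z)$ for $\A(y)$ at the cost of a bounded factor exactly as in \eqref{l3-1-1a} and then integrates out $\rho_{\T_2}^{y,\beta_2}(s,z-y)$ by \eqref{l3-1-1-00}; this yields the fourth, and in the overlapping regime the second, term of \eqref{l3-2-0}.

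On $D_3$ one further splits according to whether $|x-z|\le|z-y|$ or $|z-y|<|x-z|$. On the first piece, $|z-y|\ge|x-y|/2$ and $\rho_0^{y,0}(s,\cdot)$ is comparable to a radially non-increasing function, so $\rho_{\T_2}^{y,\beta_2}(s,z-y)\p s^{\T_2}(|z-y|^{\B_2}\wedge1)\rho_0^{y,0}(s,x-y)$; after estimating $\rho_0^{y,0}(s,x-y)$ against $\rho_0^{y,0}(t,x-y)$ as above, one integrates $\rho_{\T_1}^{z,\beta_1}(t-s,x-z)$ in $z$ via \eqref{l3-1-1}. On the second piece the roles of the two factors are exchanged and one uses \eqref{l3-1-1-00}. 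Tracking which of $|x-y|^{\B_1}$, $|x-y|^{\B_2}$, their product, or neither, is left over after the integration returns the first, third and fourth terms of \eqref{l3-2-0}, the first being the one in which the whole of $\B_1+\B_2$ has been consumed by a power of $t-s$ (or of $s$). Adding the contributions of $D_1,D_2,D_3$ proves \eqref{l3-2-0}.

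The bulk of the argument is bookkeeping, and the one feature genuinely absent from the constant-order situation in \cite{CZ} is that the index $\A(z)$ inside $\rho_{\T_1}^{z,\beta_1}(t-s,x-z)$ moves with the integration variable: every comparison of the type $(t-s)^{-d/\A(z)}\asymp(t-s)^{-d/\A(x)}$, $|x-z|^{-\A(z)}\asymp|x-z|^{-\A(x)}$, or $(t-s)^{-(\A(z)-\A(x))/\A(x)}\p1$ has to be made uniformly in all variables. Each of these follows from \eqref{e1-3} and the elementary bounds $\sup_{0<u\le1}u^{\B_0}|\log u|<\infty$ and $\sup_{0<u\le1}u^{\B_0/\A_2}|\log u|<\infty$, just as in \eqref{l3-1-1a} and the proof of Lemma \ref{l3-1}; the most delicate instance is the overlapping regime $|x-y|\lesssim s^{1/\A(y)}\le(t-s)^{1/\A(x)}$ with $s\ll t-s$, where the index-mismatch factor is $\exp\big(c|x-y|^{\B_0}|\log(t-s)|\big)$ and is controlled via $|x-y|^{\B_0}\le s^{\B_0/\A_2}$ together with $|\log(t-s)|\le|\log s|$. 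Finally, the four borderline cases $\B_1=\A(x)$, $\B_2=\A(y)$, $\B_1+\B_2=\A(x)$, $\B_1+\B_2=\A(y)$ in Lemma \ref{l3-1} are exactly what produce the four logarithmic indicators on the right-hand side of \eqref{l3-2-0}.
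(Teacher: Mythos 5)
Your reduction of \eqref{l3-2-1} to \eqref{l3-2-0} is fine (lowering the exponents from $\A(x),\A(y)$ to $\A_2$ and absorbing the borderline logarithms only costs a constant), and it is a mild simplification of the paper, which instead re-runs the whole argument with the second inequality of \eqref{l3-1-1}. Your proof of \eqref{l3-2-0} also has the same skeleton as the paper's: a geometric splitting of the $z$-integral, the elementary inequalities \eqref{l3-2-3}, \eqref{l3-2-5} for the H\"older factors, Lemma \ref{l3-1} to integrate the remaining kernel, and the index-trading bounds coming from \eqref{e1-3}.

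However, there is a genuine gap in the regime analysis on $D_1$ (and, symmetrically, on $D_2$). Your key step on $D_1$ is to pull out $\rho^{y,\B_2}_{\T_2}(s,z-y)\p\rho^{y,\B_2}_{\T_2}(s,x-y)$ and then replace the time-$s$ kernel by the time-$t$ kernel ``in the polynomial regime $|x-y|\gtrsim s^{1/\A(y)}$''. But comparability of $\rho^{y,\cdot}_0(s,x-y)$ with $\rho^{y,\cdot}_0(t,x-y)$ requires $|x-y|\gtrsim t^{1/\A(y)}$, not merely $\gtrsim s^{1/\A(y)}$: when $s\ll t$ and $s^{1/\A(y)}\le|x-y|\ll t^{1/\A(y)}$ one has $(s^{1/\A(y)}+|x-y|)^{-d-\A(y)}\asymp|x-y|^{-d-\A(y)}$, which exceeds $(t^{1/\A(y)}+|x-y|)^{-d-\A(y)}$ by the unbounded factor $(t^{1/\A(y)}/|x-y|)^{d+\A(y)}$. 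This is not a matter of constants: in that regime your conclusion that the $D_1$ contribution lands in the third term of \eqref{l3-2-0} is itself false. Take $\T_1=\T_2=0$ and $\B_1,\B_2<\A_1$; since $t-s\asymp t$ and $|x-y|\ll t^{1/\A(y)}$, the $(t-s)$-kernel is flat on $D_1$ and the $D_1$ integral is $\asymp t^{-1-d/\A(x)}|x-y|^{\B_1+\B_2-\A(y)}$, which exceeds the third term by roughly $(t^{1/\A(y)}/|x-y|)^{\A(y)-\B_1}$; it is the \emph{fourth} term that controls it, obtained by flattening the $(t-s)$-kernel (the factor whose time scale dominates $|x-y|$) and integrating the $s$-factor in $z$ via \eqref{l3-1-1-00}. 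Your clause for the sub-regime ``where both $(t-s)^{1/\A(x)}$ and $s^{1/\A(y)}$ dominate $|x-y|$'' does not cover this case, in which only one of them does; the same defect occurs on $D_2$ when $t-s\ll t$ and $(t-s)^{1/\A(y)}\p|x-y|\ll t^{1/\A(y)}$, and again in your $D_3$ step ``after estimating $\rho^{y,0}_0(s,x-y)$ against $\rho^{y,0}_0(t,x-y)$ as above''. This is exactly why the paper organizes the proof around comparing $|x-y|$ with $2t^{1/\overline{\A}(y;t^{1/\A_2})}$ and, in the small-$|x-y|$ case, splits the $z$-integral at $|z-y|=t^{1/\A(y)}$ and cases on $s\gtrless t/2$, so that the on-diagonal bound is always extracted from the factor whose time variable is comparable to $t$ while the other factor is integrated out. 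Your argument needs to be reorganized along these lines (re-routing the intermediate regimes to the second/fourth terms) before it closes.
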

\begin{proof} (1) The proof is split into three cases.

{\bf Case (a): $|x-y|>2.$}

It holds that
\begin{equation}\label{l3-2-1a}
\begin{split}
 \int_{\R^d} \rho_{\T_1}^{z,\beta_1}(t-s,x-z)\rho_{\T_2}^{y,\beta_2}(s,z-y)\,dz
&\le  \int_{\{|z-y|\ge {|x-y|}/{2}\}} \rho_{\T_1}^{z,\beta_1}(t-s,x-z)\rho_{\T_2}^{y,\beta_2}(s,z-y)\,dz\\
 &\quad+\int_{\{|z-y|\le {|x-y|}/{2}\}} \rho_{\T_1}^{z,\beta_1}(t-s,x-z)\rho_{\T_2}^{y,\beta_2}(s,z-y)\, dz\\
&=:J_1+J_2.
\end{split}
\end{equation}

If $|z-y|\ge {|x-y|}/{2}\ge 1\ge s^{{1/\A(y)}}$, then
\begin{equation}\label{l3-2-2}
\begin{split}
\rho_0^{y,0}(s,z-y)&\p \frac{1}{|z-y|^{d+\A_1}}\p
\frac{1}{|x-y|^{d+\A_1}}\p \rho_0^{y,0}(t,x-y).
\end{split}
\end{equation}
Observe that
(see e.g.\ the proof of \cite[Lemma 2.2]{CZ}) for any $x,y, z\in
\R^d$,
\begin{equation}\label{l3-2-3}
\begin{split}
\big(|x-z|^{\B_1}\wedge 1\big)
\big(|z-y|^{\B_2}\wedge 1\big)\le &\big(|x-z|^{\B_1+\B_2}\wedge 1\big) +\big(|x-z|^{\B_1}\wedge 1\big)
\big(|x-y|^{\B_2}\wedge 1\big).
\end{split}
\end{equation}
Combining this with \eqref{l3-2-2} and the first inequality in \eqref{l3-1-1},
we get that for $\beta_1,\beta_2\in (0,1)$,
 \begin{equation}\label{l3-2-4}
\begin{split}
J_1&\p (t-s)^{\T_1}s^{\T_2}\rho_0^{y,0}(t,x-y)\int_{\R^d}\rho^{z,\B_1+\B_2}_0(t-s,x-z)\,dz\\
&\quad +(t-s)^{\T_1}s^{\T_2}\rho_0^{y,\B_2}(t,x-y)\int_{\R^d}\rho^{z,\B_1}_0(t-s,x-z)\,dz\\
&\p (t-s)^{\T_1+[(({\B_1+\B_2})/{\A(x)
})\wedge 1]-1}s^{\T_2}\big(1\!+\!|\log (t-s)|\I_{\{\B_1+\B_2=\A(x)\}}\big)\rho_0^{y,0}(t,x-y)\\
&\quad+
(t-s)^{[\T_1+({\B_1}/{\A(x)})\wedge 1]-1}s^{\T_2}\big(1+|\log (t-s)|\I_{\{\B_1=\A(x)\}}\big)\rho_0^{y,\B_2}(t,x-y).
\end{split}
\end{equation}

If $|z-y|\le {|x-y|}/{2}$, then $|x-z|\ge
{|x-y|}/{2}\ge 1 \ge (t-s)^{{1}/{\A(z)}}$, and so
\begin{align*}
& \rho_0^{z,0}(t-s,x-z)\p \frac{1}{|x-z|^{d+\A_1}}\p \frac{1}{|x-y|^{d+\A_1}}\p \rho_0^{y,0}(t,x-y).
\end{align*}
Combining this with
\begin{equation}\label{l3-2-5}
\begin{split}
\big(|x-z|^{\B_1}\wedge 1\big)
\big(|z-y|^{\B_2}\wedge 1\big)\le &\big(|z-y|^{\B_1+\B_2}\wedge 1\big) +\big(|z-y|^{\B_2}\wedge 1\big)
\big(|x-y|^{\B_1}\wedge 1\big),
\end{split}
\end{equation}
we arrive at
\begin{equation}\label{l3-2-5a}
\begin{split}
J_2&\p (t-s)^{\T_1}s^{\T_2}\rho_0^{y,0}(t,x-y)\int_{\R^d}\rho^{y,\B_1+\B_2}_0(s,z-y)\, dz\\
&\quad +(t-s)^{\T_1}s^{\T_2}\rho_0^{y,\B_1}(t,x-y)\int_{\R^d}\rho^{y,\B_2}_0(s,z-y)\, dz\\
&\p (t-s)^{\T_1}s^{\T_2+[(({\B_1+\B_2})/{\A(y)})\wedge 1]-1}
\big(1+|\log s|\I_{\{\B_1+\B_2=\A(y)\}}\big)\rho_0^{y,0}(t,x-y)\\
&\quad+
(t-s)^{\T_1}s^{\T_2+[({\B_2}/{\A(y)})\wedge 1]-1}\big(1+|\log s|\I_{\{\B_2=\A(y)\}}\big)\rho_0^{y,\B_1}(t,x-y),
\end{split}
\end{equation}
where the second inequality follows from the first inequality in \eqref{l3-1-1-00}.

According to the above estimates for $J_1$ and $J_2$, \eqref{l3-2-1} holds in this case.

{\bf Case (b): $2t^{{1}/{\overline{\A}(y;t^{{1/\A_2}})}}\le |x-y|\le 2$.}

Let $J_1$ and $J_2$ be defined by \eqref{l3-2-1a}. If $|z-y|\ge
{|x-y|}/{2}$ and $0<s<t<1$, then $|z-y|\ge {|x-y|}/{2}$,
and so
\begin{equation}\label{l3-2-6}
\begin{split}
\rho_0^{y,0}(s,z-y)&\p
\begin{cases}
|z-y|^{-d-\A(y)},& |z-y|\le 1,\\
 |z-y|^{-d-\A_1},& |z-y|>1
\end{cases}
\\
&\p \begin{cases}
|x-y|^{-d-\A(y)},&|z-y|\le 1,\\
|x-y|^{-d-\A_1},& |z-y|>1
\end{cases}
\\
&\p |x-y|^{-d-\A(y)}\p \rho^{y,0}_0(t,x-y),
\end{split}
\end{equation}
where in
the third inequality we have used the fact that
$$|x-y|^{-d-\A_1}\p |x-y|^{-d-\A(y)}\quad \textrm{ for all }|x-y|\le 2 ,$$ and the last
inequality follows from
$$|x-y|^{-d-\A(y)} \p  \rho^{y,0}_0(t,x-y)\quad\textrm{ for all } |x-y|\ge 2t^{{1}/{\overline{\A}(y;t^{{1/\A_2}})}}\ge 2t^{{1/\A(y)}}.$$
Using \eqref{l3-2-6} and following the same argument of \eqref{l3-2-4}, we know that \eqref{l3-2-4} is satisfied too.

If $|z-y|<{|x-y|}/{2}$, then $|x-z|\ge {|x-y|}/{2}$,
and so we obtain
\begin{equation}\label{l3-2-7}
\begin{split}
\rho_0^{z,0}(t-s,x-z)&\p
\begin{cases}
|x-z|^{-d-\A(z)},& |x-z|\le 1,\\
|x-z|^{-d-\A_1}, & |x-z|>1
\end{cases}
\\
&\p |x-y|^{-d-\A(z)} \p |x-y|^{\A(y)-\A(z)}\cdot|x-y|^{-d-\A(y)}\\
&\p |x-y|^{-|\A(y)-\A(z)|}\cdot|x-y|^{-d-\A(y)}\\
&\p |x-y|^{-C|x-y|^{\beta_0}}\cdot|x-y|^{-d-\A(y)}\\
&\p \exp\big(C|\log|x-y||\cdot|x-y|^{\B_0}\big)\cdot|x-y|^{-d-\A(y)}\\
&\p |x-y|^{-d-\A(y)}\p \rho_0^{y,0}(t,x-y),
\end{split}
\end{equation}
where in the second inequality we used the fact that if $|x-z|\ge {|x-y|}/{2}$ and $|x-y|\le 2$, then
$|x-z|^{-d-\A_1}\p |x-y|^{-d-\A_1}\p |x-y|^{-d-\A(z)},$
the fifth inequality follows from the fact that
$$|\A(y)-\A(z)|\p |z-y|^{\beta_0}\le C |x-y|^{\B_0}\quad\textrm{ for all }|z-y|\le {|x-y|}/{2},$$
in the seventh inequality we used $$\sup_{z\in \R^d: |z|\le 2}\exp\big(C|\log
|z||\cdot|z|^{\beta_0}\big)<\infty,$$ and the last inequality is due to
the same argument of the last one in \eqref{l3-2-6}.

Using the above inequality for $\rho_0^{z,0}(t-s,x-z)$ and following the same procedure of \eqref{l3-2-5a},
we know that \eqref{l3-2-5a} still holds.

Therefore, according to the estimates for $J_1$ and $J_2$, \eqref{l3-2-1} also holds in this case.

{\bf Case (c): $|x-y|\le
2t^{{1}/{\overline{\A}(y;t^{{1/\A_2}})}}$.}

Note that for any $t \in (0,1]$ and $y \in \R^d$,
\begin{equation}\label{l3-2-8}
\begin{split}
1\le&
\frac{t^{{1}/{\overline{\A}(y;t^{{1/\A_2}})}}}{t^{{1/\A(y)}}}\le
t^{-\frac{\overline{\A}(y;t^{{1/\A_2}})-\A(y)}{\A(y)\overline{\A}(y;t^{{1/\A_2}})}}
\le
t^{-\frac{Ct^{{\B_0}/{\A_2}}}{\A_1^2}}\le \exp\big(C|\log
t|\cdot t^{{\B_0}/{\A_2}} \big)\le C_1 ,
\end{split}
\end{equation}
hence $|x-y|\le 2t^{{1}/{\overline{\A}(y;t^{{1/\A_2}})}}$
implies that $|x-y|\le C_2 t^{{1/\A(y)}}$ for some constant
$C_2>0$.

On the other hand,
\begin{align*}
 \int_{\R^d}\rho_{\T_1}^{z,\B_1}(t-s,x-z)\rho_{\T_2}^{y,\B_2}(s,z-y)\,dz
&\le \int_{\{|z-y|> t^{{1/\A(y)}}\}}\rho_{\T_1}^{z,\B_1}(t-s,x-z)\rho_{\T_2}^{y,\B_2}(s,z-y)\,dz\\
&\quad+\int_{\{|z-y|\le t^{{1/\A(y)}}\}}\rho_{\T_1}^{z,\B_1}(t-s,x-z)\rho_{\T_2}^{y,\B_2}(s,z-y)\, dz\\
&=:J_1+J_2.
\end{align*}

If $|z-y|\ge t^{{1/\A(y)}}>s^{{1/\A(y)}}$, then
\begin{align*}
\rho_0^{y,0}(s,z-y)\p |z-y|^{-d-\A(y)}\p t^{-({d+\A(y)})/{\A(y)}}\p \rho_0^{y,0}(t,x-y),
\end{align*}
where the second inequality follows from $|z-y|\ge t^{{1/\A(y)}}$, and the last inequality
is due to $t^{-({d+\A(y)})/{\A(y)}}\p \rho_0^{y,0}(t,x-y)$ for all $|x-y|\le C_2t^{{1/\A(y)}}$. Applying the above estimate for $\rho_0^{y,0}(s,z-y)$ and following the same argument of \eqref{l3-2-4}, we can find that \eqref{l3-2-4} still holds
for $J_1$ here.

Now we turn to estimate for $J_2$, which is divided into the following two subcases:

{\bf Subcase (c1): $s>{t}/{2}$.}

If $|z-y|\le t^{{1/\A(y)}}$, then
\begin{align*}
\rho_0^{y,0}(s,z-y)&\p s^{-({d/\A(y)})-1}\p t^{-({d/\A(y)})-1}\p \rho_0^{y,0}(t,x-y),
\end{align*}
where the second inequality follows
from $s>{t}/{2}$, and in the last inequality we have used the fact that
$t^{-({d/\A(y)})-1}\p \rho_0^{y,0}(t,x-y)$ for all $|x-y|\le C_2t^{{1/\A(y)}}$.

Having the above estimate for $\rho_0^{y,0}(s,z-y)$ at hand, we also can follow the same procedure of \eqref{l3-2-4} to get that \begin{align*}
J_2&\p (t-s)^{\T_1+(({\B_1+\B_2})/{\A(x)})-1}s^{\T_2}\rho_0^{y,0}(t,x-y)+
(t-s)^{\T_1+({\B_1}/{\A(x)})-1}s^{\T_2}\rho_0^{y,\B_2}(t,x-y).
\end{align*}

{\bf Subcase (c2): $s\le {t}/{2}$.}

If $|z-y|\le t^{{1/\A(y)}}$,
then we have
\begin{align*}
\rho_0^{z,0}(t-s,x-z)&\p (t-s)^{-({d}/{\A(z)})-1}\p t^{-({d}/{\A(z)})-1}\p t^{-|({d}/{\A(z)})-({d/\A(y)})|}\cdot t^{-({d/\A(y)})-1}\\
&\p t^{-Ct^{{\B_0}/{\A_2}}}\cdot t^{-({d/\A(y)})-1}\p t^{-({d/\A(y)})-1}\p \rho_0^{y,0}(t,x-y),
\end{align*}
where the second inequality is due to $(t-s)\ge {t}/{2}$, in the
fourth inequality we used the fact that
$|\A(y)-\A(z)|\p |z-y|^{\beta_0}\p t^{{\B_0}/{\A(y)}}\p
t^{{\B_0}/{\A_2}},$ the fifth inequality follows from
$$\sup_{t \in (0,1]}t^{-Ct^{{\B_0}/{\A_2}}}\p \sup_{t \in (0,1]}
\exp\big(C|\log t|t^{{\B_0}/{\A_2}}\big)<\infty,$$ and the last
inequality is again due to the fact that $t^{-({d/\A(y)})-1}\p
\rho_0^{y,0}(t,x-y)$ for all $|x-y|\le C_2t^{{1/\A(y)}}$.

Using the above estimate for $\rho_0^{z,0}(t-s,x-z)$ and following the same procedure of \eqref{l3-2-5a},  we
can also get \eqref{l3-2-5a}.

Combining both estimates for $J_1$ and $J_2$, we know \eqref{l3-2-1} holds for {\bf Case (c)}.
Therefore, the proof of \eqref{l3-2-0} is complete.

(2) We can apply the second inequality in \eqref{l3-1-1} in the proofs of
\eqref{l3-2-4} and \eqref{l3-2-5a}, and follow the same arguments as above to prove \eqref{l3-2-1}.
\end{proof}

According to Lemma \ref{l3-2}, we can immediately derive the following statement.
\begin{corollary}\label{l3-3} Let $\Bb(\gamma,\beta)$ denote the Beta function with respect to $\gamma$ and $\beta$. Then, the following two statements hold.
\begin{itemize}
\item[(1)] For any $\theta_1,\theta_2\in \R$ and $\beta_1,\beta_2 \in (0,1)$, there is a constant $c>0$ such that for all $t\in (0,1]$ and $x,y \in \R^d$  with $\B_1+\B_2<\A_*(x,y):=\A(x)\wedge\A(y)$,
$\T_1+({\B_1}/{\A(x)})>0$ and $\T_2+({\B_2}/{\A(y)})>0$,
\begin{equation}\label{l3-3-0}
\begin{split}
\int_0^t&\int_{\R^d} \rho_{\T_1}^{z,\beta_1}(t-s,x-z)\rho_{\T_2}^{y,\beta_2}(s,z-y)\,dz\,ds\\
\le &c\Bb\big(({\B_1}/{\A_2})+\T_1,({\B_2}/{\A_2})+\T_2\big)\\
&\times \Big(
\rho_{\T_1+\T_2+(({\B_1+\B_2})/{\A^*(x,y)})}^{y,0}+
\rho_{\T_1+\T_2+({\B_1}/{\A(x)})}^{y,\B_2}
+\rho_{\T_1+\T_2+({\B_2}/{\A(y)})}^{y,\B_1}\Big)(t,x-y),
\end{split}
\end{equation}
where $\A^*(x,y):= \A(x)\vee\A(y)$.

\item[(2)] For any $\theta_1,\theta_2\in \R$ and $\beta_1,\beta_2 \in (0,1)$ such that $\B_1+\B_2<\A_2$,  $\T_1+({\B_1}/{\A_2})>0$ and $\T_2+({\B_2}/{\A_2})>0$,
it holds  for any $t\in (0,1]$ and $x,y \in \R^d$ that
\begin{equation}\label{l3-3-1}
\begin{split}
\int_0^t&\int_{\R^d} \rho_{\T_1}^{z,\beta_1}(t-s,x-z)\rho_{\T_2}^{y,\beta_2}(s,z-y)\,dz\,ds\\
\p&\Bb\big(({\B_1}/{\A_2})+\T_1,({\B_2}/{\A_2})+\T_2\big)
\\
&\times \big(
\rho_{\T_1+\T_2+(({\B_1+\B_2})/{\A_2})}^{y,0}+\rho_{\T_1+\T_2+({\B_1}/{\A_2})}^{y,\B_2}
+\rho_{\T_1+\T_2+({\B_2}/{\A_2})}^{y,\B_1}\big)(t,x-y).
\end{split}
\end{equation}
\end{itemize}
\end{corollary}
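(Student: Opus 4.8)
The plan is to obtain both statements by integrating the pointwise convolution estimates of Lemma~\ref{l3-2} over the time variable $s\in(0,t)$. I will describe the argument for part~(1); part~(2) is proved by the identical computation starting from \eqref{l3-2-1} instead of \eqref{l3-2-0}, with $\A_2$ playing the role of both $\A(x)$ and $\A(y)$.

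First I would simplify the right-hand side of \eqref{l3-2-0} under the standing hypotheses. Since $\beta_1,\beta_2\in(0,1)$ and $\beta_1+\beta_2<\A_*(x,y)=\A(x)\wedge\A(y)$, we have $\beta_1\le\beta_1+\beta_2<\A(x)$ and $\beta_2\le\beta_1+\beta_2<\A(y)$, so every indicator $\I_{\{\B_1+\B_2=\A(x)\}}$, $\I_{\{\B_1+\B_2=\A(y)\}}$, $\I_{\{\B_1=\A(x)\}}$, $\I_{\{\B_2=\A(y)\}}$ vanishes and every truncated exponent $(\beta/\A(\cdot))\wedge 1$ equals $\beta/\A(\cdot)<1$. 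Thus \eqref{l3-2-0} reduces to a finite sum of terms of the form $(t-s)^{a-1}s^{b-1}\,\rho_{0}^{y,\beta}(t,x-y)$ with $\beta\in\{0,\beta_1,\beta_2\}$, in which the factor $\rho_{0}^{y,\beta}(t,x-y)$ is independent of $s$. Integrating in $s$ and using $\int_0^t(t-s)^{a-1}s^{b-1}\,ds=t^{a+b-1}\Bb(a,b)$ (valid for $a,b>0$, by the substitution $s=tu$), together with $t^{\gamma}\rho_{0}^{y,\beta}(t,x-y)=\rho_{\gamma}^{y,\beta}(t,x-y)$, I would then read off: the two $\rho_{0}^{y,0}$-contributions give $\Bb(\T_1+\tfrac{\B_1+\B_2}{\A(x)},\T_2+1)$ and $\Bb(\T_1+1,\T_2+\tfrac{\B_1+\B_2}{\A(y)})$ times $\rho^{y,0}$ with time exponents $\T_1+\T_2+\tfrac{\B_1+\B_2}{\A(x)}$ and $\T_1+\T_2+\tfrac{\B_1+\B_2}{\A(y)}$; the $\rho_{0}^{y,\beta_2}$-contribution gives $\Bb(\T_1+\tfrac{\B_1}{\A(x)},\T_2+1)\,\rho_{\T_1+\T_2+\B_1/\A(x)}^{y,\beta_2}$; and the $\rho_{0}^{y,\beta_1}$-contribution gives $\Bb(\T_1+1,\T_2+\tfrac{\B_2}{\A(y)})\,\rho_{\T_1+\T_2+\B_2/\A(y)}^{y,\beta_1}$. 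The positivity needed for all these Beta integrals holds: every first argument is $>0$ since $\T_1+\tfrac{\B_1}{\A(x)}>0$ and $\B_1<\A(x)$ give $\T_1>-1$; every second argument is $>0$ since $\T_2+\tfrac{\B_2}{\A(y)}>0$ and $\B_2<\A(y)$ give $\T_2>-1$. Finally, because $t\le1$ and $\A(x),\A(y)\le\A^*(x,y)$, one has $t^{(\B_1+\B_2)/\A(x)},t^{(\B_1+\B_2)/\A(y)}\le t^{(\B_1+\B_2)/\A^*(x,y)}$, which merges the two $\rho^{y,0}$-terms into the single term $\rho_{\T_1+\T_2+(\B_1+\B_2)/\A^*(x,y)}^{y,0}(t,x-y)$ of \eqref{l3-3-0}; the remaining two terms already carry precisely the exponents claimed.

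The step that requires the most care is to absorb all four Beta prefactors into one common constant $c\,\Bb(\tfrac{\B_1}{\A_2}+\T_1,\tfrac{\B_2}{\A_2}+\T_2)$. Here I would invoke the monotonicity of the Beta function: $\Bb(a,b)=\int_0^1u^{a-1}(1-u)^{b-1}\,du$ is nonincreasing in each of $a,b$ on $(0,\infty)$. Since $\A(x),\A(y)\le\A_2$ we get $\tfrac{\B_1}{\A(x)},\tfrac{\B_1+\B_2}{\A(x)}\ge\tfrac{\B_1}{\A_2}$ and $\tfrac{\B_2}{\A(y)},\tfrac{\B_1+\B_2}{\A(y)}\ge\tfrac{\B_2}{\A_2}$, while $1>\tfrac{\B_1}{\A_2}$ and $1>\tfrac{\B_2}{\A_2}$ because $\beta_1,\beta_2<1\le\A_2$; hence each of the four Beta prefactors has both arguments no smaller than those of $\Bb(\tfrac{\B_1}{\A_2}+\T_1,\tfrac{\B_2}{\A_2}+\T_2)$, and monotonicity yields the desired domination (the asserted inequality being trivially true, the right side being $+\infty$, in the degenerate case $\tfrac{\B_1}{\A_2}+\T_1\le0$ or $\tfrac{\B_2}{\A_2}+\T_2\le0$). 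Collecting the three surviving $\rho$-terms with this single constant gives \eqref{l3-3-0}. For part~(2), the same chain of steps applies verbatim starting from \eqref{l3-2-1}: now $\A_2$ already appears everywhere in place of $\A(x),\A(y)$, so no $t\le1$ merging and no monotonicity estimate is needed — the Beta prefactors produced are exactly of the form $\Bb(\tfrac{\B_1}{\A_2}+\T_1,\tfrac{\B_2}{\A_2}+\T_2)$ — and one simply collects terms to obtain \eqref{l3-3-1}.
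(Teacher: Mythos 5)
Your proof is correct and is essentially the paper's own argument: integrate the pointwise bounds of Lemma \ref{l3-2} in $s$ via $\int_0^t(t-s)^{a-1}s^{b-1}\,ds=t^{a+b-1}\Bb(a,b)$, and then dominate all resulting Beta prefactors by $\Bb\big(({\B_1}/{\A_2})+\T_1,({\B_2}/{\A_2})+\T_2\big)$ using the monotonicity of the Beta function in each argument. Two cosmetic remarks: the inequality $\B_i<\A_2$ should be deduced from $\B_1+\B_2<\A_*(x,y)\le\A_2$ (respectively $\B_1+\B_2<\A_2$), not from the claim $1\le\A_2$, which may fail since $\A_2\in(0,2)$; and in part (2) the factors such as $\Bb(\T_1+1,\T_2+({\B_1+\B_2})/{\A_2})$ still require the same monotonicity comparison rather than mere collection of terms.
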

\begin{proof}
Note that
\begin{equation*}
\int_0^t
(t-s)^{\gamma-1}s^{\B-1}\,ds=t^{\gamma+\B-1}\Bb(\gamma,\B),\quad
\gamma>0,\ \B>0.
\end{equation*}
This, along with  \eqref{l3-2-0}, \eqref{l3-2-1} and  the decreasing property of $\Bb(\gamma,\B)$ with respect to $\gamma$ and $\beta$, yields the desired assertions  \eqref{l3-3-0} and \eqref{l3-3-1}.
\end{proof}

At the end of this subsection, we make some remarks.

\begin{remark}\label{r3-3}
(1)  According to the proofs of \cite[Lemma 2.1 (ii) and (iii)]{CZ}, we know that for $0\le s\le t\le 1$, $x,y\in \R^d$ and $\B_1,\B_2
\in (0,1)$, $\T_1,\T_2\in \R$, it holds that
\begin{equation}\label{l3-2-1-}
\begin{split}
&\int_{\R^d} \rho_{\T_1}^{y,\beta_1}(t-s,x-z)\rho_{\T_2}^{y,\beta_2}(s,z-y)\, dz\\
 &\p \Big[(t-s)^{\T_1+[((\beta_1+\beta_2)/{\A(y)})\wedge 1]-1}s^{\T_2}
 \big(1+|\log (t-s)|\I_{\{\B_1+\B_2=\A(y)\}}\big)\\
 &\qquad+(t-s)^{\T_1}
s^{\T_2 +[((\beta_1+\beta_2)/{\A(y)})\wedge 1]-1} \big(1+|\log s|\I_{\{\B_1+\B_2=\A(y)\}}\big)\Big]\rho_{0}^{y,0}(t,x-y)\\
&\qquad +(t-s)^{\T_1+[({\beta_1}/{\A(y)})\wedge 1]-1}s^{\T_2} \big(1+|\log (t-s)|\I_{\{\B_1=\A(y)\}}\big)\rho_{0}^{y,\beta_2}(t,x-y)\\
&\qquad
+(t-s)^{\T_1}s^{\T_2+[({\beta_2}/{\A(y)})\wedge 1] -1} \big(1+|\log s|\I_{\{\B_2=\A(y)\}}\big)\rho_0^{y,\beta_1}(t,x-y).
\end{split}
\end{equation} If, in addition, $\B_1+\B_2<\A(x)$, $({\B_1}/{\A(y)})+\T_1>0$ and $({\B_2}/{\A(y)})+\T_2>0$, then it also holds
\begin{equation}\label{r3-2-1}
\begin{split}
&\int_0^t\int_{\R^d}\rho^{y,\B_1}_{\T_1}(t-s,x-z)
\rho^{y,\B_2}_{\T_2}(s,z-y)\,dz\,ds\\
&\p
\mathscr{B}\big(({\B_1}/{\A(y)})+\T_1,({\B_2}/{\A(y)})+\T_2\big)\\
&\quad \times \big(\rho^{y,0}_{\T_1+\T_2+(({\B_1+\B_2})/{\A(y)})} +\rho^{y,\B_1}_{\T_1+\T_2+({\B_2}/{\A(y)})}(t,x-y)
+\rho^{y,\B_2}_{\T_1+\T_2+({\B_1}/{\A(y)})}\big)(t,x-y).
\end{split}
\end{equation}
Note that in \eqref{l3-2-1-} and \eqref{r3-2-1} the index $\A(y)$ is independent of the
integrand variable $z$. However, in the present setting we also need \eqref{l3-2-0} and \eqref{l3-2-1} as well as the
convolution inequalities \eqref{l3-3-0} and \eqref{l3-3-1}, where the index $\A(z)$ will
depend on the integrand variable $z$.

(2) For every fixed $\varepsilon>0$ small enough, $\G\in \R$,
$\theta \in \R_+$ and $x, y \in \R^d$, define
\begin{equation}\label{p3-1-7}
\tilde \rho_{\G,\e}^{y,\theta }(t,x)=t^{\G}(|x|^{\theta}\wedge
1)
\begin{cases}
\frac{1}{(t^{{1/\A(y)}}+|x|)^{d+\A(y)}},&  |x|\le 1,\\
\frac{1}{|x|^{d+\A_1-\varepsilon}},& |x|>1.
\end{cases}
\end{equation}
By carefully tracking the proofs of Lemma \ref{l3-1}, Lemma \ref{l3-2}, Corollary
\ref{l3-3} and  \cite[Lemma 2.1 (ii) and (iii)]{CZ}, we know that the inequalities \eqref{l3-1-1},
\eqref{l3-1-1-00},  \eqref{l3-2-0}, \eqref{l3-2-1}, \eqref{l3-3-0}, \eqref{l3-3-1},  \eqref{l3-2-1-} and \eqref{r3-2-1}
are valid with
$\rho$ replaced by $\tilde \rho$. (In particular, \eqref{l3-2-6} holds true.) For simplicity, in the remainder of this paper we often omit
the parameter $\e$ in $\tilde \rho$.
\end{remark}

\subsection{Existence, upper bounds and continuity of $q(t,x,y)$.}
We will prove the existence and some estimates for the solution $q(t,x,y)$ to the equation
\eqref{eq2-2}. For this, we first define $q_n(t,x,y)$ inductively by
\begin{equation}\label{eq3-1}
q_n(t,x,y):=\int_0^t\int_{\R^d}q_0(t-s,x,z)q_{n-1}(s,z,y)\,dz\,ds,\ n\ge1,\ t\in(0,1].
\end{equation}
Then we can construct $q(t,x,y)$ as follows.
\begin{proposition}\label{p3-1} Let $\B_0^*\in(0,\B_0]\cap(0,{\alpha_2})$ and $\B_0^{**} \in (0,\B_0]\cap(0,{\alpha_2}/{2})$. Then, the following two statements hold.
\begin{itemize}
\item[(1)] If $\kappa(x,z)$ is independent of $z$, then $q(t,x,y):=\sum_{n=0}^{\infty}q_n(t,x,y)$ is
absolutely convergent on $(0,1]\times \R^d\times \R^d$, it solves equation \eqref{eq2-2} and satisfies that for any $0<\gamma<\theta<{\beta^*_0}/{\A_2}$ and
$\varepsilon>0$,
there exists a positive constant $c_1:=c_1(\A,\K,\G,\theta,\varepsilon)$ such that
for every $(t,x,x',y)\in (0,1]\times \R^d\times \R^d \times \R^d$,
\begin{equation}\label{p3-1-1}
|q(t,x,y)|\le c_1\left(\rho^{y,0}_{{(\B_0^*/\A_2)}-\gamma}+\rho^{y,\B_0^*}_{-\gamma}\right)(t,x-y)
\end{equation} and if, moreover, $\B_0^*<\A(x)$, then
\begin{equation}\label{p3-1-1a}
\begin{split}
 |q(t,x,y)-q(t,x',y)|\le & c_1\left(|x-x'|^{{\A_1(\B^*_0-\A_2\theta)}/{\A(x)}}\wedge 1\right)\\
   &\times\Big[\Big(\tilde \rho^{y,0}_{\theta-\gamma+(\B_0^{*}/\A_2)-(\B_0^*/\A_1)}
(t,x-y)+\tilde \rho^{y,0}_{\theta-\gamma+
(\B_0^{*}/\A_2)-(\B_0^*/\A_1)}(t,x'-y)\Big].
\end{split}
\end{equation}
Suppose, in addition, that there are  $y_0\in \R^d$ and $r\in(0,\infty]$ such that  $\alpha(z)=\alpha(y_0)$ for any $z \in B(y_0,r_0)$. Then there is a constant
$c_3:=c_3(\A,\K,r_0)>0$ such that for every $t\in (0,1]$ and $x \in \R^d$,
\begin{equation}\label{p3-1-2b}
|q(t,x,y_0)|\le c_3\left(\rho^{y_0,0}_{{\B_0^*/\A_2}}+\rho^{y_0,\B_0^*}_{0}\right)(t,x-y_0).
\end{equation}

\item[(2)] If $\kappa(x,z)$ depends on $z$ and $({\A_2}/{\A_1})-1<{\B^{**}_0}/{\A_2}$ holds true, then $q(t,x,y)=:\sum_{n=0}^{\infty}q_n(t,x,y)$ is
still absolutely convergent, solves the equation \eqref{eq2-2}, and satisfies that for every $\gamma,\theta>0$ such that $({\A_2}/{\A_1})-1+\gamma =:\G_2<\theta<{\B_0^{**}/\A_2}$ and
$\varepsilon>0$, there exists a positive constant $c_2:=c_2(\A,\K, \G,\theta,\varepsilon)$ such that for every $(t,x,x',y)\in (0,1]\times \R^d\times \R^d \times \R^d$,
\begin{equation}\label{p3-1-2}
|q(t,x,y)|\le c_2\left(\rho^{y,0}_{{(\B_0^{**}/\A_2)}-\G_2 }+\rho^{y,\B_0^{**}}_{-\G_2 }\right)(t,x-y)
\end{equation} and if, moreover, $\B_0^*<\A(x)$, then
\begin{equation}\label{p3-1-2a}
\begin{split}
 |q(t,x,y)-q(t,x',y)|
&\le c_2\left(|x-x'|^{{\A_1(\B_0^*-\A_2\theta)}/{\A(x)}}\wedge 1\right)\\
&\quad\times
\Big[\tilde \rho^{y,0}_{\theta-\gamma_2+(\B_0^{*}/\A_2)-(\B_0^*/\A_1)}(t,x-y)
+\tilde \rho^{y,0}_{\theta-\gamma_2
+(\B_0^{*}/\A_2)-(\B_0^*/\A_1)}(t,x'-y)\Big].
\end{split}
\end{equation}
If additionally there are some $y_0\in \R^d$ and $r_0\in(0,\infty]$ such that $\alpha(z)=\alpha(y_0)$ for all $z \in B(y_0,r_0)$, then there is a constant $c_4:=c_4(\A,\K, r_0)>0$ such that for every $t\in (0,1]$ and  $x \in \R^d$,
\begin{equation}\label{p3-1-2c}
|q(t,x,y_0)|\le c_4\left(\rho^{y_0,0}_{1-({\A_2}/{\A_1})+{(\B_0^{**}/\A_2)}}+\rho^{y_0,\B_0^{**}}_{1-({\A_2}/{\A_1})}\right)(t,x-y_0).
\end{equation}
\end{itemize}
\end{proposition}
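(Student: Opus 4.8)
The plan is to run the classical Levi (parametrix) scheme: establish absolute convergence of the series $q=\sum_{n\ge 0}q_n$ with $q_n$ defined by \eqref{eq3-1}, deduce that $q$ solves \eqref{eq2-2}, and then read off the upper and H\"older bounds by propagating the estimates for $q_0$ through the space--time convolution. Throughout I write $g_1\ast g_2(t,x,y):=\int_0^t\!\int_{\R^d}g_1(t-s,x,z)g_2(s,z,y)\,dz\,ds$, so that $q_n=q_0\ast q_{n-1}$ and $q=q_0+q_0\ast q$.

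First I would prove the convergence and the bound \eqref{p3-1-1} (resp.\ \eqref{p3-1-2}). The input is Proposition~\ref{l2-1}: $|q_0(t,x,y)|\preceq\rho^{y,\beta_0^*}_{-\gamma}(t,x-y)$ in the $z$-independent case (resp.\ $|q_0(t,x,y)|\preceq\rho^{y,\beta_0^{**}}_{1-(\alpha_2/\alpha_1)-\gamma}(t,x-y)$ in general), valid for every small $\gamma>0$. Feeding this into $q_n=q_0\ast q_{n-1}$ and applying the convolution inequalities of Corollary~\ref{l3-3} (in the $z$-independent case one may use the sharper \eqref{l3-3-0}, otherwise \eqref{l3-3-1}), one shows by induction that each $q_n(t,x,y)$ is dominated by a fixed finite sum of terms $\rho^{y,\beta}_{\theta_n}(t,x-y)$ with $\beta\in\{0,\beta_0^*\}$ (resp.\ $\{0,\beta_0^{**}\}$), with time exponents $\theta_n$ increasing linearly in $n$ with slope $\eta:=(\beta_0^*/\alpha_2)-\gamma>0$ (resp.\ $\eta:=(\beta_0^{**}/\alpha_2)-((\alpha_2/\alpha_1)-1)-\gamma$, which is positive for $\gamma$ small precisely by the hypothesis $(\alpha_2/\alpha_1)-1<\beta_0^{**}/\alpha_2$), and with coefficients of the form $C^{\,n+1}\prod_{k=1}^{n}\Bb(k\eta+a,\eta+b)$ for suitable $a,b>0$. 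Since $\Bb(u,v)=\Gamma(u)\Gamma(v)/\Gamma(u+v)$, this product decays like $\Gamma(\eta)^{n}/\Gamma(n\eta)$, so the series converges (a Mittag--Leffler type bound; recall $t\le 1$). This yields absolute convergence of $q$ on $(0,1]\times\R^d\times\R^d$, and summing the $\rho$-estimates gives \eqref{p3-1-1} (resp.\ \eqref{p3-1-2}), with the $\beta=\beta_0^*$ (resp.\ $\beta_0^{**}$) term coming from $n=0$ and the improved-exponent $\beta=0$ term collecting $\sum_{n\ge1}$. That $q$ solves \eqref{eq2-2} is then immediate by summing $q_n=q_0\ast q_{n-1}$, the interchange of $\sum$ and $\ast$ being justified by the bounds just obtained.

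Next I would derive the H\"older estimate \eqref{p3-1-1a} (resp.\ \eqref{p3-1-2a}) from the identity $q=q_0+q_0\ast q$, which gives
\[
q(t,x,y)-q(t,x',y)=\big(q_0(t,x,y)-q_0(t,x',y)\big)+\int_0^t\!\!\int_{\R^d}\big(q_0(t-s,x,z)-q_0(t-s,x',z)\big)\,q(s,z,y)\,dz\,ds.
\]
To the two $q_0$-increments I apply Proposition~\ref{p3-1-1a-p}; the factor $|x-x'|^{\alpha_1(\beta_0-\alpha_2\theta)/\alpha(x)}$ there involves $\alpha$ only at the \emph{fixed} base point $x$ (which we may weaken to $|x-x'|^{\alpha_1(\beta_0^*-\alpha_2\theta)/\alpha(x)}$ using $\beta_0^*\le\beta_0$ and $\beta_0^*<\alpha(x)$), so it pulls out of the $dz\,ds$-integral. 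What remains inside is the convolution of the $q_0$-continuity kernel against the already-proved bound \eqref{p3-1-1} (resp.\ \eqref{p3-1-2}); estimating it with Corollary~\ref{l3-3} — but with $\rho$ replaced throughout by the heavier-tailed $\tilde\rho$ of \eqref{p3-1-7}, which is legitimate by Remark~\ref{r3-3}(2) and which absorbs the tail terms $|x-y|^{\varepsilon}\I_{\{|x-y|>R_0\}}\rho^{y,0}_{\cdot}(t,x-y)\preceq\tilde\rho^{y,0}_{\cdot}(t,x-y)$ occurring in Proposition~\ref{p3-1-1a-p} — produces exactly the two $\tilde\rho^{y,0}$ terms of \eqref{p3-1-1a}, the shift $-(\beta_0^*/\alpha_1)$ in their time exponent recording the loss in converting a $\rho^{y,\beta_0^*}$-type increment into a $\rho^{y,0}$-type kernel. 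Finally, for the localized bounds \eqref{p3-1-2b} and \eqref{p3-1-2c} I would rerun the convergence argument with $y=y_0$ fixed, starting from the improved estimate \eqref{l2-1-33a} (resp.\ \eqref{l2-1-33}) for $q_0(t,\cdot,y_0)$ — which carries no $t^{-\gamma}$ loss since $\alpha\equiv\alpha(y_0)$ on $B(y_0,r_0)$ — and using inside each convolution a comparison of the form \eqref{l3-2-6}--\eqref{l3-2-7} to replace $\rho^{z,\cdot}$ for $z$ near $y_0$ by $\rho^{y_0,\cdot}$.

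The hard part is the bookkeeping forced by the variable order. In each iterate $q_0(t-s,x,z)$ is estimated by $\rho^{z,\beta_0^*}_{\cdot}(t-s,x-z)$, whose index $\alpha(z)$ now depends on the integration variable, so the constant-order convolution lemma of \cite{CZ} no longer applies and one must use \eqref{l3-3-0}/\eqref{l3-3-1}; these are valid only when both shifted time exponents $\theta_i+\beta_i/\alpha_2$ are \emph{positive} (so the $s$-integral converges) and the two H\"older exponents satisfy $\beta_1+\beta_2<\alpha_*(x,y)$, resp.\ $<\alpha_2$. At every step one must therefore choose how to distribute the available H\"older regularity between the two convolved factors so that both constraints persist; this is exactly what dictates the ranges $\gamma<\theta<\beta_0^*/\alpha_2$ in part~(1), and $\gamma_2<\theta<\beta_0^{**}/\alpha_2$ together with the restriction $\beta_0^{**}<\alpha_2/2$ in part~(2) (the latter guaranteeing $\beta_1+\beta_2<\alpha_2$ even for the worst pairing of exponents). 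One must also keep track, in every $\rho$ and $\tilde\rho$ that appears, of which of $x$, $y$, $z$ carries the order, and carry the $\rho\to\tilde\rho$ replacement needed for \eqref{p3-1-1a} consistently through the entire induction.
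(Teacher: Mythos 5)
Your proposal follows essentially the same route as the paper: Picard iteration $q_n=q_0\ast q_{n-1}$ estimated via the variable-order convolution inequalities of Corollary \ref{l3-3}, Beta-function product coefficients giving Mittag--Leffler-type convergence of $\sum_n q_n$ (hence \eqref{p3-1-1}, \eqref{p3-1-2} and that $q$ solves \eqref{eq2-2}), the H\"older bounds from $q=q_0+q_0\ast q$ combined with Proposition \ref{p3-1-1a-p} (pulling out the $|x-x'|$ factor and passing to the $\tilde\rho$-versions of the convolution inequalities to absorb the tail terms), and the localized bounds by rerunning the iteration from \eqref{l2-1-33a} and \eqref{l2-1-33}. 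The only bookkeeping difference is that the paper caps the per-step gain of the $\rho^{y,0}$-component at $(\widehat{\beta}_0/\alpha_2)-\gamma$ with $\widehat{\beta}_0:=\beta_0\wedge(\alpha_2/2)$ already inside the part (1) iteration (so that the condition $\beta_1+\beta_2<\alpha_2$ of the convolution lemma holds at every step, recovering \eqref{p3-1-1} afterwards via $\beta_0\le 2\widehat{\beta}_0$), a device you flag explicitly only for part (2).
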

\begin{proof}
Without loss of generality, throughout the proof we will assume that $\B_0< \A_2$ and $\B_0^*=\B_0$; otherwise, we will replace $\B_0$ by $\B_0^*.$

(i) According to \eqref{l2-1-2}, if $\kappa(x,z)$ is independent of $z$, then
$$|q_0(t,x,y)|\p \rho_{-\gamma}^{y,\beta_0}(t,x-y),\quad t\in(0,1] \textrm{ and } x,y \in \R^d.$$
Therefore, by Corollary \ref{l3-3},
\begin{align*}|q_1(t,x,y)|\p&\int_0^t\int \rho_{-\gamma}^{z,\B_0}(t-s,x-z) \rho^{y,\B_0}_{-\gamma}(s,z-y)\,dz\,ds\\
\p& \Bb\big(({\wh\B_0}/{\A_2})-\gamma, ({\B_0}/{\A_2})-\gamma\big)
\big(\rho_{2(({\wh
\B_0}/{\A_2})-\gamma)}^{y,0}+\rho_{-\gamma+(({\B_0}/{\A_2})-\gamma)}^{y,\B_0}\big)(t,x-y),\end{align*}
where $\wh \B_0:=\B_0 \wedge (\A_2/2)$.

Suppose now that
$$|q_n(t,x,y)|\le \omega_n\Big(\rho_{(n+1)(({\wh \B_0}/{\A_2})-\gamma)}^{y,0}+\rho_{-\gamma+n(({\B_0}/{\A_2})-\gamma)}^{y,\B_0} \Big)(t,x-y),\quad n\ge0,$$
where the constant $\omega_n$ is to be determined later.
Hence, according to Corollary \ref{l3-3} again,
\begin{align*}|q_{n+1}(t,x,y)|\le& C\int_0^t\int \rho_{-\gamma}^{z,\B_0}(t-s,x-z) \omega_n \Big(\rho_{(n+1)(({\wh\B_0}/{\A_2})-\gamma)}^{y,0}+\rho_{-\gamma+n(({\B_0}/{\A_2})-\gamma)}^{y,\B_0} \Big)(s,z-y)\,dz\,ds\\
\le&C \omega_n \Bb(({\beta_0}/{\A_2})-\gamma, (n+1)(({\wh \beta_0}/{\A_2})-\gamma)) \Big(\rho_{(n+2)(({\wh \B_0}/{\A_2})-\gamma)}^{y,0}+\rho_{-\gamma+(n+1)(({\B_0}/{\A_2})-\gamma)}^{y,\B_0} \Big)(t,x-y)\\
=&: \omega_{n+1}\Big(\rho_{(n+2)(({\wh \B_0}/{\A_2})-\gamma)}^{y,0}+\rho_{-\gamma+(n+1)(({\B_0}/{\A_2})-\gamma)}^{y,\B_0} \Big)(t,x-y), \end{align*} where $$ \omega_{n+1}= C \Bb\big(({\beta_0}/{\A_2})-\gamma, (n+1)(({\wh \beta_0}/{\A_2})-\gamma)\big)\omega_n ,$$
and $C$ is a constant independent of $t,n,x$ and $y$.
Note that $\omega_0\le C$ and $\Bb(\gamma, \B)=\frac{\Gamma(\gamma)\Gamma(\B)}{\Gamma(\gamma+\B)}$, where $\Gamma$ is the standard Gamma function. By iteration procedure we have
\begin{align*}\omega_n\le &C^{n+1}\Bb\big(({\beta_0}/{\A_2})-\gamma, n(({\wh \beta_0}/{\A_2})-\gamma)\big)\\
&\times\Bb\big(({\beta_0}/{\A_2})-\gamma, (n-1)(({\wh \beta_0}/{\A_2})-\gamma)\big)\cdots\Bb\big(({\beta_0}/{\A_2})-\gamma, ({\wh \beta_0}/{\A_2})-\gamma\big)\\
=&\frac{\big[C \Gamma(({\beta_0}/{\A_2})-\gamma )\big]^{n+1}}{\Gamma\big((n+1)(({\wh \beta_0}/{\A_2})-\gamma)\big)}.\end{align*}
Therefore,
\begin{equation}\label{p3-1-6a}\begin{split}
|q_n(t,x,y)|\le &\frac{\big[C \Gamma(({\beta_0}/{\A_2})-\gamma )\big]^{n+1}}{\Gamma\big((n+1)(({\wh \beta_0}/{\A_2})-\gamma)\big)} \Big(\rho_{(n+1)(({\wh \B_0}/{\A_2})-\gamma)}^{y,0}
+\rho_{-\gamma+n(({\B_0}/{\A_2})-\gamma)}^{y,\B_0} \Big)(t,x-y),\end{split}
\end{equation}
and so combing all estimates together
$$\sum_{n=0}^\infty |q_n(t,x,y)|\le \left(\sum_{n=0}^\infty \frac{\big[C \Gamma(({\beta_0}/{\A_2})-\gamma )\big]^{n+1}}{\Gamma\big((n+1)(({\wh\beta_0}/{\A_2})-\gamma)\big)}\right)\Big(\rho_{2(({\wh \B_0}/{\A_2})-\gamma)}^{y,0}+\rho_{-\gamma}^{y,\B_0} \Big)(t,x-y),$$ which means that $q(t,x,y):=\sum_{n=0}^\infty q_n(t,x,y)$ is absolutely convergent, and
$$|q(t,x,y)|\le C_1  \Big(\rho_{2(({\wh\B_0}/{\A_2})-\gamma)}^{y,0}+\rho_{-\gamma}^{y,\B_0} \Big)(t,x-y).$$
Thus, \eqref{p3-1-1} is proved by using the fact that $\B_0\le 2\wh \B_0$ and changing the constant $\G$ properly. From \eqref{eq3-1}
and the fact $q(t,x,y)=\sum_{n=0}^{\infty}q_n(t,x,y)$, it is easy to see that $q(t,x,y)$ solves the equation \eqref{eq2-2}.

(ii) Suppose that there are some $y_0\in \R^d$ and $r\in(0,\infty]$ such that $\alpha(z)=\alpha(y_0)$ for all $z\in B(y_0,r_0)$.
According to \eqref{l2-1-33a}, it is easy to verify that for every $z \in \R^d$,
\begin{equation}\label{p3-1-7a}
\begin{split}
|q_0(s,z,y_0)|&\p \big(|z-y_0|^{\beta_0}\wedge 1\big)\cdot
\begin{cases}
\frac{1}{(s^{{1/\A(y_0)}}+|z-y_0|)^{d+\A(y_0)}},& |z-y_0|\le r_0,\\
\frac{1}{|z-y_0|^{d+\A_1}},&|z-y_0|> r_0
\end{cases}\\
&\p \rho^{y_0,\B_0}_0(s,z-y_0).
\end{split}
\end{equation}
This along with the fact that $|q_0(t-s,x,z)|\p \rho^{z,\B_0}_{-\G}(t-s,x-z)$ for every $x,z\in \R^d$ yields that for all $x\in \R^d$,
\begin{align*}
|q_1(t,x,y_0)|&\p \int_0^t\int_{\R^d}\rho^{z,\B_0}_{-\G}(t-s,x-z)\rho^{y_0,\B_0}_0(s,z-y_0)\,dz\,ds\\
&\le C\mathscr{B}\big(({\wh\B_0}/{\A_2})-\G,({ \B_0}/{\A_2})-\G\big)
\big(\rho^{y_0,0}_{({\wh \B_0}/{\A_2})+(({\wh \B_0}/{\A_2})-\G)}+\rho^{y_0,\B_0}_{({\B_0}/{\A_2})-\G}\big)(t,x-y_0).
\end{align*}
Following the same arguments in part (i), we find that for every $n \ge 0$,
\begin{align*} |q_n(t,x,y_0)|
&\le \frac{\big[C \Gamma(({\beta_0}/{\A_2})-\gamma )\big]^{n+1}}{\Gamma\big((n+1)(({\wh \beta_0}/{\A_2})-\gamma)\big)}
\Big(\rho_{({\wh \B_0}/{\A_2})+n(({\wh\B_0}/{\A_2})-\gamma)}^{y_0,0}+\rho_{n(({ \B_0}/{\A_2})-\gamma)}^{y_0,\B_0} \Big)(t,x-y_0),\end{align*}
which implies $q(t,x,y):=\sum_{n=1}^{\infty}q_n(t,x,y)$ is absolutely convergent and gives us \eqref{p3-1-2b} immediately.

(iii) It suffices to prove \eqref{p3-1-1a} for the case that $|x-x'|\le R_1$  holds with some $R_1>0$, since the case
that $|x-x'|>1$ follows from \eqref{p3-1-1} immediately.
For any (fixed) $\e>0$ small enough, let $\tilde \rho^{y,\B}_{\G}$ be defined by \eqref{p3-1-7}.  By \eqref{p3-1-1a-1}, we have
\begin{equation}\label{p3-1-8a}
\begin{split}
&|q_0(t,x,y)-q_0(t,x',y)|\\
&\p (|x-x'|^{\tilde \T}\wedge 1)\times \bigg[\left(\tilde \rho^{y,0}_{\theta-\gamma}+\tilde \rho^{y,\B_0}_{\theta-\gamma-({\B_0}/{\A(x)})}\right)(t,x-y)  + \left(\tilde \rho^{y,0}_{\theta-\gamma}+\tilde \rho^{y,\B_0}_{\theta-\gamma-({\B_0}/{\A(x')})}\right)(t,x'-y)\bigg],
\end{split}
\end{equation}
where $\tilde \T:=\A_1(\B_0-\A_2\theta)/{\A(x)}$. Since $\B_0^*=\B_0<\A(x)$, by  \eqref{e1-3} we can find a constant
$R_1>0$ small enough such that $\B_0< \A(x)\wedge\A(x')$ for any $x'\in \R^d$ with $|x-x'|\le R_1$.
Then, according to \eqref{eq2-2}, it holds that
\begin{align*}
&|q(t,x,y)-q(t,x',y)|\\
&\le |q_0(t,x,y)-q_0(t,x',y)|+\int_0^t\int_{\R^d}
|q_0(t-s,x,z)-q_0(t-s,x',z)||q(s,z,y)|\,dz\,ds\\
&\p (|x-x'|^{\tilde \T}\wedge 1)\\
&\quad\times\bigg\{ \Big[\big(\tilde \rho^{y,0}_{\theta-\gamma}+\tilde \rho^{y,\B_0}_{\theta-\gamma-({\B_0}/{\A(x)})}\big)(t,x-y)
+ \left(\tilde \rho^{y,0}_{\theta-\gamma}+\tilde \rho^{y,\B_0}_{\theta-\gamma-({\B_0}/{\A(x')})}\right)(t,x'-y)\Big]\\
&\quad \quad+\int_0^t\int_{\R^d} \Big[\big(\tilde \rho^{z,0}_{\theta-\gamma}+\tilde \rho^{z,\B_0}_{\theta-\gamma-({\B_0}/{\A(x)})}\big)(t-s,x-z)\\
&\qquad\qquad\quad\qquad+ \big(\tilde \rho^{z,0}_{\theta-\gamma}+\tilde \rho^{z,\B_0}_{\theta-\gamma-({\B_0}/{\A(x')})}
\big)(t-s,x'-z)\Big]
\Big[\big(
\tilde \rho^{y,0}_{({\B_0}/{\A_2})-\G}+\tilde \rho^{y,\B_0}_{-\G}\big)(s,z-y)\Big]\,dz\,ds \bigg\}\\
&\p (|x-x'|^{\tilde \T}\wedge 1)\cdot\Big[\tilde \rho^{y,0}_{\theta-2\gamma
+(\B_0/\A_2)-(\B_0/\A_1)}(t,x-y)
+ \tilde \rho^{y,0}_{\theta-2\gamma+(\B_0/\A_2)-(\B_0/\A_1)}(t,x'-y)\Big].
\end{align*}
Here the second inequality follows from \eqref{p3-1-1} and \eqref{p3-1-8a}, thanks to the fact that
$\B_0< \A(x)\wedge \A(x')$, and the last inequality is due to Remark \ref{r3-3}(2) (which indicates that the convolution inequality \eqref{l3-3-0} holds for $\tilde \rho$) and the fact that $\gamma<\theta$.

(iv) Now we are going to prove the case that $\kappa(x,z)$ depends on $z$.
Due to the assumption $({\A_2}/{\A_1})-1<{\B_0^{**}}/{\A_2}$, we can choose $\gamma>0$ small enough such that
$\gamma+({\A_2}/{\A_1})-1=:\G_2 <{\B_0^{**}}/{\A_2}$. Using \eqref{l2-1-1} and following the same arguments in part (i) with $\G_2 $ instead of $\G$, we can obtain that for
every $n\ge 1$ and $(t,x,y)\in (0,1]\times \R^d \times \R^d$,
\begin{equation}\label{p3-1-9a}
\begin{split}
|q_n(t,x,y)|\le &\frac{\big[C \Gamma(({\beta_0^{**}}/{\A_2})-\G_2  )\big]^{n+1}}{\Gamma\big((n+1)(({\beta_0^{**}}/{\A_2})-\G_2 )\big)} \bigg(\rho_{(n+1)(({\B_0^{**}}/{\A_2})-\G_2 )}^{y,0}+\rho_{-\G_2 +n({\B_0^{**}}/{\A_2}
-\G_2 )}^{y,\B_0} \bigg)(t,x-y).
\end{split}
\end{equation}
Using this and \eqref{l2-1-1}, we find immediately that $\sum_{n=0}^{\infty}|q_n(t,x,y)|<\infty$ and so
\eqref{p3-1-2} is true.

Note that, according to Proposition \ref{p3-1-1a-p}(2), in this case \eqref{p3-1-8a} holds with
$\gamma_2$ in place of  $\G $. (Note that here $\B_0$ is not replaced by $\B_0^{**}$.) This along with \eqref{p3-1-2} and the same argument in (iii) gives us \eqref{p3-1-2a}.

(v) Suppose that the assumptions in part (2) of the Proposition hold, and that there are some $y_0\in \R^d$ and $r_0\in(0,\infty]$ such that $\alpha(z)=\alpha(y_0)$ for all $z \in B(y_0,r_0)$. Then, using \eqref{l2-1-33} and repeating the arguments in part (ii) above,
we can prove \eqref{p3-1-2c}.
\end{proof}

Furthermore, according to Proposition \ref{p2-1}, we also have the following estimates for $q(t,x,y)$.
\begin{proposition}\label{P-new}\begin{itemize}
\item[(1)] Suppose that $\K(x,z)$ is independent of $z$. Then there exists a constant $c_5:=c_5(\A,\K)>0$ such that
for every $t \in (0,1]$ and $x,y\in \R^d$,
\begin{equation}\label{p3-1-2e}
|q(t,x,y)|\le
\begin{cases}
c_5t^{-1-({d}/{\A(y)})},\quad& |x-y|\le t^{{1}/{\A(y)}},\\
\frac{c_5}{|x-y|^{d+\A_2}},\quad& t^{{1}/{\A(y)}}< |x-y| \le 1,\\
\frac{c_5}{|x-y|^{d+\A_1}},\quad& |x-y|>1.
\end{cases}
\end{equation}

\item[(2)] Let $\B_0^{**}\in(0,\B_0]\cap(0,{\alpha_2}/{2}).$ Suppose that $\K(x,z)$ depends on $z$, and
$\B_0^{**}/{\A_2}>({\A_2}/{\A_1})-1$. Then exists a constant $c_6:=c_6(\A,\K)>0$ such that
for every $t \in (0,1]$ and $x,y\in\R^d$,
\begin{equation}\label{p3-1-2f}
|q(t,x,y)|\le
\begin{cases}
c_6t^{-1-({d}/{\A(y)})},\quad& |x-y|\le t^{{1}/{\A(y)}},\\
\frac{c_6t^{1-({\A_2}/{\A_1})}}{|x-y|^{d+\A_2}},\quad& t^{{1}/{\A(y)}}< |x-y| \le 1,\\
\frac{c_6t^{1-({\A_2}/{\A_1})}}{|x-y|^{d+\A_1}},\quad& |x-y|>1.
\end{cases}
\end{equation}
\end{itemize}
\end{proposition}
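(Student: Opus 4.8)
The plan is to use the Levi identity $q=q_0+r$ with $r(t,x,y):=\int_0^t\int_{\R^d}q_0(t-s,x,z)\,q(s,z,y)\,dz\,ds$, which is exactly \eqref{eq2-2}. The contribution of $q_0$ to the claimed bounds \eqref{p3-1-2e} (resp.\ \eqref{p3-1-2f}) is precisely what Proposition \ref{p2-1} provides, since $|x-y|^{d+\A_2}\wedge|x-y|^{d+\A_1}$ equals $|x-y|^{d+\A_2}$ for $|x-y|\le 1$ and $|x-y|^{d+\A_1}$ for $|x-y|>1$. So everything reduces to proving that $r$ obeys the same three--regime estimate. Throughout I would fix $\beta:=\beta_0^*$ (resp.\ $\beta:=\beta_0^{**}$), shrinking it below $\beta_0$ and below $\A_2/2$ if necessary (legitimate, since $\rho^{y,\beta_0}_\theta\le\rho^{y,\beta}_\theta$ for $\beta\le\beta_0$), and pick $\gamma>0$ small; in case (2) I would also write $\G_2=\gamma+(\A_2/\A_1)-1$, which by the hypothesis $(\A_2/\A_1)-1<\B_0^{**}/\A_2$ can be kept $<\beta/\A_2$.

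For $r$ I would combine the sharp bounds already available: $|q_0(t-s,x,z)|\p\rho^{z,\beta}_{-\gamma}(t-s,x-z)$ by \eqref{l2-1-2} (resp.\ $\p\rho^{z,\beta}_{1-(\A_2/\A_1)-\gamma}(t-s,x-z)$ by \eqref{l2-1-1}), and $|q(s,z,y)|\p\big(\rho^{y,0}_{(\beta/\A_2)-\gamma}+\rho^{y,\beta}_{-\gamma}\big)(s,z-y)$ by \eqref{p3-1-1} (resp.\ $\p\big(\rho^{y,0}_{(\beta/\A_2)-\G_2}+\rho^{y,\beta}_{-\G_2}\big)(s,z-y)$ by \eqref{p3-1-2}). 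Feeding these into the space--time convolution inequality of Corollary \ref{l3-3}(2) (which, as is already used in the proof of Proposition \ref{p3-1}, remains valid when one spatial exponent is $0$), one bounds $|r(t,x,y)|$ by a finite sum $\sum_j\big(\rho^{y,0}_{e_j}+\rho^{y,\beta}_{e_j'}\big)(t,x-y)$ in which every time--exponent satisfies $e_j,e_j'\ge(\beta/\A_2)-2\gamma$ (resp.\ $e_j,e_j'\ge 2-2(\A_2/\A_1)+(\beta/\A_2)-2\gamma$): here the Beta--function gain $\beta/\A_2$ coming from the convolution beats the loss $-\gamma$ (resp.\ $-\G_2$, using $(\A_2/\A_1)-1<\beta/\A_2$). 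Choosing $\gamma$ small makes $(\beta/\A_2)-2\gamma>0$ in case (1) and $2-2(\A_2/\A_1)+(\beta/\A_2)-2\gamma\ge 1-(\A_2/\A_1)$ in case (2).

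To finish, I would compare each $\rho^{y,0}_e$ and $\rho^{y,\beta}_{e'}$ with $e,e'$ in the above range against the asserted bound, region by region. Using $t^{1/\A(y)}+|x-y|\asymp t^{1/\A(y)}$ or $\asymp|x-y|$ according to the regime, $(|x-y|^\beta\wedge 1)\le 1$, $t\le 1$, and $\A(y)\le\A_2$, one checks directly that with $e,e'\ge 0$ every such term is $\p t^{-1-d/\A(y)}$ for $|x-y|\le t^{1/\A(y)}$, $\p|x-y|^{-d-\A_2}$ for $t^{1/\A(y)}<|x-y|\le 1$, and $\p|x-y|^{-d-\A_1}$ for $|x-y|>1$ --- this settles case (1); in case (2) the same computation (now with $e,e'\ge 1-(\A_2/\A_1)$, so $t^e\le t^{1-(\A_2/\A_1)}$) produces those spatial factors multiplied by $t^{1-(\A_2/\A_1)}$ in the intermediate and far ranges. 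The one point the convolution bound on $r$ does not settle is the near--diagonal range $|x-y|\le t^{1/\A(y)}$ of case (2) (where $e$ may be negative); there I would instead read $|q(t,x,y)|\le c\,t^{-1-d/\A(y)}$ directly off \eqref{p3-1-2}, using that $(\beta/\A_2)-\G_2>0$ and $(\beta/\A(y))-\G_2>0$. Together with the $q_0$--estimate this gives the proposition.

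The step I expect to be the main obstacle is precisely the exponent bookkeeping in the second and third paragraphs: one must make sure that after applying Corollary \ref{l3-3}(2) every resulting time--exponent is $\ge 0$ (case (1)) or $\ge 1-(\A_2/\A_1)$ (case (2)), and that all admissibility conditions of Corollary \ref{l3-3}(2) hold along the way; this is where the smallness of $\gamma$, the standing hypothesis $(\A_2/\A_1)-1<\B_0^{**}/\A_2$ and the constraint $\B_0^{**}<\A_2/2$ (hence $\A_2<2\A_1$) all get used, and it is also why the near--diagonal range of case (2) has to be handled by the separate, more direct argument.
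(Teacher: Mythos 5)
Your proposal is correct and follows essentially the paper's own route: the paper likewise splits $|q|\le |q_0|+{}$(remainder), invokes Proposition \ref{p2-1} for the $q_0$-term, and controls the remainder with the same convolution machinery, the only difference being that it bounds $\sum_{n\ge 1}|q_n|$ termwise via the iteration estimates \eqref{p3-1-6a} and \eqref{p3-1-9a} (choosing $\gamma<\B_0/(2\A_2)$, resp.\ $\G_2<\B_0^{**}/\A_2$), whereas you bound the identical quantity $q_0\ast q$ by one application of Corollary \ref{l3-3} to \eqref{l2-1-2}/\eqref{l2-1-1} together with \eqref{p3-1-1}/\eqref{p3-1-2}. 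Your exponent bookkeeping checks out (including the separate near-diagonal treatment in case (2), which the paper instead absorbs through the spatial weight $|x-y|^{\B_0}$ producing an extra $t^{\B_0/\A_2}$), so this is a mild repackaging of the same argument rather than a genuinely different proof.
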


\begin{proof}
Without loss of generality, throughout the proof we still assume that $\B_0< \A_2/2$ and $\B_0^{**}=\B_0$.

We first suppose that $\K(x,z)$ is independent of $z$.
As in the proof of Proposition \ref{p3-1}(1), we know that
$q(t,x,y)=\sum_{n=0}^{\infty}q_n(t,x,y)$
is absolutely convergent. In particular, \begin{equation}\label{p3-1-9}
|q(t,x,y)|\le |q_0(t,x,y)|+\sum_{n=1}^{\infty}|q_n(t,x,y)|.
\end{equation}
According to \eqref{p3-1-6a}, we have
\begin{align*}
 \sum_{n=1}^{\infty}|q_n(t,x,y)|
& \le
 \left(\sum_{n=1}^\infty \frac{\big[C \Gamma(({\beta_0}/{\A_2})-\gamma)\big]^{n+1}}{\Gamma\big((n+1)(({\beta_0}/{\A_2})-\gamma)\big)}
 \right)\Big(\rho_{3(({\B_0}/{\A_2})-\gamma)}^{y,0}+\rho_{({\B_0}/{\A_2})-2\gamma}^{y,\B_0} \Big)(t,x-y) \p \rho^{y,0}_0(t,x-y),
\end{align*}
where in the last inequality we choose $\G>0$ small enough such that
$\G<{\B_0}/({2\A_2})$.
Combining this estimate with \eqref{p2-1-1} and  \eqref{p3-1-9},  we obtain \eqref{p3-1-2e} immediately.

If $\K(x,z)$ depends on $z$ and ${\B_0}/{\A_2}>({\A_2}/{\A_1})-1$, then one can use \eqref{p2-1-1a} and follow
the same procedure above to prove \eqref{p3-1-2f}.
\end{proof}

\medskip

\section{Estimates for $p(t,x,y)$}\label{section4}
\subsection{Existence and upper bounds for $p(t,x,y)$}

By making full use of the estimates for $q(t,x,y)$ in Proposition
\ref{p3-1}, we now can prove that $p(t,x,y)$ is well defined by
\eqref{eq2-1}.

\begin{proposition}\label{p3-2} \begin{itemize}
\item[(1)] If $\K(x,z)$ is independent of $z$, then $p(t,x,y)$ is well defined by
\eqref{eq2-1}, and for every $\gamma,c_0>0$, there exist positive
constants $c_1:=c_1(\A,\K,\G,c_0)$ and $R_0:=R_0(\A,\K,\G,c_0)$ such that for any $t\in (0,1]$ and $x,y\in \R^d$,
\begin{equation}\label{p3-2-1}
p(t,x,y)\le
\begin{cases}
c_1t^{-{d/\A(x)}}, & |y-x|\le c_0t^{{1/\A(x)}},\\
\frac{c_1t^{1-\G}}{|x-y|^{d+\A(x)}},& c_0t^{{1/\A(x)}}\le |y-x|\le R_0,\\
\frac{c_1t^{1-\gamma}}{|x-y|^{d+\A_1}},&|y-x|> R_0.
\end{cases}
\end{equation} Suppose additionally that there are some $x_0 \in \R^d$ and $r_0\in(0,\infty]$ such that
$\alpha(z)=\alpha(x_0)$ for every $z \in B(x_0,r_0)$. Then we can
find a constant $c_2:=c_2(\A,\K,\G,r_0)>0$ such that for every $t\in (0,1]$ and $y \in
\R^d$,
\begin{equation}\label{p3-2-2}
p(t,x_0,y)\le
\begin{cases}
\frac{c_2 t }{(t^{{1/\A(x_0)}}+|x_0-y|)^{d+\A(x_0)}},& |x_0-y|\le {r_0}/{2},\\
\frac{c_2 t^{1-\G} }{|x_0-y|^{d+\A_1}},&|x_0-y|>{r_0}/{2}.
\end{cases}
\end{equation}

\item[(2)] If $\K(x,z)$ depends on $z$ and $({\A_2}/{\A_1})-1<\min\{{\B_0}/{\A_2},1/2\}$, then $p(t,x,y)$ is well defined by
\eqref{eq2-1}, and for every $\gamma, c_0>0$, there exist constants
$c_1:=c_1(\A,\K,\G,c_0)$ and $R_0:=R_0(\A,\K,\G,c_0)$ such that for any $t\in (0,1]$ and $x,y\in \R^d$,
\begin{equation}\label{p3-2-1a}
p(t,x,y)\le
\begin{cases}c_1t^{-{d/\A(x)}}, & |y-x|\le c_0 t^{{1/\A(x)}},\\
\frac{c_1t^{1-\tilde\G}}{|x-y|^{d+\A(x)}},& c_0 t^{{1/\A(x)}}\le |y-x|\le R_0,\\
\frac{c_1t^{1-\tilde\G }}{|x-y|^{d+\A_1}},&|y-x|> R_0,
\end{cases}
\end{equation}
where $\tilde\G:=({\A_2}/{\A_1})-1+\G$. Suppose additionally there exist some
$x_0 \in \R^d$ and $r_0\in(0,\infty]$ such that $\alpha(z)=\alpha(x_0)$ for
every $z \in B(x_0,r_0)$. Then, for any $c_0>0$, we can find a constant
$c_2:=c_2(\A,\K,\G,r_0,c_0)>0$ such that for every $t\in (0,1]$ and $y \in \R^d$,
\begin{equation}\label{p3-2-2a}
p(t,x_0,y)\le
\begin{cases}
{c_2}{t^{-{d/\A (x_0)}}},&\quad |y-x_0|< c_0t^{{1/\A(x_0)}},\\
\frac{c_2 t^{2-{(\A_2/\A_1)}} }{|y-x_0|^{d+\A(x_0)}},& \quad
c_0t^{{1/\A(x_0)}}\le |y-x_0|< {r_0}/{2},\\
\frac{c_2 t^{1-\tilde\G} }{|x-y_0|^{d+\A_1}},&\quad |y-x_0|\ge
{r_0}/{2}.
\end{cases}
\end{equation}
\end{itemize}
\end{proposition}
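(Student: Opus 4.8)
The plan is to argue directly from the Levi representation \eqref{eq2-1}, writing $p(t,x,y)=p^y(t,x-y)+\Phi(t,x,y)$ with $\Phi(t,x,y):=\int_0^t\!\int_{\R^d}p^z(t-s,x-z)\,q(s,z,y)\,dz\,ds$, and estimating the two summands separately. For the parametrix term Lemma \ref{p1-1} gives $p^y(t,x-y)\p\rho_1^{y,0}(t,x-y)$, and in the case $\K=\K(x)$ the sharper break-free bound $p^y(t,x-y)\le c\,t\,(t^{1/\A(y)}+|x-y|)^{-d-\A(y)}$ of \eqref{p1-1-1}. For $\Phi$ I insert $p^z(t-s,x-z)\p\rho_1^{z,0}(t-s,x-z)$ together with the bound on $q$ from Proposition \ref{p3-1} — \eqref{p3-1-1} when $\K=\K(x)$, and \eqref{p3-1-2} (with $\B_0^{**}$ chosen as there, which is possible precisely under the hypothesis $(\A_2/\A_1)-1<\min\{\B_0/\A_2,1/2\}$) in general — and then apply the convolution inequality \eqref{l3-3-1} of Corollary \ref{l3-3}; here the degenerate exponent $\beta_1=0$ of the factor $\rho_1^{z,0}$ causes no trouble, since its time exponent $\T_1=1$ is strictly positive, and the remaining balance conditions $\beta_1+\beta_2<\A_2$ and $\T_2+\beta_2/\A_2>0$ hold once $\gamma$ is small. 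This single application shows simultaneously that the double integral in \eqref{eq2-1} converges absolutely, so that $p(t,x,y)$ is well defined, and that $\Phi(t,x,y)\p(\rho^{y,0}_{1+(\B_0^*/\A_2)-\gamma}+\rho^{y,\B_0^*}_{1-\gamma})(t,x-y)$, with $\gamma$ replaced by $\tilde\gamma:=\gamma+(\A_2/\A_1)-1$ in the general case, which is the origin of the extra loss in \eqref{p3-2-1a}.

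The second step is to convert these $\rho^{y,\cdot}$-bounds, which involve the base point $y$, into the asserted estimates phrased through $\A(x)$; this is a regime-by-regime re-expression of the $\rho$-functions. When $|x-y|\le c_0t^{1/\A(x)}$ one has $|\A(x)-\A(y)|\p|x-y|^{\B_0}\p t^{\B_0/\A_2}$, whence $t^{1/\A(x)}\asymp t^{1/\A(y)}$ and $t^{-d/\A(x)}\asymp t^{-d/\A(y)}$ by the device of \eqref{l3-2-8}; since moreover $\B_0^*/\A_2-\gamma>0$, $\tilde\gamma<\B_0^{**}/\A_2$ and $|x-y|^{\B_0^*}\p t^{\B_0^*/\A(y)}$ here, each $\rho$-term is $\p t^{-d/\A(x)}$, so the near-diagonal bounds in \eqref{p3-2-1}, \eqref{p3-2-1a} hold with no loss. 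When $c_0t^{1/\A(x)}\le|x-y|<R_0<1$ one converts $|x-y|^{-d-\A(y)}\asymp|x-y|^{-d-\A(x)}$ and $|x-y|^{\B_0^*-d-\A(y)}\p|x-y|^{-d-\A(x)}$ exactly as in \eqref{l3-1-1a} for $R_0$ small, and the smallest time exponent present, $1-\gamma$ (resp.\ $1-\tilde\gamma$), produces the stated $t$-power, while the parametrix term contributes only $t\le t^{1-\gamma}\wedge t^{1-\tilde\gamma}$ and is dominated. When $|x-y|\ge R_0$ the $\rho$-functions already carry the exponent $\A_1$, matching the claim. Choosing $R_0=R_0(\A,\K,\gamma,c_0)$ small and relabelling $\gamma$ yields \eqref{p3-2-1} and \eqref{p3-2-1a}.

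For the refined bounds \eqref{p3-2-2}, \eqref{p3-2-2a} fix $x_0,r_0$ with $\A\equiv\A(x_0)$ on $B(x_0,r_0)$. When $|x_0-y|$ is small (of order $\le t^{1/\A(x_0)}$) or $\ge r_0/2$ the assertion is already contained in \eqref{p3-2-1}/\eqref{p3-2-1a} after choosing $R_0\le r_0/2$, so only the moderate range $t^{1/\A(x_0)}\lesssim|x_0-y|<r_0/2$ needs improvement, and there $y\in B(x_0,r_0)$, hence $\A(y)=\A(x_0)$ and $\A\equiv\A(y)$ on $B(y,r_0/2)$. The parametrix term is of the claimed form directly (via \eqref{p1-1-1} when $\K=\K(x)$; via $\rho_1^{y,0}$ with $\A(y)=\A(x_0)$, absorbing a constant depending on $r_0$ if $r_0/2>1$, in general). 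For $\Phi$ I split the $z$-integral at $|z-x_0|=r_0/2$. On $\{|z-x_0|\le r_0/2\}$ all relevant indices equal $\A(x_0)$, so I may use $p^z(t-s,x_0-z)\p\rho_1^{x_0,0}(t-s,x_0-z)$, the \emph{loss-free} $q$-bound \eqref{p3-1-2b} (resp.\ \eqref{p3-1-2c}) applied with second argument $y$ and radius $r_0/2$, and the constant-index convolution inequality \eqref{r3-2-1}; the smallest resulting time exponent is $1$ (resp.\ $2-\A_2/\A_1$), so this part is $\p t\,(t^{1/\A(x_0)}+|x_0-y|)^{-d-\A(x_0)}$ (resp.\ its $t^{2-\A_2/\A_1}$ analogue) after re-expressing the $\rho^{y,\B_0^*}$ term on the moderate range. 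On $\{|z-x_0|>r_0/2\}$ one bounds $p^z(t-s,x_0-z)\p(t-s)\,|z-x_0|^{-d-\A_1}$ (up to a constant depending on $r_0$) and uses the loss-free $q$-bound; the $z$- and then $s$-integrations give a contribution of order $t^{1+\varepsilon}$ (resp.\ $t^{2-(\A_2/\A_1)+\varepsilon}$), dominated by the claimed bound on $|x_0-y|<r_0/2$. Summing the two pieces proves \eqref{p3-2-2}, \eqref{p3-2-2a}.

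The main obstacle is this third step. One must notice that the local constancy of $\A$ near $x_0$ lets the \emph{second} argument of $q$ inherit the loss-free estimates of Proposition \ref{p3-1}, split the parametrix convolution so that the near-diagonal part runs with a single constant index (and hence mimics the constant-order theory) while the off-ball part is merely shown to be of strictly higher order in $t$, and keep track of the break of $\rho$ at $|x|=1$ when $r_0/2>1$ together with the various index conversions around $x_0$. The global bounds \eqref{p3-2-1}, \eqref{p3-2-1a}, by contrast, reduce to feeding Lemma \ref{p1-1} and Proposition \ref{p3-1} into Corollary \ref{l3-3} and translating base points via \eqref{l3-1-1a} and \eqref{l3-2-8}.
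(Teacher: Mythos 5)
Your proposal is correct, and its skeleton coincides with the paper's: feed the parametrix bounds of Lemma \ref{p1-1} and the $q$-estimates of Proposition \ref{p3-1} into the convolution inequality of Corollary \ref{l3-3} to get well-definedness of \eqref{eq2-1} together with $p\p(\rho^{y,0}_1+\rho^{y,\B_0^*}_{1-\G})$ (with $\G$ replaced by $\tilde\G$ in case (2)), handle the near-diagonal regime by $|x-y|^{\B_0^*}\p t^{\B_0^*/\A_2}$ and $t^{-d/\A(y)}\asymp t^{-d/\A(x)}$ as in \eqref{l3-2-8}, and obtain the refined bounds at $x_0$ from the loss-free local estimates \eqref{p3-1-2b}/\eqref{p3-1-2c}, which apply to $q(\cdot,\cdot,y)$ for every $y\in B(x_0,r_0/2)$ because $\A$ is constant on $B(y,r_0/2)$. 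The execution differs in two spots, both legitimately. For the middle and far regimes of \eqref{p3-2-1}/\eqref{p3-2-1a} the paper proves the uniform base-point conversion \eqref{l4-1-5}, $\rho^{y,0}_0(t,z)\p t^{-\G}\rho^{x,0}_0(t,z)$ for $|x-y|\le R_0$, accepting a $t^{-\G}$ loss controlled by taking $R_0$ small; you instead discard the $t^{1/\A(y)}$ in the denominator and convert only the spatial factor $|x-y|^{-d-\A(y)}\p|x-y|^{-d-\A(x)}$ by the \eqref{l3-1-1a} mechanism, which is loss-free and in fact makes the smallness of $R_0$ inessential for that step. For \eqref{p3-2-2}/\eqref{p3-2-2a} the paper simply reruns the part-(1) computation with $\G=0$, since $\A(y)=\A(x_0)$ removes the base-point loss and the loss-free $q$-bound removes the other one; your splitting of the $z$-integral at $|z-x_0|=r_0/2$, with the constant-index convolution \eqref{r3-2-1} on the inner ball and a crude $t^{1+\varepsilon}$ (resp.\ $t^{2-(\A_2/\A_1)+\varepsilon}$) estimate off the ball, is more laborious but reaches the same conclusion, and the off-ball region is empty when $r_0=\infty$, so your constants depending on $r_0$ are harmless. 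Your remark about the degenerate exponent $\B_1=0$ in the factor $\rho^{z,0}_1$ is well taken: Corollary \ref{l3-3} and \eqref{r3-2-1} are stated for $\B_1\in(0,1)$, but their proofs (via \eqref{l3-2-0}--\eqref{l3-2-1} and \eqref{l3-1-1}) do not use $\B_1>0$ in that slot, and the paper applies them in exactly the same way.
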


\begin{proof} Without loss of generality, throughout this proof we will assume that $2\B_0<\A_2$ and $\B_0^{**}=\B_0$; otherwise, we will replace $\B_0$ by $\B_0^{**}\in (0,\B_0]\cap (0,{\A_2}/{2}).$ For simplicity, we only verify the case that $c_0=1$.

(i) We first consider the case that $\K(x,z)$ is independent of $z$.
Note that $$p^z(t,x-z)\p \rho_1^{z,0}(t,x-z),\quad t\in (0,1],x,z\in \R^d.$$ This, along with
\eqref{p3-1-1} and \eqref{l3-3-1}, yields that for every $(t,x,y)\in
(0,1]\times \R^d \times \R^d$ and {any constant
$0<\G<{\B_0}/{\A_2},$}
\begin{equation*}
\int_0^t \int_{\R^d} p^z(t-s,x-
z)q(s,z,y)\,dz\,ds\p
\big(\rho^{y,\B_0}_{1-\gamma}+
\rho^{y,0}_{1+({\B_0}/{\A_2})-\gamma}\big)\big(t,x-y\big).
\end{equation*}
Therefore, $p(t,x,y)$ is well defined by \eqref{eq2-1}, and
\begin{equation}\label{p3-2-3}
p(t,x,y)\p \Big(\rho^{y,0}_1+\rho^{y,\B_0}_{1-\gamma}+
\rho^{y,0}_{1+({\B_0}/{\A_2})-\gamma}\Big)\big(t,x-y\big)\p
\big(\rho^{y,0}_1+\rho^{y,\B_0}_{1-\gamma}\big)\big(t,x-y\big).
\end{equation}

When $|x-y|\le t^{{1/\A(x)}}\le t^{{1/\A_2}}$,
$$\rho^{y,\B_0}_{1-\gamma}\big(t,x-y\big)\p t^{-\G}|x-y|^{\B_0}\rho^{y,0}_1\big(t,x-y\big)
\p t^{({\B_0}/{\A_2})-\G}\rho^{y,0}_1\big(t,x-y\big).$$
Hence, due to $\G<{\B_0}/{\A_2}$, we get
\begin{equation}\label{p3-2-3a}
\begin{split}
p(t,x,y)\p \rho^{y,0}_1\big(t,x-y\big)\p t^{-{d/\A(y)}}\p
t^{-{d}/{\overline{\A}(y;t^{{1/\A_2}})}}\p t^{-{d/\A(x)}},
\end{split}
\end{equation}
where the third inequality follows from \eqref{l3-2-8}.

Next, we will verify the other two cases in the upper bound \eqref{p3-2-1}. Observe that
for every $t \in (0,1]$ and $x,y,z\in \R^d$ such that
$|x-y|\le 1$ and $|z|\le 1$,
\begin{align*}
 \frac{1}{(t^{{1/\A(y)}}+|z|)^{d+\A(y)}}
&=
\frac{1}{(t^{{1/\A(x)}}t^{({1/\A(y)})-({1/\A(x)})}+|z|)^{d+\A(y)}}
 \p \frac{1}{(t^{{1/\A(x)}}t^{{|\A(x)-\A(y)|}/{\A_1^2}}+|z|)^{d+\A(y)}}\\
&\p \frac{1}{(t^{{1/\A(x)}}t^{{|\A(x)-\A(y)|}/{\A_1^2}}+|z|t^{{|\A(x)-\A(y)|}/{\A_1^2}})^{d+\A(y)}}\\
&=t^{-{|\A(x)-\A(y)|(d+\A(y))}/{\A_1^2}}\cdot\frac{1}{(t^{{1/\A(x)}}+|z|)^{\A(y)-\A(x)}}\cdot
\frac{1}{(t^{{1/\A(x)}}+|z|)^{d+\A(x)}}\\
&\p t^{-{(d+\A_2)|\A(x)-\A(y)|}/{\A_1^2}}\cdot
t^{-{|\A(x)-\A(y)|}/{\A_1}}\cdot
\frac{1}{(t^{{1/\A(x)}}+|z|)^{d+\A(x)}} \p
\frac{t^{-C_1R_0^{\B_0}}}{(t^{{1/\A(x)}}+|z|)^{d+\A(x)}},
\end{align*}
where the second inequality we used
$t^{{|\A(x)-\A(y)|}/{\A_1^2}}\le 1$, and the third inequality follows from the fact that for all $t\in (0,1]$ and $x,y,z\in \R^d$ with $|z|\le 1$,
\begin{align*}
\frac{1}{(t^{{1/\A(x)}}+|z|)^{\A(y)-\A(x)}}\p t^{-{|\A(y)-\A(x)|}/{\A(x)}}.
\end{align*}
Thus, choosing $R_0:=R_0(\A,\G)$ small enough  such that
$C_1R_0^{\B_0}<\G$ and using the definition of $\rho^{x,0}_0$, we can
get that for every $t \in (0,1]$ and $x,y,z\in \R^d$ with
$|x-y|\le R_0$ and $|z|\le 1$,
\begin{equation}\label{l4-1-5}
\rho^{y,0}_0(t,z)\p t^{-\G}\rho^{x,0}_0(t,z).
\end{equation}
By the definition of \eqref{e3-0}, we know immediately that \eqref{l4-1-5} still holds for
every $|x-y|\le R_0$ and $|z|> 1$. This is, \eqref{l4-1-5} holds for all $x,y,z\in \R^d$ with $|x-y|\le R_0$.

Hence, combining \eqref{p3-2-3}, \eqref{p3-2-3a} with \eqref{l4-1-5} and changing the constant $\G$ properly, we find that
\begin{align*}
p(t,x,y)&\p
 \big(\rho^{y,0}_{1}+\rho^{y,\B_0}_{1-\gamma}
\big)\big(t,x-y\big)\p \begin{cases}t^{-{d/\A(x)}}, & |y-x|\le t^{{1/\A(x)}},\\
\frac{t^{1-\G}}{|x-y|^{d+\A(x)}},& t^{{1/\A(x)}}\le |y-x|\le R_0,\\
\frac{t^{1-\gamma}}{|x-y|^{d+\A_1}},&|y-x|> R_0.
\end{cases}
\end{align*}

(ii) If there are some $x_0 \in \R^d$ and $r_0\in(0,\infty]$ such
that $\alpha(z)=\alpha(x_0)$ for all $z \in B(x_0,r_0)$, then for
every $y \in B(x_0,{r_0}/{2})$ and $z \in
B(y,{r_0}/{2})$,
$
\A(z)=\A(y)=\A(x_0).
$
Hence, by \eqref{p3-1-2b}, we have
\begin{equation*}
|q(t,x,y)|\p
\Big(\rho^{y,0}_{{\B_0}/{\A_2}}+\rho^{y,\B_0}_0\Big)(t,x-y)\quad\textrm{ for all }
x\in \R^d\textrm{ and }y \in B\big(x_0,{r_0}/{2}\big).
\end{equation*}
Based on the inequality above, the computation in part (i) is valid
with $\G=0$ for $x=x_0$ and $y \in B(x_0,{r_0}/{2})$, which
proves \eqref{p3-2-2} for the case that $y \in
B(x_0,{r_0}/{2})$. The upper bound for the case
$|y-x_0|>{r_0}/{2}$ is just the same as that of \eqref{p3-2-1}.

(iii) If $\K(x,z)$ depends on $z$ and
${(\A_2/\A_1)}-1<{\B_0}/{\A_2}$, then, according to
\eqref{p3-1-2}, we know that the computation in part (i) holds with
$\G$ replaced by $\tilde\G$. Thus, following the argument in part
(i), we can obtain \eqref{p3-2-1a}. Similarly as in (ii),
\eqref{p3-2-2a} could be verified by using \eqref{p3-1-2c}.
\end{proof}

\begin{remark}
The estimate \eqref{p3-2-2} indicates that if $\A(x)$ is a
constant $\A\in(0,2)$ locally, then we can get a upper bound for
$p(t,x,y)$ which is also locally comparable with that for the heat
kernel of rotationally symmetric $\A$-stable process.  In particular, when
$\A(x)\equiv\A$ for all $x \in \R^d$, \eqref{p3-2-2a} coincides with
the upper bound given in \cite[Therem 1.1]{CZ}.
\end{remark}

Besides Proposition \ref{p3-2}, we also can obtain the following upper bound for $p(t,x,y)$, which is based on Proposition \ref{P-new}.
\begin{proposition}\label{p3-3} \begin{itemize}
\item [(1)] If $\K(x,z)$ is independent of $z$, then for any $c_0>0$, there exists a positive
constant $c_1:=c_1(\A,\K,c_0)$ such that for every $t \in (0,1]$ and $x,y\in \R^d$,
\begin{equation}\label{p3-3-1}
p(t,x,y)\le
\begin{cases}
 c_1t^{-{d}/{\A(x)}},\quad& |x-y|\le c_0t^{{1}/{\A(x)}},\\
\frac{c_1t}{|x-y|^{d+\A_2}},\quad& c_0 t^{{1}/{\A(x)}}< |x-y| \le 1,\\
\frac{c_1t}{|x-y|^{d+\A_1}},\quad& |x-y|>1.
\end{cases}
\end{equation}

\item [(2)] Let $\B_0^{**}\in(0,\B_0]\cap(0,{\alpha_2}/{2}).$ Suppose that $\K(x,z)$ depends on $z$ and ${\B_0^{**}}/{\A_2}>({\A_2}/{\A_1})-1$. Then, for any $c_0>0$,  there exists a positive
constant $c_2:=c_2(\A,\K,c_0)$ such that for every $t \in (0,1]$ and $x,y\in \R^d$,
\begin{equation}\label{p3-3-1a}
p(t,x,y)\le
\begin{cases}
c_2t^{-{d}/{\A(x)}},\quad& |x-y|\le c_0t^{{1}/{\A(x)}},\\
\frac{c_2t^{2-({\A_2}/{\A_1})}}{|x-y|^{d+\A_2}},\quad& c_0t^{{1}/{\A(x)}}< |x-y| \le 1,\\
\frac{c_2t^{2-({\A_2}/{\A_1})}}{|x-y|^{d+\A_1}},\quad& |x-y|>1.
\end{cases}
\end{equation}
\end{itemize}
\end{proposition}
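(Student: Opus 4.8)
As in the proof of Proposition \ref{p3-2}, it suffices to take $c_0=1$, and we may assume $\B_0<\A_2$ in part (1) and $\B_0^{**}=\B_0<\A_2/2$ in part (2). The plan is to re-run the argument of Proposition \ref{p3-2} based on the representation
$$p(t,x,y)=p^y(t,x-y)+\int_0^t\int_{\Rd}p^z(t-s,x-z)q(s,z,y)\,dz\,ds,$$
feeding in the sharp, $\gamma$-free estimates for $q$ from Proposition \ref{P-new} in place of \eqref{p3-1-1}/\eqref{p3-1-2}. Because the bounds of Proposition \ref{P-new} carry no factor $t^{-\gamma}$, this removes the $\gamma$ from the exponent of $p$; the price one pays is that the denominator $|x-y|^{d+\A(x)}$ in the intermediate regime of \eqref{p3-2-1}/\eqref{p3-2-1a} must be weakened to $|x-y|^{d+\A_2}$, which is harmless since $|x-y|\le1$ there, and that the index comparison \eqref{l4-1-5} (the other origin of the $t^{-\gamma}$ loss in Proposition \ref{p3-2}) is no longer invoked outside the near-diagonal regime.

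For the leading term I would use $p^y(t,x-y)\p\rho_1^{y,0}(t,x-y)$ from \eqref{p1-1-1-1-0} and recast it in the three claimed regimes: for $|x-y|>1$, $\rho_1^{y,0}(t,x-y)\asymp t|x-y|^{-d-\A_1}$; for $t^{1/\A(x)}<|x-y|\le1$, $\rho_1^{y,0}(t,x-y)\p t|x-y|^{-d-\A(y)}\le t|x-y|^{-d-\A_2}$; and for $|x-y|\le t^{1/\A(x)}\le t^{1/\A_2}$, the argument of \eqref{p3-2-3a} (via \eqref{l3-2-8} and the H\"older continuity of $\A$) gives $\rho_1^{y,0}(t,x-y)\asymp t^{-d/\A(y)}\asymp t^{-d/\A(x)}$.

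The bulk of the work is the convolution term. Using $p^z(t-s,x-z)\p\rho_1^{z,0}(t-s,x-z)$ and the bound $|q(s,z,y)|\p\rho_0^{y,0}(s,z-y)$ from Proposition \ref{P-new}(1) (for part (2), $|q(s,z,y)|\p\rho_0^{y,0}(s,z-y)$ near the diagonal and $\p s^{1-\A_2/\A_1}\rho_0^{y,0}(s,z-y)$ off it), I would decompose the $(s,z)$-integral in the manner of Lemma \ref{l3-2}, according to the relative sizes of $|z-y|$, $|x-y|$, $s^{1/\A(y)}$, $(t-s)^{1/\A(z)}$, together with a split of the time integral at $s\simeq t/2$. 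On $s\in(t/2,t)$ one has $s^{1/\A(y)}\asymp t^{1/\A(y)}$, hence $\rho_0^{y,0}(s,z-y)\asymp\rho_0^{y,0}(t,z-y)$; substituting $u=t-s\in(0,t/2)$, using $\int_{\Rd}p^z(u,x-z)\,dz\p1$ and the spatial decay of $\rho_0^{y,0}(t,\cdot)$ away from $y$ (splitting $|z-x|\gtrless|x-y|/2$), this range contributes $\p t|x-y|^{-d-\A_2}$ in the intermediate regime, $\p t|x-y|^{-d-\A_1}$ in the far regime, and $\p t^{-d/\A(x)}$ near the diagonal. On $s\in(0,t/2)$, where $t-s\asymp t$ so $\rho_1^{z,0}(t-s,x-z)\asymp\rho_1^{z,0}(t,x-z)$, the same regime analysis applies except near $z=y$, where the most singular part of the integral must be collapsed using the Chapman--Kolmogorov identity $\int_{\Rd}p^y(t-s,x-z)p^y(s,z-y)\,dz=p^y(t,x-y)$ for the L\'evy process $X^y$, the errors from replacing $p^z$ by $p^y$ and from the $z$-dependence being governed by $(|z-y|^{\B_0}\wedge1)$-weighted kernels. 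Summing the pieces and rewriting the resulting sums of $\rho^{y,\cdot}_{\cdot}$ in piecewise form yields \eqref{p3-3-1}. Part (2) is identical using Proposition \ref{P-new}(2): the hypothesis $\B_0^{**}/\A_2>(\A_2/\A_1)-1$ keeps the relevant time exponents positive, and the off-diagonal factor $t^{1-\A_2/\A_1}$ carried by $q$ propagates into the prefactor $t^{2-\A_2/\A_1}$ of \eqref{p3-3-1a}.

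The step I expect to be the main obstacle is precisely the integrability of the convolution near $z=y$ and $s=0$. Unlike the $\beta_0$-refined estimate used in Proposition \ref{p3-2}, the sharp bound $|q(s,z,y)|\p\rho_0^{y,0}(s,z-y)$ has no $|z-y|^{\B_0}$ factor, and in fact $\int_0^t\int_{\Rd}p^z(t-s,x-z)\rho_0^{y,0}(s,z-y)\,dz\,ds$ diverges logarithmically at the diagonal; this is what forces both the time split at $s\simeq t/2$ and the use of the semigroup structure of $p^y$ to absorb the singular contribution without re-introducing a $t^{-\gamma}$. It is exactly here that Proposition \ref{P-new} improves on the input of Proposition \ref{p3-2}, and the bookkeeping of the several regimes is the delicate part.
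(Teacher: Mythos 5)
There is a genuine gap in the treatment of the singular part of the convolution term, which you yourself flag as the delicate step. Your plan is to use only the $\gamma$-free bounds of Proposition \ref{P-new} and to rescue the divergent contribution near $z=y$, $s\to 0$ by a time split at $t/2$ together with the Chapman--Kolmogorov identity for the frozen kernel $p^y$. But that identity applies to $p^y$, not to $q$: near the diagonal $|q(s,z,y)|\p s^{-1-d/\A(y)}$, i.e.\ $q(s,\cdot,y)$ is a full factor of order $s^{-1}$ more singular than the density $p^y(s,\cdot-y)$, and on the annulus $s^{1/\A(y)}\le |z-y|\le 1$ the bound \eqref{p3-1-2e} only gives $|z-y|^{-d-\A_2}$, so that $\int_{\{|z-y|\le |x-y|/2\}}|q(s,z,y)|\,dz$ is of order $s^{-(\A_2/\A(y))\vee 1}$. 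Replacing $p^z$ by $p^y$ and collapsing via $\int p^y(t-s,x-z)p^y(s,z-y)\,dz=p^y(t,x-y)$ therefore still leaves a non-integrable $s$-factor; the $(|z-y|^{\B_0}\wedge 1)$-weighted kernels you invoke control only the replacement errors, not this main term. So, as sketched, the mechanism does not close, and no amount of regime bookkeeping with the crude bound alone will fix it.

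The paper's resolution is simpler, and it shows that your premise --- that \eqref{p3-1-1} must be discarded to avoid reintroducing $t^{-\gamma}$ --- is mistaken. The near-diagonal case $|x-y|\le t^{1/\A(x)}$ is quoted directly from \eqref{p3-2-1}. Off the diagonal one splits only in space, at $|z-y|=|x-y|/2$ (no split of the time integral): on $\{|z-y|\ge |x-y|/2\}$ one uses Proposition \ref{P-new}, whose bound there is purely spatial ($|x-y|^{-d-\A_2}$, resp.\ $|x-y|^{-d-\A_1}$), exactly as you do; on $\{|z-y|\le |x-y|/2\}$ one keeps the $\B_0$-weighted estimate \eqref{p3-1-1} (with some $0<\gamma<\B_0/\A_2$), bounds $p^z(t-s,x-z)\p t|x-y|^{-d-\A_2}$ (resp.\ $|x-y|^{-d-\A_1}$) since $|x-z|\ge|x-y|/2$, and uses \eqref{l3-1-1-00} to get $\int_{\R^d}|q(s,z,y)|\,dz\p s^{-1+(\B_0/\A_2)-\gamma}$, which is integrable in $s$; integrating over $(0,t)$ produces only the harmless factor $1+t^{(\B_0/\A_2)-\gamma}\p 1$, so no $t^{-\gamma}$ survives in the final bound. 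Part (2) runs identically with \eqref{p3-1-2f} and \eqref{p3-1-2}, the off-diagonal factor $t^{1-\A_2/\A_1}$ giving the prefactor $t^{2-\A_2/\A_1}$. Your handling of the leading term $p^y(t,x-y)$ and of the region where $z$ is far from $y$ agrees with the paper; the missing idea is that the gain needed at $z=y$ comes from the $(|z-y|^{\B_0}\wedge 1)$ factor already present in \eqref{p3-1-1}, not from any semigroup cancellation.
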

\begin{proof} Throughout the proof, we assume that $\B_0<\A_2/2$ and that $\B_0=\B_0^{**}.$ We only verify the case that $c_0=1$.

(1) We first suppose that $\K(x,z)$ is independent of $z$.

{\bf Case (a): $|x-y|\le t^{1/\A(x)}$.}

According to \eqref{p3-2-1}, we can easily see that \eqref{p3-3-1} holds.

{\bf Case (b): $|x-y|>2$.}

We have
\begin{equation}\label{p3-3-2}
\begin{split}
 \left|\int_{\R^d} p^z(t-s,x-z)q(s,z,y)\,dz\right|
&\le \int_{\{|z-y|>{|x-y|}/{2}\}} p^z(t-s,x-z)|q(s,z,y)|\,dz\\
&\quad
+\int_{\{|z-y|\le {|x-y|}/{2}\}} p^z(t-s,x-z)|q(s,z,y)|\,dz\\
&=:J_1+J_2.
\end{split}
\end{equation}When $|z-y|\ge {|x-y|}/{2}\ge 1$, it follows from \eqref{p3-1-2e} that for any $s\in (0,1]$ and $y,z\in \R^d$,
\begin{equation*}
|q(s,z,y)|\p \frac{1}{|z-y|^{d+\A_1}}\p \frac{1}{|x-y|^{d+\A_1}}.
\end{equation*}
Therefore, by \eqref{p1-1-1-1-0} and \eqref{l3-1-1},
\begin{align*}
J_1&\p \frac{1}{|x-y|^{d+\A_1}}\int_{\R^d}\rho^{z,0}_1(t-s,x-z)\, dz\p \frac{1}{|x-y|^{d+\A_1}}.
\end{align*}
Meanwhile, it is not difficult to see that condition $|z-y|\le {|x-y|}/{2}$ implies
$|z-x|\ge {|x-y|}/{2}\ge 1$, and so
\begin{equation*}
p^z(t-s,x-z)\p \frac{t-s}{|x-z|^{d+\A_1}}\p \frac{t}{|x-z|^{d+\A_1}}.
\end{equation*}
This, along with \eqref{p3-1-1} and \eqref{l3-1-1-00}, yields that
\begin{align*}
J_2&\p \frac{t}{|x-y|^{d+\A_1}}\int_{\R^d}\left(
\rho^{y,0}_{({\B_0}/{\A_2})-\G}+\rho^{y, \B_0}_{-\G}\right)(s,z-y)\, dz\p \frac{t}{|x-y|^{d+\A_1}}\cdot s^{-1+({\B_0}/{\A_2})-\G}.
\end{align*}

Combining all the estimates above together and choosing $0<\G<{\B_0}/{\A_2}$, we find that
\begin{align*}
 \left|\int_0^t \int_{\R^d}p^z(t-s,x-z)q(s,z,y)\,dz\,ds\right|
&\p \int_0^t (J_1+J_2)\,ds\p \frac{t}{|x-y|^{d+\A_1}}\cdot \big(1+t^{({\B_0}/{\A_2})-\G}\big)\!\p \frac{t}{|x-y|^{d+\A_1}}.
\end{align*}
Then, the desired assertion \eqref{p3-3-1}  immediately follows from the estimate above and \eqref{eq2-1}.

{\bf Case (c): $t^{{1}/ \A(x)}\le |x-y|\le 2$.}

We still define $J_1$ and $J_2$ by those in \eqref{p3-3-2}.
If $|z-y|\ge {|x-y|}/{2}$,
then, by  \eqref{p3-1-2e}, we have
\begin{align*}
|q(s,z,y)|&\p
\begin{cases}
s^{-d/\A(y)},& |z-y|\le s^{1/\A(y)},\\
|z-y|^{-d-\A_2},& s^{{1}/{\A(y)}}\le|z-y|\le 1,\\
 |z-y|^{-d-\A_1},& |z-y|>1
\end{cases}
\\
&\p
\begin{cases}
|x-y|^{-d-\A(y)},& |z-y|\le s^{{1}/{\A(y)}},\\
|x-y|^{-d-\A_2},& s^{{1}/{\A(y)}}\le|z-y|\le 1,\\
|x-y|^{-d-\A_1},& |z-y|>1
\end{cases}
\\
&\p |x-y|^{-d-\A_2},
\end{align*}
where in the second inequality we have used that fact that if
$|z-y|\le s^{1/\A(y)}$ and $|x-y|\p |z-y|$, then
$$s^{-d/\A(y)}\p |z-y|^{-d-\A(y)}\p |x-y|^{-d-\A(y)},$$
the last inequality follows from the fact that $|x-y|^{-d-\A_1}\p |x-y|^{-d-\A_2}$, thanks to $|x-y|\le 2$.
Hence,
\begin{align*}
J_1&\p \frac{1}{|x-y|^{d+\A_2}} \int_{\R^d}\rho^{z,0}_1(t-s,x-z)\,dz \p \frac{1}{|x-y|^{d+\A_2}}.
\end{align*}
At the same time, if $|z-y|\le {|x-y|}/{2}$, then
\begin{align*}
p^z(t-s,x-z)&\p \frac{t-s}{|x-z|^{d+\A(z)}}\p \frac{t}{|x-y|^{d+\A(z)}}\p \frac{t}{|x-y|^{d+\A_2}},
\end{align*}
where the last inequality follows from $|x-y|^{-d-\A(z)}\p |x-y|^{-d-\A_2}$ since $|x-y|\le 2$.
Combining this estimate with \eqref{p3-1-1},  we arrive at
$$
J_2 \p \frac{t}{|x-y|^{d+\A_2}}\int_{\R^d}\left(
\rho^{y,0}_{({\B_0}/{\A_2})-\G}+\rho^{y,\B_0}_{-\G}\right)(s,z-y)\,dz \p \frac{t}{|x-y|^{d+\A_2}}\cdot s^{-1+(\B_0/{\A_2})-\G}.
$$

Using all the estimates above and choosing $0<\G<{\B_0}/{\A_2}$, we get
\begin{align*}
 \left|\int_0^t \int_{\R^d}p^z(t-s,x-z)q(s,z,y)\,dz\,ds\right|
&\p \int_0^t (J_1+J_2)\,ds\p \frac{t}{|x-y|^{d+\A_2}}\cdot \left(1+t^{({\B_0}/{\A_2})-\G}\right)\!\p \frac{t}{|x-y|^{d+\A_2}}.
\end{align*}
This, along with \eqref{eq2-1} immediately yields \eqref{p3-3-1}.

(2) If $\K(x,z)$ depends on $z$ and ${\B_0}/{\A_2}>({\A_2}/{\A_1})-1$, then, applying \eqref{p3-1-2f} and following the same arguments as above, we can prove
\eqref{p3-1-1a}. The details are omitted here.
\end{proof}

\subsection{H\"{o}lder regularity and gradient estimates of $p(t,x,y)$}
In this part, we consider the H\"{o}lder regularity and gradient estimates of $p(t,\cdot,y)$.

\begin{lemma}\label{l4-4}
There exists a constant $c_1:=c_1(\A,\K)>0$ such that for all $x,x',y\in \R^d$ and $t \in (0,1]$,
\begin{equation}\label{l4-4-1}
\begin{split}
\left|\nabla p^y(t,x)-\nabla p^y(t,x')\right|\le &c_1
\left[\left(t^{-1/\A(y)}|x-x'|\right)\wedge 1\right]\left(
\rho^{y,0}_{1-(1/\A(y))}(t,x)+\rho^{y,0}_{1-(1/\A(y))}(t,x')\right).
\end{split}
\end{equation}
\end{lemma}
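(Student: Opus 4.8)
The plan is to deduce \eqref{l4-4-1} from the Chapman--Kolmogorov identity for $p^y$ together with the regularity estimate \eqref{p1-1-3} for $p^y$. First I would record the pointwise gradient bound $|\nabla p^y(t,x)|\p\rho^{y,0}_{1-(1/\A(y))}(t,x)$ for all $t\in(0,1]$ and $x,y\in\R^d$: when $\K(x,z)=\K(x)$ is independent of $z$ this is \eqref{p1-1-1} with $j=1$ together with the identity $t^{-1/\A(y)}\rho^{y,0}_1(t,x)=\rho^{y,0}_{1-(1/\A(y))}(t,x)$, which one reads off directly from \eqref{e3-0} (using $\A(y)\ge\A_1$ where $|x|>1$); in the general case it follows by dividing \eqref{p1-1-3} by $|x-x'|$ and letting $x'\to x$.

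Next I would argue for an arbitrary pair $x,x'$ at once (no genuine case split is needed, although for $|x-x'|\ge t^{1/\A(y)}$ the claim also drops out of the triangle inequality and the gradient bound above). From $p^y(t,\cdot)=p^y(t/2,\cdot)\ast p^y(t/2,\cdot)$, differentiating in $x$ and then shifting the $x$-dependence onto the second factor via the symmetry $p^y(t/2,-v)=p^y(t/2,v)$, one obtains $\nabla p^y(t,\cdot)(x)=\int_{\R^d}\nabla p^y(t/2,\cdot)(v)\,p^y(t/2,x-v)\,dv$, the differentiation under the integral being justified by the rapid decay $|\widehat{p^y(t,\cdot)}(\xi)|\le e^{-ct|\xi|^{\A(y)}}$ (which holds since $\K\ge\K_1>0$). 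Subtracting the analogous identity at $x'$ and letting the increment act on the $p^y$-factor gives $\nabla p^y(t,\cdot)(x)-\nabla p^y(t,\cdot)(x')=\int_{\R^d}\nabla p^y(t/2,\cdot)(v)\bigl(p^y(t/2,x-v)-p^y(t/2,x'-v)\bigr)\,dv$. Bounding the first factor by the gradient estimate, the $p^y$-increment by \eqref{p1-1-3} (using $(t/2)^{-1/\A(y)}\asymp t^{-1/\A(y)}$ and $\rho^{y,0}_\gamma(t/2,\cdot)\asymp\rho^{y,0}_\gamma(t,\cdot)$), which pulls out the factor $(t^{-1/\A(y)}|x-x'|)\wedge1$ in front, and then applying a convolution inequality at the fixed index $\A(y)$ to $\int_{\R^d}\rho^{y,0}_{1-(1/\A(y))}(t,v)\,\rho^{y,0}_1(t,x-v)\,dv\p\rho^{y,0}_{1-(1/\A(y))}(t,x)$ (and likewise with $x'$), one arrives at \eqref{l4-4-1}.

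The step I expect to be the main obstacle is precisely this last convolution estimate: it is of the kind established in \cite{CZ} and in Lemma \ref{l3-2}/Corollary \ref{l3-3}, but with \emph{vanishing} Hölder exponent $\beta=0$ in both factors, a case not literally covered by the statements quoted (which require $\beta_1,\beta_2\in(0,1)$), so it has to be checked directly --- e.g.\ by splitting the integral into the regions $|v|\le|x|/2$ and $|v|>|x|/2$ as in the proof of Lemma \ref{l3-2}, and using Lemma \ref{l3-1}. A secondary, purely bookkeeping difficulty is tracking the $\rho$-functions across the threshold $1$ in \eqref{e3-0} and across the switch of tail exponent from $d+\A(y)$ to $d+\A_1$; since everything here is at the single frozen index $\A(y)$, these reduce to the corresponding one-index estimates already used in Section \ref{section3}.
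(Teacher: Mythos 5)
Your argument is correct, but it takes a genuinely different route from the paper. The paper splits the frozen generator as $\LL^y=\LL^y_{\K_1/2}+\LL^y_{\wh\K}$, writes $p^y(t,\cdot)=p^y_{\K_1/2}(t,\cdot)*p^y_{\wh\K}(t,\cdot)$, proves the gradient--H\"older bound for the exactly rotationally symmetric stable density $p^y_{\K_1/2}$ at $t=1$ via the second-derivative estimate \eqref{p1-1-1} and the mean value theorem, transfers it to all $t$ by exact scaling, and then convolves with $p^y_{\wh\K}$ using \eqref{p1-1-1-1-0} and the convolution inequality \eqref{l3-2-1-}. You instead exploit the semigroup identity $p^y(t,\cdot)=p^y(t/2,\cdot)*p^y(t/2,\cdot)$ for $p^y$ itself, place the gradient on one factor and the increment on the other, control the gradient by the bound obtained from \eqref{p1-1-3} in the limit $x'\to x$ (legitimate, since $\K\ge\K_1$ forces $|\wh{p^y}(t,\xi)|\le e^{-ct|\xi|^{\A(y)}}$ and hence smoothness of $p^y(t,\cdot)$), control the increment by \eqref{p1-1-3} again, and finish with the convolution bound $\int\rho^{y,0}_{1-(1/\A(y))}(t,w)\rho^{y,0}_1(t,x-w)\,dw\p\rho^{y,0}_{1-(1/\A(y))}(t,x)$ at the single frozen index. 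Your concern about the $\beta=0$ case of the convolution inequality is well taken but harmless: since $\rho^{y,0}_{1-(1/\A(y))}=t^{-1/\A(y)}\rho^{y,0}_1$, it reduces to $\rho^{y,0}_1(t,\cdot)*\rho^{y,0}_1(t,\cdot)\p\rho^{y,0}_1(t,\cdot)$, which follows from the split $\{|w|\le|x|/2\}\cup\{|w|>|x|/2\}$, the doubling property of the kernel across the threshold $|x|=1$ (the tail exponent only drops from $d+\A(y)$ to $d+\A_1$), and $\int\rho^{y,0}_1(t,\cdot)\p1$ from \eqref{l3-1-1-00} with $\B=0$; note that the paper's own last step also invokes \eqref{l3-2-1-} in this borderline regime. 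What the two approaches buy: the paper's decomposition isolates an exactly scaling-invariant smooth factor, which is also what delivers the higher-order bounds such as \eqref{p1-1-1} elsewhere; your route is more self-contained, needing only \eqref{p1-1-3}, the Chapman--Kolmogorov identity and an elementary convolution estimate, at the cost of the limiting argument for the gradient bound and the hand-checked $\beta=0$ convolution.
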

\begin{proof}
We write  $ \LL^y=\LL_{{\K_1}/{2}}^y+\LL_{\wh \K}^y$ with
\begin{align*}
\LL_{{\K_1}/{2}}^y f(x)&=\frac{1}{2}\int_{\R^d}\delta_{f}(x;z)\frac{\K_1}{2}\frac{1}{|z|^{d+\A(y)}}\,dz,\quad f\in C_c^2(\R^d),\\
\LL_{\wh \K}^y f(x)&=\frac{1}{2}\int_{\R^d}\delta_{f}(x;z)\frac{\wh \K(y,z)}{|z|^{d+\A(y)}}\,dz,\quad f\in C_c^2(\R^d),
\end{align*}
where $\K_1>0$ is the constant in \eqref{e1-2}, $\wh \K(y,z):=\K(y,z)-\K_1/2$ and
$$\delta_f(x;z)=f(x+z)+f(x-z)-2f(x).$$ Then, we have
\begin{equation}\label{l4-4-2}
p^y(t,x)=\int_{\R^d}p^y_{{\K_1}/{2}}(t,x-z)p^y_{\wh \K}(t,z)\,dz,\ \ x\in \R^d, t\in (0,1],
\end{equation}
where $p^y_{{\K_1}/{2}}$ and $p^y_{\wh \K}$ denote the fundamental solutions (i.e.\ heat kernel) associated with
the operators $\LL_{{\K_1}/{2}}^y$ and $\LL_{\wh \K}^y$, respectively.

We first show that \eqref{l4-4-1} holds for $p^y_{{\K_1}/{2}}$. Indeed, for every $x,x'\in \R^d$ such that $|x-x'|\le 1$,
\begin{align*}
\big|\nabla p^y_{{\K_1}/{2}}(1,x)-\nabla p^y_{{\K_1}/{2}}(1,x')\big|&=
\left|\int_0^1 \frac{d}{d \theta}\left(\nabla p^y_{{\K_1}/{2}}
\left(1,x+\theta(x'-x)\right)\right)\,d\theta\right|\\
&\le |x-x'|\cdot\int_0^1 \left|\nabla^2 p^y_{{\K_1}/{2}}
\left(1,x+\theta(x'-x)\right)\right|\,d\theta\\
&\p |x-x'|\cdot \int_0^1 \left(1+\left|x+\theta(x'-x)\right|\right)^{-d-\A(y)-2}\,d\theta\\
&\p |x-x'|\cdot \left(1+|x|\right)^{-d-\A(y)-2},
\end{align*}
where in the third inequality we have used \eqref{p1-1-1}, and the last inequality is due to
the fact that for all $\theta\in [0,1]$ and $x,x'\in \R^d$ with $|x-x'|\le 1$
\begin{equation*}
\left(1+\left|x+\theta(x'-x)\right|\right)^{-1}\p \left(1+|x|\right)^{-1}.
\end{equation*}
Also by \eqref{p1-1-1}, we have that for every
$x,x'\in \R^d$ with $|x-x'|>1$,
\begin{align*}
\big|\nabla p^y_{{\K_1}/{2}}(1,x)-\nabla p^y_{{\K_1}/{2}}(1,x')\big|&
\le \big|\nabla p^y_{{\K_1}/{2}}(1,x)\big|+
\big|\nabla p^y_{{\K_1}/{2}}(1,x')\big|\\
&\p (1+|x|)^{-d-\A(y)-1}+
(1+|x'|)^{-d-\A(y)-1}.
\end{align*}
Combining both estimates above yields that for all $x,x'\in \R^d$,
\begin{equation}\label{l4-4-3}
\begin{split}
\big|\nabla p^y_{{\K_1}/{2}}(1,x)-\nabla p^y_{{\K_1}/{2}}(1,x')\big|
\p &\left(|x-x'|\wedge 1\right)\left(
(1+|x|)^{-d-\A(y)}+
(1+|x'|)^{-d-\A(y)}\right).
\end{split}
\end{equation}
Since the Markov process $(X_t^{y,{\K_1}/{2}})_{t\ge0}$ associated with
$ \LL^y_{{\K_1}/{2}}$ is a constant time-change of standard rotationally invariant $\A(y)$-stable process, by
the scaling property, for all $t>0$ and $x \in \R^d$,
$$
 p^y_{{\K_1}/{2}}(t,x)= t^{-d/\A(y)} p^y_{{\K_1}/{2}}(1,t^{-1/\A(y)}x).
$$
This along with \eqref{l4-4-3} yields that for all $x,x'\in \R^d$ and $t\in (0,1]$,
\begin{equation}\label{l4-4-4}
\begin{split}
\big|\nabla p^y_{{\K_1}/{2}}(t,x)-\nabla p^y_{{\K_1}/{2}}(t,x')\big|
\p &\left[\left(t^{-1/\A(y)}|x-x'|\right)\wedge 1\right]\left(
\rho_{1-(1/\A(y))}^{y,0}(t,x)+\rho_{1-(1/\A(y))}^{y,0}(t,x')\right).
\end{split}
\end{equation}
Therefore,  for every $x,x'\in \R^d$ and $t\in (0,1]$,
\begin{align*}
&\left|\nabla p^y(t,x)-\nabla p^y(t,x')\right|\\
&=\left|\int_{\R^d}\big(\nabla p^y_{{\K_1}/{2}}(t,x-z)-\nabla p^y_{{\K_1}/{2}}(t,x'-z)\big)
\cdot p^y_{\wh \K}(t,z)\,dz\right|\\
&\le \int_{\R^d}\big|\nabla p^y_{{\K_1}/{2}}(t,x-z)-\nabla p^y_{{\K_1}/{2}}(t,x'-z)\big|
\cdot p^y_{\wh \K}(t,z)\,dz\\
&\p \left[\left(t^{-1/\A(y)}|x-x'|\right)\wedge 1\right]\cdot
\Bigg(\int_{\R^d}\rho^{y,0}_{1-(1/\A(y))}(t,x-z)\rho^{y,0}_1(t,z)\,dz+\int_{\R^d}\rho^{y,0}_{1-(1/\A(y))}(t,x'-z)\rho^{y,0}_1(t,z)\,dz\Bigg)\\
&\p  \left[\left(t^{-1/\A(y)}|x-x'|\right)\wedge 1\right]\cdot
\left(\rho^{y,0}_{1-(1/\A(y))}(t,x)
+\rho^{y,0}_{1-(1/\A(y))}(t,x')\right),
\end{align*}
where the equality above is due to \eqref{l4-4-2}, the second inequality follows from
\eqref{l4-4-4} and \eqref{p1-1-1-1-0}, and in the last inequality we have used
\eqref{l3-2-1-}. By now we have finished the proof.
\end{proof}

\begin{lemma}\label{l4-1}
For every $0<\varepsilon, \theta<\A_1$ and $0<\gamma<({\theta}/{\A_2})\wedge ((1-({\theta}/{\A_1}))/2)$,
there exist $R_1:=R_1(\A,\K,\G,\T)\in(0,1]$ and $c_1:=c_1(\A,\K,\G,\T,\e)>0$ such that
for all $t \in (0,1]$ and $x,y,z,w\in \R^d$ with $|x-y|\le R_1$,
\begin{equation}\label{l4-1-1}
|p^x(t,z)-p^y(t,z)|\le c_1|x-y|^{\B_0}\tilde \rho^{x,0}_{1-2\G-({\T}/{\A_1})}(t,z),\end{equation}
\begin{equation}\label{l4-1-1-1}\left|\nabla p^x(t,z)-\nabla p^y(t,z)\right|\le c_1|x-y|^{\B_0}\tilde \rho^{x,0}_{1-(1/\A(x))-2\G-
({\T}/{\A_1})}(t,z)
\end{equation}
and
\begin{equation}\label{l4-1-1a}
\left| \LL^w p^x(t,z)- \LL^w p^y(t,z)\right|\le c_1|x-y|^{\B_0}\tilde \rho^{x,0}_{-2\G-({\T}/{\A_1})}(t,z),
\end{equation}
where $\tilde \rho$ is defined by \eqref{p3-1-7}.
\end{lemma}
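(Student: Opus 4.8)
The plan is to obtain all three estimates from a single perturbation (Duhamel) identity comparing the two L\'evy heat kernels. Since $\LL^x$ and $\LL^y$ are Fourier multipliers, they commute with convolution, so differentiating $s\mapsto p^x(t-s,\cdot)*p^y(s,\cdot)$ in $s$ gives, for all $t\in(0,1]$ and $x,y,z\in\R^d$,
\[
p^x(t,z)-p^y(t,z)=\int_0^t\int_{\R^d}p^x(t-s,z-w)\,\big(\LL^x-\LL^y\big)p^y(s,\cdot)(w)\,dw\,ds ,
\]
the differentiation under the integral and the pointwise meaning of $\LL^{x},\LL^{y}$ on $p^y(s,\cdot)$ being justified by the bounds of Lemma~\ref{p1-1}, exactly as in \cite{CZ}. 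I would bound the source term by (the variables-relabelled) Lemma~\ref{l1-2}: for $|x-y|\le R_1$,
\[
\big|(\LL^x-\LL^y)p^y(s,\cdot)(w)\big|\p |x-y|^{\B_0}\big(s^{-\G}+|\log|w||\,\I_{\{|w|>1/2\}}\big)\rho_0^{y,0}(s,w),
\]
and then feed this through the convolution with $p^x(t-s,\cdot)\p\rho_1^{x,0}(t-s,\cdot)$.

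The obstacle — and the reason the exponent in \eqref{l4-1-1} depends on the free parameter $\theta$ — is that $\int_{\R^d}\rho_0^{y,0}(s,\cdot)\,dw$ is only of order $s^{-1}$, so inserting the source bound directly would leave a divergent $\int_0^t s^{-1-\G}\,ds$. The remedy I would use is the cancellation $\int_{\R^d}\delta_{g}(w;u)\,dw=0=\int_{\R^d}w_j\,\delta_{g}(w;u)\,dw$, which gives $\int_{\R^d}(\LL^x-\LL^y)p^y(s,\cdot)(w)\,dw=0=\int_{\R^d}w_j(\LL^x-\LL^y)p^y(s,\cdot)(w)\,dw$: on $\{|w|\le 1/2\}$ one may therefore replace $p^x(t-s,z-w)$ by the second-order Taylor remainder $p^x(t-s,z-w)-p^x(t-s,z)+\langle w,\nabla p^x(t-s,z)\rangle$, which by Lemma~\ref{p1-1} is $\p\big[(t-s)^{-2/\A(x)}|w|^2\wedge1\big]\big(\rho_1^{x,0}(t-s,z-w)+\rho_1^{x,0}(t-s,z)\big)$. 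Truncating $|w|^2$ at exponent $\theta<\A_1$ converts this into a $(|w|^{\theta}\wedge1)$-weight at the cost of a factor $(t-s)^{-\theta/\A(x)}$, so that the inner integral becomes a convolution of $\rho_1^{x,0}(t-s,\cdot)$ against $\rho_0^{y,\theta}(s,\cdot)$; Corollary~\ref{l3-3} (valid for $\tilde\rho$ by Remark~\ref{r3-3}(2), which also absorbs the $|\log|w||$ tail) then applies, since its integrability condition $\theta_2+\theta/\A(y)>0$ reduces precisely to $\G<\theta/\A_2$. Carrying out the $s$-integration produces the time power $1-2\G-\theta/\A_1$, the second $\G$ coming from the replacement of $\rho^{y,0}$ by $\rho^{x,0}$ via \eqref{l4-1-5} and the discrepancy $\tfrac1{\A(x)}-\tfrac1{\A(y)}\le\tfrac1{\A_1}-\tfrac1{\A_2}$ being made negligible by choosing $R_1$ small; the constraint $2\G+\theta/\A_1<1$ guarantees this power is positive. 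The contribution of $\{|w|>1/2\}$ is genuinely easier (the core region is excluded, so $\int_{\{|w|>1/2\}}\rho_0^{y,0}(s,\cdot)\,dw$ stays bounded); it is handled crudely via $|p^x(t-s,z-w)|+|p^x(t-s,z)|+|w|\,|\nabla p^x(t-s,z)|$ together with the logarithmic integral estimates of Remark~\ref{r-1}.

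For \eqref{l4-1-1-1} I would differentiate the Duhamel identity once in $z$, replacing $p^x(t-s,z-w)$ by $(\nabla p^x)(t-s,z-w)\p\rho_{1-1/\A(x)}^{x,0}(t-s,\cdot)$ (Lemma~\ref{p1-1}) and the Taylor remainder now involving $\nabla^3 p^x$; for \eqref{l4-1-1a} I would apply $\LL^w$ in the $z$-variable, using that $\LL^w p^x(t-s,\cdot)(\zeta)=\tfrac12\int\delta_{p^x}(t-s,\zeta;u)\kappa(w,u)|u|^{-d-\A(w)}\,du$ can be bounded by $\rho_0^{x,0}(t-s,\cdot)$-type quantities through \eqref{p1-1-2} and the integral estimates of Lemma~\ref{l1-1}. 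In each case the same scheme applies, with the time exponent of the leading factor ($1$, $1-1/\A(x)$, or $0$) lowered by $2\G+\theta/\A_1$, yielding the stated bounds.

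I expect the main obstacle to be the bookkeeping of the four indices $\A(x),\A(y),\A(w),\A(z)$ and the logarithmic corrections while keeping every $s$-integral convergent and every intermediate $\rho$ converted back to an $\A(x)$-indexed one via \eqref{l4-1-5} at the price of arbitrarily small powers of $t$; once the convolution machinery of Section~\ref{section3.1} is in place, the rest is a lengthy but routine adaptation of the constant-order computations of \cite{CZ}.
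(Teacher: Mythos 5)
Your overall strategy --- a Duhamel comparison of the two frozen kernels, the zero-mass cancellation of $(\LL^x-\LL^y)p$, a fractional weight $(|w|^{\T}\wedge 1)$ to tame the $s^{-1-\G}$ time-singularity of the source at $s=0$, and then the $\tilde\rho$-convolution inequalities --- is the same as the paper's, which uses the symmetrized identity split at $t/2$ (the operator difference always acts on the factor with small time variable, and only the constant $p^y(t-s,z)$, resp.\ $p^x(s,z)$, is subtracted from the factor with time variable $\ge t/2$, after which \eqref{p1-1-3} raised to the power $\T$ does the work). However, three of your specific steps do not go through as stated. First, your second-order Taylor subtraction rests on the identity $\int_{\R^d}w_j\,(\LL^x-\LL^y)p^y(s,\cdot)(w)\,dw=0$. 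From Lemma \ref{l1-2} the source only decays like $|\log|w||\,|w|^{-d-\A_1}$ for $|w|>1/2$ (and for $\A(y)\le 1$ even $\int |w|\,p^y(s,w)\,dw=\infty$), so this first moment is not absolutely convergent when $\A_1\le 1$; over the symmetric set $\{|w|\le 1/2\}$ the linear term does integrate to zero, but only because the source is even in $w$, an argument you do not make --- and in fact the gradient subtraction is unnecessary, since subtracting only the zeroth-order term and using the H\"older bound \eqref{p1-1-3} with exponent $\T$ already produces the weight $(|w|^{\T}\wedge 1)$ at cost $(t-s)^{-\T/\A_1}$. Second, your remainder bounds (and, for \eqref{l4-1-1-1}, the term involving $\nabla^3p^x$) invoke pointwise derivative estimates that Lemma \ref{p1-1} provides only in part (1), i.e.\ when $\K$ is independent of $z$; Lemma \ref{l4-1} must hold, and is used (e.g.\ in Proposition \ref{t5-1}(2) and Proposition \ref{p4-5}), for general $\K(x,z)$, where only \eqref{p1-1-1-1-0}--\eqref{p1-1-4} are available. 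The paper's proof of \eqref{l4-1-1-1} therefore goes through the gradient H\"older estimate of Lemma \ref{l4-4}, itself proved for general $\K$ via the factorization $p^y=p^y_{\K_1/2}*p^y_{\wh\K}$ and scaling, rather than through third derivatives.

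Third, because you keep the unsymmetrized identity on the whole interval $(0,t)$ and handle $\{|w|>1/2\}$ "crudely via $|p^x(t-s,z-w)|+|p^x(t-s,z)|+|w|\,|\nabla p^x(t-s,z)|$", you create boundary terms near $s=t$ that your scheme does not control: besides the divergent $\int_{\{|w|>1/2\}}|w|\,|{\rm source}|\,dw$ when $\A_1\le1$, the piece $p^x(t-s,z)\int_{\{|w|>1/2\}}|{\rm source}(s,w)|\,dw\p t^{-\G}p^x(t-s,z)$ integrated over $s\in(t/2,t)$ is of order $t^{-\G}\int_0^{t/2}p^x(\tau,z)\,d\tau\asymp t^{-\G}|z|^{\A(x)-d}$ when $d>\A(x)$ and $|z|\le t^{1/\A(x)}$, which blows up as $|z|\to0$ and is not dominated by the target $\tilde\rho^{x,0}_{1-2\G-\T/\A_1}(t,z)$. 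The paper avoids all of this by splitting the time integral at $t/2$ and symmetrizing, so the difference trick is applied only to the factor whose time variable is bounded below by $t/2$ and no truncation in $w$ is ever made; near the concentrating endpoint no subtraction is needed at all, the plain convolution estimate sufficing there. With these repairs (zeroth-order subtraction plus \eqref{p1-1-3}, Lemma \ref{l4-4} in place of $\nabla^3$, and the $t/2$-splitting) your argument collapses onto the paper's proof; as written it has genuine gaps in the range $\A_1\le1$, in the general-$\K$ case, and for $|z|\ll t^{1/\A(x)}$.
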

\begin{proof}
(i) Note that for all $t\in (0,1]$ and $x,y,z \in \R^d$,
\begin{equation}\label{l4-1-2}
\begin{split}
 p^x(t,z)-p^y(t,z)
&=\int_0^t \frac{d}{ds}\bigg(\int_{\R^d}p^x(s,w)p^y(t-s,z-w)\,dw\bigg)\,ds\\
&=\int_0^t \bigg(\int_{\R^d}\big( \LL^x p^x(s,w)\,p^y(t-s,z-w) -p^x(s,w)\, \LL^y
p^y(t-s,z-w)\big)\,dw\bigg)\,ds\\
&=\int_0^{{t}/{2}}\int_{\R^d}\big( \LL^x- \LL^y\big) p^x(s,w)\,p^y(t-s,z-w)\,dw\,ds\\
&\quad +\int_{{t}/{2}}^t\int_{\R^d} p^x(s,w)\big( \LL^x- \LL^y\big)
p^y(t-s,z-w)\,dw\,ds\\
&=\int_0^{{t}/{2}}\bigg(\int_{\R^d}\big( \LL^x- \LL^y\big) p^x(s,w)
\big(p^y(t-s,z-w)-p^y(t-s,z)\big)\,dw\bigg)\,ds\\
&\quad +\int_{{t}/{2}}^t\bigg(\int_{\R^d} \big(p^x(s,w)-p^x(s,z)\big) \big( \LL^x- \LL^y\big)
p^y(t-s,z-w)\,dw\bigg)\,ds\\
&=:J_1+J_2.
\end{split}
\end{equation}
Here, the first equality is due to the following estimate which we will verify later
\begin{equation*}
\int_0^t \left|\frac{d}{ds}\bigg(\int_{\R^d}p^x(s,w)p^y(t-s,z-w)\,dw\bigg)\,\right|ds<\infty.\end{equation*}
By estimates for $p^x(\cdot, \cdot)$ and $p^y(\cdot, \cdot)$ in Lemma \ref{p1-1},
we can change the order of derivatives and integrals in the second equality. The third and the fourth equalities above follow from the
following facts respectively
\begin{align*}
\int_{\R^d}  \LL^x f(z_1) g(z_1) \,dz_1=\int_{\R^d} f(z_1)  \LL^x g(z_1) \,dz_1,\quad f,g \in C_c^{\infty}(\R^d),
\end{align*}
\begin{align*}
\int_{\R^d}  \LL^y f(z_1) g(z_1)\, dz_1=\int_{\R^d} f(z_1)  \LL^y g(z_1) \, dz_1, \quad f,g \in C_c^{\infty}(\R^d)
\end{align*} and
\begin{align*}
\int_{\R^d}  \LL^x f(z_1)\,dz_1=\int_{\R^d}  \LL^y f(z_1)\,dz_1=0,\quad f \in C_c^{\infty}(\R^d).
\end{align*}
Note that, also due to estimates for $p^x(\cdot, \cdot)$ and $p^y(\cdot, \cdot)$,  the equalities above are still true for $p^x(t,\cdot)$ and $p^y(t,\cdot)$.

By Lemma \ref{l1-2}, for any $\G>0$ there is a constant $R_0:=R_0(\A,\K,\G)\in(0,1)$ such that for all $t\in(0,1]$ and $x,y,w\in \R^d$ with $|x-y|\le R_0$,
\begin{equation}\label{l4-1-4}
\big|\big( \LL^x- \LL^y\big)p^x(t,w)\big|\p (|x-y|^{\B_0}\wedge 1)\cdot\tilde \rho^{x,0}_{-\G}(t,w).
\end{equation}
On the other hand, for any $\T \in (0,1)$ and $\G\in (0,1)$, there exists a
constant $R_1:=R_1(\A,\K,\G)\in(0,R_0)$ such that for all
 $x,y,z\in\R^d$ with $|x-y|\le R_1$, $0<s\le t\le1$,
\begin{equation}\label{l4-1-6}
\begin{split}
 \big|p^y(t-s,z-w)-p^y(t-s,z)\big|
&\p \big[\big((t-s)^{-{1/\A(y)}}|w|\big)\wedge 1\big]\cdot
\big[\rho^{y,0}_1(t-s,z-w)+\rho^{y,0}_1(t-s,z)\big]\\
&\p \big[\big((t-s)^{-{1/\A(y)}}|w|\big)^{\T}\wedge 1\big]\cdot
\big[\rho^{y,0}_1(t-s,z-w)+\rho^{y,0}_1(t-s,z)\big]\\
&\p \big[\big((t-s)^{-{1}/{\A_1}}|w|\big)^{\T}\wedge 1\big]\cdot
\big[\rho^{x,0}_{1-\G}(t-s,z-w)+\rho^{x,0}_{1-\G}(t-s,z)\big]\\
&\p \big(|w|^{\T}\wedge 1\big)\cdot
\big[\rho^{x,0}_{1-\G-({\theta}/{\A_1})}(t-s,z-w)+\rho^{x,0}_{1-\G-({\theta}/{\A_1})}(t-s,z)\big],
\end{split}
\end{equation}
where the first inequality follows from \eqref{p1-1-3} and in the third inequality we have used \eqref{l4-1-5}.

Next, we choose $\theta\in(0,1\wedge \A_1)$ and $0<\gamma<({\theta}/{\A_2})\wedge ((1-({\theta}/{\A_1}))/2)$. Then, according to
\eqref{l4-1-4} and \eqref{l4-1-6}, for any $t\in(0,1]$ and $x,y,z\in \R^d$ with $|x-y|\le R_1$,
\begin{align*}
J_1&\p |x-y|^{\B_0}\cdot\bigg(\int_0^{{t}/{2}}\int_{\R^d}\tilde \rho^{x,\T}_{-\G}(s,w)\tilde \rho^{x,0}_{1-\G-({\T}/{\A_1})}(t-s,z-w)\,dw\,ds\\
&\qquad\qquad \qquad +\int_0^{{t}/{2}}\int_{\R^d}\tilde \rho^{x,\T}_{-\G}(s,w)
\tilde \rho^{x,0}_{1-\G-({\T}/{\A_1})}(t-s,z)\,dw\,ds\bigg)\\
&=:|x-y|^{\B_0}\cdot \big(J_{11}+J_{12}\big).
\end{align*}
As mentioned in Remark \ref{r3-3}(2), \eqref{r3-2-1} holds for $\tilde \rho$, from which we can obtain that
\begin{equation*}
J_{11}\p \tilde \rho^{x,0}_{1-2\G-({\T}/{\A_1})}(t,z).
\end{equation*}
At the same time, observe that for every $0<s<{t}/{2}$,
\begin{equation*}
\tilde \rho^{x,0}_{1-\G-({\T}/{\A_1})}(t-s,z)\p \tilde
\rho^{x,0}_{1-\G-({\T}/{\A_1})}(t,z),
\end{equation*}
then we have
\begin{equation}\label{l4-1-8}
\begin{split}
J_{12}&\p \tilde \rho^{x,0}_{1-\G-({\T}/{\A_1})}(t,z)
\int_0^{{t}/{2}}\int_{\R^d}\tilde \rho^{x,\T}_{-\G}(s,w)\,dw\,ds \\
&\p \tilde \rho^{x,0}_{1-\G-({\T}/{\A_1})}(t,z) \int_0^{{t}/{2}}s^{-1-\G+({\T}/{\A_2})}\,ds \p \tilde \rho^{x,0}_{1-2\G-({\T}/{\A_1})}(t,z),
\end{split}
\end{equation}
where in the second inequality we used the fact that \eqref{l3-1-1-00} is true for $\tilde \rho$, see again
Remark \ref{r3-3}(2).
By both estimates for $J_{11}$ and $J_{12}$, we get that for any $t\in(0,1]$ and $x,y,z\in \R^d$ with $|x-y|\le R_1$,
\begin{equation*}
J_1 \p |x-y|^{\B_0}\cdot\tilde \rho^{x,0}_{1-2\G-({\T}/{\A_1})}(t,z).
\end{equation*}
 For $J_2$, we need to handle the singularity near $s=t$. As the same way as before, we can obtain that for all $t\in (0,1]$ and
$x,y,z,w\in \R^d$ with $|x-y|\le R_1$,
\begin{align*}
\big|\big( \LL^x- \LL^y\big)p^y(t-s,z-w)\big|
&\p \big(|x-y|^{\B_0}\wedge 1\big)\cdot\tilde \rho^{y,0}_{-\G}(t-s,z-w)\p \big(|x-y|^{\B_0}\wedge 1\big)\cdot \tilde \rho^{x,0}_{-2\G}(t-s,z-w)\end{align*} and
\begin{align*}
\big|p^x(s,w)-p^x(s,z)\big|\p \big(|z-w|^{\T}\wedge 1\big) \cdot\big(\rho^{x,0}_{1-({\T}/{\A_1})}(s,z)+\rho^{x,0}_{1-({\T}/{\A_1})}(s,w)\big).
\end{align*}
Using both estimates above and following the same argument as that for $J_1$, we can get that  for any $t\in(0,1]$ and $x,y,z\in \R^d$ with $|x-y|\le R_1$, $$J_2 \p
|x-y|^{\B_0}\tilde \rho^{x,0}_{1-2\G-({\T}/{\A_1})}(t,z).$$ Then, \eqref{l4-1-1} is proved.

(ii) Following the  argument of  \eqref{l4-1-2}, we can verify that
for all $t\in (0,1]$ and $x,y,z \in \R^d$,
\begin{align*}
&\nabla p^x(t,z)-\nabla p^y(t,z)\\
&=\int_0^t \frac{d}{ds}\left(\int_{\R^d}
\nabla p^x(s,w)p^y(t-s,z-w)\,dw\right)\,ds\\
&=\int_0^t\bigg(\int_{\R^d}\Big( \LL^x p^x(s,w)\nabla p^y(t-s,z-w) -\nabla p^x(s,w)\, \LL^y
p^y(t-s,z-w)\Big)\,dw\bigg)\,ds\\
&=\int_0^{{t}/{2}}\int_{\R^d}\big( \LL^x- \LL^y\big) p^x(s,w)\,\nabla p^y(t-s,z-w)\,dw\,ds\\
&\quad +\int_{{t}/{2}}^t\int_{\R^d} \nabla p^x(s,w)\big( \LL^x- \LL^y\big)
p^y(t-s,z-w)\,dw\,ds\\
&=\int_0^{{t}/{2}}\bigg(\int_{\R^d}\big( \LL^x- \LL^y\big) p^x(s,w)
\big(\nabla p^y(t-s,z-w)-\nabla p^y(t-s,z)\big)\,dw\bigg)\,ds\\
&\quad +\int_{{t}/{2}}^t\bigg(\int_{\R^d} \big(\nabla p^x(s,w)-\nabla p^x(s,z)\big)  \big( \LL^x- \LL^y\big)
p^y(t-s,z-w)\,dw\bigg)\,ds\\
&=:I_1+I_2,
\end{align*} where in the first and the second equalities we used the integration by part formula
and  in the second equality we also used the fact that $\nabla  \LL^x p^y(t,w)= \LL^x \nabla p^y(t,w)$.

Observe that
$$
t^{-d/\A(y)}\le t^{-d/\A(x)}t^{-\big|\frac{d}{\A(y)}-\frac{d}{\A(x)}\big|}\le
t^{-d/\A(x)}t^{-\frac{|\A(x)-\A(z)|}{\A_1^2}}\le t^{-d/\A(x)}t^{-C|x-z|^{\B_0}}.
$$
Then, according to \eqref{l4-4-1} and the proof of \eqref{l4-1-6},
for any $\T \in (0,1)$, $\G\in (0,1)$, there exists a
constant $R_1:=R_1(\A,\K,\G)\in(0,R_0)$ such that for all $0<t \le 1$ and
 $x,y\in\R^d$ with $|x-y|\le R_1$,
\begin{equation}\label{l4-1-7}
\begin{split}
&\left|\nabla p^y(t,z-w)-\nabla p^y(t,z)\right|\\
&\p \big(|w|^{\T}\wedge 1\big)\cdot
\big[\rho^{x,0}_{1-(1/\A(x))-\G-({\theta}/{\A_1})}(t,z-w)+\rho^{x,0}_{1-(1/\A(x))-\G-({\theta}/{\A_1})}(t,z)\big].
\end{split}
\end{equation}
Choosing $\theta\in(0,1\wedge \A_1)$ and $0<\gamma<({\theta}/{\A_2})\wedge ((1-({\theta}/{\A_1}))/2)$, and using
\eqref{l4-1-4} and \eqref{l4-1-7}, we arrive at that for any $t\in(0,1]$ and $x,y,z\in \R^d$ with $|x-y|\le R_1$,
\begin{align*}
I_1&\p |x-y|^{\B_0}\cdot\bigg(\int_0^{{t}/{2}}\int_{\R^d}\tilde \rho^{x,\T}_{-\G}(s,w)\tilde \rho^{x,0}_{1-(1/\A(x))-\G-({\T}/{\A_1})}(t-s,z-w)\,dw\,ds\\
&\qquad\qquad \qquad\quad +\int_0^{{t}/{2}}\int_{\R^d}\tilde \rho^{x,\T}_{-\G}(s,w)
\tilde \rho^{x,0}_{1-(1/\A(x))-\G-({\T}/{\A_1})}(t-s,z)\,dw\,ds\bigg)\\
&=:|x-y|^{\B_0}\cdot \big(I_{11}+I_{12}\big).
\end{align*}
Noticing that, by Remark \ref{r3-3}(2),  \eqref{l3-2-1-} still holds for $\tilde \rho$, we have
\begin{align*}
I_{11}&\p \tilde \rho^{x,0}_0(t,z)\int_0^{t/2}
\Big(s^{-\G}(t-s)^{-(1/\A(x))-\G-(\theta/\A_1)}+s^{-1+\theta/\A_2-\G}(t-s)^{1-(1/\A(x))-\G-(\theta/\A_1)}\Big)\,ds\\
&\p \tilde \rho^{x,0}_0(t,z) \cdot t^{1-(1/\A(x))-2\G-(\theta/\A_1)}\p
 \tilde \rho^{x,0}_{1-(1/\A(x))-2\G-(\theta/\A_1)}(t,z).
\end{align*}
On the other hand, following the argument of  \eqref{l4-1-8}, we can obtain that
\begin{align*}
I_{12}&\p \tilde \rho^{x,0}_{1-(1/\A(x))-\G-(\theta/\A_1)}(t,z)\cdot\int_0^{t/2}
\int_{\R^d}\tilde \rho^{x,\theta}_{-\G}(s,w)\,dw\,ds\p \tilde \rho^{x,0}_{1-(1/\A(x))-2\G-(\theta/\A_1)}(t,z).
\end{align*}

By the same argument as that for $I_1$, we can also obtain that for every $t \in (0,1]$ and
$x,y,z\in \R^d$ with $|x-y|\le R_1$,
\begin{equation*}
I_2\p |x-y|^{\B_0}\tilde \rho^{x,0}_{1-(1/\A(x))-2\G-(\theta/\A_1)}(t,z).
\end{equation*}
Combining all estimates together, we have shown
\eqref{l4-1-1-1}.

(iii) Following the same procedure as these of \eqref{l4-1-1} and \eqref{l4-1-1-1}, we can also
verify \eqref{l4-1-1a}.
\end{proof}

\begin{proposition}\label{t4-2}
\begin{itemize}
\item[(1)] Suppose that $\K(x,z)$ is independent of $z$. Then, for every $\G>0$ small enough, there exist positive constants
$c_1:=c_1(\A,\K,\G)$ and $R_1:=R_1(\A,\K,\G)$ such that for all $t \in (0,1]$ and $x,x',y \in \R^d$ with $|x-x'|\le R_1$
\begin{equation}\label{t4-2-1}
\begin{split}
\big|p(t,x,y)-p(t,x',y)\big|
&\le c_1
|x-x'|^{(\A(x)-\G)_+\wedge 1}
\big(\rho^{y,0}_{{\G/(2\A_2)}}(t,x-y)+\rho^{y,0}_{{\G/(2\A_2)}}(t,x'-y)\big).
\end{split}
\end{equation}

\item[(2)]  Let $\B_0^{**}\in(0,\B_0]\cap(0,{\alpha_2}/{2}).$ If $\K(x,z)$ depends on $z$ and
${(\A_2/\A_1)}-1<\B_0^{**}/{\A_2}$, then for every $\G>0$, there exist positive constants
$c_1:=c_1(\A,\K,\G)$ and $R_1:=R_1(\A,\K,\G)$ such that for all $t \in (0,1]$ and $x,x',y \in \R^d$ with $|x-x'|\le R_1$
\begin{equation}\label{t4-2-1aa}
\begin{split}
\big|p(t,x,y)-p(t,x',y)\big|\le& c_1|x-x'|^{(\A_1-\G)_+\wedge 1}\big(\rho^{y,0}_{\G_1 }(t,x-y)+
\rho^{y,0}_{\G_1 }(t,x'-y)\big),
\end{split}
\end{equation}
where $\G_1:=1-({\A_2}/{\A_1})+({\G}/({2\A_2}))$.
\end{itemize}
\end{proposition}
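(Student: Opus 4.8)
The plan is to differentiate the Levi representation \eqref{eq2-1} in the first spatial variable. Since $q(s,z,y)$ does not depend on $x$, one has
\[
p(t,x,y)-p(t,x',y)=\bigl(p^y(t,x-y)-p^y(t,x'-y)\bigr)+\mathrm{II},
\]
where $\mathrm{II}=\int_0^t\int_{\R^d}\bigl(p^z(t-s,x-z)-p^z(t-s,x'-z)\bigr)\,q(s,z,y)\,dz\,ds$, and the free term and $\mathrm{II}$ are estimated separately. For the free term I would use the regularity bound \eqref{p1-1-3} for the constant-coefficient kernel $p^y$ (and Lemma \ref{l4-4} when a gradient is needed), then convert the prefactor $\bigl[(t^{-1/\A(y)}|x-x'|)\wedge1\bigr]$ into $|x-x'|^{\theta_0}$ with $\theta_0:=(\A(x)-\G)_+\wedge1$, absorbing the resulting negative power of $t$ into the subscript of $\rho$. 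The elementary inequality $(\A(x)-\G)_+\wedge1\le\A(x)\bigl(1-\tfrac{\G}{2\A_2}\bigr)$, which holds precisely because $\A(x)\le\A_2<2\A_2$, is what makes the index $\G/(2\A_2)$ appear on the right. For $|x-y|\le R_1$ the H\"older continuity of $\alpha$ lets me replace $\A(y)$ by $\A(x)$ in this bookkeeping, using the comparisons between $\rho^{y,\cdot}$ and $\rho^{x,\cdot}$ of the type \eqref{l4-1-5}; for $|x-y|>R_1$ one uses $\rho^{y,0}_1\le\rho^{y,0}_{\G/(2\A_2)}$ there together with the same prefactor manipulation.

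For $\mathrm{II}$ I would again apply \eqref{p1-1-3} to $p^z(t-s,\cdot)$ and convert the prefactor to $|x-x'|^{\theta_0}$; since $p^z(t-s,\cdot)$ carries no $|\cdot|^{\beta}$ factor, I would then dominate $\rho^{z,0}_1(t-s,\cdot)$ by $\rho^{z,\B_1}_{1-(\theta_0+\B_1)/\A(z)}(t-s,\cdot)$ for a small $\B_1>0$, which opens the door to the convolution inequalities of Corollary \ref{l3-3} and Remark \ref{r3-3}. Inserting the upper bound \eqref{p3-1-1} for $|q(s,z,y)|$ and carrying out the $(s,z)$-integration via those inequalities, with $0<\gamma<\theta<\B_0^*/\A_2$ guaranteeing convergence near $s=0$, produces after relabelling the constants a contribution $\preceq|x-x'|^{\theta_0}\bigl(\rho^{y,0}_{\G/(2\A_2)}(t,x-y)+\rho^{y,0}_{\G/(2\A_2)}(t,x'-y)\bigr)$. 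The delicate point is the singularity of $p^z(t-s,\cdot)$ as $s\uparrow t$, where the $x$-difference prefactor is only bounded by $1$ and yields no gain; I would handle this by splitting the time integral according to whether $|x-x'|\lessgtr(t-s)^{1/\A(z)}$.

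It is usually cleanest to organize the whole argument by the dichotomy $|x-x'|\le c_0t^{1/\A(x)}$ versus $|x-x'|>c_0t^{1/\A(x)}$. In the latter regime one bounds $|p(t,x,y)-p(t,x',y)|\le p(t,x,y)+p(t,x',y)$ by the upper bounds of Propositions \ref{p3-2} and \ref{p3-3}, and extracts $|x-x'|^{\theta_0}$ from $1\le\bigl(c_0^{-1}t^{-1/\A(x)}|x-x'|\bigr)^{\theta_0}$, checking that the surplus $t$-power in those bounds (namely the $t$ in \eqref{p3-3-1} and $t^{1-\G}$ in \eqref{p3-2-1}), with $\G$ small, covers $t^{\G/(2\A_2)}$ in the ranges $|x-y|\lessgtr1$; in the near-diagonal regime one uses the mean value theorem together with a gradient bound for $p(t,\cdot,y)$ assembled from \eqref{l4-4-1} for the free part and a convolution estimate for $\mathrm{II}$. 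Part (2) follows the same scheme, now starting from \eqref{p3-1-2} and the upper bounds carrying the extra loss $t^{1-\A_2/\A_1}$; because the control of $q_0$ (and hence of $q$ and $p$) only comes with this loss once $\K(x,z)$ genuinely depends on $z$ — even when $\alpha$ is locally constant — the local-constancy trick for the index is unavailable, so the H\"older exponent degrades to $(\A_1-\G)_+\wedge1$ and the index to $\G_1=1-\A_2/\A_1+\G/(2\A_2)$, which is what forces the standing hypothesis $\A_2/\A_1-1<\B_0^{**}/\A_2$.

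I expect the main obstacle to be the convolution term $\mathrm{II}$: first, forcing the convolution inequalities to apply to a kernel with no built-in H\"older factor (the device of inserting $\rho^{z,\B_1}$), while bookkeeping the several terms each inequality generates; second, the $s\uparrow t$ singularity mentioned above; and third — the genuinely new difficulty compared with \cite{CZ} — propagating the \emph{variable} index $\A(z)$ through these convolutions and then collapsing the resulting powers of $t$, $|x-x'|$ and $|x-y|$ into exactly the exponent $(\A(x)-\G)_+\wedge1$ (resp.\ $(\A_1-\G)_+\wedge1$) and subscript $\G/(2\A_2)$ (resp.\ $\G_1$), which rests on the comparison $\A(x)\le2\A_2$ and on the H\"older continuity of $\alpha$ invoked through \eqref{l4-1-5}-type estimates.
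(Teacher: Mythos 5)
Your overall scheme---decompose via \eqref{eq2-1}, apply the difference estimate \eqref{p1-1-3} to $p^y$ and to $p^z(t-s,\cdot)$, convert the prefactor $\big[(t-s)^{-1/\A(\cdot)}|x-x'|\big]\wedge 1$ into $|x-x'|^{\theta_0}$ with $\theta_0:=(\A(x)-\G)_+\wedge 1$, and close the convolution term with \eqref{p3-1-1} and Corollary \ref{l3-3}---is indeed the paper's route, but there is a genuine gap in how you treat the region where the base point of the kernel is \emph{far} from $x$. The conversion of the prefactor always costs $(t-s)^{-\theta_0/\A(z)}$ (resp.\ $t^{-\theta_0/\A(y)}$ in the free term), and the index appearing there is $\A(z)$ (resp.\ $\A(y)$), not $\A(x)$; the H\"older continuity of $\A$ only lets you compare the two when $|x-z|$ (resp.\ $|x-y|$) is small. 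For far points $\A(z)$ can be much smaller than $\theta_0$ (e.g.\ $\A(x)$ near $2$, so $\theta_0=1$, while $\A(z)$ is near $\A_1<1$): then $1-\theta_0/\A(z)<0$, the $s$-integral in $\mathrm{II}$ can even diverge at $s=t$ when $\A_1\le \theta_0/2$, and in the free term the surviving power $t^{1-\theta_0/\A(y)}$ need not be controlled by $t^{\G/(2\A_2)}$. Your proposed remedy (splitting the time integral at $|x-x'|\approx(t-s)^{1/\A(z)}$) does not repair this: the threshold depends on the integration variable $z$, and on the short time interval it only produces $|x-x'|^{\A(z)}$, again short of $\theta_0$ when $\A(z)<\theta_0$. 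The missing idea is the paper's near/far dichotomy in the \emph{spatial} variable: for $|x-z|\ge cR_2$ (and $|x-x'|\le R_2$) one uses the mean value theorem with the gradient bound \eqref{p1-1-1}; since the gradient is evaluated at points at distance $\gtrsim R_2\gg (t-s)^{1/\A(z)}$ from the singularity, it carries no negative power of $t-s$ at all, so the full factor $|x-x'|\le|x-x'|^{\theta_0}$ comes for free, yielding \eqref{pppxy} uniformly in $z$; the prefactor conversion with $\A(z)$ close to $\A(x)$ is used only for $|x-z|\le cR_2$.

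Two further points. First, in your alternative organisation the near regime $|x-x'|\le c_0t^{1/\A(x)}$ is handled by the mean value theorem applied to $p(t,\cdot,y)$ itself, with a gradient bound on $\mathrm{II}$; such a bound is not available under the hypotheses of the proposition---differentiability of $p(t,\cdot,y)$ is precisely Proposition \ref{t5-1} and requires $\tilde\B_0(x_0)>1-\A(x_0)$, because $\int_0^t\int|\nabla_x p^z(t-s,x-z)||q(s,z,y)|\,dz\,ds$ diverges at $s=t$ when $\A\le 1$ unless one exploits the cancellation $q(s,z,y)-q(s,x,y)$. The whole point of the exponent $(\A(x)-\G)_+\wedge 1$, strictly below $\A(x)$, is to keep that singularity integrable without differentiating, so the paper's difference argument cannot be replaced by a gradient argument here. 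Second, your pointwise domination of $\rho^{z,0}_1$ by $\rho^{z,\B_1}_{1-(\theta_0+\B_1)/\A(z)}$ fails for $|x-z|\lesssim (t-s)^{1/\A(z)}$; it is also unnecessary, since Lemma \ref{l3-2} and Corollary \ref{l3-3} remain valid with $\B_1=0$ (their proofs rest on Lemma \ref{l3-1}, which allows $\beta=0$), which is how the paper convolves $\rho^{z,0}_{2\G/(3\A_2)}$ with the bound on $q$.
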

\begin{proof} For simplicity, we assume that $\B_0< {\A_2}/{2}$ and $\B_0^{**}=\B_0$ as before.

(1) We first suppose that $\K(x,z)$ is independent of $z$. According
to  \eqref{p1-1-3}, for any $\G>0$,
\begin{equation}\label{t4-2-1a}
\begin{split}
&\big|p^z(t-s,x-z)-p^z(t-s,x'-z)\big|\\
&\p \big[\big((t-s)^{-{1}/{\A(z)}}|x-x'|\big)\wedge 1\big]\cdot
\big[\rho^{z,0}_1(t-s,x-z)+\rho^{z,0}_1(t-s,x'-z)\big]\\
&\p
\big[\big((t-s)^{-{1}/{\A(z)}}|x-x'|\big)^{(\A(x)-\G)_+\wedge
1}\wedge 1\big]
\big[\rho^{z,0}_1(t-s,x-z)+\rho^{z,0}_1(t-s,x'-z)\big]\\
&\p |x-x'|^{(\A(x)-\G)_+\wedge
1}(t-s)^{-{(\A(x)-\G)_+}/{\A(z)}}
\times\big[\rho^{z,0}_1(t-s,x-z)+\rho^{z,0}_1(t-s,x'-z)\big].
\end{split}
\end{equation}

On the one hand, observing that
$$\frac{\A(x)-\G}{\A(z)}\le 1+\frac{|\A(x)-\A(z)|}{\A_1}-\frac{\G}{\A_2}\le
1+\frac{C|x-z|^{\B_0}}{\A_1}-\frac{\G}{\A_2},$$ we can find a constant
$R_2:=R_2(\A,\G)>0$ such that for every $x,x',z\in \R^d$ satisfying
$|x-x'|\le R_2$ and $|x-z|\le 2R_2$ (which imply that $|x'-z|\le
3R_2$), it holds
\begin{equation*}
\frac{\A(x)-\G}{\A(z)}\le 1-\frac{2\G}{3\A_2},
\end{equation*}
which in turn yields that for every $0<s\le t\le 1$ and $x,x',z\in\R^d$ with $|x-x'|\le
R_2$ and $|x-z|\le 2R_2$
\begin{align*}
\big|p^z(t-s,x-z)-p^z(t-s,x'-z)\big|
&\p
|x-x'|^{(\A(x)-\G)_+\wedge 1}\left(\rho^{z,0}_{{2\G}/({3\A_2})}(t-s,x-z)+\rho^{z,0}_{{2\G}/({3\A_2})}(t-s,x'-z)\right).
\end{align*}

On the other hand, when $|x-x'|\le R_2$ and $|z-x|>2R_2$ (which
imply that $|z-x'|>R_2$), we obtain from \eqref{p1-1-1} and the mean
value theorem that
\begin{align*}
|p^z(t-s,x-z)-p^z(t-s,x'-z)|
&\le |x-x'|\cdot|\nabla p^z(t-s,x-z+\tilde \T_{x,x',z}(x'-x))|\\
&\p |x-x'|\cdot \frac{t-s}{((t-s)^{{1}/{\A(z)}}+|x-z|)^{d+\A(z)+1}}\\
&\p |x-x'|\cdot \frac{t-s}{|x-z|^{d+\A_1}}\p |x-x'|\cdot \rho^{z,0}_1(t-s,x-z).
\end{align*}
where in the first inequality $|\tilde \T_{x,x'z}|\le 1$ is a
constant (which may depend on $x$, $x'$ and $z$), in the second inequality
we have used the fact that
$$|x-z+\tilde \T_{x,x',z}(x'-x)|\ge |x-z|-|x'-x|\ge {|x-z|}/{2},$$
and the fourth inequality follows from
$|z-x|>R_2>C(t-s)^{{1}/{\A(z)}}$.

As a result, we obtain that for all $x,x',z\in\R^d$ with $|x-x'|\le R_2$,
\begin{equation}\label{pppxy}
\begin{split}
\big|p^z(t,x-z)-p^z(t,x'-z)\big|
&\p
|x-x'|^{(\A(x)-\G)_+\wedge 1}\Big(\rho^{z,0}_{{2\G}/({3\A_2})}(t,x-z)
+\rho^{z,0}_{{2\G}/({3\A_2})}(t,x'-z)\Big).
\end{split}
\end{equation}
Then, using \eqref{p3-1-1} and \eqref{l3-3-1} and changing the constant $\G$ properly, we arrive at for every
$|x-x'|\le R_2$,
\begin{align*}
&\int_0^t\int_{\R^d}|p^z(t-s,x-z)-p^z(t-s,x'-z)||q(s,z,y)|\,dz\,ds\\
&\p |x-x'|^{(\A(x)-\G)_+\wedge 1}\!\cdot\Big[\Big(\rho^{y,0}_{({\B_0}/{\A_2})\!+({\G}/({2\A_2}))}+\rho^{y,\B_0}_{{\G/(2\A_2)}}\Big)(t,x-y)\!+
\Big(\rho^{y,0}_{({\B_0}/{\A_2})\!+({\G}/({2\A_2}))}+\rho^{y,\B_0}_{{\G/(2\A_2)}}\Big)(t,x'-y)\Big].
\end{align*}

Combining all the estimates together with \eqref{eq2-1} and using again \eqref{pppxy}, we can prove \eqref{t4-2-1} immediately.

(2) Now we assume that $\K(x,z)$ depends on $z$ and ${(\A_2/\A_1)}-1<{\B_0}/{\A_2}$.
Replacing $\A(x)$ by $\A(z)$ in \eqref{t4-2-1a}, we find that for every
$x,x'\in \R^d$,
\begin{align*}
&\big|p^z(t-s,x-z)-p^z(t-s,x'-z)\big|\\
&\p (|x-x'|^{(\A_1-\G)_+\wedge 1}\wedge 1)(t-s)^{-{(\A(z)-\G)_+}/{\A(z)}} \big(\rho^{z,0}_1(t-s,x-z)+\rho^{z,0}_1(t-s,x'-z)\big)\\
&\p (|x-x'|^{(\A_1-\G)_+\wedge 1}\wedge 1)
\big[\rho^{z,0}_{{\G}/{\A_2}}(t-s,x-z)+\rho^{z,0}_{{\G}/{\A_2}}(t-s,x'-z)\big].
\end{align*}
Combining this with \eqref{eq2-1}, \eqref{p3-1-2} and \eqref{l3-3-1}, and following the
same arguments above,  we arrive at for all $x,x'\in\R^d$ with $|x-x'|\le R_2$,
\begin{align*}
 \big|p(t,x,y)-p(t,x',y)\big|
&\p |x-x'|^{(\A_1-\G)_+\wedge 1}\\
&\quad\times
\Big[\big(\rho^{y,0}_{{\G}/{\A_2}}(t,x-y)+
\rho^{y,0}_{{\G}/{\A_2}}(t,x'-y)\big)\\
&\qquad\,\,+\int_0^t\int_{\R^d}
\big(\rho^{z,0}_{{\G}/{\A_2}}(t-s,x-z)+\rho^{z,0}_{{\G}/{\A_2}}(t-s,x'-z)\big)\\
&\qquad \qquad  \times \big(\rho^{y,0}_{({\B_0}/{\A_2})-[{(\A_2/\A_1)}-1+{\G}/({2\A_2})]}+
\rho^{y,\B_0}_{-[{(\A_2/\A_1)}-1+{\G}/({2\A_2})]}\Big)(s,z-y)\,dz\, ds \Big]\\
&\p |x-x'|^{(\A_1-\G)_+\wedge 1}
\big[\big(\rho^{y,0}_{\B_0/\A_2+\G_1}+\rho^{y,\B_0}_{\G_1}\big)(t,x-y)+
\big(\rho^{y,0}_{\B_0/\A_2+\G_1}+\rho^{y,\B_0}_{\G_1}\big)(t,x'-y)\big],
\end{align*}
where $\G_1:=1-\A_2/\A_1+\G/(2\A_2)$. By now we have verified \eqref{t4-2-1aa}. The proof is complete.
\end{proof}

Furthermore, we have the following gradient estimates for $p(t,x,y)$.

\begin{proposition}\label{t5-1}
\begin{itemize}
\item[(1)]
Suppose that $\K(x,z)$ is independent of $z$. Let $\B_0^*:=\B_0\wedge\A_2$.
If $\tilde \B_0(x_0):=({\A_1\B_0}/{\A(x_0)})\wedge\A_1>1-\A(x_0)$ for some $x_0 \in \R^d$, then for every fixed $t \in (0,1]$ and $y \in \R^d$,
$p(t,\cdot,y)$ is differentiable at $x=x_0$. Moreover,
for every $\G>0$,
\begin{equation}\label{t5-1-1}
|\nabla p(t,\cdot,y)(x_0)|\le c_1\rho^{y,0}_{1-({1/\A(x_0)})+(\B_0^*/\A_2)-
(\B_0^*/\A_1)-\G}(t,x_0-y)
\end{equation}
holds for some $c_1:=c_1(\A,\K,\G,x_0)>0$.

\item[(2)]  Suppose that $\K(x,z)$ depends on $z$ and ${(\A_2/\A_1)}-1<\B_0^{**}/\A_2$, where $\B_0^{**}:=\B_0\wedge (\A_2/2)$.
If $\tilde \B_0(x_0)
-({\A_1\A_2}/{\A(x_0)})(({\A_2}/{\A_1})-1)>1-\A_1$ for some
$x_0\in \R^d$ with $\tilde \B_0(x_0)$ defined in $(i)$,
then for every fixed $t \in (0,1]$ and $y \in \R^d$, $p(t,\cdot,y)$ is differentiable at
$x_0\in \R^d$. Furthermore, for every $\G>0$, there exists a constant
$c_2:=c_2(\A,\K,\G,x_0)>0$ such that
\begin{equation}\label{t5-1-2}
|\nabla p(t,\cdot,y)(x_0)|\le c_2\rho^{y,0}_{1-({1}/{\A_1})+(\B_0^*/\A_2)-(\B_0^*/\A_1)-\G_2 }(t,x_0-y),
\end{equation}
where $\G_2 :={(\A_2/\A_1)}-1+\G$.
\end{itemize}
\end{proposition}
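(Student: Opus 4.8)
The plan is to differentiate the Levi representation
\[
p(t,x,y)=p^y(t,x-y)+\int_0^t\!\!\int_{\R^d}p^z(t-s,x-z)\,q(s,z,y)\,dz\,ds
\]
of \eqref{eq2-1} with respect to $x$ at $x=x_0$, term by term. First I would introduce the candidate gradient
\[
g(t,x_0,y):=\nabla p^y(t,\cdot)(x_0-y)+\int_0^t\!\!\int_{\R^d}\nabla_x p^z(t-s,\cdot)(x_0-z)\,q(s,z,y)\,dz\,ds,
\]
show that the integral on the right converges absolutely (this is where the hypothesis $\tilde\B_0(x_0)>1-\A(x_0)$ enters), and then show $g$ is genuinely $\nabla p(t,\cdot,y)(x_0)$ and satisfies the stated bound. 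The first summand is harmless: by \eqref{p1-1-1} (with $j=1$) it is dominated by $\rho^{y,0}_{1-1/\A(y)}(t,x_0-y)$, and \eqref{l4-1-5} together with the definition of $\rho$ lets one replace the frozen index and pick up the factor $\rho_{\B_0^*/\A_2-\B_0^*/\A_1-\G}^{\cdot,0}$, at the cost of absorbing a logarithm and a small power of $t$ into $\G$. So the real content is the correction integral near $s=t$, where $\nabla_x p^z(t-s,\cdot)$ is singular.

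To treat it I would split $\int_0^t=\int_0^{t/2}+\int_{t/2}^t$. On $(0,t/2)$ the factor $t-s$ is comparable to $t$, so one may differentiate under the integral, bound $|\nabla_x p^z(t-s,x-z)|\p\rho^{z,0}_{1-1/\A(z)}(t-s,x-z)$, combine with the bound $|q(s,z,y)|\p\big(\rho^{y,0}_{(\B_0^*/\A_2)-\G}+\rho^{y,\B_0^*}_{-\G}\big)(s,z-y)$ from \eqref{p3-1-1}, and integrate the resulting $z$-convolution by Corollary \ref{l3-3}; this produces precisely the exponent $1-1/\A(x_0)+\B_0^*/\A_2-\B_0^*/\A_1$ (modulo $\G$), after converting the $z$- and $y$-indices to $x_0$ via \eqref{l4-1-5}. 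On $(t/2,t)$ I would use the cancellation trick: inside $\int_{\R^d}\nabla_x p^z(t-s,x-z)q(s,z,y)\,dz$, write $q(s,z,y)=\big(q(s,z,y)-q(s,x,y)\big)+q(s,x,y)$. For the first piece the spatial H\"older estimate \eqref{p3-1-1a} supplies a factor $|x-z|^{\widetilde\theta}$, and $\int_{\R^d}|\nabla_x p^z(t-s,x-z)|\,|x-z|^{\widetilde\theta}\,dz\p(t-s)^{(\widetilde\theta-1)/\A(z)}$, which is $s$-integrable up to $s=t$ exactly when $\widetilde\theta>1-\A(x_0)$, i.e.\ when $\tilde\B_0(x_0)>1-\A(x_0)$. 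For the second piece one uses $\int_{\R^d}p^z(t-s,x-z)\,dz=1+\int_{\R^d}\big(p^z-p^x\big)(t-s,x-z)\,dz$ (conservativeness of the frozen L\'evy process, $\int_{\R^d}p^y(t,u)\,du=1$), so that differentiating in $x$ only acts on the remainder, which is controlled by the H\"older-in-freezing-point estimates \eqref{l4-1-1} and \eqref{l4-1-1-1} (and Lemma \ref{l4-4} for $\nabla p^y$); the extra $|x-z|^{\B_0}$ there again produces the convergence and the $\B_0^*/\A_2-\B_0^*/\A_1$ shift in the exponent. Having shown the integral defining $g$ converges locally uniformly in $x$ near $x_0$ (using that $\tilde\B_0(\cdot)>1-\A(\cdot)$ persists on a small ball by continuity), differentiability of $p(t,\cdot,y)$ at $x_0$ follows by the fundamental theorem of calculus along segments from $x_0$, and collecting the pieces gives the bound \eqref{t5-1-1}.

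The step I expect to be the main obstacle is the bookkeeping of time exponents in the regime $s\to t$: one must simultaneously track the negative $(t-s)$-powers coming from $\nabla_x p^z$, the compensating H\"older powers from $q$ (via \eqref{p3-1-1a}) and from $p^z-p^x$ (via \eqref{l4-1-1}, \eqref{l4-1-1-1}), the logarithmic corrections, and the small losses incurred each time an index $\A(z)$ or $\A(y)$ is traded for $\A(x_0)$, $\A_2$ (near the diagonal of $\rho$) or $\A_1$ (in the tail of $\rho$). The precise output exponent $1-1/\A(x_0)+\B_0^*/\A_2-\B_0^*/\A_1$ is what remains after all these trades, and the threshold condition $\tilde\B_0(x_0)>1-\A(x_0)$ is exactly the integrability condition produced by the $s$-integration in the cancellation argument; everything else is folded into the free parameter $\G$.

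For part (2) I would run the identical scheme, feeding in the general-$\kappa$ ingredients: the bound \eqref{p3-1-2} and the H\"older estimate \eqref{p3-1-2a} for $q$ (both carrying the additional time loss $1-\A_2/\A_1$), together with \eqref{l4-1-1}, \eqref{l4-1-1-1} and the continuity estimates of Proposition \ref{t4-2}(2). Every relevant time exponent then degrades by $(\A_2/\A_1)-1$, so the $s$-integrability threshold becomes $\tilde\B_0(x_0)-(\A_1\A_2/\A(x_0))\big((\A_2/\A_1)-1\big)>1-\A_1$ and the gradient bound becomes $\rho^{y,0}_{1-1/\A_1+\B_0^*/\A_2-\B_0^*/\A_1-\G_2}(t,x_0-y)$ with $\G_2=(\A_2/\A_1)-1+\G$; here $\A(x_0)$ is replaced by $\A_1$ in the leading exponent for the same structural reason that the H\"older exponent in Proposition \ref{t4-2}(2) is $(\A_1-\G)_+\wedge1$ rather than $(\A(x)-\G)_+\wedge1$, namely that in the variable-$\kappa$ regime the near-diagonal scaling can only be compared against the uniform lower index $\A_1$.
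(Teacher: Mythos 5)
Your proposal follows essentially the same route as the paper's proof: differentiate the Levi representation \eqref{eq2-1}, split the time integral at $t/2$, and near $s=t$ use the two cancellations $q(s,z,y)-q(s,x_0,y)$ (controlled by \eqref{p3-1-1a}) and $\nabla p^z-\nabla p^{x_0}$ (controlled by \eqref{l4-1-1-1}; the precise form of your ``conservativeness'' step is that $\int_{\R^d}\nabla p^{x_0}(t-s,x_0-z)\,dz=0$, so the frozen-at-$x_0$ gradient may be subtracted inside the $z$-integral), which yields the same integrability threshold $\tilde \B_0(x_0)>1-\A(x_0)$ and the same exponent $1-1/\A(x_0)+\B_0^*/\A_2-\B_0^*/\A_1-\G$. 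Part (2) is likewise handled exactly as in the paper, by inserting the general-$\kappa$ estimates \eqref{p3-1-2} and \eqref{p3-1-2a} with the additional time loss $(\A_2/\A_1)-1$.
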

\begin{proof}

(1) We first suppose that $\K(x,z)$ is independent of $z$. For simplicity, we assume
that $\B_0< \A(x_0)$, and so $\B_0^*=\B_0$ and
$\tilde \B_0(x_0)={\A_1\B_0}/{\A(x_0)}$. We will show that we can take the gradient with respect to variable $x$ in the equation \eqref{eq2-1}.
Note that
\begin{align*}
&\int_0^t \left|\int_{\R^d}\nabla p^z(t-s,x_0-z)q(s,z,y)\,dz\right|\,ds\\
&=\int_0^{t/2}\left|\int_{\R^d}\nabla p^z(t-s,x_0-z)q(s,z,y)\,dz\right|\,ds\\
&\quad+\int_{t/2}^t\left|\int_{\R^d}\nabla p^z(t-s,x_0-z)\left(q(s,z,y)-q(s,x_0,y)\right)\,dz\right|\,ds\\
&\quad+\int_{t/2}^t\left|\int_{\R^d}\left(\nabla p^z(t-s,x_0-z)-\nabla p^{x_0}(t-s,x_0-z)\right)
q(s,x_0,y)\,dz\right|\,ds\\
&=:\int_0^{t/2}\left|\int_{\R^d}J_1(s,z)\,dz\right|\,ds+\int_{t/2}^t
\left|\int_{\R^d}J_2(s,z)\,dz\right|\,ds+\int_{t/2}^t
\left|\int_{\R^d}J_3(s,z)\,dz\right|\,ds.
\end{align*}

According to \eqref{p1-1-1},
\begin{equation}\label{t5-1-3}
\begin{split}
|\nabla p^z(t,x-z)|&\p \frac{t}{(t^{{1}/{\A(z)}}+|x-z|)^{d+\A(z)+1}}.
\end{split}
\end{equation}
Observing that
\begin{equation*}
\Big|\frac{1}{\A(z)}-\frac{1}{\A(x)}\Big|\le
\frac{|\A(x)-\A(z)|}{\A_1^2}\le C|x-z|^{\B_0},
\end{equation*}
we find by \eqref{t5-1-3} that for every (small enough) $\G>0$, there is a constant $R_1:=R_1(\A, \K,\G)>0$ such that
\begin{equation}\label{t5-1-4}\begin{split}
|\nabla p^z(t,x-z)|&\p
\begin{cases}
\rho^{z,0}_{1-({1}/{\A(z)})}(t,x-z),\ &|x-z|\le R_1,\\
\frac{t}{|x-z|^{d+\A_1+1}}
,\ & |x-z|>R_1
\end{cases}\\
&\p \rho^{z,0}_{1-({1/\A(x)})-\G}(t,x-z). \end{split}
\end{equation}

Thus, choosing $\G>0$ small enough such that $\G<\B_0/\A_2$, and using \eqref{l3-2-0}, \eqref{p3-1-1} and
\eqref{t5-1-4}, we can find that
\begin{align*}
&\int_0^{t/2}\int_{\R^d}|J_1(s,z)|\,dz\,ds\\
&\p
\int_0^{t/2}\int_{\R^d}\rho^{z,0}_{1-({1/\A(x_0)})-\G}(t-s,x_0-z)
\big(
\rho^{y,0}_{({\B_0}/{\A_2})-\G}(s,z-y)+
\rho^{y,\B_0}_{-\G}(s,z-y)\big)\,dz\,ds\\
&\p \rho^{y,0}_0(t,x_0-y)\!\bigg(\!\int_0^{t/2}\!(t\!-\!s)^{-(1/\A(x_0))-\G}s^{(\B_0/\A_2)-\G}\! +\!
(t\!-\!s)^{1-(1/\A(x_0))-\G}s^{-1+(\B_0/\A_2)-\G}\,ds \bigg)\\
&\p \rho^{y,0}_{1-(1/\A(x_0))-2\G}(t,x-y).
\end{align*}

On the other hand, according to \eqref{p3-1-1a}, \eqref{t5-1-4} and the fact that $\B_0<\A(x_0)$, we arrive at
\begin{align*}
 \int_{t/2}^t\int_{\R^d}|J_2(s,z)|\,dz\,ds
&\p \int_{t/2}^t\int_{\R^d}
\bigg(\rho^{z,\wh \B_0}_{1-({1/\A(x_0)})-\G}(t-s,x_0-z)\tilde\rho^{y,0}_{\theta-\gamma+(\B_0/\A_2)-{(\B_0/\A_1)}}
(s,z-y)
\\
&\qquad\qquad
+\rho^{z,\wh \B_0}_{1-({1/\A(x_0)})-\G}(t-s,x_0-z)\tilde\rho^{y,0}_{\theta-\gamma+(\B_0/\A_2)-{(\B_0/\A_1)}}(s,x_0-y)\bigg)\,dz\,ds\\
&=:\int_{t/2}^t \int_{\R^d}(J_{21}(s,z)+J_{22}(s,z))\,dz\,ds,
\end{align*}
where $\wh \B_0=\wh \B_0(\theta):={\A_1(\B_0-\A_2\theta)}/{\A(x_0)}$.

Since $\tilde \B_0(x_0)>1-\A(x_0)$, we can choose
$\theta, \G>0$ small enough such that $1+(\wh \B_0/\A(x_0))-(1/\A(x_0))-\G>0$, which along with
\eqref{l3-2-0} yields
\begin{align*}
\int_{t/2}^t \int_{\R^d}J_{21}(s,z)\,dz\,ds
&\p \tilde \rho^{y,0}_0(t,x_0-y)\cdot\Big(
\int_{t/2}^t (t-s)^{1-(1/\A(x_0))-\G}s^{-1+\T-\G+(\B_0/\A_2)-(\B_0/\A_1)}\\
&\quad \quad +(t-s)^{(\wh \B_0/\A(x_0))-
(1/\A(x_0))-\G}s^{\T-\G+(\B_0/\A_2)-(\B_0/\A_1)}\,ds\Big)\\
&\p\tilde \rho^{y,0}_{1-(1/\A(x_0))+(\B_0/\A_2)-(\B_0/\A_1)-2\G}(t,x_0-y).
\end{align*}
On the other hand, noting that for $t/2<s\le t$, it holds that
\begin{equation}\label{t5-1-5}
\tilde \rho^{y,\B_0}_{\theta-\gamma+(\B_0/\A_2)-{(\B_0/\A_1)}}(s,x_0-y)\p
\tilde \rho^{y,\B_0}_{-\gamma+(\B_0/\A_2)-{(\B_0/\A_1)}}(t,x_0-y).
\end{equation}
Since $\tilde \B_0>1-\A(x_0)$, by \eqref{l3-1-1} we have
\begin{align*}
 \int_{t/2}^t \int_{\R^d}J_{22}(s,z)\,dz\,ds
&\p
\tilde \rho^{y,\B_0}_{-\gamma+(\B_0/\A_2)-(\B_0/\A_1)}(t,x_0-y)
\int_{t/2}^t\int_{\R^d}\rho^{z,\wh \B_0}_{1-({1/\A(x_0)})-\G}(t-s,x_0-z)\,dz\,ds\\
&\p \tilde \rho^{y,0}_{1+(\wh \B_0/\A(x_0))-(1/\A(x_0))+
(\B_0/\A_2)-(\B_0/\A_1)-2\G}(t,x_0-y).
\end{align*}

According to \eqref{l4-1-1-1} for the case that $|x_0-z|\le R_1$ and \eqref{t5-1-4} for the case
that $|x_0-z|>R_1$), we arrive at that (by changing the constant $\G$ properly) for any $t>0$ and $x_0,z\in \R^d$,
\begin{align*}
\left|\nabla p^{x_0}(t,x_0-z)-\nabla p^z(t,x_0-z)\right|\p
\tilde \rho^{z,\B_0}_{1-1/\A(x_0)-\G}(t,x_0-z).
\end{align*}
This, along with \eqref{p3-1-1} yields
\begin{align*}
\int_{t/2}^t\int_{\R^d}|J_3(s,z)|\,dz\,ds
&\p \tilde \rho^{y,0}_{-\G}(t,x_0-y)\int_{t/2}^t\int_{\R^d} \rho^{z,\B_0}_{1-(1/\A(x_0))-\G}(t-s,x_0-z)\,dz\,ds\\
&\p \tilde \rho^{y,0}_{1+(\B_0/\A(x_0))-1/\A(x_0)-2\G}(t,x_0-y),
\end{align*} where in the last inequality we used the fact that $\B_0\ge\tilde \B_0(x_0)>1-\A(x_0)$.

Combining all estimates together, we obtain that
\begin{equation*}
\int_0^t \left|\int_{\R^d}\nabla p^z(t-s,x_0-z)q(s,z,y)\,dz\right|\,ds
\p  \tilde \rho^{y,0}_{1-(1/\A(x_0))+(\B_0/\A_2)-(\B_0/\A_1)-2\G}(t,x_0-y).
\end{equation*}
According to the estimate above and the dominated convergence theorem,  we know that
$\nabla p(t,\cdot)(x_0)$ exists, and \eqref{t5-1-1} holds by changing $\T$ and $\G$ properly.

(2) Suppose that $\K(x,z)$ depends on $z$ and
${(\A_2/\A_1)}-1<{\B_0^{**}}/{\A_2}$. If for some $x_0\in \R^d$,
$\tilde \B_0(x_0)-({\A_1\A_2}/{\A(x_0)})
\big(({\A_2}/{\A_1})-1\big)>1-\A_1$, then, by \eqref{l4-1-1-1} and
\eqref{t5-1-3},
$$
|\nabla p^z(t,x-z)|\p t^{-{1}/{\A(z)}}\rho^{z,0}_1(t,x-z)\p
\rho^{z,0}_{1-(1/\A_1)}(t,x-z)
$$
and $$
|\nabla p^x(t,x-z)-\nabla p^z(t,x-z)|\p
\tilde \rho^{z,\B_0}_{1-(1/\A_1)-\G}(t,x-z).
$$
In the following, we choose $\theta,\gamma$ small enough such that
$$\wh \B_0:={\A_1}\big(\B_0-\A_2\theta\big)/{\A(x_0)}>-1+(1/\A_1)+\G.$$
Using estimates and following the same argument in part (1), we can obtain that
for every fixed $t \in (0,1]$ and $y \in \R^d$, $p(t,\cdot,y)$ is differentiable at $x_0\in \R^d$ and
\eqref{t5-1-2} holds.
\end{proof}
\begin{remark}
Propositions \ref{t4-2} and \ref{t5-1} show that,
the regularity of $p(t,\cdot,y)$ at
$x=x_0$ depends on the index $\A(x_0)$.
\end{remark}

\begin{proposition}\label{p4-5}
Suppose that $\K(x,z)$ is independent of $z$, or that $\K(x,z)$ depends on $z$ and ${(\A_2/\A_1)}-1<\B_0^{**}/\A_2$, where $\B_0^{**}:=\B_0\wedge (\A_2/2)$.
Then,
$p:(0,1]\times\R^d \times \R^d \rightarrow \R_+$ is continuous.
\end{proposition}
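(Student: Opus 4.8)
The plan is to prove continuity of $p$ on $(0,1]\times\R^d\times\R^d$ by using the representation \eqref{eq2-1}, namely $p(t,x,y)=p^y(t,x-y)+\int_0^t\int_{\R^d}p^z(t-s,x-z)q(s,z,y)\,dz\,ds$, and establishing joint continuity of each of the two terms separately. For the first term, joint continuity of $(t,x,y)\mapsto p^y(t,x-y)$ follows from the regularity estimates on the fixed-order heat kernels $p^y$ recorded in Lemma \ref{p1-1}: continuity in $(t,x)$ for fixed $y$ is classical, continuity in $y$ uses \eqref{l4-1-1} (which controls $|p^x(t,z)-p^y(t,z)|$ by $|x-y|^{\B_0}$ times an integrable profile), and these combine to give joint continuity away from $t=0$. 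The bulk of the work is the second term, which I will denote $H(t,x,y):=\int_0^t\int_{\R^d}p^z(t-s,x-z)q(s,z,y)\,dz\,ds$.

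First I would fix $(t_0,x_0,y_0)\in(0,1]\times\R^d\times\R^d$ and show $H$ is continuous there. The strategy is a standard dominated-convergence argument: for $(t,x,y)$ near $(t_0,x_0,y_0)$, write the difference $H(t,x,y)-H(t_0,x_0,y_0)$ and bound the integrand uniformly by an integrable majorant so that pointwise convergence of the integrand passes to the limit. The pointwise convergence of $p^z(t-s,x-z)q(s,z,y)$ as $(t,x,y)\to(t_0,x_0,y_0)$ holds for a.e.\ $(s,z)$ because $p^z(\cdot,\cdot)$ is continuous in its arguments (again Lemma \ref{p1-1}) and $q(s,\cdot,y)$ is continuous in the spatial variable by \eqref{p3-1-1a} (resp.\ \eqref{p3-1-2a}) while continuity of $q$ in $y$ can be extracted by the same Levi-series argument that produced $q$ — one shows $q_n(s,z,y)$ depends continuously on $y$ by induction using the continuity of $q_0$ in $y$, which in turn follows from the representation $q_0(s,z,y)=(\LL^z-\LL^y)p^y(s,\cdot)(z-y)$ together with \eqref{l4-1-1} and \eqref{l4-1-1a}. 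For the domination: by Lemma \ref{p1-1}(2), $p^z(t-s,x-z)\p\rho_1^{z,0}(t-s,x-z)$ uniformly, and by Proposition \ref{p3-1}, $|q(s,z,y)|\p(\rho^{y,0}_{(\B_0^*/\A_2)-\gamma}+\rho^{y,\B_0^*}_{-\gamma})(s,z-y)$; restricting $(t,x,y)$ to a small neighborhood and using the convolution inequality Corollary \ref{l3-3} together with the comparability $\rho^{y,\beta}_{\G,R_1}\asymp\rho^{y,\beta}_{\G,R_2}$, the integral $\int_0^t\int_{\R^d}\rho_1^{z,0}(t-s,x-z)(\cdots)(s,z-y)\,dz\,ds$ converges and is bounded by a finite quantity uniform over that neighborhood, which supplies the majorant (one can also absorb the dependence of $\rho$ on the base point $z$, which varies over all of $\R^d$, by noting the estimates in Lemmas \ref{l3-1}--\ref{l3-2} are uniform in the base point).

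The main obstacle I anticipate is handling the integrable singularities at the two endpoints $s=0$ and $s=t$ simultaneously with the variation of the base point $z$ in $\rho^{z,\beta}_\G$. Near $s=0$ the factor $|q(s,z,y)|$ blows up like $s^{-1+(\B_0^*/\A_2)-\gamma}$ after spatial integration, and near $s=t$ the factor $p^z(t-s,x-z)$ is singular; the convolution inequality \eqref{l3-3-1} (valid also for $\tilde\rho$ by Remark \ref{r3-3}(2)) is exactly what tames both, but one must check that choosing $\gamma$ small enough keeps all Beta-function arguments positive, and that when $(t,x)$ moves the new time interval $[0,t]$ and shifted argument $x-z$ still give a bound controlled by the majorant built from $(t_0,x_0)$ — this is routine given that $t_0>0$ so $t$ stays bounded away from $0$. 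A secondary technical point is that the estimates for $q$ in Proposition \ref{p3-1} are stated for $q(t,x,y)$ with the spatial continuity only in the first spatial slot; to get continuity of $q$ in $y$ one reruns the geometric-series estimate with the roles examined carefully, or alternatively invokes the Chapman--Kolmogorov / uniqueness machinery developed later — but the cleanest route is the direct induction on $q_n$ sketched above. Once both terms of \eqref{eq2-1} are shown to be jointly continuous on $(0,1]\times\R^d\times\R^d$, the proposition follows.
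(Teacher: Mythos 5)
Your proposal is correct and follows essentially the same route as the paper: both decompose $p$ via \eqref{eq2-1}, get continuity of $(x,y)\mapsto p^y(t,x-y)$ from \eqref{p1-1-3} and \eqref{l4-1-1}, deduce continuity of $q_0$ (using \eqref{p1-1-4}, \eqref{p3-1-1a-1}, \eqref{l4-1-1a}) and then of $q$ through the iteration bounds of Proposition \ref{p3-1} with dominated convergence, and finally pass to $p$ by dominated convergence in the time--space convolution. Your additional remarks on the majorants and the endpoint singularities at $s=0$ and $s=t$ are just the details the paper compresses into its appeal to the convolution inequalities and the dominated convergence theorem.
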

\begin{proof}
We only consider the case that $\K(x,z)$ is independent of $z$, and the other case can be verified similarly.

According to \eqref{eq2-1}, \eqref{p3-1-1} and \eqref{p3-2-1}, we know
immediately the continuity of $p$ with respect to time variable.
It remains to show the continuity of $p$ with respect to space variables.

For $x,y,z\in \R^d$ and any $\varepsilon>0$
\begin{align*}
|p^{y+\varepsilon z}(t,x-y-\varepsilon)-
p^{y}(t,x-y)|
&\le  |p^{y+\varepsilon z}(t,x-y-\varepsilon)-
p^{y+\varepsilon z}(t,x-y)|+
|p^{y+\varepsilon z}(t,x-y)-p^{y}(t,x-y)|.
\end{align*}
According to \eqref{p1-1-3} and \eqref{l4-1-1}, we can show that
$(x,y)\mapsto p^y(t,x-y)$ is continuous. On the other hand,  \eqref{p1-1-4}, \eqref{p3-1-1a-1} and \eqref{l4-1-1a} imply that
$(x,y)\mapsto q_0(t,x,y)$ is continuous. By the iteration estimates in Proposition \ref{p3-1} and the
dominated convergence theorem, we also can verify that $(x,y)\mapsto q(t,x,y)$ is continuous.
Due to the expression \eqref{eq2-1} and again the dominated convergence theorem,  we know that
$(x,y)\mapsto p(t,x,y)$ is continuous. The proof is finished.
\end{proof}

\medskip

\section{Existence and Uniqueness of the Solution to \eqref{t1-1-1}}\label{section5}
\subsection{Existence}
The purpose of this subsection is to prove rigorously that $p(t,x,y)$ defined by \eqref{eq2-1}
satisfies \eqref{t1-1-1}. First, as a direct consequence of Lemma \ref{l4-1}, we have the following statement.

\begin{lemma}\label{p4-1}  Let $\B_0^*\in(0,\B_0]\cap(0,{\alpha_2}).$  Then, for any $0<\theta<\B_0^*\A_1/\A_2$ and $0<\G<(\theta/\A_2)\wedge ((\B_0^*/\A_2-\theta/\A_1)/2)$, $t\in (0,1]$ and $x\in\R^d$,
\begin{equation}\label{p4-1-1-e}
\bigg|\int_{\R^d}p^y(t,x-y)\,dy-1\bigg|\p t+t^{(\B_0^*/{\A_2})-2\G-({\T}/{\A_1})}.
\end{equation}
In particular,
\begin{equation}\label{p4-1-1}
\lim_{t \downarrow 0}\sup_{x \in \R^d}\bigg|\int_{\R^d}p^y(t,x-y)\,dy-1\bigg|=0.
\end{equation}
\end{lemma}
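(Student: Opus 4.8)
The statement \eqref{p4-1-1-e} concerns comparing $\int_{\R^d}p^y(t,x-y)\,dy$ with $1$, where the subtle point is that the integration variable $y$ appears \emph{both} in the spatial argument $x-y$ of $p^y$ and as the index of the L\'evy operator $\LL^y$. If the index were frozen at $x$, one would simply have $\int_{\R^d}p^x(t,z)\,dz=1$ by conservativeness of the L\'evy process $X^x$. So the whole content is to control the error created by letting the index vary with $y$, and this is exactly what Lemma \ref{l4-1}, specifically estimate \eqref{l4-1-1}, is designed for. The plan is: (1) write $\int p^y(t,x-y)\,dy-1 = \int\bigl(p^y(t,x-y)-p^x(t,x-y)\bigr)\,dy$, using $\int_{\R^d}p^x(t,z)\,dz=1$; (2) split the integral into the region $|x-y|\le R_1$, where $R_1$ is the constant from Lemma \ref{l4-1} (applied with a choice of $\theta,\G$ satisfying the stated constraints), and the complementary region $|x-y|>R_1$; (3) on the near region apply \eqref{l4-1-1} with $z=x-y$ to get a pointwise bound by $c_1|x-y|^{\B_0}\,\tilde\rho^{x,0}_{1-2\G-(\theta/\A_1)}(t,x-y)$, then integrate in $y$; (4) on the far region control $p^y(t,x-y)$ and $p^x(t,x-y)$ separately by the tail estimates \eqref{p1-1-1-1-0} and \eqref{e3-0}, which are integrable in $y$ and produce a factor of order $t$.

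For step (3), the $y$-integral of $c_1|x-y|^{\B_0}\tilde\rho^{x,0}_{1-2\G-(\theta/\A_1)}(t,x-y)$ is just $c_1\int_{\R^d}\tilde\rho^{x,\B_0}_{1-2\G-(\theta/\A_1)}(t,x-z)\,dz$ (in the notation of \eqref{p3-1-7}, absorbing the $|x-y|^{\B_0}$ factor into the $|x|^\beta\wedge 1$-part of $\tilde\rho$ up to enlarging the domain where $\beta=\B_0$, which is harmless on $|x-y|\le R_1<1$). By Lemma \ref{l3-1} — valid for $\tilde\rho$ as well, per Remark \ref{r3-3}(2) — this integral is bounded by a constant times $t^{1-2\G-(\theta/\A_1)}\cdot\bigl(t^{(\B_0/\A_2)-1}\vee 1\bigr)$, i.e. by $t^{(\B_0^*/\A_2)-2\G-(\theta/\A_1)}$ (here I use $\B_0^*=\B_0$ after the standard reduction; the condition $\B_0<\A_2$ needed for Lemma \ref{l3-1} is exactly the assumption $\B_0^*\in(0,\A_2)$, and one checks $\B_0^*/\A_2>0$ and $1-2\G-(\theta/\A_1)>0$ from the constraints $\G<(\B_0^*/\A_2-\theta/\A_1)/2$ and $\theta<\B_0^*\A_1/\A_2<\A_1$). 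For step (4), both $\int_{|x-y|>R_1}p^y(t,x-y)\,dy$ and $\int_{|x-y|>R_1}p^x(t,x-y)\,dy$ are at most $c\int_{|z|>R_1}\rho^{x,0}_1(t,z)\,dz\wedge\dots\p t\int_{|z|>R_1}\frac{1}{|z|^{d+\A_1}}\,dz\p t$, using \eqref{e3-0}; this gives the remaining $t$-term in \eqref{p4-1-1-e}. Adding the two pieces gives $\bigl|\int p^y(t,x-y)\,dy-1\bigr|\p t+t^{(\B_0^*/\A_2)-2\G-(\theta/\A_1)}$, uniformly in $x$, which is \eqref{p4-1-1-e}; and then \eqref{p4-1-1} follows by letting $t\downarrow 0$, since both exponents $1$ and $(\B_0^*/\A_2)-2\G-(\theta/\A_1)$ are strictly positive.

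\textbf{Main obstacle.} There is no deep difficulty; the one thing requiring care is bookkeeping the exponent in $\tilde\rho$ and verifying that the $\theta,\G$ constraints in the statement are precisely what makes $1-2\G-(\theta/\A_1)>0$ and $(\B_0^*/\A_2)-2\G-(\theta/\A_1)>0$ simultaneously, so that Lemma \ref{l3-1} applies and the resulting power of $t$ is positive. A secondary point is making the reduction $\B_0^*=\B_0$, $\B_0<\A_2$ explicit at the start (as done in the proofs of Propositions \ref{p3-1} and \ref{p3-2}), and noting that replacing $\rho$ by $\tilde\rho$ throughout is legitimate by Remark \ref{r3-3}(2) — this is what lets Lemma \ref{l3-1}'s integral bound be invoked for $\tilde\rho$. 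Everything else is a routine application of the convolution/integral estimates already established.
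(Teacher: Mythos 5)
Your proposal is correct and follows essentially the same route as the paper's proof: the same decomposition $\int p^y(t,x-y)\,dy-1=\int\big(p^y(t,x-y)-p^x(t,x-y)\big)\,dy$, the same splitting at $|x-y|\le R_1$ versus $|x-y|>R_1$ with Lemma \ref{l4-1} (estimate \eqref{l4-1-1}) on the near region and the tail bound $\rho^{\cdot,0}_1\p t|x-y|^{-d-\A_1}$ on the far region, and the same integration of $\tilde\rho^{x,\B_0}_{1-2\G-(\theta/\A_1)}$ via the $\tilde\rho$-version of \eqref{l3-1-1-00} (the only nuance is that the frozen-index estimate \eqref{l3-1-1-00}, rather than the variable-index Lemma \ref{l3-1}, is the precise one needed, but both are available for $\tilde\rho$ by Remark \ref{r3-3}(2)). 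The exponent bookkeeping and the verification that the stated constraints on $\theta,\G$ make Lemma \ref{l4-1} applicable and the final powers of $t$ positive match the paper.
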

\begin{proof} Throughout the proof, we will assume that $\B_0<\alpha_2$ and $\B_0^*=\B_0$ for simplicity.
Noting that $\int_{\R^d}p^x(t,x-y)\,dy=1$ for all $x\in \R^d$ and $t\in (0,1]$, we have
\begin{align*}
\bigg|\int_{\R^d}p^y(t,x-y)\,dy-1\bigg|&=\bigg|
\int_{\R^d}p^y(t,x-y)\,dy-\int_{\R^d}p^x(t,x-y)\,dy\bigg|\\
&\le \int_{\R^d}|p^x(t,x-y)-p^y(t,x-y)|\,dy\\
&\le \int_{\{|x-y|\le R_1\}}|p^x(t,x-y)-p^y(t,x-y)|\,dy\\
&\quad +\int_{\{|x-y|>R_1\}}|p^x(t,x-y)-p^y(t,x-y)|\,dy\\
&=:J_1+J_2,
\end{align*}
where $R_1$ is the constant in Lemma \ref{l4-1}.

On the one hand, \eqref{l4-1-1} yields that
\begin{align*}
J_1& \p \int_{\R^d} \tilde \rho^{x,\B_0}_{1-2\G-({\T}/{\A_1})}(t,x-y)\,dy\p t^{(\B_0/{\A_2})-2\G-({\T}/{\A_1})},
\end{align*}
where the last inequality follows from the fact that \eqref{l3-1-1-00} holds for $\tilde \rho$.

On the other hand,
\begin{align*}
J_2& \p \int_{\{|x-y|>R_1\}}\big(\rho^{x,0}_1(t,x-y)+\rho^{y,0}_1(t,x-y)\big)\,dy\\
&\p \int_{\{|x-y|>R_1\}}\bigg(\frac{t}{|x-y|^{d+\A(x)}}+\frac{t}{|x-y|^{d+\A(y)}}\bigg)\,dy\p t \int_{\{|x-y|>R_1\}}\frac{1}{|x-y|^{d+\A_1}}\,dy \p t.
\end{align*}
Combining with both estimates for $J_1$ and $J_2$, we prove \eqref{p4-1-1-e}.

Choosing $\G,\T>0$ small enough such that $2\G+({\T}/{\A_1})<{\B_0}/{\A_2}$, we immediately get \eqref{p4-1-1}.
\end{proof}
\begin{lemma}\label{l4-2}
For every $0<\varepsilon, \theta<\A_1$ and $0<\gamma<({\theta}/{\A_2})\wedge ((1-({\theta}/{\A_1}))/2)$, there are constants $R_1:=R_1(\A,\K,\G,\T)\in (0,1)$ and $c_1:=c_1(\A,\K,\G,\T,\e)>0$ such that
for every $t \in (0,1]$ and $x,y\in \R^d$ with $|x-y|\le R_1$,
\begin{equation}\label{l4-2-1}
\begin{split}
&\int_{\R^d}\big|\delta_{p^x}(t,x-y;z)-\delta_{p^y}(t,x-y;z)\big|\cdot\frac{1}{|z|^{d+\A(x)}}\,dz
\le c_1\tilde \rho^{x,\B_0}_{-2\G-({\T}/{\A_1})}(t,x-y),
\end{split}
\end{equation}
where $\tilde \rho$ is defined by \eqref{p3-1-7}.
\end{lemma}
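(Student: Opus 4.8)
The plan is to decompose the integral in $z$ into the region near the origin, $\{|z|\le t^{1/\A(x)}\}$, where the second-difference structure of $\delta_{p^x}-\delta_{p^y}$ must be exploited via a second-order Taylor expansion, and the complementary region $\{|z|> t^{1/\A(x)}\}$, where one may simply bound $\delta_{p^x}(t,x-y;z)-\delta_{p^y}(t,x-y;z)$ by the four terms $|p^x(t,x-y\pm z)-p^y(t,x-y\pm z)|$ and $2|p^x(t,x-y)-p^y(t,x-y)|$. On the far region, Lemma \ref{l4-1} (specifically \eqref{l4-1-1}) controls each of these differences by $|x-y|^{\B_0}\tilde\rho^{x,0}_{1-2\G-(\T/\A_1)}(t,\cdot)$ evaluated at the appropriate point, and then the bound $\int_{\{|z|>t^{1/\A(x)}\}}\big(\tilde\rho^{x,0}_{1-2\G-(\T/\A_1)}(t,w\pm z)+\tilde\rho^{x,0}_{1-2\G-(\T/\A_1)}(t,w)\big)|z|^{-d-\A(x)}\,dz$ is estimated exactly as in the proof of Lemma \ref{l1-1} (the far-field half of the $I_1,I_2$ estimates), paying attention that $\tilde\rho$ satisfies the same convolution-type inequalities by Remark \ref{r3-3}(2); this yields a bound of the form $|x-y|^{\B_0}\,t^{1-(\A(x)/\A(x))}\tilde\rho^{x,0}_{\cdot}\asymp |x-y|^{\B_0}\tilde\rho^{x,\B_0}_{-2\G-(\T/\A_1)}(t,x-y)$ after collecting the $t$-powers and converting a factor $(|x-y|^{\B_0}\wedge 1)$ appropriately.

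For the near region $\{|z|\le t^{1/\A(x)}\}$ I would write, as in \eqref{e:conpr0},
\[
\delta_{p^x}(t,x-y;z)-\delta_{p^y}(t,x-y;z)=\int_0^1\int_{-1}^1\big(\nabla^2 p^x(t,x-y+\theta'\theta z)-\nabla^2 p^y(t,x-y+\theta'\theta z)\big)(\theta z,z)\,d\theta'\,d\theta,
\]
so that the integrand is bounded by $|z|^2\sup_{|w'|\le |z|}\big|\nabla^2 p^x(t,x-y+w')-\nabla^2 p^y(t,x-y+w')\big|$. The key auxiliary estimate is the analogue of \eqref{l4-1-1-1} at the level of second derivatives, namely a bound of the form $|\nabla^2 p^x(t,w)-\nabla^2 p^y(t,w)|\p |x-y|^{\B_0}\tilde\rho^{x,0}_{1-(2/\A(x))-2\G-(\T/\A_1)}(t,w)$, which one obtains by the same Duhamel/semigroup argument used in Lemma \ref{l4-1} (write $p^x-p^y$ as an integral of $(\LL^x-\LL^y)$ against the two heat kernels, differentiate twice in $w$, and use \eqref{p1-1-1}, \eqref{l4-1-4} and the $\tilde\rho$-convolution inequalities). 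Inserting this into the near-region integral and integrating $\int_{\{|z|\le t^{1/\A(x)}\}}|z|^{2-d-\A(x)}\,dz\p t^{(2-\A(x))/\A(x)}$, and using $\tilde\rho^{x,0}_{1-(2/\A(x))-2\G-(\T/\A_1)}(t,x-y+w')\p t^{-2/\A(x)}\tilde\rho^{x,0}_{1-2\G-(\T/\A_1)}(t,x-y)$ for $|w'|\le t^{1/\A(x)}$ (comparability of $\tilde\rho$ on the diagonal ball, as in \eqref{e:one1}), one again lands on $|x-y|^{\B_0}\tilde\rho^{x,0}_{-2\G-(\T/\A_1)}(t,x-y)$; the extra factor $|x-y|^{\B_0}$ needed to upgrade $\tilde\rho^{x,0}$ to $\tilde\rho^{x,\B_0}$ is absorbed using the smoothing inequality $\tilde\rho^{x,0}_{-2\G-(\T/\A_1)}(t,x-y)\p t^{-\B_0/\A(x)}\tilde\rho^{x,\B_0}_{-2\G-(\T/\A_1)}(t,x-y)$ and re-tuning $\G$, which is legitimate since $2\G+(\T/\A_1)<\B_0/\A_2$ under the stated ranges of $\G,\T$.

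The main obstacle I anticipate is not any single estimate but the bookkeeping of the $\alpha(z)$-versus-$\alpha(x)$ discrepancies: the exponent $1/\alpha(z)$ in $p^z$ differs from $1/\alpha(x)$ by $O(|x-z|^{\B_0})$, and when $|x-z|$ is only bounded by $2$ (not small) one must fold the resulting multiplicative error $t^{-C|x-z|^{\B_0}}=\exp(C|\log t||x-z|^{\B_0})$ into a loss of $t^{-\G}$, exactly as in \eqref{l3-1-1a}, \eqref{l3-2-8} and \eqref{l4-1-5}. Ensuring that each such conversion is controlled by the single $\G$-budget (rather than accumulating uncontrolled powers) is where care is required; this is also why the statement is phrased with $\tilde\rho$ (which tolerates an $\e$-loss in the far-field exponent, Remark \ref{r3-3}(2)) rather than $\rho$. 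Once the second-derivative difference estimate is in hand, the rest is a routine repetition of the region-splitting arguments already carried out in Lemmas \ref{l1-1} and \ref{l4-1}.
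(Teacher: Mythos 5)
Your far-region argument (bounding $|\delta_{p^x}-\delta_{p^y}|$ by the kernel differences at $x-y\pm z$ and $x-y$, applying \eqref{l4-1-1}, and integrating $|z|^{-d-\A(x)}$ over $\{|z|>t^{1/\A(x)}\}$ as in Lemma \ref{l1-1}) is sound. The genuine gap is in the near region: your Taylor expansion requires the auxiliary estimate $|\nabla^2 p^x(t,w)-\nabla^2 p^y(t,w)|\p |x-y|^{\B_0}\tilde\rho^{x,0}_{1-(2/\A(x))-2\G-(\T/\A_1)}(t,w)$, which is neither proved in the paper nor obtainable "by the same argument as Lemma \ref{l4-1}" in the generality in which Lemma \ref{l4-2} is needed. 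The lemma must hold for general $\K(x,z)$ depending on $z$ (it feeds into Lemma \ref{p4-2}(2) and Proposition \ref{t4-1}(2)), but the pointwise derivative bounds \eqref{p1-1-1} are available only when $\K(x,z)=\K(x)$; for $z$-dependent $\K$ the paper has only the second-difference estimates \eqref{p1-1-2} and \eqref{p1-1-4}, and even first-derivative information (Lemma \ref{l4-4}, \eqref{l4-1-1-1}) had to be manufactured through the convolution factorization $p^y=p^y_{\K_1/2}*p^y_{\wh\K}$. To run your Duhamel argument at the level of $\nabla^2$ you would additionally need a pointwise bound on $\nabla^2 p^y$ of the form $t^{-2/\A(y)}\rho^{y,0}_1$ and, for the singular piece $s$ near $t$, a H\"older-in-space estimate for $\nabla^2 p^x(s,\cdot)$ with the correct scaling; both are plausibly provable by pushing the factorization to second order, but this is a substantial new layer of estimates, not a repetition of Lemma \ref{l4-1}. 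The paper sidesteps all of it: it applies the Duhamel identity \eqref{l4-1-2} directly to $\delta_{p^x}(t,x-y;\cdot)-\delta_{p^y}(t,x-y;\cdot)$, uses \eqref{l4-1-4} together with the joint estimate \eqref{p1-1-4} — which already carries both the smallness factor $\big[(t^{-2/\A}|z|^2)\wedge 1\big]$ in $z$ and the spatial-increment factor, with no second derivatives — removes the $z$-integral via \eqref{l4-2-3} (the argument of Lemma \ref{l1-1}), and finishes with the $\tilde\rho$ convolution inequalities of Remark \ref{r3-3}(2).

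Two smaller points. First, your final "absorption" step is both unnecessary and incorrect as stated: the inequality $\tilde\rho^{x,0}_{-2\G-(\T/\A_1)}(t,x-y)\p t^{-\B_0/\A(x)}\tilde\rho^{x,\B_0}_{-2\G-(\T/\A_1)}(t,x-y)$ fails when $|x-y|\ll t^{1/\A(x)}$, since then $|x-y|^{\B_0}\ll t^{\B_0/\A(x)}$; but you do not need it, because the factor $|x-y|^{\B_0}$ is already carried by the difference estimates, and for $|x-y|\le R_1<1$ one has $|x-y|^{\B_0}\,\tilde\rho^{x,0}_{\G'}(t,x-y)=\tilde\rho^{x,\B_0}_{\G'}(t,x-y)$ by the definition \eqref{p3-1-7}. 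Second, your near-region comparability $\tilde\rho^{x,0}(t,x-y+w')\asymp\tilde\rho^{x,0}(t,x-y)$ for $|w'|\le t^{1/\A(x)}$ needs the same two-case split ($|x-y|\le 2t^{1/\A(x)}$ versus $>2t^{1/\A(x)}$) as in Lemma \ref{l1-1}; that is routine, but should be said.
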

\begin{proof}
By using \eqref{l4-1-2}, we can verify that
\begin{align*}
& \delta_{p^x}(t,x-y;z)-\delta_{p^y}(t,x-y;z)\\
&=\int_0^{{t}/{2}}\bigg(\int_{\R^d}\big( \LL^x- \LL^y\big)p^x(s,w)
\big(\delta_{p^y}(t-s,x-y-w;z)-\delta_{p^y}(t-s,x-y;z)\big)\,dw\bigg)\,ds\\
&\quad+\int_{{t}/{2}}^t\!\!\bigg(\int_{\R^d}\big( \LL^x- \LL^y\big)p^y(t-s,x-y-w)
\big(\delta_{p^x}(s,w;z)-\delta_{p^x}(s,x-y;z)\big)\,dw\bigg)\,ds.
\end{align*}
Therefore,
\begin{align*}
& \int_{\R^d}\big|\delta_{p^x}(t,x-y;z)-\delta_{p^y}(t,x-y;z)\big|\cdot\frac{1}{|z|^{d+\A(x)}}\,dz\\
& \le \int_0^{{t}/{2}}\bigg[\int_{\R^d}\big|\big( \LL^x- \LL^y\big)p^x(s,w)\big|
\bigg(\int_{\R^d}\big|\delta_{p^y}(t-s,x-y-w;z)-\delta_{p^y}(t-s,x-y;z)\big|\cdot \frac{1}{|z|^{d+\A(x)}}\,dz\bigg)\,dw\bigg]\,ds\\
&+\int_{{t}/{2}}^t\bigg[\int_{\R^d}\big|\big( \LL^x- \LL^y\big)p^y(t-s,x-y-w)\big|
\bigg(\int_{\R^d}\big|\delta_{p^x}(s,w;z)-\delta_{p^x}(s,x-y;z)\big|\cdot \frac{1}{|z|^{d+\A(x)}}\,dz\bigg)\,dw\bigg]\,ds\\
&=:J_1+J_2.
\end{align*}

According to \eqref{p1-1-4} and \eqref{l4-1-5}, for every $\T \in (0,1)$ and $\G>0$, there exists a constant
$R_1:=R_1(\A,\K,\G)>0$ such that for every $0<s<t<1$ and $x,y\in \R^d$ with $|x-y|\le R_1$,
\begin{equation}\label{l4-2-2}
\begin{split}
&\big|\delta_{p^y}(t-s,x-y-w;z)-\delta_{p^y}(t-s,x-y;z)\big|\\
&\p \big[\left((t-s)^{-{1/\A(y)}}|w|\right)^{\T}\wedge 1\big]\cdot
\left[\left((t-s)^{-{2}/{\A(y)}}|z|^2\right)\wedge 1\right]
\\
&\quad\times\big[
\rho_1^{y,0}(t-s,x-y)+\rho_1^{y,0}(t-s,x-y-w)
+\rho_1^{y,0}(t-s,x-y\pm z)+\rho_1^{y,0}(t-s,x-y-w\pm z)\big]\\
&\p \left(|w|^{\T}\wedge 1\right)\cdot
\left[\left((t-s)^{-{2}/{\A(x)}}|z|^2\right)\wedge 1\right]
 \cdot\big[
\rho_{1-\G-({\T}/{\A_1})}^{x,0}(t-s,x-y)+\rho_{1-\G-({\T}/{\A_1})}^{x,0}(t-s,x-y-w)\\
&\qquad\qquad\qquad\quad
+\rho_{1-\G-({\T}/{\A_1})}^{x,0}(t-s,x-y\pm z)+\rho_{1-\G-({\T}/{\A_1})}^{x,0}(t-s,x-y-w\pm z)\big].
\end{split}
\end{equation}
Furthermore, following the argument of Lemma \ref{l1-1}, we can derive that for every $t\in (0,1]$ and $x,y \in \R^d$
\begin{equation}\label{l4-2-3}
\begin{split}
&\int_{\R^d} \left[\left(t^{-{2}/{\A(x)}}|z|^2\right)\wedge 1\right]
\cdot \rho_{1}^{x,0}(t,y \pm z)\cdot |z|^{-d-\A(x)}\,dz
\p \rho^{x,0}_{0}(t,y),\\
&\int_{\R^d} \left[\left(t^{-{2}/{\A(x)}}|z|^2\right)\wedge 1\right]
\cdot \rho_{1}^{x,0}(t,y)\cdot |z|^{-d-\A(x)}\,dz
\p \rho^{x,0}_{0}(t,y).
\end{split}
\end{equation}
Combining \eqref{l4-1-4} with \eqref{l4-2-2} and \eqref{l4-2-3}, we find that for  all $t\in(0,1]$ and  $x,y\in\R^d$ with $|x-y|\le R_1$,
\begin{align*}
J_1 &\p |x-y|^{\B_0}\cdot \bigg[
\int_0^{{t}/{2}}\int_{\R^d}\tilde \rho^{x,\T}_{-\G}(s,w)\cdot
\tilde \rho^{x,0}_{-\G-({\T}/{\A_1})}(t-s,x-y-w)\,dw\,ds\\
&\qquad\qquad\qquad+\int_0^{{t}/{2}}\int_{\R^d}\tilde \rho^{x,\T}_{-\G}(s,w)\cdot
\tilde \rho^{x,0}_{-\G-({\T}/{\A_1})}(t-s,x-y)\,dw\,ds\bigg]\\
&=:|x-y|^{\B_0}\cdot(J_{11}+J_{12}).
\end{align*}
Since \eqref{l3-2-1-} holds true for $\tilde \rho$ as mentioned in Remark \ref{r3-3}(2),
for every $0<\theta<\A_1$ and $0<\gamma<({\theta}/{\A_2})\wedge ((1-({\theta}/{\A_1}))/2)$, we have
\begin{equation}\label{l4-2-3a}
\begin{split}
J_{11}&\p \tilde \rho^{x,0}_0(t,x-y) \int_0^{{t}/{2}}\big(s^{-1-\G+({\T}/{\A_2})}(t-s)^{-\G-({\T}/{\A_1})}
+s^{-\G}(t-s)^{-1-\G-({\T}/{\A_1})}\big)\,ds\\
&\p \tilde \rho^{x,0}_0(t,x-y) \bigg(t^{-\G-({\T}/{\A_1})}\int_0^{{t}/{2}}s^{-1-\G+({\T}/{\A_2})}\,ds+
t^{-1-\G-({\T}/{\A_1})}\int_0^{{t}/{2}} s^{-\G}\,ds\bigg)\\
& \p  \tilde \rho^{x,0}_{-2\G-({\T}/{\A_1})}(t,x-y).
\end{split}
\end{equation}
On the other hand, note that for every $0<s<{t}/{2}$, $$\tilde \rho^{x,0}_{-\G-({\T}/{\A_1})}(t-s,x-y)\p \tilde \rho^{x,0}_{-\G-({\T}/{\A_1})}(t,x-y).$$
Then, it holds that
\begin{align*}
J_{12}&\p  \tilde \rho^{x,0}_{-\G-({\T}/{\A_1})}(t,x-y)\cdot
\int_0^{{t}/{2}}\int_{\R^d}\tilde \rho^{x,\T}_{-\G}(s,w)\,dw\,ds\p \tilde \rho^{x,0}_{-2\G-({\T}/{\A_1})}(t,x-y),
\end{align*} where \eqref{l3-1-1-00} was used in the last inequality.
Hence, we find that for every $t\in (0,1]$ and $x,y \in \R^d$ with $|x-y|\le R_1$,
\begin{equation*}
J_1 \p \tilde \rho^{x,\B_0}_{-2\G-({\T}/{\A_1})}(t,x-y).
\end{equation*}

By estimates for
 the terms
$$\big|\big( \LL^x- \LL^y\big)p^y(t-s,x-y-w)\big|$$ and $$\big|
\delta_{p^x}(s,w;z)-\delta_{p^x}(s,x-y;z)\big|,$$ we can deal with the singularity
as $s$ near $t$ by the same arguments above and obtain that for all $t\in (0,1]$ and $x,y\in \R^d$ with $|x-y|\le R_1$,
\begin{equation*}
J_2 \p \tilde \rho^{x,\B_0}_{-2\G-({\T}/{\A_1})}(t,x-y).
\end{equation*}

Combining both of the estimates for $J_1$ and $J_2$, we finally obtain that \eqref{l4-2-1} holds true.
\end{proof}
The following result is a consequence of Lemma \ref{l4-2}.
\begin{lemma}\label{p4-2} Let $\B_0^*\in(0,\B_0]\cap(0,{\alpha_2}).$ Then, we have the following two statements.
\begin{itemize}
\item[(1)] If $\K(x,z)$ is independent of $z$, then for every positive
constants $\gamma,\theta \in (0,1)$ such that $\gamma<{\theta}/{\A_2}$ and
$2\gamma+({\theta}/{\A_1})<\B_0^*/\A_2$, there exists a constant $c_1:=c_1(\A,\K,\G,\T)>0$ such that for all $t \in (0,1]$ and $x\in \R^d$,
\begin{equation}\label{p4-2-1}
\Big|\int_{\R^d} \LL^xp^y(t,x-y)\,dy\Big|\le c_1t^{-1+(\B_0^*/\A_2)-2\G-({\theta}/{\A_1})}.
\end{equation}

\item[(2)] If $\K(x,z)$ depends on $z$, then for every positive
constants $\gamma,\theta \in (0,1)$ such that $\gamma<{\theta}/{\A_2}$ and
$2\gamma+({\theta}/{\A_1})<\B_0^*/\A_2$, the estimate
\eqref{p4-2-1} will be replaced by
\begin{equation}\label{p4-2-1a}
\Big|\int_{\R^d} \LL^xp^y(t,x-y)\,dy\Big|\le c_1\big(
t^{-1+(\B_0^*/\A_2)-2\G-({\theta}/{\A_1})}+t^{1-{(\A_2/\A_1)}}\big).
\end{equation}
\end{itemize}
\end{lemma}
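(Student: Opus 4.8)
The plan is to exploit the fact that for each fixed $x$ the L\'evy operator $\LL^x$ is mass-preserving on its own heat kernel, namely $\int_{\R^d}\LL^xp^x(t,x-y)\,dy=0$ for every $t\in(0,1]$; this is the analytic content of $p^x$ being a probability transition density. Writing
\begin{equation*}
\int_{\R^d}\LL^xp^y(t,x-y)\,dy=\int_{\R^d}\LL^x\big(p^y(t,\cdot-y)-p^x(t,\cdot-x)\big)(x)\,dy,
\end{equation*}
the problem reduces to estimating the action of $\LL^x$ on the difference $p^y(t,\cdot)-p^x(t,\cdot)$ evaluated at $x-y$, integrated in $y$. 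Using \eqref{e2-3} this is $\tfrac12\int_{\R^d}\big(\int_{\R^d}\delta_{p^y-p^x}(t,x-y;z)\tfrac{\K(x,z)}{|z|^{d+\A(x)}}\,dz\big)\,dy$, and the inner $z$-integral is exactly the quantity controlled by Lemma \ref{l4-2} once $|x-y|\le R_1$.

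First I would split the $y$-integral into the near region $\{|x-y|\le R_1\}$ and the far region $\{|x-y|>R_1\}$, where $R_1=R_1(\A,\K,\G,\T)$ is the constant from Lemma \ref{l4-2}. On the near region I apply Lemma \ref{l4-2} directly to bound the inner integral by $c_1\tilde\rho^{x,\B_0}_{-2\G-(\T/\A_1)}(t,x-y)$, and then integrate in $y$: by Remark \ref{r3-3}(2) the estimate \eqref{l3-1-1-00} holds for $\tilde\rho$, so $\int_{\R^d}\tilde\rho^{x,\B_0}_{-2\G-(\T/\A_1)}(t,x-y)\,dy\p t^{-2\G-(\T/\A_1)}\cdot t^{(\B_0/\A_2)-1}=t^{-1+(\B_0/\A_2)-2\G-(\T/\A_1)}$, which is the claimed main term (with $\B_0^*=\B_0$ after the reduction; the general $\B_0^*$ case is handled by replacing $\B_0$ by $\B_0^*$ wherever it occurs, exactly as in the proofs of the preceding propositions). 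On the far region, I would instead estimate $\LL^xp^y(t,\cdot)(x-y)$ and $\LL^xp^x(t,\cdot)(x-x)$ separately; for $|x-y|>R_1$ the argument is the one already carried out in the proof of Proposition \ref{l2-1} (see \eqref{l2-1-3a}--\eqref{e:ref1}) giving $|\LL^xp^y(t,\cdot)(x-y)|\p\rho^{y,0}_0(t,x-y)\p t|x-y|^{-d-\A_1}$, whose $y$-integral over $\{|x-y|>R_1\}$ is $\p t$; the diagonal term $\LL^xp^x(t,\cdot)(0)$ is bounded by $\int_{|z|>R_1/2}$-type tails plus a second-derivative estimate via \eqref{p1-1-1}, also of order $t$ after integrating, but here one must be a little careful since there is no $y$-integration in that single term — I would instead keep the difference $p^y-p^x$ together throughout the far region too, noting $|\LL^x(p^y(t,\cdot)-p^x(t,\cdot))(x-y)|\p t|x-y|^{-d-\A_1}$ for $|x-y|>R_1$ by the same pointwise estimates, so its $y$-integral is $\p t$. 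Summing the two regions gives $|\int_{\R^d}\LL^xp^y(t,x-y)\,dy|\p t+t^{-1+(\B_0^*/\A_2)-2\G-(\T/\A_1)}\p t^{-1+(\B_0^*/\A_2)-2\G-(\T/\A_1)}$, the last step because the hypothesis $2\gamma+(\theta/\A_1)<\B_0^*/\A_2$ forces the exponent $-1+(\B_0^*/\A_2)-2\G-(\T/\A_1)$ to be strictly greater than $-1$ but we only need it $\le 1$ to absorb $t$, which is immediate for $t\le 1$.

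For part (2), when $\K(x,z)$ depends on $z$, the only change is in the far region and in the pointwise behaviour of $\LL^xp^y$: instead of $\rho^{y,0}_0(t,x-y)$ one gets the factor $t^{1-(\A_2/\A_1)}$ from the estimate \eqref{l2-1-5}-type bound on $J_1$ in the proof of Proposition \ref{l2-1}(2). On the near region Lemma \ref{l4-2} is stated without the assumption that $\K$ is independent of $z$ (its proof uses \eqref{p1-1-4} and \eqref{l4-1-4}, both available in the general case), so that part is unchanged and still yields $t^{-1+(\B_0^*/\A_2)-2\G-(\T/\A_1)}$. Hence the far contribution becomes $\p t^{1-(\A_2/\A_1)}$ and the total bound is $c_1\big(t^{-1+(\B_0^*/\A_2)-2\G-(\T/\A_1)}+t^{1-(\A_2/\A_1)}\big)$, which is exactly \eqref{p4-2-1a}.

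The main obstacle I anticipate is the rigorous justification that $\int_{\R^d}\LL^xp^y(t,x-y)\,dy=\int_{\R^d}\LL^x\big(p^y(t,\cdot-y)-p^x(t,\cdot-x)\big)(x)\,dy$, i.e. that one may pull the constant $\int\LL^xp^x(t,x-y)\,dy=0$ into the integrand and interchange $\LL^x$ (a principal-value singular integral in $z$) with the $y$-integration. This requires a Fubini/dominated-convergence argument using the absolute integrability bounds for $\delta_{p^y}$ and $\delta_{p^x}$ from \eqref{p1-1-2}, together with the fact — already invoked in the proof of Lemma \ref{l4-1} — that $\int_{\R^d}\LL^xf(z_1)\,dz_1=0$ for test functions and, by the decay estimates of Lemma \ref{p1-1}, for $p^x(t,\cdot)$ itself. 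Once this bookkeeping is in place, the two-region split and the cited convolution inequalities finish the proof mechanically.
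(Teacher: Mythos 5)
Your proposal follows the paper's own route: subtract the term $\LL^x p^x(t,\cdot)(x-y)$ inside the $y$-integral (its $y$-integral vanishes), split at the radius $R_1$ of Lemma \ref{l4-2}, bound the near region by $\int_{\R^d}\tilde\rho^{x,\B_0}_{-2\G-(\T/\A_1)}(t,x-y)\,dy\p t^{-1+(\B_0^*/\A_2)-2\G-(\T/\A_1)}$ via \eqref{l3-1-1-00}, and treat the far region with the Proposition \ref{l2-1}-type estimates, the mismatched-order term producing the extra $t^{1-(\A_2/\A_1)}$ in case (2). Two details need correcting, though neither changes the outcome. First, your displayed identity subtracts $\LL^x\big(p^x(t,\cdot-x)\big)(x)=\LL^x p^x(t,\cdot)(0)$, a $y$-independent nonzero quantity whose integral over $y\in\R^d$ diverges; the correct subtraction --- and the one your subsequent text and your appeal to Lemma \ref{l4-2} actually require --- is $\LL^x p^x(t,\cdot)(x-y)$, i.e.\ the integrand $\tfrac12\int_{\R^d}\big(\delta_{p^y}-\delta_{p^x}\big)(t,x-y;z)\frac{\K(x,z)}{|z|^{d+\A(x)}}\,dz$, justified by $\int_{\R^d}\LL^x p^x(t,x-y)\,dy=0$; with this there is no separate ``diagonal term with no $y$-integration'' on either region. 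Second, your far-region bounds carry a spurious factor of $t$: $\rho^{y,0}_0$ has $t^0$, and indeed $\LL^x p^y(t,\cdot)(w)\to \K(x,w)/|w|^{d+\A(x)}\neq 0$ as $t\downarrow 0$ for fixed $w\neq 0$, so neither $|\LL^x p^y(t,\cdot)(x-y)|$ nor the difference $|\LL^x(p^y-p^x)(t,\cdot)(x-y)|$ is $\p t\,|x-y|^{-d-\A_1}$ by the estimates you cite. The correct statement, as in \eqref{p4-2-2}, is $\int_{\R^d}|\delta_{p^y}(t,x-y;z)|\,|z|^{-d-\A(x)}\,dz\p |x-y|^{-d-\A_1}$ (and likewise for $\delta_{p^x}$) when $\K$ is independent of $z$, so the far region contributes a constant rather than $O(t)$; this is still absorbed because the exponent $-1+(\B_0^*/\A_2)-2\G-(\T/\A_1)$ is negative (since $\B_0^*<\A_2$), which is exactly how the paper concludes. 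In case (2) the $\delta_{p^y}$ term is instead $\p t^{1-(\A_2/\A_1)}|x-y|^{-d-\A_1}$ (the \eqref{l2-1-5}-type computation), giving the second term of \eqref{p4-2-1a}, as you say.
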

\begin{proof} We assume that $\B_0<\alpha_2$ and $\B_0^*=\B_0$ for simplicity.
Observe that
\begin{align*}
 \int_{\R^d}  \LL^xp^y(t,x-y)\,dy
&=\int_{\R^d}\big( \LL^xp^y(t,x-y)- \LL^xp^x(t,x-y)\big)\,dy\\
&=\int_{\R^d}
\int_{\R^d}\big(\delta_{p^y}(t,x-y;z)
-\delta_{p^x}(t,x-y;z)\big)\cdot\frac{\K(x,z)}{|z|^{d+\A(x)}}\,dz\,dy.
\end{align*}
Therefore, for all $t\in (0,1]$ and $x\in \R^d$,
\begin{align*}
 \Big|\int_{\R^d}  \LL^xp^y(t,x-y)\,dy\Big|
&\p \int_{\{|y-x|\le R_1\}}\int_{\R^d}
|\delta_{p^y}(t,x-y;z)
-\delta_{p^x}(t,x-y;z)|\cdot \frac{1}{|z|^{d+\A(x)}}\,dz\,dy\\
&\quad+\int_{\{|y-x|>R_1\}}\int_{\R^d}|\delta_{p^y}(t,x-y;z)
-\delta_{p^x}(t,x-y;z)|\cdot \frac{1}{|z|^{d+\A(x)}}\,dz\,dy\\
&=:J_1+J_2,
\end{align*}
where $R_1$ is the constant in Lemma \ref{l4-2}.
According to \eqref{l4-2-1}, we have for all $t\in (0,1]$ and $x\in \R^d$,
\begin{align*}
J_1& \p \int_{\{|y-x|\le R_1\}}\tilde \rho^{x,\B_0}_{-2\G-({\T}/{\A_1})}(t,x-y)\,dy
\p t^{-1+(\B_0^*/\A_2)-2\G-({\theta}/{\A_1})},
\end{align*}
where we used the fact that \eqref{l3-1-1-00} holds for $\tilde \rho$.

(1) Suppose that $\K(x,z)$ is independent of $z$. Then, according to the argument for $J_1$ in  \eqref{l2-1-3a}, for every $t \in (0,1]$ and $x,y\in \R^d$ with $|x-y|>R_1$,
\begin{equation}\label{p4-2-2} \begin{split}
&\int_{\R^d}|\delta_{p^y}(t,x-y;z)|\cdot \frac{1}{|z|^{d+\A(x)}}\,dz\p \frac{1}{|x-y|^{d+\A_1}},\\
& \int_{\R^d}|\delta_{p^x}(t,x-y;z)|\cdot \frac{1}{|z|^{d+\A(x)}}\,dz\p \frac{1}{|x-y|^{d+\A_1}},\end{split}
\end{equation}
which imply that for all $t\in (0,1]$ and $x\in \R^d$,
\begin{align*}
J_2&\p \int_{\{|y-x|>R_2\}}\int_{\R^d}
|\delta_{p^y}(t,x-y;z)|\cdot \frac{1}{|z|^{d+\A(x)}}\,dz\,dy +\int_{\{|y-x|>R_2\}}\int_{\R^d}
|\delta_{p^x}(t,x-y;z)|\cdot \frac{1}{|z|^{d+\A(x)}}\,dz\,dy\\
&\le C_1.
\end{align*}
Combining both the estimates for $J_1$ and $J_2$, we prove \eqref{p4-2-1}.

(2) When $\K(x,z)$ depends on $z$, by the argument of \eqref{l2-1-5}, the first inequality in \eqref{p4-2-2} will be changed into
\begin{equation*}
\int_{\R^d}|\delta_{p^y}(t,x-y;z)|\cdot \frac{1}{|z|^{d+\A(x)}}\,dz\p
\frac{t^{1-{(\A_2/\A_1)}}}{|x-y|^{d+\A_1}}.
\end{equation*}
Using this inequality and following the same line as above, we will obtain \eqref{p4-2-1a}.
\end{proof}

 The statement below is the main result in this subsection.

\begin{proposition}\label{t4-1}
The following two statements hold.
\begin{itemize}
\item [(1)] If $\K(x,z)$ is independent of $z$, then $p(t,x,y)$ defined by \eqref{eq2-1} satisfies the equation \eqref{t1-1-1}
pointwise.

\item[(2)] If $\K(x,z)$ depends on $z$ and ${(\A_2/\A_1)}-1<\min\{{\B_0}/{\A_2},{1}/{2}\}$, then
  $p(t,x,y)$ also satisfies the equation \eqref{t1-1-1}
	pointwise.
  \end{itemize}
\end{proposition}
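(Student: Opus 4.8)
The plan is to verify \eqref{t1-1-1} by differentiating the Levi representation \eqref{eq2-1} under the integral sign and using the equation satisfied by each frozen kernel $p^y$. Write $p(t,x,y)=p^y(t,x-y)+\Phi(t,x,y)$ where $\Phi(t,x,y)=\int_0^t\int_{\R^d}p^z(t-s,x-z)q(s,z,y)\,dz\,ds$. Since $\partial_t p^y(t,x-y)=\LL^y p^y(t,\cdot)(x-y)$ is known, the task reduces to justifying that $\partial_t\Phi(t,x,y)$ exists and equals $\LL^x p(t,\cdot,y)(x)-\LL^x p^y(t,\cdot)(x-y)$; equivalently (after regrouping), that
\begin{equation*}
\partial_t p(t,x,y)=\LL^x p(t,\cdot,y)(x)+\big(\LL^y-\LL^x\big)p^y(t,\cdot)(x-y)+\int_0^t\int_{\R^d}\LL^x p^z(t-s,\cdot)(x-z)q(s,z,y)\,dz\,ds,
\end{equation*}
and that the last two terms cancel against the ``$-q_0$'' contributions once one recalls $q_0(t,x,y)=(\LL^x-\LL^y)p^y(t,\cdot)(x-y)$ and the defining equation \eqref{eq2-2} for $q$. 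So the real content is an interchange-of-limits argument: differentiating the $s$-integral at its upper endpoint $s=t$ produces the singular term $q(t,x,y)$ (via $p^z(0^+,\cdot)=\delta$ heuristically), and differentiating inside produces $\int_0^t\int \partial_t p^z(t-s,x-z)q(s,z,y)\,dz\,ds=\int_0^t\int\LL^z p^z(t-s,\cdot)(x-z)q(s,z,y)\,dz\,ds$, after which one replaces $\LL^z$ by $\LL^x$ up to an error controlled by Lemma \ref{l4-1}\eqref{l4-1-1a}.

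First I would fix the pivot decomposition $\Phi=\Phi_1+\Phi_2$ with
\begin{equation*}
\Phi_1(t,x,y)=\int_0^t\int_{\R^d}\big(p^z(t-s,x-z)-p^x(t-s,x-z)\big)q(s,z,y)\,dz\,ds,\qquad
\Phi_2(t,x,y)=\int_0^t\int_{\R^d}p^x(t-s,x-z)q(s,z,y)\,dz\,ds;
\end{equation*}
the point of the pivot is that $\Phi_2$ only involves the L\'evy kernel $p^x$ for a fixed $x$, for which $\partial_t$ and $\LL^x$ act classically and the standard Duhamel identity $\partial_t\Phi_2=q(t,x,y)+\int_0^t\int\LL^x p^x(t-s,x-z)q(s,z,y)\,dz\,ds$ holds (this is exactly the step carried out in \cite{CZ} in the constant-order setting, and the needed off-diagonal and gradient bounds for $p^x$ are in Lemma \ref{p1-1}). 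For $\Phi_1$ the integrand is far less singular near $s=t$ because of the H\"older gain $|p^z-p^x|$ in $z$, quantified by \eqref{l4-1-1}, so $\partial_t$ passes under both integrals by dominated convergence, with the time-derivative estimates \eqref{l4-1-1a} for $\LL^w(p^z-p^x)$ and the convolution bounds of Corollary \ref{l3-3} (or their $\tilde\rho$-versions, Remark \ref{r3-3}(2)) providing the dominating functions. Collecting the two pieces and using $q_0(t,x,y)=(\LL^x-\LL^y)p^y(t,\cdot)(x-y)$ together with \eqref{eq2-2} rewritten as $\int_0^t\int q_0(t-s,x,z)q(s,z,y)\,dz\,ds=q(t,x,y)-q_0(t,x,y)$, the bookkeeping collapses to $\partial_t p(t,x,y)=\LL^x p(t,\cdot,y)(x)=\LL p(t,\cdot,y)(x)$, which is \eqref{t1-1-1}. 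For part (2) the identical scheme works; one only needs $({\A_2}/{\A_1})-1<\min\{{\B_0}/{\A_2},1/2\}$ to guarantee, via Proposition \ref{p3-1}(2) and Lemma \ref{p4-2}(2), that the additional powers of $t$ lost in the estimates (the $t^{1-\A_2/\A_1}$ and $t^{-\G_2}$ factors) still leave the relevant $s$-integrals convergent and the dominated-convergence hypotheses satisfied.

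The main obstacle, and the step I would spend the most care on, is the endpoint singularity at $s=t$ in $\partial_t\Phi_2$: one must show $\lim_{h\downarrow0}\frac1h\int_{t-h}^t\int_{\R^d}p^x(t-s,x-z)q(s,z,y)\,dz\,ds=q(t,x,y)$, and simultaneously that $\frac1h\int_0^{t-h}\int(p^x(t-s,x-z)-p^x(t-h-s,x-z))q(s,z,y)\,dz\,ds$ converges to $\int_0^t\int\LL^x p^x(t-s,x-z)q(s,z,y)\,dz\,ds$. This requires the continuity of $z\mapsto q(t,z,y)$ at $z=x$ (from Proposition \ref{p3-1}, the $|x-x'|^{\cdots}$ modulus in \eqref{p3-1-1a}, \eqref{p3-1-2a}), the near-diagonal bound $|q(s,z,y)|\p s^{-1-d/\A(y)}$ from Proposition \ref{P-new}, and the $\LL^x$-pointwise bounds for $p^x(t,\cdot)$ from Lemma \ref{p1-1}; one splits $\R^d$ into $|z-x|\lesssim (t-s)^{1/\A(x)}$ and its complement and estimates each piece, exactly mirroring the corresponding argument in \cite{CZ}. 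A secondary but genuine difficulty is that the frozen index $\A(z)$ varies with the integration variable $z$, so that the comparison between $\rho^{z,\beta}$ and $\rho^{x,\beta}$ (inequality \eqref{l4-1-5}) and between $t^{-d/\A(z)}$ and $t^{-d/\A(x)}$ must be invoked repeatedly to reduce everything to the single-index convolution inequalities \eqref{l3-3-0}--\eqref{l3-3-1}; this is routine given the lemmas already proved but must be done consistently to keep all exponents positive.
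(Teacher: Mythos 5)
Your strategy is essentially the paper's: differentiate the Levi representation \eqref{eq2-1}, extract $q(t,x,y)$ from the endpoint $s=t$, and regroup using $q_0=(\LL^x-\LL^y)p^y$ and \eqref{eq2-2}. Your pivot $\Phi=\Phi_1+\Phi_2$ with the kernel frozen at the base point $x$ is a genuine variation and has a real advantage: for $\Phi_2$ the cancellation $\int_{\R^d}\LL^x p^x(t-s,\cdot)(x-z)\,dz=0$ is exact (Fubini plus $\int\delta_{p^x}(u,\cdot;w)=0$), so you can dispense with the paper's approximate cancellation Lemma \ref{p4-2} and hence with Lemma \ref{l4-2}; the price is that the comparison estimates of Lemma \ref{l4-1} now enter through $\Phi_1$, where the paper instead gains smallness from $q(s,z,y)-q(s,x,y)$ and $(\LL^z-\LL^x)p^z$ inside one term. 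Two points must be made explicit for this to close. First, the ``standard Duhamel identity'' for $\Phi_2$ is not a quotable fact: size bounds alone give $\int_{\R^d}|\LL^x p^x(t-s,\cdot)(x-z)|\,dz$ of order $(t-s)^{-1}$, so the iterated integral is not absolutely convergent as written; you must split $q(s,z,y)=(q(s,z,y)-q(s,x,y))+q(s,x,y)$, kill the second piece by the exact cancellation, and control the first by the H\"older modulus \eqref{p3-1-1a} (resp.\ \eqref{p3-1-2a}) — the same splitting is what furnishes the dominating function for your dominated-convergence step; your listed ingredients (Lemma \ref{p1-1} bounds plus a split at $|z-x|\lesssim(t-s)^{1/\A(x)}$) do not suffice without it. Second, in $\Phi_1$ the bound \eqref{l4-1-1a} alone does not control $\LL^z p^z-\LL^x p^x$; you need $\LL^z(p^z-p^x)$ via \eqref{l4-1-1a} plus $(\LL^z-\LL^x)p^x$ via a $q_0$-type estimate (Lemma \ref{l1-2}, Proposition \ref{l2-1}), together with a separate far-field treatment when $|x-z|>R_1$, since Lemmas \ref{l4-1}--\ref{l4-2} are only stated for $|x-z|\le R_1$. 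Also, in case (2) the gradient bounds of Lemma \ref{p1-1}(1) are unavailable, so the endpoint analysis must run on \eqref{p1-1-1-1-0}--\eqref{p1-1-4} and Propositions \ref{p3-1}(2), \ref{P-new}(2).

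The genuine gap is the last step. Your computation, once completed, yields
\begin{equation*}
\partial_t p(t,x,y)=\LL^x p^y(t,\cdot)(x-y)+\int_0^t\int_{\R^d}\LL^x p^z(t-s,\cdot)(x-z)\,q(s,z,y)\,dz\,ds,
\end{equation*}
but \eqref{t1-1-1} requires identifying the right-hand side with $\LL p(t,\cdot,y)(x)$, i.e.\ passing the nonlocal (principal-value) operator $\LL^x$ through the time integral in \eqref{eq2-1}. You dismiss this as ``bookkeeping'', but it is precisely the content of step (iv) of the paper's proof: one first establishes the absolute bound \eqref{t4-1-7a}, then truncates both the singular integral ($|z|>\sigma$) and the time interval ($(\e,t-\e)$), and passes to the limit using dominated convergence and Fatou. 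Without this interchange (or an equivalent Fubini argument for the symmetrized difference against $\K(x,z)|z|^{-d-\A(x)}\,dz$, justified by the same absolute bounds), your argument proves only the displayed identity for $\partial_t p$, not the equation \eqref{t1-1-1}. Supply this step and the two clarifications above, and your proof goes through along the same lines as the paper's, with the pivot at $x$ simplifying the cancellation lemma.
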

\begin{proof}
For simplicity, we assume that $\B_0< {\A_2}/{2}$ and $\B_0^*=\B_0$.

{(1)} We first assume that $\K(x,z)$ is independent of $z$. The proof is split into four parts.

{(i)} For every $0<s<t<1$ and $x,y \in \R^d$, define
\begin{equation*}
\phi(t,s,x,y):=\int_{\R^d}p^z(t-s,x-z)q(s,z,y)\,dz,
\end{equation*}
where $q(t,x,y)$ is constructed in Proposition \ref{p3-1}.
By \eqref{eq2-1}, it holds that
\begin{equation}\label{t4-1-1}
p(t,x,y)=p^y(t,x-y)+\int_0^t \phi(t,s,x,y)\,ds.
\end{equation}

Note that for every $t \in (0,1]$, $x,y\in \R^d$ and $\e>0$,
\begin{align*}
 \frac{1}{\e}\int_t^{t+\e}\phi(t+\e,s,x,y)\,ds-q(t,x,y)
&= \frac{1}{\e}\int_t^{t+\e}\int_{\R^d}p^z(t+\e-s,x-z)q(s,z,y)\,dz\,ds-q(t,x,y)\\
&=\frac{1}{\e}\int_t^{t+\e}\int_{\R^d}p^z(t+\e-s,x-z)\big(
q(s,z,y)-q(s,x,y)\big)\,dz\,ds\\
&\quad +\frac{1}{\e}\int_t^{t+\e}\bigg(
\int_{\R^d} p^z(t+\e-s,x-z)\,dz\bigg)\cdot \big(q(s,x,y)-q(t,x,y)\big)\,ds\\
&\quad +q(t,x,y)\cdot\frac{1}{\e}\int_t^{t+\e}\bigg(\int_{\R^d} p^z(t+\e-s,x-z)\,dz-1\bigg)\,ds\\
&=:J_1(\e)+J_2(\e)+J_3(\e).
\end{align*}

First, by \eqref{p3-1-1a}, for every $t>0$, $x,y \in \R^d$ and $\sigma>0$, there exists
a constant $\varepsilon_1:=\varepsilon_1(t,x,y, \sigma)>0$ such that for all $z\in \R^d$ with $|z-x|\le \varepsilon_1$,
\begin{equation}\label{t4-1-2}
|q(s,z,y)-q(s,x,y)|\le \sigma
\end{equation} holds for all $t<s<t+\e$ and $0<\e<1$.
Let
\begin{align*}
|J_1(\e)|&\p \frac{1}{\e}\int_t^{t+\e}\int_{\{|z-x|\le \varepsilon_1\}}p^z(t+\e-s,x-z)\big|
q(s,z,y)-q(s,x,y)\big|\,dz\,ds\\
&\quad +\frac{1}{\e}\int_t^{t+\e}\int_{\{|z-x|>\varepsilon_1\}}p^z(t+\e-s,x-z)\big|
q(s,z,y)-q(s,x,y)\big|\,dz\,ds\\
&=:J_{11}(\e)+J_{12}(\e).
\end{align*}
According to \eqref{t4-1-2} and \eqref{l3-1-1}, we know immediately that
\begin{align*}
J_{11}(\e)&\p \sigma\cdot\frac{1}{\e}\int_t^{t+\e}\int_{\R^d}\rho^{z,0}_{1}
(t+\e-s,x-z)\,dz\,ds\p \frac{\sigma}{\e}\cdot \int_t^{t+\e}\,ds \le C_1 \sigma,
\end{align*}
where $C_1>0$ is a constant independent of $\e$ and $\sigma$.
At the same time, it holds that for any $\G<{\B_0}/{\A_2},$
\begin{align*}
|J_{12}(\e)|&\p \frac{1}{\e}\cdot\bigg(
\int_t^{t+\e}\big(t+\e-s\big) \int_{\{|z-x|>\varepsilon_1\}}
\frac{|q(s,z,y)|+|q(s,x,y)|}{|x-z|^{d+\A_1}}\,dz\,ds\bigg)\\
&\p \varepsilon_1^{-d-\A_1}\int_t^{t+\e}
\int_{\R^d}\big(\rho^{y,0}_{({\B_0}/{\A_2})-\G}+\rho^{y,\B_0}_{-\G}\big)(s,z-y)\,dz\,ds\\
&\quad +\int_t^{t+\e}\int_{\{|z-x|>\varepsilon_1\}}
\frac{\big(\rho^{y,0}_{({\B_0}/{\A_2})-\G}+\rho^{y,\B_0}_{-\G}\big)(s,x-y)}{|z-x|^{d+\A_1}}\,dz\,ds\\
&\p \varepsilon_1^{-d-\A_1}\bigg(
\int_t^{t+\e}s^{-1+({\B_0}/{\A_2})-\G}\,ds+\varepsilon_1^d t^{-1-({d}/{\A_1})-\G} \int_t^{t+\e}\,ds\bigg)\\
&\le C(\varepsilon_1)t^{-1-({d}/{\A_1})-\G}\e,
\end{align*}
where in the first inequality we used the fact that $p^z(t,x)\p \frac{t}{|x|^{d+\A_1}}$ for every
$t \in (0,1]$ and $x\in \R^d$ with $|x|>\varepsilon_1$, the second inequality follows from \eqref{p3-1-1}, and in the third inequality we used \eqref{l3-1-1-00} and the fact
that $\rho^{y,0}_{-\G}(s,x)\p t^{-1-({d}/{\A_1})-\G}$ for all $t<s<t+\e$ and $x\in \R^d$.

Second, note that for every fixed $x,y \in \R^d$,
$q_0(\cdot,x,y)$ is continuous in $(0,1]$. Then, by \eqref{eq2-2}, \eqref{p3-1-1} and the dominated convergence theorem,
we know that $q(\cdot,x,y)$ is continuous in $(0,1]$. Thus, for every $t \in (0,1]$, $x,y \in \R^d$ and $\sigma>0$, there exists a constant $\e_2:=\e_2(t,x,y,\sigma)>0$ such that for all $|s-t|<\e_2$,
\begin{equation*}
|q(s,x,y)-q(t,x,y)|\le \sigma,
\end{equation*}
from which we have that when $\e<\e_2$,
\begin{align*}
|J_2(\e)|&\p \frac{\sigma}{\e}\cdot
\Big(\int_t^{t+\e}\Big|\int_{\R^d}p^z(t+\e-s,x-z)\,dz\Big|\,ds\Big)\le C_2\sigma,
\end{align*}
where in the last inequality we have used Lemma \ref{p4-1}, and
$C_2>0$ is a constant independent of $\e$ and $\sigma$.

Third, according to Lemma \ref{p4-1}, we arrive at
\begin{align*}
\lim_{\e \downarrow 0}|J_3(\e)|&\le
|q(t,x,y)|\cdot \bigg(\lim_{\e \downarrow 0}\frac{1}{\e}
\cdot \int_t^{t+\e}\Big|\int_{\R^d}p^z(t+\e-s,x-z)\,dz-1\Big|\,ds\bigg)\\
&\le |q(t,x,y)|\cdot\bigg(\lim_{\e \downarrow 0}\sup_{s \in (0,\e)}
\Big|\int_{\R^d}p^z(s,x-z)\,dz-1\Big|\bigg)=0.
\end{align*}

Combining all the estimates together, we arrive at for every $\sigma>0$,
\begin{align*}
\lim_{\e \downarrow 0}\Big|\frac{1}{\e}\int_t^{t+\e}\phi(t+\e,s,x,y)\,ds-q(t,x,y)\Big|\le C_3\sigma,
\end{align*}
where $C_3$ is independent of $\e$ and $\sigma$. Since $\sigma$ is arbitrary, we finally obtain that
\begin{equation}\label{t4-1-3}
\begin{split}
\lim_{\e \downarrow 0}\Big|\frac{1}{\e}\int_t^{t+\e}\phi(t+\e,s,x,y)\,ds-q(t,x,y)\Big|=0.
\end{split}
\end{equation}

{(ii)} By the proof of \eqref{l4-2-3} (also see \cite[Theorem 2.4]{CZ}), it holds that
\begin{equation}\label{t4-1-4}
| \LL^zp^z(t-s,x-z)|\p \rho^{z,0}_0(t-s,x-z),
\end{equation} and so we can verify that for every $0<s<t<1$ and $x,y\in \R^d$,
$$\int_{\R^d}| \LL^z p^z(t-s,x-z)||q(s,z,y)|\,dz<\infty.$$
Thus, by the dominated convergence theorem, for every $0<s<t$ and $x,y \in \R^d$,
\begin{equation}\label{e:lkk}
\begin{split}
\partial_t \phi(t,s,x,y)&=\int_{\R^d} \LL^zp^z(t-s,x-z)q(s,z,y)\,dz\\
&=\int_{\R^d} \LL^zp^z(t-s,x-z)(q(s,z,y)-q(s,x,y))\,dz\\
&\quad +q(s,x,y)\int_{\R^d}( \LL^z- \LL^x)p^z(t-s,x-z)\,dz\\
&\quad +q(s,x,y)\int_{\R^d} \LL^x p^z(t-s,x-z)\,dz\\
&=:\tilde J_1+ \tilde J_2+ \tilde J_3.
\end{split}
\end{equation}

Combining \eqref{t4-1-4} with \eqref{p3-1-1a} yields that for any $0<\G<\T<{\B_0}/{\A_2}$,
\begin{align*}
|\tilde J_1| &\p \int_{\R^d}\tilde \rho^{z,\tilde \T}_0(t-s,x-z)\cdot
\tilde \rho^{y,0}_{\T-\G+({\B_0}/{\A_2})-({\B_0}/{\A_1})}(s,z-y)\,dz\\
&\quad +\int_{\R^d}\tilde \rho^{z,\tilde \T}_0(t-s,x-z)\cdot
\tilde \rho^{y,0}_{\T-\G+({\B_0}/{\A_2})-({\B_0}/{\A_1})}(s,x-y)\,dz\\
&=:\tilde J_{11}+ \tilde J_{12},
\end{align*}
where $\tilde \T:=\A_1(\B_0-\A_2\theta)/{\A_2}$.
According to \eqref{l3-2-1} (which holds for $\tilde \rho$), we obtain that
for every ${t}/{2}\le s \le t$,
\begin{align*}
\tilde J_{11}&\p \tilde \rho^{y,0}_0(t,x-y)\cdot\big[(t-s)^{-1+({\tilde \T}/{\A_2})}s^{\T-\G+(\B_0/\A_2)-({\B_0}/{\A_1})-1}\big]\p t^{-\G-({\B_0}/{\A_1})-({d}/{\A_1})-2}(t-s)^{-1+({\tilde \T}/{\A_2})},
\end{align*}
where in the last inequality we have used the fact that $\tilde\rho^{y,0}_0(t,x-y)\p t^{-1-d/\A_1}$ and
$s^{-1}\p t^{-1}$ for ${t}/{2}\le s \le t$. Note that $\tilde \rho^{y,\B_0}_{\T-\G-({\B_0}/{\A_1})}(s,x-y)\p t^{-\G-({\B_0}/{\A_1})-({d}/{\A_1})-1}$ for
every ${t}/{2}\le s \le t$,
and \eqref{l3-1-1} holds for $\tilde \rho$, then we have
\begin{align*}
\tilde J_{12}& \p t^{-\G-({\B_0}/{\A_1})-({d}/{\A_1})-1}\int_{\R^d}\tilde \rho^{z,\tilde \T}_{0}(t-s,x-z)\,dz\p t^{-\G-({\B_0}/{\A_1})-({d}/{\A_1})-1}(t-s)^{-1+({\tilde \T}/{\A_2})}.
\end{align*}

By \eqref{l2-1-2} and \eqref{p3-1-1}, we arrive at that for any $0<\G<\T<{\B_0}/{\A_2}$ and every
${t}/{2}\le s \le t$,
\begin{equation*}
|( \LL^z- \LL^x)p^z(t-s,x-z)|\p
|q_0(t-s,x, z)|\p \rho^{z,\B_0}_{-\G}(t-s,x-z)
\end{equation*} and \begin{equation*}
|q(s,x,y)|\p \big(\rho^{y,0}_{({\B_0}/{\A_2})-\G}+\rho^{y,\B_0}_{-\G}\big)(s,x-y)\p t^{-\G-({d}/{\A_1})-1}.
\end{equation*}
Hence, combining both estimates above with \eqref{l3-1-1},  we obtain
\begin{align*}
|\tilde J_{2}|&\p t^{-\G-({d}/{\A_1})-1}\int_{\R^d} \rho^{z,\B_0}_{-\G}(t-s,x-z)\,dz
 \p t^{-\G-({d}/{\A_1})-1}(t-s)^{-1+({\B_0}/{\A_2})-\G}.
\end{align*}
Furthermore, \eqref{p3-1-1} and \eqref{p4-2-1} yield that for every ${t}/{2}\le s \le t$ and
constants $\gamma,\theta \in (0,1)$ such that $\gamma<{\theta}/{\A_2}$ and
$2\gamma+({\theta}/{\A_1})<\B_0/\A_2$,
\begin{equation*}
|\tilde J_3| \p t^{-\G-({d}/{\A_1})-1}(t-s)^{-1+({\B_0}/{\A_2})-2\G-({\theta}/{\A_1})}.
\end{equation*}

On the other hand, when $0<s\le {t}/{2}$,  it follows from \eqref{t4-1-4}, \eqref{p3-1-1} and \eqref{l3-2-1} that
\begin{align*}
\big|\partial_t \phi(t,s,x,y)\big|
 &\p
\int_{\R^d}\big| \LL^z p^z(t-s,x-z)\big|\big|q(s,z,y)\big|\,dz\\
&\p \int_{\R^d} \rho^{z,0}_0(t-s,x-z)\cdot
\big(\rho^{y,0}_{({\B_0}/{\A_2})-\G}+\rho^{y,\B_0}_{-\G}\big)(s,z-y)\,dz\\
&\p \rho^{y,0}_0(t,x-y)\cdot\big[(t-s)^{-1}s^{({\B_0}/{\A_2})-\G}+
(t-s)^{-1+({\B_0}/{\A_2})}s^{-\G}+s^{-1+({\B_0}/{\A_2})-\G}\big]\\
&\quad +\rho^{y,\B_0}_{0}(t,x-y)\cdot(t-s)^{-1}s^{-\G}\\
&\p t^{-2-({d}/{\A_1})}s^{-1+({\B_0}/{\A_2})-\G},
\end{align*}
where in the last inequality we have used the facts that $\rho^{y,0}_0(t,x-y)\p t^{-1-({d}/{\A_1})}$ and $(t-s)^{-1}\p t^{-1}$ for every
$0<s\le {t}/{2}$.
Therefore, choosing $\gamma,\theta \in (0,1)$ such that $\gamma<{\theta}/{\A_2}$ and
$2\gamma+({\theta}/{\A_1})<\B_0/\A_2$, and combining all the estimates above,  we
know that \eqref{e:lkk} is well defined, and that for every $t \in (0,1)$,
$
\sup_{t_0 \in (t,t+\e_1)}|
\partial_t\phi(t_0,s,x,y)|\le \eta(t,s),
$
where $\eta$ is a non-negative measurable function such that
$\int_0^t \eta(t,s)\,ds<\infty$ and $\e_1>0$ is a constant small enough (which may depend on $t$).
Now, according to the dominated convergence theorem, for every $t \in (0,1]$ and $x,y\in \R^d$,
\begin{equation}\label{t4-1-5}
\begin{split}
\lim_{\e \downarrow 0}\frac{1}{\e}\cdot
\bigg(\int_0^t \big(\phi(t+\e,s,x,y)-\phi(t,s,x,y)\big)\,ds\bigg)=
\int_0^t \partial_t \phi(t,s,x,y)\,ds.
\end{split}
\end{equation}

{(iii)} We obtain by \eqref{t4-1-3} and \eqref{t4-1-5}  that for every $t \in (0,1)$ and $x,y \in \R^d$,
\begin{align*}
&\lim_{\e \downarrow 0}\Bigg|\frac{1}{\e}
\cdot \Big(\int_0^{t+\e}\phi(t+\e,s,x,y)\,ds-
\int_0^{t}\phi(t,s,x,y)\,ds\Big)-q(t,x,y)-\int_0^t \partial_t \phi(t,s,x,y)\,ds\Bigg|\\
&\le \lim_{\e \downarrow 0}\Bigg|\frac{1}{\e}\cdot\Big(\int_t^{t+\e}
\phi(t+\e,s,x,y)\,ds\Big)-q(t,x,y)\Bigg|\\
&\quad+\lim_{\e \downarrow 0}\Bigg|\frac{1}{\e}\cdot\Big(\int_0^{t}
(\phi(t+\e,s,x,y)-\phi(t,s,x,y))\,ds\Big)-\int_0^t \partial_t \phi(t,s,x,y)\,ds\Bigg|\\
&=0.
\end{align*}
By the same way, it is not difficult to verify
\begin{align*}
&\lim_{\e \uparrow 0}\Bigg|\frac{1}{\e}
\cdot \Big(\int_0^{t+\e}\phi(t+\e,s,x,y)\,ds-
\int_0^{t}\phi(t,s,x,y)\,ds\Big)-q(t,x,y)-\int_0^t \partial_t \phi(t,s,x,y)\,ds\Bigg|=0.
\end{align*}
Hence, we have for any $t_0\in (0,1]$ and $x,y\in \R^d$,
\begin{equation*}
\partial_t\left(\int_0^{\cdot}\phi(\cdot,s,x,y)\,ds\right)(t_0)=q(t_0,x,y)+\int_0^{t_0}\partial_t \phi(t_0,s,x,y)\,ds.
\end{equation*}

Combining all estimates above with \eqref{t4-1-1}, \eqref{eq2-2} and \eqref{e:lkk},
we have for every $t \in (0,1]$ and $x,y \in \R^d$,
\begin{equation}\label{t4-1-6}
\begin{split}
\frac{\partial p(t,x,y)}{\partial t}
&= \LL^y p^y(t,x-y)+q(t,x,y)+\int_0^t \partial_t \phi(t,s,x,y)\,ds\\
&= \LL^y p^y(t,x-y)+( \LL^x- \LL^y)p^y(t,x-y)\\
&\quad +\int_0^t
\int_{\R^d}( \LL^x- \LL^z)p^z(t-s,x-z)\cdot q(s,z,y)\,dz\,ds\\
&\quad +\int_0^t \int_{\R^d} \LL^z p^z(t-s,x-z)q(s,z,y)\,dz\,ds\\
&= \LL^x p^y(t,x-y)+\int_0^t\int_{\R^d} \LL^x p^z(t-s,x-z)q(s,z,y)\,dz\,ds.
\end{split}
\end{equation}

Furthermore, by the same arguments for estimates of $\tilde J_1$, $\tilde J_2$ and $\tilde J_3$ above, we have
\begin{equation}\label{t4-1-7a}
\begin{split}
& \int_0^t\int_{\R^d}|  \LL^x p^z(t-s,x-z)q(s,z,y)|\,dz\,ds\\
&\p \int_0^t\int_{\R^d}\big|  \LL^z p^z(t-s,x-z)q(s,z,y)\big|\,dz\,ds+\int_0^t \int_{\R^d}|( \LL^x- \LL^z) p^z(t-s,x-z)q(s,z,y)|\,dz\,ds\\
&<\infty
\end{split}
\end{equation} and so
$$\int_0^t \LL^x(\phi(t,s,\cdot,y))(x)\,ds= \int_0^t\int_{\R^d}  \LL^x p^z(t-s,x-z)q(s,z,y)\,dz\,ds,$$ which in turn implies that \eqref{t4-1-6} is well defined.

{(iv)} For any $\e,\sigma>0$ small enough,
\begin{align*}
&\int_{\{|z|>\sigma\}}\delta_{\int_0^t\phi(t,s,\cdot,y)\,ds}(x;z)\cdot\frac{\K(x,z)}{|z|^{d+\A(x)}}\,dz\\
&=\int_{\e}^{t-\e} \int_{\{|z|>\sigma\}}\delta_{\phi(t,s,\cdot,y)}(x;z)\cdot\frac{\K(x,z)}{|z|^{d+\A(x)}}\,dz
\,ds +\int_{\{(0,\e)\cup(t-\e,t)\}}
\int_{\{|z|>\sigma\}}\delta_{\phi(t,s,\cdot,y)}(x;z)\cdot\frac{\K(x,z)}{|z|^{d+\A(x)}}\,dz
\,ds\\
&=:\wh J_1(\e,\sigma)+\wh J_2(\e,\sigma).
\end{align*}
Note that for the integral with respect to time variable in $\wh J_1(\e,\sigma)$, there is not any
singularity. According to the dominated convergence theorem,  we have
\begin{align*}
\lim_{\sigma \downarrow 0}J_1(\e,\sigma)&=\int_{\e}^{t-\e}
 \LL^x(\phi(t,s,\cdot,y))(x)\,ds=\int_{\e}^{t-\e}\int_{\R^d} \LL^x p^z(t-s,x-z)q(s,z,y)\,dz\,ds,
\end{align*}
Meanwhile, according to the proof of \eqref{t4-1-7a} and Fatou's lemma we arrive at
\begin{equation*}
\limsup_{\sigma \downarrow 0}|\wh J_2(\e,\sigma)|\le
\int_{\{(0,\e)\cup(t-\e,t)\}}| \LL^x(\phi(t,s,\cdot,y)(x)|\,ds.
\end{equation*}
Hence, combining all the estimates above with \eqref{e2-3} yields
\begin{align*}
&\Big| \LL^x\Big(\int_0^t \phi(t,s,\cdot,y)\,ds\Big)(x)-
\int_0^t  \LL^x(\phi(t,s,\cdot,y))(x)\,ds\Big|\\
&\le \limsup_{\sigma \downarrow 0}\bigg|\int_{\{|z|>\sigma\}}\delta_{\int_0^t\phi(t,s,\cdot,y)\,ds}(x;z)\cdot\frac{\K(x,z)}{|z|^{d+\A(x)}}\,dz
-\int_0^t  \LL^x(\phi(t,s,\cdot,y))(x)\,ds \bigg|\\
&\le 2\int_{\{(0,\e)\cup(t-\e,t)\}}| \LL^x(\phi(t,s,\cdot,y))(x)|\,ds.
\end{align*}
Then, letting $\e \rightarrow 0$, we know that for every $t \in (0,1)$ and $x,y\in \R^d$,
\begin{align*}
 \LL^x\Big(\int_0^t \phi(t,s,\cdot,y)\,ds\Big)(x)
&=\int_0^t  \LL^x(\phi(t,s,\cdot,y))(x)\,ds=
\int_0^t\int_{\R^d} \LL^x p^z(t-s,x-z)q(s,z,y)\,dz\,ds,
\end{align*}
which along with \eqref{t4-1-6} yields \eqref{t1-1-1} immediately.

{(2)} Suppose that $\K(x,z)$ depends on $z$. Then, using \eqref{l2-1-1},
\eqref{p3-1-2} and \eqref{p3-1-2a}, and following the same arguments above,
we can also show \eqref{t1-1-1} holds true. The details are omitted here.
\end{proof}

\subsection{Maximum principle and uniqueness}

Adopting the approach of \cite[Theorem 4.1]{CZ}, we will prove the following maximum principle for non-local parabolic PDEs associated with the operator $\LL$, which is crucial for the uniqueness of solution to the
corresponding Cauchy problem.

\begin{theorem}\label{t4-3}
Let $u \in C_b([0,1]\times \R^d)$ be the solution of the following equation
\begin{equation}\label{t4-3-2a}
{\partial_t u(t,x)}= \LL u(t,x),\quad (t,x)\in (0,1]\times \R^d.
\end{equation}
Suppose \begin{equation}\label{t4-3-1}
\lim_{t \downarrow 0}\sup_{x \in \R^d}\big|u(t,x)-u(0,x)\big|=0,
\end{equation}
and for every $x \in \R^d$, $t \mapsto  \LL u(t,x)$ is continuous in $t \in (0,1]$.
Assume that there exists a function $\T(x)\in (0,1)$ such that
$\inf_{x \in \R^d}\big(\T(x)+1-\A(x)\big)>0$ and for every $\e \in (0,1)$,
\begin{equation}\label{t4-3-2}
\sup_{t \in (\e,1)}\big|u(t,x)-u(t,x')\big|\le c_1(\e)|x-x'|^{\T(x)},\quad x,x'\in \R^d.
\end{equation}
Then for every $t \in (0,1]$,
\begin{equation*}
\sup_{x \in \R^d}u(t,x)\le \sup_{x \in \R^d}u(0,x).
\end{equation*}
\end{theorem}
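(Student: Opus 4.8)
The plan is to argue by contradiction, combining the standard penalization device with the nonlocal maximum-principle computation at a maximum point, as in \cite[Theorem 4.1]{CZ}. Set $M:=\sup_{x\in\R^d}u(0,x)<\infty$ and assume, for contradiction, that $M':=\sup_{(t,x)\in[0,1]\times\R^d}u(t,x)>M$. The first ingredient I would fix is an auxiliary \emph{penalization function} $\phi\in C^\infty(\R^d)$ with $\phi\ge 0$, $\phi(x)\to\infty$ as $|x|\to\infty$, and, crucially, $\sup_{x\in\R^d}|\LL\phi(x)|<\infty$; after rescaling $\phi$ by a small positive constant we may and do assume $\sup_{x\in\R^d}|\LL\phi(x)|\le\tfrac12$. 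A concrete choice is $\phi(x)=c_0\log(1+|x|^2)$ or $\phi(x)=c_0(1+|x|^2)^{\gamma/2}$ with $\gamma\in(0,\A_1\wedge1)$, and the verification that $\LL\phi$ is bounded (indeed tends to $0$ as $|x|\to\infty$) is the one genuinely computational step. I would do it by splitting $\int_{\R^d}$ into $\{|z|\le1\}$, where $|\phi(x+z)+\phi(x-z)-2\phi(x)|\le|z|^2\sup_{|\xi|\le1}|\nabla^2\phi(x+\xi)|$ together with $\A(x)\le\A_2<2$ gives a uniform bound, and $\{|z|>1\}$, which I would further split according to $|z|\le|x|/2$, $|x|/2<|z|\le2|x|$, and $|z|>2|x|$, using respectively the decay of $\nabla^2\phi$ away from the origin, the crude bound $|\phi(x\pm z)|+|\phi(x)|\lesssim\log(1+|x|)$ (or $\lesssim|x|^\gamma$), and $\A(x)\ge\A_1$; each piece is $O(1)$ uniformly in $x$ thanks to $\gamma<\A_1$.

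Next, for $\eta>0$ put $v_\eta(t,x):=u(t,x)-\eta t-\eta\phi(x)$. Since $u$ is bounded and $\eta\phi(x)\to\infty$ uniformly in $t\in[0,1]$, $v_\eta\to-\infty$ as $|x|\to\infty$, so $v_\eta$ (continuous on the closed slab since $u\in C_b([0,1]\times\R^d)$) attains its supremum $S_\eta:=\sup_{[0,1]\times\R^d}v_\eta$ at some finite point $(t_0,x_0)$. Choosing $(t_*,x_*)$ with $u(t_*,x_*)>(M+M')/2$ and then $\eta$ small enough, one gets $S_\eta\ge v_\eta(t_*,x_*)>M\ge\sup_{x}v_\eta(0,x)$, which forces $t_0>0$. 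Now I exploit optimality. Since $s\mapsto v_\eta(s,x_0)$ has a maximum on $[0,1]$ at $t_0\in(0,1]$ and $u$ is differentiable in $t$ by \eqref{t4-3-2a}, we have $\partial_t v_\eta(t_0,x_0)\ge0$, that is,
\begin{equation*}
\LL u(t_0,\cdot)(x_0)=\partial_t u(t_0,x_0)\ge\eta .
\end{equation*}
On the other hand $x_0$ is a global maximizer of $v_\eta(t_0,\cdot)$, so $\delta_{v_\eta(t_0,\cdot)}(x_0;z)\le0$ for every $z$. Writing $u(t_0,\cdot)=v_\eta(t_0,\cdot)+\eta\phi+\eta t_0$ and using the symmetric (principal-value) form \eqref{e2-1} of $\LL$, for every $\e>0$
\begin{align*}
&\tfrac12\int_{\{|z|>\e\}}\delta_{u(t_0,\cdot)}(x_0;z)\,\frac{\K(x_0,z)}{|z|^{d+\A(x_0)}}\,dz\\
&\qquad=\tfrac12\int_{\{|z|>\e\}}\delta_{v_\eta(t_0,\cdot)}(x_0;z)\,\frac{\K(x_0,z)}{|z|^{d+\A(x_0)}}\,dz
+\eta\cdot\tfrac12\int_{\{|z|>\e\}}\delta_{\phi}(x_0;z)\,\frac{\K(x_0,z)}{|z|^{d+\A(x_0)}}\,dz .
\end{align*}
The first term on the right is $\le0$ for every $\e$; the second tends to $\eta\,\LL\phi(x_0)$ as $\e\downarrow0$ since $\phi\in C^2$ and $\LL\phi$ is absolutely convergent; and the left side tends to $\LL u(t_0,\cdot)(x_0)$, which exists and is finite by \eqref{t4-3-2a}. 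Hence $\LL u(t_0,\cdot)(x_0)\le\eta\,\LL\phi(x_0)\le\eta/2$, contradicting $\LL u(t_0,\cdot)(x_0)\ge\eta$. Therefore $M'\le M$, and in particular $\sup_{x}u(t,x)\le\sup_{x}u(0,x)$ for every $t\in(0,1]$.

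The remaining hypotheses enter as follows and do not require separate heavy work: \eqref{t4-3-1} and the continuity of $t\mapsto\LL u(t,x)$ ensure that $u$ is $C^1$ in $t$ with continuous $\partial_t u$ and extends continuously to $t=0$, so the one-dimensional calculus at $t_0$ is legitimate; while \eqref{t4-3-2} together with $\inf_{x}(\T(x)+1-\A(x))>0$ guarantees that $\LL u(t,\cdot)(x)$ is a genuinely pointwise, finite object — in particular it controls, after the symmetrization in \eqref{e2-1}, the near-diagonal part $\int_{\{|z|\le1\}}\delta_{u(t,\cdot)}(x;z)\,|z|^{-d-\A(x)}\,dz$ — so that the additive splitting above makes sense. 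I expect the only real obstacle to be the construction and estimate of $\phi$: one must keep its growth slow enough (logarithmic is enough) that the contribution of its curvature, which is concentrated in the regime $|z|\asymp|x|$, stays bounded uniformly in $x$ even though $\A(x)$ ranges over all of $[\A_1,\A_2]$; everything else is routine bookkeeping around the maximum point.
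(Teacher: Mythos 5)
Your argument is correct, and it takes a genuinely different route from the paper. The paper localizes multiplicatively: it sets $u_R^{\delta}=u\,l_R-(t-\e)\delta$ with a compactly supported cutoff $l_R$, and the whole point of hypothesis \eqref{t4-3-2} together with $\inf_x(\T(x)+1-\A(x))>0$ is to control the commutator $\LL(u\,l_R)-l_R\,\LL u$, whose near-diagonal part is an integral of $(u(t,x+z)-u(t,x))(l_R(x+z)-l_R(x))$ against $|z|^{-d-\A(x)}$; the contradiction then comes from $\LL u_R^{\delta}(t_0,x_0)\le 0$ at a maximum plus the bound $g_R^{\delta}\le C(\e,u)R^{-\G}-\delta$. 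You instead penalize additively with a smooth Lyapunov function $\phi$ (a $\gamma$-power with $\gamma<\A_1$, or a logarithm) for which $\LL\phi$ is bounded uniformly in $x$ — this is where $\A(x)\ge\A_1>0$, $\A_2<2$ and the two-sided bounds on $\K$ enter, and your splitting of the large-jump region is the right way to check it. Because the perturbation is smooth, there is no commutator to estimate: at the global maximum of $v_\eta(t_0,\cdot)$ you only use $\delta_{v_\eta}(x_0;z)\le 0$ for every $z$, the absolute convergence of each truncated integral (the $\e$-truncation makes the splitting by linearity legitimate even though $\delta_u$ need not be absolutely integrable near $0$), and $\lim_{\e\downarrow 0}\tfrac12\int_{\{|z|>\e\}}\delta_\phi\,\K(x_0,z)|z|^{-d-\A(x_0)}dz=\LL\phi(x_0)$. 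The payoff is that your proof never actually uses \eqref{t4-3-2}: it only needs $\LL u(t,x)$ to exist pointwise in the symmetrized principal-value sense, which is exactly how the paper applies $\LL$ to functions that are not $C^2$ (cf.\ \eqref{e2-1} and \eqref{t1-1-2a}), and which also follows from the compensated form \eqref{e1-1} by the symmetry of $\K(x_0,\cdot)$ in $z$ whenever that form is the one meant in \eqref{t4-3-2a}. So your argument is, if anything, slightly more general and avoids the bookkeeping with $R\to\infty$; the paper's cutoff version keeps every auxiliary function compactly supported, at the price of invoking the spatial H\"older estimate.

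Two small points of precision. First, make explicit at the outset in which pointwise sense $\LL u(t_0,\cdot)(x_0)$ is understood (principal value of the symmetric integrals), since that identification is the only place where the meaning of the hypothesis ``$u$ solves \eqref{t4-3-2a}'' is invoked. Second, your closing remark that \eqref{t4-3-2} ``guarantees that $\LL u(t,\cdot)(x)$ is a genuinely pointwise, finite object'' is not accurate as stated: H\"older regularity of order $\T(x)>\A(x)-1$ gives only $|\delta_u(x;z)|\lesssim|z|^{\T(x)}$, which does not make the near-diagonal integral absolutely convergent unless $\T(x)>\A(x)$; in the paper that hypothesis is used solely for the cutoff commutator. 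Since your proof does not rely on \eqref{t4-3-2} at all, this misattribution is harmless, but the sentence should be corrected or dropped.
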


\begin{proof}
Throughout the proof, the constant $C$ denotes a positive constant that is independent of $R$
and $x$  whose exact value may change from line to line. Since \eqref{t4-3-1} holds, it suffices to prove that
for any $\e \in (0,1)$ and $t\in (\e,1]$,
\begin{equation}\label{t4-3-5}
\sup_{x \in \R^d}u(t,x)\le \sup_{x \in \R^d}u(\e,x).
\end{equation}

For every $R>1$, we can choose a smooth cut-off function $l_R:\R^d \rightarrow \R$ such that
\begin{equation*}
l_R(x)=
\begin{cases}
1,\ \ \   & |x|\le R,\\
\in [0,1],\ & R<|x|<2R,\\
0,\ \ \  &|x|\ge 2R,
\end{cases}
\end{equation*} and
\begin{equation}\label{t4-3-6}
|\nabla l_R(x)|^2+|\nabla^2 l_R(x)|\le \frac{C}{R^2},\quad x\in \R^d.
\end{equation}
For every $R,\delta>0$ and $\e\in (0,1)$, define $$u_{R}^{\delta}(t,x)=
u(t,x)l_R(x)-(t-\e)\delta,\quad x\in \R^d, t\in (\e,1).$$ Then, it follows from the fact $\partial_t u(t,x)= \LL u(t,x)$ that
\begin{equation}\label{t4-3-3}
{\partial_t u_R^{\delta}(t,x)}=
\LL u_R^{\delta}(t,x)+g_R^{\delta}(t,x),
\end{equation}
where $g_R^{\delta}(t,x):= \LL u(t,x)\cdot l_R(x)-\LL(u\cdot l_R)(t,x)-\delta$.

Observe that
\begin{align*}
& \LL(u\cdot l_R)(t,x)- \LL u(t,x)\cdot l_R(x)-\LL l_R(x)\cdot u(t,x)\\
&=\int_{\R^d}(u(t,x+z)-u(t,x))\cdot (l_R(x+z)-l_R(x))
\frac{\K(x,z)}{|z|^{d+\A(x)}}\,dz\\
&\p \int_{\{|z|\le 1\}}
(u(t,x+z)-u(t,x))\cdot (l_R(x+z)-l_R(x))
\frac{1}{|z|^{d+\A(x)}}\,dz\\
&\quad +\int_{\{|z|> 1\}}
(u(t,x+z)-u(t,x))\cdot (l_R(x+z)-l_R(x))
\frac{1}{|z|^{d+\A(x)}}\,dz\\
&=:J_1+J_2.
\end{align*}
On the one hand, by \eqref{t4-3-2} and \eqref{t4-3-6},
\begin{align*}
J_1&\le \frac{C(\e)}{R}\int_{\{|z|\le 1\}}
\frac{|z|^{1+\T(x)}}{|z|^{d+\A(x)}}\,dz\le \frac{C(\e)}{R},
\end{align*}
where in the last inequality we used $\inf_{x \in \R^d}\big(\T(x)+1-\A(x)\big)>0$.
On the other hand, for every $0<\G< \A_1\wedge 1$,
\begin{align*}
J_2&\le 2\|u\|_{\infty}\|l_R\|_{\infty}^{1-\G}
\int_{\{|z|>1\}}\frac{|l_R(x+z)-l_R(x)|^{\G}}{|z|^{d+\A(x)}}\,dz\le \frac{C\|u\|_{\infty}}{R^{\G}}\int_{\{|z|>1\}}
|z|^{-d-\A(x)+\G}\,dz\le C\|u\|_{\infty}R^{-\G}.
\end{align*}
According to the argument above, it is easy to verify that
$$
|\LL l_R(x)|\le CR^{-\G}.
$$
Combining all the estimates above yields
\begin{equation}\label{t4-3-6a}
\begin{split}
g_R^{\delta}(t,x)\le C(\e,u)R^{-\G}-\delta.
\end{split}
\end{equation}

Now we are going to verify that for every fixed $R$ large enough,
\begin{equation}\label{t4-3-7}
\sup_{x \in \R^d}u_R^{\delta}(t,x)\le \sup_{x \in \R^d}u_R^{\delta}(\e,x),\quad t\in (\e,1].
\end{equation}
Suppose \eqref{t4-3-7} does not hold. Then for every large enough $R$, there exists
$(t_0,x_0) \in (\e,1]\times \R^d$ (which may depend on $R$, $\e$ and $\delta>0$) such that
\begin{equation}\label{t4-3-7a}
\sup_{(t,x) \in (\e,1)\times\R^d}u_R^{\delta}(t,x)=u_R^{\delta}(t_0,x_0).
\end{equation} Note that the existence of $(t_0,x_0)$ follows from the fact
that $u_R^{\delta}(t,x)=0$ for every $t \in (\e,1]$ and $|x|>2R$.
Therefore, by \eqref{t4-3-3}, we have for every $h \in (0,t_0-\e)$,
\begin{align*}
0\le \frac{u_R^{\delta}(t_0,x_0)-u_R^{\delta}(t_0-h,x_0)}{h}=
\frac{1}{h}\int_{t_0-h}^{t_0}  \LL u_R^{\delta}(s,x_0)\,ds+
\frac{1}{h}\int_{t_0-h}^{t_0}g_R^{\delta}(s,x_0)\,ds.
\end{align*}
Letting $h \downarrow 0$, we arrive at
\begin{equation}\label{t4-3-8}
0\le  \LL u_R^{\delta}(t_0,x_0)+g_R^{\delta}(t_0,x_0),
\end{equation} thanks to the assumption that the function $t \mapsto  \LL u(t,x)$ is continuous on $(0,1]$.
Furthermore, from \eqref{t4-3-7a} it is easy to see
\begin{equation*}
 \LL u_R^{\delta}(t_0,x_0)=\int_{\R^d}\big(u_R^{\delta}(t_0,x_0+z)+
u_R^{\delta}(t_0,x_0-z)-2u_R^{\delta}(t_0,x_0)\big)\frac{\K(x_0,z)}{|z|^{d+\A(x_0)}}\,dz\le 0.
\end{equation*}
Combining this with \eqref{t4-3-6a}, we get that for every $R>\big(\frac{2C(\e,u)}{\delta}\big)^{{1}/{\G}}$,
\begin{align*}
 \LL u_R^{\delta}(t_0,x_0)+g_R^{\delta}(t_0,x_0)\le -{\delta}/{2},
\end{align*}
which contradicts with \eqref{t4-3-8}.
Hence, the assumption above fails and so \eqref{t4-3-7} holds. Letting $R \rightarrow \infty$ in \eqref{t4-3-7},
we obtain \eqref{t4-3-5} immediately.
\end{proof}

Now, we are in a position to prove Theorems \ref{t1-1} and \ref{t1-2}.
\begin{proof}[Proofs of Theorems $\ref{t1-1}$ and $\ref{t1-2}$]
(i) We first assume that $\K(x,z)$ is independent of $z$.
We will prove that $p(t,x,y)$ constructed by \eqref{eq2-1} satisfies all the assertions.

According to Proposition \ref{t4-1}, \eqref{t1-1-1} holds. By
Propositions \ref{p3-2} and \ref{p3-3}, \eqref{t1-1-4} and \eqref{t1-1-4a} hold true.
Propositions \ref{t4-2} and \ref{t5-1} imply
\eqref{t1-1-7} and \eqref{t1-1-7a}, respectively. By Proposition \ref{p4-5},  we know that
$p:(0,1]\times\R^d\times \R^d\rightarrow \R_+$ is continuous. The lower bounds
\eqref{t1-1-4b}  will be shown in Corollary \ref{c-lower}, which is a consequence of Propositions
\ref{p4-3} and \ref{t4-4} in the next subsection.

\smallskip

(ii) According to  the proof of Proposition \ref{t4-1}, we can
obtain that for every $\e \in (0,1)$, there exist a constant
$C(\e)>0$ and a measurable function $\eta:\R^d \to  \R_+$ such that
$\int_{\R^d}\eta(y)\,dy<\infty$,

\begin{equation*}
\sup_{t \in (\e,1)}\Big|\frac{\partial p(t,x,y)}{\partial t}\Big|\le C(\e)\eta(x-y)
\end{equation*} and
\begin{equation*}\begin{split}
&\sup_{t \in (\e,1),\sigma>0}\bigg|\int_{\{|z|>\sigma\}}\big(p(t,x+z,y)+p(t,x-z,y)-2p(t,x,y)\big)
\frac{\K(x,z)}{|z|^{d+\A(x)}}\,dz\bigg|\le C(\e)\eta(x-y).\end{split}
\end{equation*}
Thus, by the dominated convergence theorem, it is easy to verify
that $\LL u_f (t, \cdot) (x)$ exists for each $t>0$ and $x\in \R^d$, $t\mapsto \LL u_f (t, \cdot) (x)$
is continuous,  and that \eqref{t1-1-2} holds.

\smallskip

(iii) We have by \eqref{eq2-1} that
\begin{align*}
u_f(t,x)-f(x)&=\int_{\R^d}p^y(t,x-y)\big(f(y)-f(x)\big)\,dy+
f(x)\left(\int_{\R^d}p^y(t,x-y)\,dy-1\right)\\
&\quad +\int_{\R^d} f(y)\int_0^t\int_{\R^d}p^z(t-s,x-z)q(s,z,y)\,dz\,ds\,dy\\
&=:J_1+J_2+J_3.
\end{align*}

For any $f\in C_{b,u}(\R^d)$ and for every $\e>0$, there exists
a constant $\delta:=\delta(\e)>0$ such that $|f(x)-f(y)|<\e$ for all $x,y \in \R^d$ with
$|x-y|\le \delta$. Thus,
\begin{align*}
|J_1|&\le \int_{\{|y-x|\le \delta\}}p^y(t,x-y)|f(y)-f(x)|\,dy+
\int_{\{|y-x|>\delta\}}p^y(t,x-y)|f(y)-f(x)|\,dy\\
&\le \e \int_{\R^d}\rho^{y,0}_1(t,x-y)\,dy+
C(\e)\|f\|_{\infty}\int_{\{|y-x|>\delta\}}\frac{t}{|x-y|^{d+\A_1}}\,dy\le C\e+C(\e,\delta)t,
\end{align*}
where in the last inequality we have used \eqref{l3-1-1}.
According to \eqref{p4-1-1}, it holds that $\lim_{t \downarrow 0}\sup_{x \in \R^d}|J_2|=0$.
Furthermore, by \eqref{p3-1-1} and \eqref{l3-3-1}, we obtain that for $\G<\B_0^*/{\A_2}$ with $\B^*\in (0,\B]\cap(0,\A_2/2)$,
\begin{align*}
|J_3|&\p \|f\|_{\infty}\Bigg[\int_{\R^d}
\int_0^t\int_{\R^d}\rho^{z,0}_1(t-s,x-z)
\Big(\rho^{y,0}_{{(\B_0^*/\A_2)}-\G}(s,z-y)+ \rho^{y,\B_0^*}_{-\G}(s,z-y)\Big)\,dz\,ds\,dy\Bigg]\\
&\p \|f\|_{\infty}\int_{\R^d}
\Big(\rho^{y,0}_{1+{(\B_0^*/\A_2)}-\G}+\rho^{y,\B_0^*}_{1-\G}\Big)(t,x-y)\,dy\p t^{{(\B_0^*/\A_2)}-\G}\|f\|_{\infty}.
\end{align*}
Combining all the estimates above together, we arrive at
\begin{equation*}
\lim_{t \downarrow 0}\sup_{x \in \R^d}\big|u_f(t,x)-f(x)\big|\le \e.
\end{equation*}
Since $\e$ is arbitrary, we know that \eqref{t1-1-3} holds.

\smallskip

(iv) Denote by $C_{b}^{\varepsilon}(\R^d)$ the set of bounded H\"{o}lder continuous functions, and by $C_{b}^{2,\varepsilon}(\R^d)$ the set of bounded twice differentiable functions whose second derivatives are uniformly H\"{o}lder continuous.  We first suppose that $f \in C_{b}^{2,\varepsilon}(\R^d)$. Let
\begin{equation*}
\tilde u_f(t,x):=f(x)+\int_0^t\int_{\R^d}p(s,x,y)\LL f(y)\,dy\,ds=f(x)+\int_0^t
u_{\LL f}(s,x)\,ds.
\end{equation*}
Since $\LL f \in C_{b}^{\varepsilon}(\R^d)$, it is easy to see that
\begin{equation*}
\frac{\partial \tilde u_f(t,x)}{\partial t}=\int_{\R^d}p(t,x,y)\LL f(y)\,dy
=u_{\LL f}(t,x),\quad t\in (0,1]
\end{equation*} and
\begin{equation*}
\lim_{t \downarrow 0}\sup_{x\in\R^d}|\tilde u_f(t,x)-f(x)|=0.
\end{equation*}

On the other hand, due to $\LL f \in C_{b}^{\varepsilon}(\R^d)$ again, it follows from \eqref{t1-1-2} that
$ \LL u_{\LL f}(t,x)=\frac{\partial u_{\LL f}(t,x)}{\partial t}$. Furthermore, following the
proof of Proposition \ref{t4-1}, we will get
\begin{align*}
\int_0^t \Big|\int_{\R^d} \LL^x p^z(t-s,x-z)\LL f(z)\,dz\Big|\,ds
&\le \int_0^t \Big|\int_{\R^d} \LL^z p^z(t-s,x-z)\LL f(z)\,dz\Big|\,ds\\
&\quad +\int_0^t \Big|\int_{\R^d}( \LL^x- \LL^z) p^z(t-s,x-z)\LL f(z)\,dz\Big|\,ds<\infty
\end{align*} and so $$\int_0^t \LL u_{\LL f}(s,x)\,ds$$ is well defined.
Hence,
\begin{align*}
\LL\tilde u_f(t,x)&=\LL f(x)+\int_0^t
 \LL u_{\LL f}(s,x)\,ds=\LL f(x)+\int_0^t \frac{\partial u_{\LL f}(s,x)}{\partial s}\,ds\\
&=\LL f(x)+u_{\LL f}(t,x)-\LL f(x)=u_{\LL f}(t,x),
\end{align*}
where in the third equality we used
$\lim_{t \downarrow 0}\sup_{x\in\R^d}|u_{\LL f}(t,x)-\LL f(x)|=0$, thanks to \eqref{t1-1-3}.
Therefore, both $u_f$ and $\tilde u_f$ are solutions of the following PDE
\begin{equation}\label{t1-1-11}
\begin{cases}
\frac{\partial u(t,x)}{\partial t}= \LL u(t,x),\\
\lim_{t \downarrow 0}\sup_{x \in \R^d}|u(t,x)-f(x)|=0.
\end{cases}
\end{equation}

Let $w_f(t,x):=u_f(t,x)-\tilde u_f(t,x)$. Then,  \eqref{t4-3-2a} and \eqref{t4-3-1} hold for $w_f$ with $w_f(0,x)\equiv 0$. At the same time, it is easy to verify from \eqref{eq2-1}
that the function $t \mapsto \LL w_f(t,x)$ is continuous on $(0,1]$, and that
\eqref{t1-1-7} implies \eqref{t4-3-2} holds for $w_f$. Thus, by Theorem \ref{t4-3}, we have
\begin{equation*}
w_f(t,x)\le w_f(0,x)=0,\quad (t,x)\in (0,1]\times \R^d.
\end{equation*}
Furthermore, applying the argument above to $-w_f$, we finally get that \begin{equation*}
w_f(t,x)\equiv 0,\quad  (t,x)\in (0,1]\times \R^d.
\end{equation*}
Therefore, $u_f(t,x)=\tilde u_f(t,x)$ for any $t \in (0,1]$ and $x \in \R^d$, which further implies that
\begin{equation*}
\int_{\R^d} \LL p(t,\cdot,y)(x)f(y)\,dy=\frac{\partial u_f(t,x)}{\partial t}
=\frac{\partial \tilde u_{f}(t,x)}{\partial t}=\int_{\R^d}p(t,x,y)\LL f(y)\,dy.
\end{equation*}
Thus, \eqref{t1-1-5a} holds. Observe that
\begin{equation*}
u_f(t,x)=\tilde u_f(t,x)=f(x)+\int_0^t u_{\LL f}(s,x)\,ds.
\end{equation*}
According to \eqref{t1-1-3}, we arrive at \eqref{t1-1-5b} immediately.
By the standard approximation procedure, we know that \eqref{t1-1-5a} and
\eqref{t1-1-5b} still hold for every $f \in C_{b,u}^2(\R^d)$.

\smallskip

(v) Let $u(t,x)=:\int_{\R^d}p(t,x,y)\,dy$. Then, according to \eqref{t1-1-2} and \eqref{t1-1-3}, we know that
$u$ satisfies the following equation
\begin{equation}\label{t1-1-11}
\begin{cases}
\frac{\partial u(t,x)}{\partial t}= \LL u(t,x),\\
\lim_{t \downarrow 0}\sup_{x \in \R^d}\big|u(t,x)-1\big|=0.
\end{cases}
\end{equation}
At the same time, $v(t,x)\equiv 1$ satisfies the equation \eqref{t1-1-11} above. Note that \eqref{t4-3-2} and the time continuity condition hold for both
$u(t,x)$ and $v(t,x)$. Then, using the same argument as in (iv)  and applying Theorem \ref{t4-3}, we obtain
\begin{equation*}
\int_{\R^d}p(t,x,y)\,dy=u(t,x)=1,\quad (t,x)\in (0,1]\times \R^d,
\end{equation*}
which is \eqref{t1-1-6}.

For every fixed $s \in (0,1)$ and $y \in \R^d$, we define $$u_{s,y}(t,x):=\int_{\R^d}p(t,x,z)p(s,z,y)\,dz.$$
 Again by \eqref{t1-1-2} and \eqref{t1-1-3}, the following equation holds for $u_{s,y}$:
\begin{equation}\label{t1-1-12}
\begin{cases}
\frac{\partial u_{s,y}(t,x)}{\partial t}= \LL u_{s,y}(t,x),\\
\lim_{t \downarrow 0}\sup_{x \in \R^d}|u_{s,y}(t,x)-p(s,x,y)|=0.
\end{cases}
\end{equation}
On the other hand, it is easy to verify that $v_{s,y}(t,x):=p(t+s,x,y)$ satisfies
\eqref{t1-1-12}, and \eqref{t4-3-2} and the time continuity condition hold for both
$u_{s,y}(t,x)$ and $v_{s,y}(t,x)$.  Following the procedure above and applying Theorem \ref{t4-3}, we arrive at
$$\int_{\R^d}p(t,x,z)p(s,z,y)\,dz=u_{s,y}(t,x)=v_{s,y}(t,x)=p(t+s,x,y),$$
which is \eqref{t1-1-5}.

\smallskip

(vi)
Suppose that $\wh p(t,x,y)$ is another
jointly continuous function  on $(0, 1]\times \R^d\times \R^d$
that is bounded for each $t>0$ and
 satisfies  \eqref{t1-1-7} and \eqref{t1-1-2a}-\eqref{t1-1-3}.
For every $f \in C_{b,u}(\R^d)$, let
$$\wh u_f(t,x):=\int_{\R^d}\wh p(t,x,y)f(y)\,dy \quad \hbox{and} \quad
\wh w_f(t,x):=u_f(t,x)-\wh u_f(t,x).
$$
 Then both $u_f$ and $\wh u_f$ satisfy \eqref{t1-1-2}. By the same argument above, we have
$\wh w_f(t,x)=0$ for each $(t,x)\in (0,1]\times \R^d$. This implies
that for every $f \in C_{b,u}(\R^d)$,
\begin{equation*}
\int_{\R^d} p(t,x,y)f(y)\,dy=\int_{\R^d} \wh p(t,x,y)f(y)\,dy.
\end{equation*}
Consequently, $p(t,x,y)=\wh p(t,x,y)$ for a.e.\ $y \in \R^d$ and hence for every $y\in \R^d$.

\smallskip

(vii) Suppose that $\K(x,z)$ depends on $z$.  The desired upper and lower estimates and the regularity can be proved in a similar way as before. On the other hand, the condition ${(\A_2/\A_1)}-1<\B_*/\alpha_2<{1}/{2}$ implies that
$\A_2-\A_1<{\A_1}/{2}<1$, which ensures that
\eqref{t4-3-2} holds with $u_f(t,x):=\int_{\R^d}p(t,x,y)f(y)\,dy$ for every
$f \in C_{b,u}(\R^d)$. Therefore, following the arguments in steps (i)-(vi), we can verify that \eqref{t1-1-5}-\eqref{t1-1-5b} hold in such case.
\end{proof}

\begin{proof}[Sketch of the Proof for Remark $\ref{rrr-000}$$(1)$] In Theorem \ref{t1-1} we assume that $\K(x,z)$ is independent of $z$.
According to its proof, the reason why we need such condition is only due to that this implies the
gradient estimate \eqref{p1-1-1}. Thus, for the upper bound estimates for $|\nabla p^y(t,x)|$
and  $|\nabla^2 p^y(t,x)|$, the time singularity factor $t^{-{1/\A(y)}}$ will not appear when $|x|$ is large and $t$ is small. This point is
crucial for estimates \eqref{l2-1-2} and \eqref{p3-1-1a-1}, which yield Theorem \ref{t1-1}.

Now, we turn to these two assumptions in Remark \ref{rrr-000}(1). According to \cite[Theorem 1.5]{KR} and  \cite[Corollary 7 and Theorem 21]{BGR}, the following gradient estimate \begin{equation*}
|\nabla^k p^y(t,x)|\p \frac{t}{|x|^{d+\A(y)+k}},\quad k=1,2
\end{equation*} holds for all $t \in (0,1]$ and $x\in\R^d$ with $|x|$ is large,
which also ensures that
the required estimates \eqref{l2-1-2} and \eqref{p3-1-1a-1} hold true.\end{proof}

\subsection{Lower bound estimates}
In this subsection, we will establish lower bound estimates for $p(t,x,y)$. The idea of the arguments below is inspired
by that in \cite[Subsection 4.4]{CZ}. Throughout this part, we will always
suppose that either of two conditions below is satisfied:
\begin{itemize}
\item [(1)] $\K(x,z)$ is independent of $z$.
\item [(2)] $\K(x,z)$ depends on $z$, and $({\A_2}/{\A_1})-1<\B_0^{**}/{\A_2},$ where $\B_0^{**}\in (0,\beta_0]\cap(0,\alpha_2/2).$
\end{itemize}

\begin{proposition}{\bf (On diagonal lower bounds)}\label{p4-3}
There exists a positive constant $c_1:=c_1(\A,\K)$ such that for all $t \in (0,1]$ and $x,y \in \R^d$ with $|x-y|\le 5
(t^{{1}/{\A(x)}}\vee t^{{1}/{\A(y)}})$,
\begin{equation}\label{p4-3-1}
p(t,x,y) \ge c_1t^{-{d}/{\A(x)}}.
\end{equation}
\end{proposition}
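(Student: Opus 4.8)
The plan is to treat $t$ small and $t$ of order one separately, using in both cases the Levi representation \eqref{eq2-1}, $p(t,x,y)=p^y(t,x-y)+\int_0^t\int_{\R^d}p^z(t-s,x-z)q(s,z,y)\,dz\,ds$. A preliminary remark I would record first: in the regime $|x-y|\le 5(t^{1/\A(x)}\vee t^{1/\A(y)})$ with $t\in(0,1]$ one has $|x-y|\le 5t^{1/\A_2}$, so the computation behind \eqref{l3-2-8} (H\"older continuity of $\A$ together with $\sup_{t\le1}t^{\B_0/\A_2}|\log t|<\infty$) gives $t^{1/\A(x)}\asymp t^{1/\A(y)}$ and $t^{-d/\A(x)}\asymp t^{-d/\A(y)}$, with constants depending only on $\A$; in particular $|x-y|\le C_1t^{1/\A(y)}$ for some $C_1=C_1(\A)$.

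\textbf{Small times.} The operator $\LL^y$ generates the pure-jump symmetric L\'evy process $X^y$ whose jump kernel $\K(y,z)/|z|^{d+\A(y)}$ is comparable to $|z|^{-d-\A(y)}$; by the near-diagonal lower bound for such $\A(y)$-stable-like processes (\cite{CK}; see also \cite{CZ}) there is $c_*=c_*(\A,\K,C_1)>0$ with $p^y(t,w)\ge c_*t^{-d/\A(y)}$ whenever $|w|\le C_1t^{1/\A(y)}$. For the correction term I would combine $p^z(t-s,x-z)\p\rho^{z,0}_1(t-s,x-z)$ from \eqref{p1-1-1-1-0}, the bound for $q$ in Proposition~\ref{p3-1} (namely \eqref{p3-1-1}, resp.\ \eqref{p3-1-2}) and the convolution inequality of Corollary~\ref{l3-3} to get, for small $\gamma>0$,
\[
\Big|\int_0^t\!\!\int_{\R^d}p^z(t-s,x-z)q(s,z,y)\,dz\,ds\Big|\p\big(\rho^{y,0}_{1+(\B_0^*/\A_2)-\gamma}+\rho^{y,\B_0^*}_{1-\gamma}\big)(t,x-y).
\]
Evaluating at $|x-y|\le C_1t^{1/\A(y)}$, where $|x-y|^{\B_0^*}\p t^{\B_0^*/\A(y)}$, the right-hand side is $\p t^{(\B_0^*/\A_2)-\gamma}\,t^{-d/\A(y)}$, which is $o(t^{-d/\A(y)})$ as $t\downarrow0$ once $\gamma<\B_0^*/\A_2$. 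Hence there is $t_0=t_0(\A,\K)\in(0,1]$ so that the correction term is $\le\tfrac12c_*t^{-d/\A(y)}$ for $t\le t_0$ in this regime, and then $p(t,x,y)\ge\tfrac12c_*t^{-d/\A(y)}\ge c\,t^{-d/\A(x)}$.

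\textbf{Times of order one.} For $t\in[t_0,1]$ the hypothesis forces $|x-y|\le5$, and since $t^{-d/\A(x)}$ is bounded on $[t_0,1]$ it is enough to show $p(t,x,y)\ge c$. Here I would iterate the Chapman--Kolmogorov identity \eqref{t1-1-5}: fix a large integer $k=k(\A,\K,t_0)$ with $t/k\le t_0$, take $w_0=x,w_1,\dots,w_k=y$ equally spaced on the segment from $x$ to $y$ (so $|w_i-w_{i+1}|\le5/k$), and bound
\[
p(t,x,y)\ge\int_{B(w_1,r)}\!\!\cdots\!\!\int_{B(w_{k-1},r)}\ \prod_{i=0}^{k-1}p\big(\tfrac tk,z_i,z_{i+1}\big)\,dz_1\cdots dz_{k-1},\qquad z_0=x,\ z_k=y,
\]
with $r$ a small fixed multiple of $(t/k)^{1/\A_1}$. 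Since $|z_i-z_{i+1}|$ is at most a fixed multiple of $1/k$, each factor is bounded below by a positive constant: for the substep one uses the same Levi decomposition as above, with the principal term $p^{z_{i+1}}(\tfrac tk,z_i-z_{i+1})$ bounded below either on-diagonally or by the one-jump estimate $\q\frac{t/k}{|z_i-z_{i+1}|^{d+\A(z_{i+1})}}$ of \cite{CK} (valid in the displacement range $\le1$ relevant here), while the correction, evaluated at a displacement at most a fixed multiple of $1/k$ and time at least $t_0/k$, is made $\le\frac12$ of the principal term by taking $\gamma<\B_0^*/\A_2$ and $k$ large. As all the resulting constants depend only on $\A,\K,t_0,k$, the $k$-fold product is bounded below by a positive constant, which closes this case.

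\textbf{Main difficulty.} The delicate point is the small-time step, specifically verifying that the Levi correction genuinely gains a positive power of $t$ over the principal term in the near-diagonal regime; this is where the variable order enters, through the precise form of $\rho^{y,\B}_\G$, through the version of the convolution inequality (Corollary~\ref{l3-3}) in which the order in the integrand depends on the integration variable, and through the comparisons $t^{1/\A(x)}\asymp t^{1/\A(y)}$ and $t^{-d/\A(x)}\asymp t^{-d/\A(y)}$. Once the correction is seen to be of lower order the short-time bound is immediate; the passage to $t$ of order one is then a chaining argument as above, the only genuine care there being the choice of the number of steps $k$ large enough (depending on the threshold $t_0$) that each intermediate displacement falls in the range covered by the short-time lower bounds.
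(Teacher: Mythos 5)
Your small-time step is exactly the paper's: comparability $t^{1/\A(x)}\asymp t^{1/\A(y)}$ from \eqref{l3-2-8}, the two-sided bound for the frozen kernel $p^y$, and domination of the Levi correction via \eqref{p3-1-1}/\eqref{p3-1-2} and Corollary \ref{l3-3}, giving \eqref{p4-3-1} for $t\le t_0$. The problem is your passage to $t\in[t_0,1]$. You aim at the stronger statement $p(t,x,y)\ge c$ for \emph{all} $|x-y|\le 5$, which forces you to produce off-diagonal lower bounds for the substeps, and you propose to get these analytically by dominating the Levi correction by the one-jump term $\frac{s}{|w|^{d+\A}}$ at displacement $|w|\asymp 1/k$ and time $s\asymp t_0/k$. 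In the case $\K(x,z)=\K(x)$ this works (note you need $\gamma<\B_0^*$ in addition to $\gamma<\B_0^*/\A_2$ when $\A_2<1$, a harmless adjustment). But in the general-$\K$ case the correction bound carries the exponent $\G_2=(\A_2/\A_1)-1+\gamma$, and the correction-to-principal ratio off-diagonal is of order $s^{-\G_2}|w|^{\B_0^{**}}\asymp t_0^{-\G_2}k^{\G_2-\B_0^{**}}$, which can only be made small if $\G_2<\B_0^{**}$, i.e.\ $(\A_2/\A_1)-1<\B_0^{**}$. The standing hypothesis of this subsection is only $(\A_2/\A_1)-1<\B_0^{**}/\A_2$, which does \emph{not} imply $(\A_2/\A_1)-1<\B_0^{**}$ when $\A_2<1$: e.g.\ $\A_1=0.5$, $\A_2=0.7$, $\B_0^{**}=0.3$ satisfies the hypothesis but has $\G_2>0.3$ for every $\gamma>0$. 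In that corner your per-step one-jump bound is not available, and shrinking the steps does not help either: purely near-diagonal steps of size $\asymp(t/k)^{1/\A}$ cover a total distance $\asymp t^{1/\A}k^{1-1/\A}$, which does not reach order one when $\A_2<1$. So as written the argument has a genuine gap precisely in part of the parameter range the proposition covers.

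The paper avoids this by never leaving the near-diagonal regime in the iteration: for $t\in[t_0,2t_0]$ and $|x-y|\le 5t^{1/\A(x)}$ it writes $p(t,x,y)=\int p(t_0,x,z)p(t-t_0,z,y)\,dz$, restricts to $|z-y|\le 5(t-t_0)^{1/\A(y)}$, and observes that then $|x-z|$ is at most a fixed multiple of $t_0^{1/\A(x)}$, so the small-time bound (with an enlarged but fixed constant in place of $5$, which the small-time argument tolerates under exactly the standing assumption $\G_2<\B_0^{**}/\A_2$) applies to both factors; integrating over the $z$-ball of volume $\asymp(t-t_0)^{d/\A(y)}$ gives $p(t,x,y)\succeq t^{-d/\A(x)}$, and one iterates $[1/t_0]+1$ times. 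The point is that the target spatial scale grows because $t^{1/\A}$ grows with $t$, not by accumulating step displacements, so no off-diagonal lower bound is ever needed here; the genuinely off-diagonal lower bound is proved afterwards, and probabilistically (via the L\'evy system, Lemma \ref{l4-3} and Proposition \ref{t4-4}), using the present on-diagonal estimate as input. To repair your proof, replace the chaining over a segment of length $5$ by this scale-respecting Chapman--Kolmogorov iteration (or restrict your chaining argument to the case $\K(x,z)=\K(x)$ and argue the general case as the paper does).
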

\begin{proof}
For simplicity, we only prove the case that $\K(x,z)$ depends on $z$ and $({\A_2}/{\A_1})-1<\B_0^{**}/{\A_2}$, since the other case can be
tackled similarly and easily.

First, according to \eqref{l3-2-8}, for any $t\in(0,1]$ and $x,y\in\R^d$ with $|x-y|\le 5
(t^{{1}/{\A(x)}}\vee t^{{1}/{\A(y)}})\le 5t^{{1}/{\A_2}}$,
\begin{equation}\label{p4-3-2}
t^{{1}/{\A(x)}}\asymp t^{{1}/{\A(y)}}.
\end{equation}
It is well known that for any $t>0$ and $x,y\in \R^d$, $$p^y(t,x-y)\asymp \frac{t}{(t^{{1}/{\A(y)}}+|x-y|)^{d+\A(y)}}.$$
Thus, for all $t\in (0,1]$ and $x,y\in\R^d$,
\begin{equation}\label{p4-3-3}
p^y(t,x-y)\succeq t^{-{d}/{\A(y)}}
\quad\textrm{ when }|x-y|\le 5(t^{{1}/{\A(x)}}\vee t^{{1}/{\A(y)}}).
\end{equation}

Second, \eqref{p3-1-2} and \eqref{l3-3-1}  yield that for all $t\in (0,1]$ and $x,y\in\R^d$ with $|x-y|\le 5t^{{1}/{\A_2}}$,
\begin{equation}\label{p4-3-4}
\begin{split}
&\int_0^t \int_{\R^d}
p^z(t-s,x-z)|q(s,z,y)|\,dz\,ds\\
&\p\int_0^t \int_{\R^d}\rho^{z,0}_1(t-s,x-z)
\cdot\Big(\rho^{y,0}_{({\B_0^{**}}/{\A_2})-\G_2 }+\rho^{y,\B_0^{**}}_{-\G_2 }\Big)(s,z,y)\,dz\,ds\\
&\p \rho^{y,\B_0^{**}}_{1-\G_2 }(t,x-y)+\rho^{y,0}_{1+({\B_0^{**}}/{\A_2})-\G_2 }(t,x-y)\\
& \p
|x-y|^{\B_0^{**}}t^{-{d}/{\A(y)}-\G_2 }+t^{-({d}/{\A(y)})+({\B_0^{**}}/{\A_2})-\G_2 }\p t^{-({d}/{\A(y)})+({\B_0^{**}}/{\A_2})-\G_2 },
\end{split}
\end{equation}
where in the first inequality $\G_2: =({\A_2}/{\A_1})-1+\G$ and the fourth inequality
follows from the fact that $|x-y|\le 5t^{{1}/{\A_2}}$.

The assumption $({\A_2}/{\A_1})-1<{\B_0^{**}}/{\A_2}$ ensures that we can choose
$\G>0$ small enough such that $\G_2 <{\B_0^{**}}/{\A_2}$. Combining \eqref{p4-3-4}
with \eqref{p4-3-3}, \eqref{eq2-1} and \eqref{p4-3-2}, we arrive at that there is a constant $t_0\in(0,1]$ such that for all $t\in (0,t_0]$ and $x,y\in \R^d$ with $|x-y|\le 5(t^{{1}/{\A(x)}}\vee t^{{1}/{\A(y)}})$,
\begin{equation}\label{pnte}
p(t,x,y)\q t^{-{d}/{\A(y)}}\asymp t^{-{d}/{\A(x)}}.
\end{equation}
Note that, according to the argument above, \eqref{pnte} still holds for all $t\in (0,t_0]$ and $x,y\in \R^d$ with $|x-y|\p
(t^{{1}/{\A(x)}}\vee t^{{1}/{\A(y)}})$. Furthermore, due to \eqref{p4-3-2}, for any $y,z\in \R^d$ with $|z-y|\le 5t^{{1}/{\A(y)}}$, we have $|x-z|\le |z-y|+|x-y|\p t^{{1}/{\A(x)}}$ for any $x\in \R^d$ with $|x-y|\p (t^{1/\A(x)}\vee t^{1/\A(y)})$, and so $$p(t,x,z)\q t^{-{d}/{\A(z)}}\asymp t^{-{d}/{\A(x)}},\quad t\in (0,t_0],$$ thanks to \eqref{p4-3-2} again. Therefore, according to the Chapman-Kolmogorov equation
\eqref{t1-1-5} and \eqref{pnte}, for every $t\in [t_0,2t_0]$ and $x,y\in \R^d$ with $|x-y|\le
5t^{{1}/{\A(x)}}$,
\begin{align*}
p(t,x,y)&=\int_{\R^d}p(t_0,x,z)p(t-t_0,z,y)\,dz\q (t-t_0)^{-{d}/{\A(y)}}\int_{\{|z-y|\le 5(t-t_0)^{{1}/{\A(y)}}\}}
p(t_0,x,z)\,dz\\
&\q (t-t_0)^{-{d}/{\A(y)}}t_0^{-{d}/{\A(x)}}(t-t_0)^{{d}/{\A(y)}}\q
t^{-{d}/{\A(x)}}.
\end{align*}
Iterating the arguments above $[{1}/{t_0}]+1$ times, we can
obtain \eqref{p4-3-1}.
\end{proof}

To consider off-diagonal lower bounds for the heat kernel $p(t,x,y)$,  we will make use of a strong Markov process, in particular the corresponding L\'evy system, associated with the operator $\LL$.
Note that, from the Chapman-Kolmogorov equation \eqref{t1-1-5} and the properties \eqref{t1-1-2} and \eqref{t1-1-5a}, it is standard to prove the following result. Since the proof is almost the same as \cite[Theorem 4.5]{CZ},
we omit it here.

\begin{proposition}\label{p4-4}
$(1)$ There is a strong Markov process $X:=\big((X_t)_{t \ge 0}; (\Pp_x)_{x\in \R^d}\big)$ such that for every
$f \in C_b^2(\R^d)$,
\begin{equation}\label{p4-4-0}
M_t^f:=f(X_t)-f(X_0)-\int_0^t \LL f(X_s)\,ds,\quad t \in (0,1]
\end{equation}
is a martingale with respect to the natural filtration $\mathscr{F}_t:=\sigma\{X_s,0\le s\le t\}$ under probability
measure $\Pp_x$ for all $x \in \R^d$. Moreover, $X$ has the strong Feller property.

$(2)$ For every non-negative measurable function $g:(0,1]\times \R^d \times \R^d \rightarrow \R_+$  vanishing
on $\{(s,x,y)\in (0,1]\times \R^d \times \R^d: x=y\}$ and any stopping time $T$, we have
\begin{equation}\label{p4-4-1}
\Ee_x\left(\sum_{s \le T\wedge 1}g(s,X_{s-},X_s)\right)=\Ee_x \bigg(\int_0^{T\wedge 1}\int_{\R^d}
g(s,X_s,y)J(X_s,y)\,dy\,ds\bigg),
\end{equation} where $$J(x,y):=\frac{\K(x,y-x)}{|y-x|^{d+\A(x)}},\quad x,y \in \R^d.$$
\end{proposition}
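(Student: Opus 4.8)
\textbf{Proof proposal for Proposition \ref{p4-4}.}

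The plan is to follow closely the strategy of \cite[Theorem 4.5]{CZ}, using the analytic properties of $p(t,x,y)$ established in Theorem \ref{t1-1} (in particular the Chapman--Kolmogorov equation \eqref{t1-1-5}, the conservativeness \eqref{t1-1-6}, the strong continuity \eqref{t1-1-3}, and the identity \eqref{t1-1-5a}) to construct the process and verify the martingale problem. First I would define a family of transition operators $P_t f(x):=\int_{\R^d}p(t,x,y)f(y)\,dy$ for $t\in(0,1]$ and $P_0f:=f$. Thanks to \eqref{t1-1-5} these operators satisfy the semigroup property $P_{s+t}=P_sP_t$ for $s+t\le 1$, thanks to \eqref{t1-1-6} they are conservative (i.e.\ $P_t 1=1$), and thanks to \eqref{t1-1-4b} together with the upper bounds $P_t$ is positivity-preserving and Markovian; moreover \eqref{t1-1-3} gives strong continuity on $C_{b,u}(\R^d)$, and the upper bound \eqref{t1-1-4} together with the explicit decay of $\rho^{y,0}_\G$ shows $P_t$ maps $C_\infty(\R^d)$ (continuous functions vanishing at infinity) into itself, so that $(P_t)$ extends to a Feller semigroup on $C_\infty(\R^d)$. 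Extending the semigroup to all $t\ge 0$ by iteration (writing $t=n t_0+r$ with $t_0,r\in(0,1]$), the standard construction (e.g.\ via Kolmogorov's extension theorem and the existence of a c\`adl\`ag modification for Feller semigroups) yields a strong Markov process $X=((X_t)_{t\ge0};(\Pp_x)_{x\in\R^d})$ with transition semigroup $(P_t)$. The strong Feller property is immediate from Theorem \ref{t1-1}(i): for bounded measurable $f$, the joint continuity of $p(t,\cdot,\cdot)$ together with the integrable upper bound \eqref{t1-1-4} and dominated convergence shows $x\mapsto P_t f(x)$ is continuous.

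For part (1), to see that $M^f_t$ is a martingale for $f\in C^2_b(\R^d)$, I would first treat $f\in C^2_{b,u}(\R^d)$ and invoke \eqref{t1-1-5a}: this identity says precisely that $\frac{d}{dt}P_t f=P_t(\LL f)$, i.e.\ $P_t f(x)-f(x)=\int_0^t P_s(\LL f)(x)\,ds$ for $t\in(0,1]$ and $x\in\R^d$. Combined with the Markov property of $X$ and the semigroup identity, a routine computation shows that $\Ee_x[M^f_t-M^f_s\mid\mathscr F_s]=0$ for $0\le s\le t\le 1$; the general $t\ge0$ case follows by concatenation. A minor technical point is to pass from $f\in C^2_{b,u}(\R^d)$ to general $f\in C^2_b(\R^d)$: here one localizes, multiplying $f$ by a smooth cutoff $l_R$ as in the proof of Theorem \ref{t4-3}, controls the error terms $\LL(fl_R)-l_R\LL f$ by $O(R^{-\G})$ uniformly, and lets $R\to\infty$, using that $\LL f$ is bounded for $f\in C^2_b(\R^d)$.

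For part (2), the L\'evy system formula \eqref{p4-4-1}, I would proceed in the usual two stages. The key input is that the process $X$ is a pure-jump process whose ``instantaneous jump intensity'' is governed by the kernel $J(x,y)=\kappa(x,y-x)/|y-x|^{d+\alpha(x)}$; concretely, for $f,g\in C^2_c(\R^d)$ with disjoint supports one has $\LL(fg)=0$ off $\mathrm{supp}\,f\cup\mathrm{supp}\,g$ and, on $\mathrm{supp}\,g$, $\LL g(x)=\int_{\R^d}g(y)J(x,y)\,dy$ (and symmetrically), which via the martingale property of $M^g$ gives $\Ee_x[\sum_{s\le t}f(X_{s-})g(X_s)]=\Ee_x[\int_0^t f(X_s)(\int g(y)J(X_s,y)\,dy)\,ds]$. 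One then extends this from products $f(x)g(y)$ with disjoint supports to arbitrary non-negative measurable $g(s,x,y)$ vanishing on the diagonal by a monotone-class / monotone-convergence argument, and finally replaces the deterministic time $t$ by a stopping time $T$ using the strong Markov property and optional stopping. I expect the main obstacle to be the bookkeeping in this last extension step—justifying the passage to stopping times and to general $g$ rigorously—but since the transition density $p(t,x,y)$ and all the requisite regularity and decay estimates are already in hand from Theorem \ref{t1-1}, this is entirely parallel to \cite[Theorem 4.5]{CZ} and presents no genuinely new difficulty; for this reason the detailed proof can be safely omitted.
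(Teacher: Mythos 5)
Your proposal is correct and follows essentially the same route the paper has in mind: the paper omits the proof precisely because it is the standard construction from the Chapman--Kolmogorov equation \eqref{t1-1-5}, conservativeness, strong continuity and \eqref{t1-1-2}, \eqref{t1-1-5a}, carried out as in \cite[Theorem 4.5]{CZ}, which is exactly the semigroup-construction, martingale-problem and L\'evy-system argument you sketch. The only remark is that your localization step for passing from $C^2_{b,u}$ to $C^2_b$ and the monotone-class/optional-stopping bookkeeping for \eqref{p4-4-1} are indeed the standard details the paper delegates to \cite{CZ}.
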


For any subset $D\subseteq \R^d$, define
\begin{equation*}
\sigma_{D}:=\inf\{t \ge 0: X_t \in D\},\qquad \tau_D:=\inf\{t \ge 0: X_t \notin D\}.
\end{equation*}
\begin{lemma}\label{l4-3}
There exist constants $R_1,A_0\in (0,1)$ such that for every
$r \in (0,R_1)$,
\begin{equation}\label{l4-3-1}
\Pp_x\big(\tau_{B(x,A_0 r)}\le r^{\A(x)}\big)\le {1}/{2},\quad x\in \R^d.
\end{equation}
\end{lemma}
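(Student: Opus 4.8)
The plan is the classical Dynkin--formula argument for exit times, applied to the Feller process $X=((X_t)_{t\ge0};(\Pp_x))$ of Proposition~\ref{p4-4}(1) and a radial cut--off function. Fix $x_0\in\R^d$ and $r\in(0,R_1)$, with $R_1\in(0,1)$ to be chosen, put $\tau:=\tau_{B(x_0,A_0r)}$, and take $f(y)=\psi\big(|y-x_0|/(A_0r)\big)$ for a fixed smooth $\psi:[0,\infty)\to[0,1]$ with $\psi\equiv0$ on $[0,3/4]$, $\psi\equiv1$ on $[1,\infty)$ and $\|\psi'\|_\infty+\|\psi''\|_\infty\le C$. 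Then $f\in C_c^\infty(\R^d)\subset C_b^2(\R^d)$, $f(x_0)=0$, $0\le f\le1$, $\|\nabla^2 f\|_\infty\p(A_0r)^{-2}$, and $f\equiv1$ on $\R^d\setminus B(x_0,A_0r)$, so $f(X_\tau)=1$ on $\{\tau<\infty\}$. Applying the optional stopping theorem to the martingale $M^f_t$ of Proposition~\ref{p4-4}(1) at the bounded time $t\wedge\tau$, and using $f(x_0)=0$ together with $f(X_{t\wedge\tau})\ge\I_{\{\tau\le t\}}$, one gets
\begin{equation*}
\Pp_{x_0}\big(\tau\le t\big)\le\Ee_{x_0}\big[f(X_{t\wedge\tau})\big]=\Ee_{x_0}\Big[\int_0^{t\wedge\tau}\LL f(X_s)\,ds\Big]\le t\,\sup_{y\in B(x_0,A_0r)}\big|\LL f(y)\big|.
\end{equation*}

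The crux is the bound $\sup_{y\in B(x_0,A_0r)}|\LL f(y)|\p(A_0r)^{-\A(x_0)}$. Since $\K$ is even in $z$, I would use the symmetric representation $\LL f(y)=\tfrac12\int_{\R^d}\delta_f(y;z)\,\K(y,z)|z|^{-d-\A(y)}\,dz$ from \eqref{e2-1}, which removes the first--order term, and split the integral at $|z|=A_0r$. On $\{|z|\le A_0r\}$ one has $|\delta_f(y;z)|\le\|\nabla^2 f\|_\infty|z|^2\p(A_0r)^{-2}|z|^2$, which, integrated against $|z|^{-d-\A(y)}\,dz$ and using $\A(y)\le\A_2<2$, contributes $\p(A_0r)^{-\A(y)}$; on $\{|z|>A_0r\}$ one uses $|\delta_f(y;z)|\le4$ and $\A(y)\ge\A_1>0$, contributing $\p(A_0r)^{-\A(y)}$ as well, with constants uniform thanks to $\K_1\le\K\le\K_2$. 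Finally, as $y\in B(x_0,A_0r)\subseteq B(x_0,r)$, the H\"older condition \eqref{e1-3} gives $|\A(y)-\A(x_0)|\le c_1r^{\beta_1}$, hence
\begin{equation*}
(A_0r)^{-\A(y)}=(A_0r)^{-\A(x_0)}(A_0r)^{\A(x_0)-\A(y)}\p(A_0r)^{-\A(x_0)}\exp\!\big(c_1r^{\beta_1}|\log(A_0r)|\big)\p(A_0r)^{-\A(x_0)},
\end{equation*}
uniformly in $x_0\in\R^d$ and $r\in(0,R_1)$ once $R_1$ is small, since $\sup_{0<s\le1}s^{-c_1s^{\beta_1}}<\infty$. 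This replacement of the jump exponent $\A(y)$ by its value $\A(x_0)$ at the base point is exactly where the variable order of $\LL$ intervenes, and it is the only step that is not completely routine.

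Combining the two displays yields $\Pp_{x_0}\big(\tau_{B(x_0,A_0r)}\le t\big)\le c_0\,t\,(A_0r)^{-\A(x_0)}$ for all $x_0\in\R^d$, $r\in(0,R_1)$ and $t>0$, with $c_0$ an absolute constant; equivalently, running the same argument with the cut--off rescaled to the radius of the ball actually appearing in \eqref{l4-3-1} gives an estimate of the form $\Pp_{x}(\tau_{B(x,\cdot)}\le t)\p t\,(\text{radius})^{-\A(x)}$. The lemma then follows by taking $t$ and the radius in the relation dictated by \eqref{l4-3-1} and fixing $A_0\in(0,1)$ and $R_1$ small enough that the right--hand side is at most $1/2$; a single choice of $A_0$ suffices for all $x$ because $\A(x)\ge\A_1$. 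I expect the main obstacle to be nothing conceptual but the uniform bookkeeping in the second paragraph --- obtaining the estimate on $\LL f$ with every implied constant independent of $x_0$ and of $r$ despite the spatial variation of the exponent $\A(x)$ --- which is taken care of by \eqref{e1-3} precisely as indicated above.
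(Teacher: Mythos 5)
Your overall strategy is exactly the paper's: apply the martingale/Dynkin identity from Proposition \ref{p4-4}(1) to a rescaled cut-off $f$ vanishing at the centre and equal to $1$ off the ball, bound $\sup_{y\in B(x,A_0r)}|\LL f(y)|$ by splitting the jump integral, and convert the exponent $\A(y)$ into $\A(x)$ via \eqref{e1-3} for $r$ below a small $R_1$. Your bound $\sup_{y\in B(x,A_0r)}|\LL f(y)|\p (A_0r)^{-\A(x)}$ is correct, and it is in fact what the paper's two-parameter estimate (splitting at $|z|=\lambda r$ inside a ball of radius $Ar$) reduces to upon taking $\lambda\asymp A$; so up to that point the argument is sound and essentially identical to the paper's.

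The gap is in the final choice of constants. Plugging $t=r^{\A(x)}$ into $\Pp_x(\tau_{B(x,A_0r)}\le t)\le c_0\,t\,(A_0r)^{-\A(x)}$ gives $c_0\,A_0^{-\A(x)}$: the powers of $r$ cancel exactly, so the bound does not depend on $r$ at all, and shrinking $R_1$ cannot reduce it ($R_1$ only serves to make the H\"older correction factor $(A_0r)^{\A(x)-\A(y)}$ bounded by, say, $2$). Moreover, with $A_0\in(0,1)$ one has $A_0^{-\A(x)}\ge 1$, so the right-hand side is at least the fixed constant $c_0$ (which depends on $d,\K_2,\A_1,\A_2$ and the cut-off and has no reason to be $\le 1/2$); your closing sentence ``fixing $A_0\in(0,1)$ and $R_1$ small enough that the right-hand side is at most $1/2$'' therefore does not go through. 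The leverage has to come from the ratio between the ball radius and $t^{1/\A(x)}$: one must take $A_0$ \emph{large}, so that $c_0A_0^{-\A(x)}\le c_0A_0^{-\A_1}\le 1/2$ uniformly in $x$ (and then choose $R_1$ small, depending on $A_0$, for the H\"older correction). This is exactly how the paper concludes (``first taking $\lambda$ large enough and then $A$ large enough''); the restriction $A_0\in(0,1)$ in the statement of Lemma \ref{l4-3} is inconsistent with the paper's own proof as well, and the only place the lemma is used, in \eqref{t4-4-3}, merely requires some constant $A_0>0$ because the free small parameter $\lambda$ there absorbs its size. With $A_0$ taken large (and the order of choices $A_0$ first, then $R_1$), your argument is complete.
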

\begin{proof}
Choose a function $f \in C_b^2(\R^d)$ such that
$f(0)=0$ and $f(x)=1$ for every $|x|\ge 1$. For each $r>0$ and $x \in \R^d$, define
$f_{r,x}(y):=f\big(\frac{y-x}{r}\big)$.
Then, by \eqref{p4-4-0}, for every $0<r<1$, $A>0$ and $x \in \R^d$,
\begin{equation}\label{l4-3-2}
\begin{split}
\Pp_x \big(\tau_{B(x,Ar)}\le r^{\A(x)}\big)&\le \Ee_x f_{Ar,x}\big(X_{\tau_{B(x,Ar)}\wedge r^{\A(x)}}\big)=\Ee_x \Big(\int_0^{\tau_{B(x,Ar)}\wedge r^{\A(x)}}\LL f_{Ar,x}(X_s)\,ds\Big).
\end{split}
\end{equation}
Observe that for every $y \in B(x,Ar)$ and $\lambda>1$,
\begin{align*}
|\LL f_{Ar,x}(y)|&\p
\int_{\{|z|\le \lambda r\}}|\delta_{f_{Ar,x}}(y;z)|\cdot \frac{dz}{|z|^{d+\A(y)}}
+\int_{\{|z|>\lambda r\}}|\delta_{f_{Ar,x}}(y;z)|\cdot \frac{dz}{|z|^{d+\A(y)}}\\
&\p \frac{\|\nabla^2 f\|_{\infty}}{(Ar)^2}
\int_{\{|z|\le \lambda r\}}|z|^{2-d-\A(y)}\,dz+\|f\|_{\infty}
\int_{\{|z|> \lambda r\}}|z|^{-d-\A(y)}\,dz\\
&\p \bigg(\frac{\|\nabla^2 f\|_{\infty}\lambda^{2-\A(y)}}{A^2}+
\frac{\|f\|_{\infty}}{\lambda^{\A(y)}}\bigg)\cdot r^{-\A(y)}.
\end{align*}
Hence, first taking $\lambda$ large enough and then $A$ large enough, we can find a constant $A_0>0$ such that
\begin{equation*}
|\LL f_{A_0r,x}(y)|\le \frac{1}{4}r^{-\A(y)}\quad \textrm{for all } y \in B(x,A_0r).
\end{equation*}

Since for every $y \in B(x,A_0r)$ and $r\in (0,1)$,
\begin{align*}
r^{-\A(y)}&=r^{-\A(x)}r^{-(\A(y)-\A(x))}\le r^{-|\A(x)-\A(y)|}r^{-\A(x)}\le r^{-C_1(A_0r)^{\B_0}}r^{-\A(x)}\le \exp(C_2|\log r|r^{\B_0})r^{-\A(x)},
\end{align*}
there exists a constant $r_0>0$ small enough such that $r^{-\A(y)}\le 2 r^{-\A(x)}$ for all
$r \in (0,r_0)$ and $y \in B(x,A_0r)$. Hence, we have for every $r \in (0,r_0)$ and $y\in B(x,A_0r)$,
\begin{equation*}
|\LL f_{A_0r,x}(y)|\le \frac{1}{2}r^{-\A(x)}.
\end{equation*}
Therefore, putting this estimate into \eqref{l4-3-2}, we obtain \eqref{l4-3-1}.
\end{proof}

We now show the following off-diagonal lower bound estimates for $p(t,x,y)$.

\begin{proposition}{\bf (Off-diagonal lower bound estimates)} \label{t4-4}
There exists a constant $c_1:=c_1(\A,\K)>0$ such that for every $t\in (0,1]$ and $x,y\in\R^d$ with
$|x-y|>5\max\{t^{{1}/{\A(x)}},t^{{1}/{\A(y)}}\}$
\begin{equation}\label{t4-4-1}
p(t,x,y)\ge \frac{c_1t}{|x-y|^{d+\A(x)}}.
\end{equation}
\end{proposition}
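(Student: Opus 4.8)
The plan is to derive \eqref{t4-4-1} from the on-diagonal lower bound of Proposition~\ref{p4-3}, from the strong Markov process $X$ and its L\'evy system in Proposition~\ref{p4-4}, and from the exit estimate of Lemma~\ref{l4-3}, following the scheme of \cite[Subsection~4.4]{CZ}; the genuinely new feature is to track the local behaviour of $\A(\cdot)$ near $x$ in the jump-intensity estimate. Throughout, write $r:=|x-y|$ and assume $r>5\max\{t^{1/\A(x)},t^{1/\A(y)}\}$.

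\textbf{Step 1 (reduction to a hitting probability near $y$).} First I would use the Chapman--Kolmogorov equation \eqref{t1-1-5}, keep only the part of the integral over a ball about $y$, and apply Proposition~\ref{p4-3}: choosing a fixed $\kappa=\kappa(\A)\in(0,1)$ so small that, comparing $(t/2)^{1/\A(z)}$ with $t^{1/\A(y)}$ through an estimate of the type \eqref{l3-2-8}, one has $|z-y|\le5\max\{(t/2)^{1/\A(z)},(t/2)^{1/\A(y)}\}$ for every $z\in B(y,\kappa t^{1/\A(y)})$, Proposition~\ref{p4-3} gives $p(t/2,z,y)\succeq t^{-d/\A(y)}$ there. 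Hence
\[
p(t,x,y)\ \ge\ \int_{B(y,\kappa t^{1/\A(y)})}p(t/2,x,z)\,p(t/2,z,y)\,dz\ \succeq\ t^{-d/\A(y)}\,\Pp_x\big(X_{t/2}\in B(y,\kappa t^{1/\A(y)})\big),
\]
and it remains to prove $\Pp_x\big(X_{t/2}\in B(y,\kappa t^{1/\A(y)})\big)\succeq t^{\,1+d/\A(y)}\,r^{-d-\A(x)}$.

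\textbf{Step 2 (producing one big jump).} Put $\delta:=A_0t^{1/\A(x)}$. Since $r>5t^{1/\A(x)}$ and $r>5t^{1/\A(y)}$, the balls $B(x,\delta)$ and $B(y,\tfrac12\kappa t^{1/\A(y)})$ are disjoint and at distance $\asymp r$. I would bound the hitting probability below by the probability that $X$ (i) stays in $B(x,\delta)$ up to some time $s\le t/4$, (ii) jumps at time $s$ directly from $B(x,\delta)$ into $B(y,\tfrac12\kappa t^{1/\A(y)})$, and (iii) then stays in $B(y,\kappa t^{1/\A(y)})$ on $[s,t/2]$. Stages (i) and (iii) have probability $\succeq1$ by Lemma~\ref{l4-3} (for (i) take $\rho=t^{1/\A(x)}$, so $\Pp_x(\tau_{B(x,\delta)}>t)\ge\tfrac12$; for (iii) use the strong Markov property at the jump time and, when $t$ is not already small, iterate over a bounded number of sub-intervals so that each radius stays below $R_1$). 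Stage (ii) is estimated from below by the L\'evy system \eqref{p4-4-1} as $\Ee_x\int_0^{\tau_{B(x,\delta)}\wedge(t/4)}\int_{B(y,\frac12\kappa t^{1/\A(y)})}J(X_s,u)\,du\,ds$; on the domain of integration $|u-X_s|\asymp r$, hence $J(X_s,u)\succeq r^{-d-\A(X_s)}$, and since $|X_s-x|\le\delta$, \eqref{e1-3} gives $|\A(X_s)-\A(x)|\le c_1(\delta^{\B_0}\wedge1)$, so that $r^{-d-\A(X_s)}\succeq r^{-d-\A(x)}$ provided $r^{c_1\delta^{\B_0}}$ stays bounded (which is automatic for $r\le1$). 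Using $|B(y,\tfrac12\kappa t^{1/\A(y)})|\asymp t^{d/\A(y)}$ and $\Ee_x[\tau_{B(x,\delta)}\wedge(t/4)]\succeq t$ (again Lemma~\ref{l4-3}), stage (ii) has probability $\succeq t^{\,1+d/\A(y)}r^{-d-\A(x)}$, which with Step~1 yields \eqref{t4-4-1} in this range of $r$.

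\textbf{Main obstacle.} The delicate point, and the place where the variable order really enters, is the passage $r^{-d-\A(X_s)}\succeq r^{-d-\A(x)}$: the L\'evy kernel felt at the (random) jump origin $X_s$ carries the index $\A(X_s)$, not $\A(x)$, and the mismatch produces a factor $r^{\pm|\A(X_s)-\A(x)|}$. To keep this factor under control the big jump must issue from a genuinely small neighbourhood of $x$, whence the choice $\delta=A_0t^{1/\A(x)}$ and the use of \eqref{l3-2-8}-type estimates to absorb $r^{c_1\delta^{\B_0}}$; and to reach all $r>5\max\{t^{1/\A(x)},t^{1/\A(y)}\}$ --- not merely a bounded range, and for all $t\in(0,1]$ --- this localization has to be combined with an iteration through \eqref{t1-1-5} over a partition of $[0,t]$ into short time steps, all intermediate points kept near $x$ before the single big jump and near $y$ afterwards, exactly as in \cite[Subsection~4.4]{CZ}. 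The remaining verifications --- disjointness of the balls, $|u-X_s|\asymp r$ on the integration domain, the chaining of Lemma~\ref{l4-3}, and the bookkeeping of constants in Step~1 --- are routine but somewhat lengthy.
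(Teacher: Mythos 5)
Your Steps 1--2 are, up to rearrangement, the paper's own proof: the same reduction through the Chapman--Kolmogorov identity \eqref{t1-1-5} and the on-diagonal bound \eqref{p4-3-1}, the same confinement estimates from Lemma \ref{l4-3} (together with $t^{1/\A(y)}\asymp t^{1/\A(z)}$ for $z$ near $y$), the same single-big-jump lower bound via the L\'evy system \eqref{p4-4-1} with the jump issued from a ball $B(x,c\,t^{1/\A(x)})$, and the same use of \eqref{e1-3} to trade $\A(X_s)$ for $\A(x)$. (Small points such as $t^{1/\A(x)}$ possibly exceeding the radius $R_1$ of Lemma \ref{l4-3} when $t$ is close to $1$ are handled in the paper by working on $[0,\lambda t]$ with $\lambda$ small rather than by your sub-interval iteration, but that is cosmetic.) In the regime you actually complete, essentially $5t^{1/\A(x)}<|x-y|\le 1$, the mismatch factor $|x-y|^{\A(x)-\A(X_s)}$ is under control because $|\A(x)-\A(X_s)|\le Ct^{\B_0/\A(x)}$ and $|x-y|\ge 5t^{1/\A(x)}$ give $|x-y|^{Ct^{\B_0/\A(x)}}\succeq t^{Ct^{\B_0/\A_1}}\succeq 1$; this is exactly the absorption performed at the end of the paper's proof, and your argument there is correct.

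The gap is the case $|x-y|>1$. You dispose of it by appealing to ``an iteration through \eqref{t1-1-5} over a partition of $[0,t]$ \dots exactly as in \cite{CZ}'', but no such device exists in that reference: there the order is constant, so the factor $|x-y|^{-|\A(X_s)-\A(x)|}$ you are trying to kill never arises. Nor does the sketched chaining close the gap: to keep the pre-jump index within $O(1/\log|x-y|)$ of $\A(x)$, the big jump must issue from a ball $B(x,\rho)$ with $\rho^{\B_0}\log|x-y|\lesssim 1$, and when $t$ is of order one you must then either confine the path to this very small ball over a macroscopic time span (an event of probability far below any constant) or impose an intermediate-time constraint $X_{t-s_0}\in B(x,\rho)$, which by the two-sided bounds costs a factor of order $\rho^{d}t^{-d/\A(x)}$ together with a short remaining time $s_0$, and these losses are never recovered; neither route produces $c\,t\,|x-y|^{-d-\A(x)}$. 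The difficulty is real: for $t$ of order one the factor $|x-y|^{-Ct^{\B_0/\A(x)}}$ is not bounded below as $|x-y|\to\infty$, so this regime cannot simply be waved through. The paper itself uses no iteration here at all: it concludes in one line that $\inf_{z\in B(x,t^{1/\A(x)})}|x-y|^{-d-\A(z)}\succeq t^{C_2t^{\B_0/\A_1}}|x-y|^{-d-\A(x)}\succeq |x-y|^{-d-\A(x)}$ (the displayed chain at the end of its proof, whose verification is transparent precisely in the range $|x-y|\le 1$ that you treat). So you have located the crux correctly, but your handling of $|x-y|>1$ rests on a citation that does not contain the needed device and on a chaining scheme that loses more than it gains; as written, your proof is complete only for $|x-y|\le 1$.
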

\begin{proof}
For any $t\in (0,1]$ and $x,y\in\R^d$, it holds that
\begin{equation}\label{t4-4-2}
\begin{split}
&\Pp_x\big(X_{\lambda t}\in B(y,t^{{1}/{\A(y)}})\big)\\
&\ge \Pp_x\big(\sigma_{B(y,t^{{1}/{\A(y)}}/2)}\le \lambda t;
\sup_{s \in (\sigma_{B(y,t^{{1}/{\A(y)}}/2)},\lambda t)}\big|X_s-X_{\sigma_{B(y,t^{{1}/{\A(y)}}/2)}}\big|\le t^{{1}/{\A(y)}}/2\big)\\
&\ge\Pp_x\big(\sigma_{B(y,t^{{1}/{\A(y)}}/2)}\le \lambda t ;\Pp_{X_{\sigma_{B(y,
t^{{1}/{\A(y)}}/2)}}}\big(\sup_{0\le s \le \lambda t}|X_s-X_0|\le
 t^{{1}/{\A(y)}}/2\big)\big)\\
&\ge \Pp_x\big(\sigma_{B(y,t^{{1}/{\A(y)}}/2)}\le \lambda t\big)
\cdot \inf_{z \in B(y,t^{{1}/{\A(y)}}/2)}\Pp_z\big(
\tau_{B(z,t^{{1}/{\A(y)}}/2)}>\lambda t\big),
\end{split}
\end{equation}
where in the second inequality we have used the strong Markov property.

Following the proof of \eqref{l3-2-8}, we know that
$t^{{1}/{\A(y)}}\ge C_1 t^{{1}/{\A(z)}}$ for every $z \in B(y,t^{{1}/{\A(y)}}/2)$.
Hence, there exists a constant $0<\lambda_0<1$ such that for all $0<\lambda<\lambda_0$
and $z \in B(y,t^{{1}/{\A(y)}}/2)$,
\begin{equation}\label{t4-4-3}
\begin{split}
&\Pp_z\big(\tau_{B(z,t^{{1}/{\A(y)}}/2)}>\lambda t\big)
\ge \Pp_z\big(\tau_{B(z,{C_1t^{{1}/{\A(z)}}}/{2})}>\lambda t\big)\ge {1}/{2},
\end{split}
\end{equation}
where the last inequality  follows from \eqref{l4-3-1}.
On the other hand, by the L\'evy system \eqref{p4-4-1}, for $\lambda>0$ small enough and for any $x,y\in \R^d$ with $
|x-y|>5\max\{t^{{1}/{\A(x)}},t^{{1}/{\A(y)}}\}$,
\begin{equation}\label{t4-4-4}
\begin{split}
 \Pp_x\big(\sigma_{B(y,t^{{1}/{\A(y)}}/2)}\le \lambda t\big)
&\ge \Pp_x\big(X_{\lambda t \wedge \tau_{B(x,t^{{1}/{\A(x)}})}}\in B(y,t^{{1}/{\A(y)}}/2)\big)\\
&=\Ee_x\bigg(\int_0^{\lambda t \wedge \tau_{B(x,t^{{1}/{\A(x)}})}}
\int_{B(y,t^{{1}/{\A(y)}}/2)}\frac{du}{|X_s-u|^{d+\A(X_s)}}\,ds\bigg)\\
&\q \lambda t^{1+({d}/{\A(y)})}\cdot \inf_{z \in B(x,t^{{1}/{\A(x)}})}\frac{1}{|x-y|^{d+\A(z)}}
\cdot \Pp_x\big(\tau_{B(x,t^{{1}/{\A(x)}})}\ge \lambda t\big)\\
&\q \frac{t^{1+({d}/{\A(y)})}}{|x-y|^{d+\A(x)}},
\end{split}
\end{equation}
where in the first inequality we used the fact that
$|x-y|\ge 5\max\{t^{{1}/{\A(x)}},t^{{1}/{\A(y)}}\}$ implies $B(x,t^{{1}/{\A(x)}})\cap
B(y,t^{{1}/{\A(y)}}/2)=\emptyset$, the second inequality follows from the fact that
$|z-u|\p |x-y|$ for any $z \in B\big(x,t^{{1}/{\A(x)}}\big)$
and $u \in B\big(y,t^{{1}/{\A(y)}}/2\big)$, and in the last inequality we used
\eqref{l4-3-1} and the fact that for any $x,y,z\in \R^d$ with $|x-y|\ge 5t^{{1}/{\A(x)}}$ and $z \in
B(x,t^{{1}/{\A(x)}})$ and for every $0<t\le 1$,
\begin{align*}
|x-y|^{-d-\A(z)}&\q |x-y|^{-d-\A(x)+C_2|x-z|^{\B_0}}\q t^{C_2t^{{\B_0}/{\A_1}}}|x-y|^{-d-\A(x)}\q |x-y|^{-d-\A(x)}.
\end{align*}

According to \eqref{t4-4-2}, \eqref{t4-4-3} and \eqref{t4-4-4}, we obtain
that for any $|x-y|>5t^{{1}/{\A(x)}}$ and $\lambda$ small enough,
\begin{equation}\label{t4-4-5}
\Pp_x\big(X_{\lambda t}\in B(y,t^{{1}/{\A(y)}})\big)\q
\frac{t^{1+({d}/{\A(y)})}}{|x-y|^{d+\A(x)}}.
\end{equation}
Hence, we arrive at
\begin{align*}
p(t,x,y)&=\int_{\R^d}p(\lambda t,x,z)p((1-\lambda)t,z,y)\,dz\\
&\ge \int_{B(y,t^{{1}/{\A(y)}})}
p(\lambda t,x,z)p((1-\lambda)t,z,y)\,dz\\
&\ge \inf_{|z-y|\le t^{{1}/{\A(y)}}}p((1-\lambda)t,z,y)
\int_{B(y,t^{{1}/{\A(y)}})}p(\lambda t,x,z)\,dz\\
&\q \inf_{|z-y|\le C_3t^{{1}/{\A(z)}}}p((1-\lambda)t,z,y)
\int_{B(y,t^{{1}/{\A(y)}})}p(\lambda t,x,z)\,dz\\
&\q t^{-{d}/{\A(y)}} \Pp_x\big(X_{\lambda t}\in B(y,t^{{1}/{\A(y)}})\big)\q \frac{t}{|x-y|^{d+\A(x)}},
\end{align*}
where the third inequality follows from the fact that $t^{{1}/{\A(y)}}\asymp t^{{1}/{\A(z)}}$ for every
$|y-z|\le t^{{1}/{\A(y)}}$, in the forth inequality we used \eqref{p4-3-1}, and in the last inequality we used \eqref{t4-4-5}.
Therefore, \eqref{t4-4-1} has been proved.
\end{proof}

According to Propositions \ref{p4-3} and \ref{t4-4}, we immediately get the following

\begin{corollary}\label{c-lower}
There exists a constant
$c_0:=c_0(\A,\K)>0$ such that for every $t\in (0,1]$ and $x,y \in \R^d$,
\begin{equation*}\label{t4-4-1a}
p(t,x,y)\ge
\frac{c_0 t }{\big(t^{{1}/{\A(x)}}+|y-x|\big)^{d+\A(x)}}.
\end{equation*}
\end{corollary}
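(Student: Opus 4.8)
The plan is to deduce Corollary \ref{c-lower} by simply merging the two regimes treated in Proposition \ref{p4-3} and Proposition \ref{t4-4}, after checking that the right-hand side $t/(t^{1/\A(x)}+|x-y|)^{d+\A(x)}$ behaves correctly in each regime. First I would fix $t\in(0,1]$ and $x,y\in\R^d$ and split according to whether $|x-y|\le 5\max\{t^{1/\A(x)},t^{1/\A(y)}\}$ (the near-diagonal case) or $|x-y|>5\max\{t^{1/\A(x)},t^{1/\A(y)}\}$ (the off-diagonal case); these two cases exhaust all possibilities, so it suffices to produce the bound in each.

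In the near-diagonal case, Proposition \ref{p4-3} gives $p(t,x,y)\ge c_1 t^{-d/\A(x)}$. I then need the elementary inequality $t/(t^{1/\A(x)}+|x-y|)^{d+\A(x)}\p t^{-d/\A(x)}$ valid whenever $|x-y|\p t^{1/\A(x)}$: indeed, if $|x-y|\le 5\max\{t^{1/\A(x)},t^{1/\A(y)}\}$, then using \eqref{l3-2-8} (which yields $t^{1/\A(y)}\asymp t^{1/\A(x)}$ as soon as $|x-y|\le 5 t^{1/\A_2}$, hence in particular in this regime) one gets $t^{1/\A(x)}+|x-y|\p t^{1/\A(x)}$, so the quotient is $\q t\cdot t^{-(d+\A(x))/\A(x)}=t^{-d/\A(x)}$. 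Combining, $p(t,x,y)\q t^{-d/\A(x)}\q t/(t^{1/\A(x)}+|x-y|)^{d+\A(x)}$, as required.

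In the off-diagonal case, Proposition \ref{t4-4} gives $p(t,x,y)\ge c_1 t/|x-y|^{d+\A(x)}$. Here $|x-y|>5 t^{1/\A(x)}$ forces $t^{1/\A(x)}+|x-y|\asymp |x-y|$, so $t/|x-y|^{d+\A(x)}\asymp t/(t^{1/\A(x)}+|x-y|)^{d+\A(x)}$, and the desired estimate follows at once. Taking $c_0$ to be the minimum of the constants produced in the two cases (which depend only on $\A$ and $\K$, via Propositions \ref{p4-3} and \ref{t4-4}) completes the argument.

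There is essentially no obstacle here beyond the bookkeeping just described, since all the real work is contained in Propositions \ref{p4-3} and \ref{t4-4}; the only point requiring a moment's care is the comparison $t^{1/\A(x)}\asymp t^{1/\A(y)}$ in the near-diagonal regime, which is exactly \eqref{p4-3-2} and is already recorded in the proof of Proposition \ref{p4-3}. One should also note that Propositions \ref{p4-3} and \ref{t4-4} (and hence this corollary) are stated under the blanket hypothesis that either $\K(x,z)$ is independent of $z$, or $\K(x,z)$ depends on $z$ with $(\A_2/\A_1)-1<\B_0^{**}/\A_2$; the corollary inherits that hypothesis without change.
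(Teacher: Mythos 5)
Your proposal is correct and matches the paper's own treatment: the paper derives Corollary \ref{c-lower} "immediately" from Propositions \ref{p4-3} and \ref{t4-4} by exactly this near-/off-diagonal case split, with the comparability $t^{1/\A(x)}\asymp t^{1/\A(y)}$ from \eqref{p4-3-2} handling the near-diagonal regime. You have simply written out the routine comparison of $t/(t^{1/\A(x)}+|x-y|)^{d+\A(x)}$ with the two bounds, which is all that is needed.
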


\bigskip

\noindent \textbf{Acknowledgements.} \small The
research of Xin Chen is supported by National Natural Science Foundation of China (No.\ 11501361 and No.\ 11871338) and \lq\lq Yang Fan Project\rq\rq \, of Science and Technology Commission of Shanghai Municipality (No.\ 15YF1405900).
The research of Zhen-Qing Chen is  partially supported by Simons Foundation grant 520542 and a Victor Klee Faculty Fellowship at UW.
The research of Jian Wang is supported by the National Natural Science Foundation of China (Nos.\ 11522106 and 11831014), the Fok Ying Tung Education Foundation (No.\ 151002), the Alexander von Humboldt foundation, the Program for Probability and Statistics: Theory and Application (No.\ IRTL1704) and the Program for Innovative Research Team in Science and Technology in Fujian Province University (IRTSTFJ).

\medskip

\end{document}